\documentclass[12pt]{amsart}
\pdfoutput=1 
\usepackage{xcolor}
\definecolor{cadmiumgreen}{rgb}{0.0, 0.42, 0.24}
\usepackage[
colorlinks, citecolor=cadmiumgreen,
pagebackref,
pdfauthor={Andreas Gross, Farbod Shokrieh}, 
pdftitle={A sheaf-theoretic approach to tropical homology},
pdfstartview ={FitV},
]{hyperref}

\usepackage[
alphabetic,
backrefs,
msc-links,
nobysame,
lite,
]{amsrefs}

\usepackage[left=3.5cm,top=3.5cm,right=3.5cm]{geometry}

\usepackage[T1]{fontenc}
\usepackage[utf8]{inputenc}

\usepackage{mathptmx} 
\usepackage[scaled]{helvet}
\usepackage{courier}
\usepackage{microtype}
\usepackage{amssymb,amsthm, amsmath,mathtools,calligra,mathrsfs}	
\usepackage{dsfont}						
\usepackage{tikz}							
\usetikzlibrary[matrix,arrows,calc]
\usepackage{enumitem}

 \theoremstyle{plain}
 \newtheorem{thm}{Theorem}[section]
  \theoremstyle{definition}
  \newtheorem{defn}[thm]{Definition}
  \theoremstyle{definition}
  \newtheorem{example}[thm]{Example}
  \theoremstyle{plain}
  \newtheorem{lemma}[thm]{Lemma}
  \theoremstyle{plain}
  \newtheorem{cor}[thm]{Corollary}
  \theoremstyle{remark}
  \newtheorem{rem}[thm]{Remark}
  \theoremstyle{plain}
  \newtheorem{prop}[thm]{Proposition}
  \theoremstyle{plain}
  
  \theoremstyle{remark}
  
  \theoremstyle{remark}
  
  \theoremstyle{plain}
  
  \theoremstyle{definition}
  
  \numberwithin{equation}{section}

\DeclareMathOperator{\Hom}{Hom}
\DeclareMathOperator{\Tor}{Tor}
\DeclareMathOperator{\Ext}{Ext}
\DeclareMathOperator{\Homs}{\mathscr{H}\text{\kern -3pt {\calligra\large om}}\,}
\DeclareMathOperator{\Divs}{\mathscr{D}\text{\kern -3pt {\calligra\large iv}}\,}
\DeclareMathOperator{\Rat}{\mathscr{M}}

\DeclareMathOperator{\relint}{relint}

\DeclareMathOperator{\supp}{supp}

\DeclareMathOperator{\divv}{div}
\DeclareMathOperator{\Div}{Div}

\DeclareMathOperator{\conv}{conv}

\DeclareMathOperator{\id}{id}

\DeclareMathOperator{\cyc}{cyc}
\newcommand\tcyc{\widetilde{\mathrm{cyc}}}

\newcommand{\Aff}{\operatorname{Aff}}

\newcommand{\LC}{\operatorname{LC}}
\newcommand{\dual}{{\mathds D}}
\newcommand{\vdual}{{\mathscr D}}
\newcommand{\SoftDual}{{\mathscr S}}
\newcommand{\berg}[1]{L_{#1}}
\newcommand{\sing}{{\mathrm{sing}}}

\newcommand{\R}{{\mathds R}}
\newcommand{\Rbar}{{\overline\R}}

\newcommand{\Hyper}{{\mathds H}}

\newcommand{\Z}{{\mathds Z}}
\newcommand{\N}{{\mathds N}}

\newcommand{\mF}{{\mathcal F}}
\newcommand{\mG}{{\mathcal G}}
\newcommand{\mL}{{\mathcal L}}

\newcommand{\mS}{{\mathcal S}}

\newcommand{\scrH}{{\mathscr H}}

\renewcommand{\injlim}{\varinjlim}
\renewcommand\phi\varphi

\newcommand{\der}[1]{\ensuremath{D(\Z_{#1})}}

\begin{document}

\title{A sheaf-theoretic approach to tropical homology}

\author{Andreas Gross}
\email{\href{mailto:andreas.gross@colostate.edu}{andreas.gross@colostate.edu}}

\author{Farbod Shokrieh}
\email{\href{mailto:farbod@math.ku.dk}{farbod@math.ku.dk}, \href{mailto:farbod@uw.edu}{farbod@uw.edu}}

\date{\today}

\subjclass[2010]{
\href{https://mathscinet.ams.org/msc/msc2010.html?t=14T05}{14T05},
\href{https://mathscinet.ams.org/msc/msc2010.html?t=14F05}{14F05},
\href{https://mathscinet.ams.org/msc/msc2010.html?t=14C17}{14C17},
\href{https://mathscinet.ams.org/msc/msc2010.html?t=52B40}{52B40},
\href{https://mathscinet.ams.org/msc/msc2010.html?t=14C25}{14C25}}

\begin{abstract} 
We introduce a sheaf-theoretic approach to tropical homology, especially for tropical homology with potentially non-compact supports. Our setup is suited to study the functorial properties of tropical homology, and we show that it behaves analogously to classical Borel-Moore homology in the sense that there are proper push-forwards, cross products, and cup products with tropical cohomology classes, and that it satisfies identities like the projection formula and the K\"unneth theorem. Our framework allows for a natural definition of the tropical cycle class map, which we show to be a natural transformation. Finally, we prove Poincar\'e-Verdier duality over the integers on tropical manifolds.
\end{abstract}

\maketitle

\setcounter{tocdepth}{1}
\tableofcontents

\section{Introduction}
\renewcommand*{\thethm}{\Alph{thm}}

\subsection{Background}
Tropical (co)homology theory is a new tool to associate algebraic invariants to the spaces appearing in tropical geometry. They were introduced in \cite{TropHomology} where it was shown that the tropical cohomology groups of tropical manifolds have a Hodge-theoretic interpretation in algebraic geometry in case the tropical manifold arises a the tropicalization of a smooth projective variety. As one would expect by analogy to the algebro-geometric picture, tropical homology is also closely related to tropical intersection theory. In \cite{MZeigenwave}, Mikhalkin and Zharkov introduced the tropical cycle class map on rational polyhedral spaces equipped with that assigns a class in tropical homology to every tropical cycle. This map has been further studied in \cite{TropicalSurfaces} in the case of tropical surfaces, and in \cite{Lefschetz} with a special emphasis on locally finite tropical cycles with integer coefficients. An excellent introduction to the subject can be found in \cite{TropIntro}.

\subsection{Our contributions}

We introduce a  sheaf-theoretic viewpoint on tropical homology which reduces constructions in tropical homology, which does not have a direct counterpart in algebraic geometry, to well-known constructions in sheaf theory. The basis for this is the following theorem that we prove in \S\ref{subsec:Comparison of homologies}:

\begin{thm}[= Theorem \ref{thm:compatibility of homology theories}]
\label{thm:introcomparison}
Let $X$ be a rational polyhedral space. Then there exists a natural isomorphism
\[
H^{lf}_{p,q}(X) \cong H^{-q}R\Hom^\bullet(\Omega_X^p,\dual_X) \ ,
\]
where $H^{lf}_{p,q}(X)$ denotes the tropical homology groups defined via locally finite tropical chains over $\Z$, $\Omega_X^p$ denotes the sheaf of tropical $p$-forms, and $\dual_X$ denotes the dualizing complex on $X$.
\end{thm}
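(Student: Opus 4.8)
The plan is to compute both sides by an explicit cellular complex attached to a polyhedral structure on $X$, identify these complexes, and then upgrade the resulting isomorphism to a canonical and functorial one by passing through complexes of sheaves. So fix a polyhedral structure on $X$ and consider the stratification of $X$ by the relative interiors $\relint\sigma$ of its faces $\sigma$. The sheaf $\Omega^p_X$ is constructible with respect to this stratification: its sections over the open star of $\sigma$ form a free $\Z$-module $\Omega^p(\sigma)$ (the multi-cotangent space at $\sigma$), with generization maps $\Omega^p(\tau)\to\Omega^p(\sigma)$ for faces $\tau\le\sigma$; write $\mF_p(\sigma):=\Hom(\Omega^p(\sigma),\Z)$ for the dual multi-tangent module. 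On the other side one invokes the classical cellular model for the dualizing complex of a polyhedral space: $\dual_X$ is represented by a complex of sheaves adapted to the stratification, and since $\Omega^p_X$ is constructible with free stalks, $R\Homs(\Omega^p_X,\dual_X)$ is computed by applying $\Homs(\Omega^p_X,-)$ to it. Its complex of global sections $C^\bullet$ has, in cohomological degree $-q$, the module $\prod_{\dim\sigma=q}\Hom(\Omega^p(\sigma),\mathrm{or}_\sigma)$, where $\mathrm{or}_\sigma$ is the orientation module of $\sigma$, with differential assembled from the cellular coboundary and the generization maps of $\Omega^p_X$.

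By definition, $H^{lf}_{p,q}(X)$ is the degree $-q$ cohomology of the complex of locally finite tropical $(p,\bullet)$-chains, whose term in degree $-q$ is $C^{lf}_{p,q}(X)=\prod_{\dim\sigma=q}\mF_p(\sigma)\otimes_\Z\mathrm{or}_\sigma$, the boundary operator combining the cellular boundary with the inclusions $\mF_p(\sigma)\hookrightarrow\mF_p(\tau)$ over the facets $\tau$ of $\sigma$. The duality $\mF_p(\sigma)=\Hom(\Omega^p(\sigma),\Z)$ gives term-by-term identifications $\mF_p(\sigma)\otimes_\Z\mathrm{or}_\sigma\cong\Hom(\Omega^p(\sigma),\mathrm{or}_\sigma)$, and a direct check — whose only subtlety is the standard sign relating a boundary map in homological degree $q$ to a coboundary in cohomological degree $-q$ — shows that under these identifications the two differentials agree. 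Hence $C^{lf}_{p,\bullet}(X)\cong C^\bullet$ as complexes, which already yields the asserted isomorphism $H^{lf}_{p,q}(X)\cong H^{-q}R\Hom^\bullet(\Omega^p_X,\dual_X)$ for the chosen polyhedral structure.

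It remains to make this isomorphism canonical and natural, which I would do before taking cohomology. The presheaf $U\mapsto C^{lf}_{p,q}(U)$ is a flasque sheaf on $X$, so the complex of sheaves $\mathcal C^{lf}_{p,\bullet}$ consists of $\Gamma$-acyclic sheaves and $\mathbb H^{-q}(X,\mathcal C^{lf}_{p,\bullet})=H^{lf}_{p,q}(X)$; in particular $\mathcal C^{lf}_{0,\bullet}$ is a flasque model for $\dual_X$. The contraction pairing between locally finite chains and tropical forms sheafifies to a morphism $\mathcal C^{lf}_{p,\bullet}\otimes\Omega^p_X\to\mathcal C^{lf}_{0,\bullet}$, natural in $X$ and independent of the polyhedral structure, and hence induces a morphism $\mathcal C^{lf}_{p,\bullet}\to R\Homs(\Omega^p_X,\dual_X)$ in the derived category; that it is a quasi-isomorphism is exactly the content of the combinatorial identification above, checked on each open set. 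Passing to hypercohomology yields the natural isomorphism of the theorem. (Alternatively one can argue entirely combinatorially, checking that the isomorphism of the previous paragraph is compatible with polyhedral refinements and with morphisms, but the sheaf-level argument is more robust.)

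I expect the main obstacle to be the control of the right-hand side: since $X$ is an arbitrary rational polyhedral space and not a tropical manifold, $\dual_X$ is a genuine complex, and one needs a cellular model for it that is simultaneously explicit enough to run the term-by-term matching and compatible enough with the stratification to compute $R\Homs(\Omega^p_X,-)$ without spurious higher $\Ext$ terms; carrying the orientation modules and the boundary/coboundary signs faithfully through this identification is the delicate part. The remaining steps — the matching of differentials and the verification that the sheaf-level comparison morphism is a quasi-isomorphism — are routine once this setup is in place.
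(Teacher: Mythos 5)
Your plan diverges from the paper's argument in two places where the real content lies, and in both places the proposal asserts rather than proves the key point.

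First, the left-hand side. In this paper (and in the sources it follows), $H^{lf}_{p,q}(X)$ is defined via \emph{locally finite singular tropical chains}: formal locally finite sums of pairs $(\sigma,s)$ with $\sigma$ a singular simplex respecting the face structure and $s\in\Hom(\Omega^p_X|_{\sigma(\Delta^q)},\Z_{\sigma(\Delta^q)})$. You replace this, "by definition", with the cellular complex $\prod_{\dim\sigma=q}\mF_p(\sigma)\otimes_\Z\mathrm{or}_\sigma$. The agreement of cellular and singular locally finite tropical homology is not a definition but a theorem of essentially the same nature as the one you are proving (in the paper it comes out as a byproduct of the comparison, since both sides are identified with $H^{BM}_{p,q}$); as written, your argument would establish a statement about cellular tropical homology, not the stated one. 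The paper avoids this issue by working throughout with the complex $\Delta^{\Sigma,\bullet}_X$ of singular chains respecting the stratification, for which the identification $\Gamma\Homs(\Omega^p_X,\Delta^{\Sigma,\bullet}_X)\cong C^{lf}_{p,-\bullet}(X;\Sigma)$ is immediate from the decomposition $\Delta^{\Sigma,-i}_X\cong\bigoplus_\sigma\Z_{\sigma(\Delta^i)}$.

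Second, and more seriously, the right-hand side. Your argument hinges on the claim that, because $\Omega^p_X$ is constructible with free stalks, applying the underived $\Homs(\Omega^p_X,-)$ to a cellular (Shepard-type) model of $\dual_X$ computes $R\Homs(\Omega^p_X,\dual_X)$, i.e.\ that no higher $\Ext$-sheaves against the closed-cell sheaves $\Z_\sigma$ intervene. This is not a formal consequence of constructibility: for instance, with $Y=D^2$ stratified so that the origin is a vertex and $\mF=\Z_{D^2\setminus\{0\}}$ (constructible, all stalks free), the first local $\Ext$-sheaf of $\mF$ into $\Z_{D^2}$ is nonzero at the origin, so a naive term-by-term pairing against a cellular model would produce spurious contributions. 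Whether the required vanishing holds for $\Omega^p_X$ is exactly the delicate part, and you acknowledge it as "the main obstacle" without supplying an argument; but this is where the proof actually lives. The paper's entire appendix is devoted to the analogous statement for $\Delta^{\Sigma,\bullet}_X$: that stratified singular chains compute $\dual_X$ on conically stratified spaces (Proposition \ref{prop:dualizing complex using singular cycles respecting the stratification}), a restriction/subdivision argument (Proposition \ref{prop:restriction yields equivalence}), the case of unions of strata (Proposition \ref{prop: Hom is equal to derived hom for locally closed sets}), and a d\'evissage over strata using Lemma \ref{lem:Sections of dual} (whose proof relies on the admissibility of the stratification and on $i^!\Z_{D^n}=0$ for the boundary inclusion), culminating in Proposition \ref{prop:Hom is RHom}. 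Without an argument of this kind adapted to your cellular model -- and without the singular-versus-cellular comparison above -- the proposal is a programme rather than a proof; the final paragraph on naturality (flasqueness of the locally finite chain sheaves, the sheaf-level contraction pairing) is comparatively minor but also left unverified.
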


Motivated by this, and in analogy to the classical theory, we denote
\[
H^{BM}_{p,q}(X)=H^{-q}R\Hom^\bullet(\Omega_X^p,\dual_X)
\]
and call it the $(p,q)$-th tropical Borel-Moore homology group. This formulation of tropical homology makes it  evident that the functorial behavior of tropical homology is completely determined by the functorial behavior of tropical $p$-forms on the one hand, and dualizing complexes on the other. With this in mind, the constructions of proper push-forwards, cross products, cup products, and cap products in tropical homology are straightforward generalizations of the classical constructions, at least after we establish some general functorial properties of the sheaves of tropical forms. Our point of view also sheds light on the identities that these operations satisfy. For example, we obtain a tropical version of the K\"unneth theorem:

\begin{thm}[= Theorem \ref{thm:Kuenneth}]
\label{thm:introkuenneth}
Let $X$ and $Y$ be rational polyhedral spaces such that all tropical homology groups of $X$ and $Y$ are finitely generated and torsion free. Then we have
\[
H^{BM}_{p,q}(X\times Y)\cong \bigoplus_{i+j=p\atop k+l=q} H^{BM}_{i,k}(X)\otimes_\Z H^{BM}_{j,l}(Y) \ .
\]
\end{thm}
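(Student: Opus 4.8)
The plan is to reduce the statement to the analogous product formulas for the two building blocks of tropical Borel--Moore homology — the sheaves of tropical forms and the dualizing complex — and then to invoke the algebraic K\"unneth theorem over $\Z$. For rational polyhedral spaces $X$ and $Y$ write $\pr_X\colon X\times Y\to X$ and $\pr_Y\colon X\times Y\to Y$ for the projections, and for a complex $\mathcal{A}$ on $X$ and a complex $\mathcal{B}$ on $Y$ set $\mathcal{A}\boxtimes^L\mathcal{B}:=\pr_X^*\mathcal{A}\otimes^L_{\Z_{X\times Y}}\pr_Y^*\mathcal{B}$.

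The first input is the decomposition
\[
\Omega_{X\times Y}^p\;\cong\;\bigoplus_{i+j=p}\Omega_X^i\boxtimes^L\Omega_Y^j \ ,
\]
which follows from the definition of the sheaves of tropical forms: cell by cell it reflects the identity $\bigwedge^p(A\oplus B)\cong\bigoplus_{i+j=p}\bigwedge^iA\otimes\bigwedge^jB$ for the multitangent spaces, and since the $\Omega^i$ have torsion-free stalks the external tensor products may be taken in the derived sense. The second input is the product formula for dualizing complexes,
\[
\dual_{X\times Y}\;\cong\;\dual_X\boxtimes^L\dual_Y \ ,
\]
which holds because $\dual_X=a_X^!\,\Z$ for the structure morphism $a_X\colon X\to\mathrm{pt}$, and the spaces underlying rational polyhedral spaces are locally compact Hausdorff of finite cohomological dimension, so that the usual base-change argument of Verdier duality applies.

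Combining these with the defining formula for $H^{BM}_{p,q}$ and the compatibility of $R\Hom^\bullet(-,-)$ with finite direct sums in the first variable, we obtain
\[
R\Hom^\bullet(\Omega_{X\times Y}^p,\dual_{X\times Y})\;\cong\;\bigoplus_{i+j=p}R\Hom^\bullet\!\bigl(\Omega_X^i\boxtimes^L\Omega_Y^j,\ \dual_X\boxtimes^L\dual_Y\bigr) \ .
\]
The crux is then to show that the canonical external-product morphism
\[
R\Hom^\bullet(\Omega_X^i,\dual_X)\otimes^L_\Z R\Hom^\bullet(\Omega_Y^j,\dual_Y)\;\longrightarrow\;R\Hom^\bullet\!\bigl(\Omega_X^i\boxtimes^L\Omega_Y^j,\ \dual_X\boxtimes^L\dual_Y\bigr)
\]
is an isomorphism. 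The most transparent route is via Verdier duality: since $\dual_X=a_X^!\,\Z$, adjunction gives $R\Hom^\bullet(\mathcal{F},\dual_X)\cong R\Hom_\Z\bigl(R\Gamma_c(X,\mathcal{F}),\Z\bigr)$ for every sheaf $\mathcal{F}$ on $X$, and similarly on $Y$ and on $X\times Y$; combined with the K\"unneth isomorphism $R\Gamma_c(X\times Y,\mathcal{F}\boxtimes^L\mathcal{G})\cong R\Gamma_c(X,\mathcal{F})\otimes^L_\Z R\Gamma_c(Y,\mathcal{G})$ for compactly supported hypercohomology of locally compact Hausdorff spaces, this reduces the claim to the statement that the natural map $R\Hom_\Z(A,\Z)\otimes^L_\Z R\Hom_\Z(B,\Z)\to R\Hom_\Z(A\otimes^L_\Z B,\Z)$ is an isomorphism when $A=R\Gamma_c(X,\Omega_X^i)$ and $B=R\Gamma_c(Y,\Omega_Y^j)$ are perfect complexes of abelian groups. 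Taking $H^{-q}$ and feeding this into the algebraic K\"unneth theorem over $\Z$ produces a natural short exact sequence with outer terms $\bigoplus_{i+j=p,\,k+l=q}H^{BM}_{i,k}(X)\otimes_\Z H^{BM}_{j,l}(Y)$ and $\bigoplus_{i+j=p,\,k+l=q-1}\Tor^\Z_1\bigl(H^{BM}_{i,k}(X),H^{BM}_{j,l}(Y)\bigr)$; the torsion-freeness hypothesis kills the latter, and naturality of the external product, of Verdier duality, and of the K\"unneth maps yields that the isomorphism is natural.

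The step I expect to be the main obstacle is the perfectness input just used: for the external-product map on $\Z$-linear duals to be an isomorphism, and for the algebraic K\"unneth theorem to apply in its clean form, one needs $R\Gamma_c(X,\Omega_X^i)$ and $R\Gamma_c(Y,\Omega_Y^j)$ to be perfect complexes of abelian groups, i.e.\ bounded with finitely generated cohomology. Boundedness is immediate from finite cohomological dimension; finite generation of the groups $H^*_c(X,\Omega_X^i)$ is exactly where the hypothesis that all tropical homology groups of $X$ and $Y$ are finitely generated enters, via the universal coefficient theorem, and I would record this as a separate lemma. The two geometric inputs — the product decomposition of $\Omega^p$ and the product formula for $\dual$ over $\Z$ — are comparatively routine, following from the good local topology of rational polyhedral spaces together with the explicit local description of the sheaves of tropical forms.
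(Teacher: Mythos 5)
Your argument is essentially the paper's own proof of Theorem \ref{thm:Kuenneth}: the same decomposition $\Omega^p_{X\times Y}\cong\bigoplus_{i+j=p}\Omega^i_X\boxtimes\Omega^j_Y$ from Lemma \ref{lem:differentials on product}, the identification $\dual_{X\times Y}\cong\dual_X\boxtimes^L\dual_Y$, reduction via the universal property of the dualizing complex to $\Z$-linear duals of $R\Gamma_c\Omega^i$, the K\"unneth formula for compactly supported hypercohomology, finite generation (perfectness) of $R\Gamma_c\Omega^i_X$, and the algebraic K\"unneth theorem for complexes, with torsion-freeness killing the $\Tor$ term. The only difference is in bookkeeping of hypotheses: the paper's Theorem \ref{thm:Kuenneth} assumes finite generation of $H^q_c(X,\Omega^p_X)$ directly, so the universal-coefficient lemma you flag as the main obstacle is precisely the (implicit) translation between the hypothesis of the introductory statement and that of the body theorem.
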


Another advantage of a sheaf-theoretic view on tropical geometry is that sheaves are very well-suited to pass from local to global considerations. We exploit this in our definition of the tropical cycle class map, where we use the the definition of \cite{MZeigenwave} locally and then utilize the sheaf property to glue. This has the advantage of avoiding the necessity of dealing with global face structures or triangulations of the space, as one needs to in the definitions in \cite{MZeigenwave,Lefschetz}. Once the tropical cycle class map is defined, we prove that a natural transformation in the sense that it satisfies the compatibility conditions summarized in the following theorem:

\begin{thm}[= Corollary \ref{cor:cycle map commutes with push-forwards}, Proposition \ref{prop:cycle map commutes with cross products}, Proposition \ref{prop:cycle map respects intersections with chern classes}]
\label{thm:introfunctoriality}
The tropical cycle class map commutes with proper push-forwards, cross products, and intersections with tropical Cartier divisors.
\end{thm}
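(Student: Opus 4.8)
The plan is to establish the three compatibilities separately, all following one pattern. The tropical cycle class map is built by gluing the local construction of \cite{MZeigenwave}, so it commutes with restriction to open subsets; and a tropical cycle, as well as its class in tropical Borel--Moore homology, is detected on any dense open subset whose complement has strictly smaller dimension (for the underlying cycle this is clear, and for the Borel--Moore class it follows from the long exact localization sequence, which shows that restriction to such an open is injective in the relevant bidegree). Each of the three target operations — cross product, cap product with $c_1$, proper push-forward — commutes with restriction to opens. So in each case I would first reduce the asserted identity, after possibly subdividing and passing to a dense open, to an explicit statement about open polyhedral subsets of Euclidean space and single weighted polyhedra, and then match the Mikhalkin--Zharkov representative against the sheaf-theoretic description of the operation.

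\emph{Cross products.} This is the most direct. After the reduction one is left with a product cell $Z\times W$, and the external product on tropical Borel--Moore homology is induced by the canonical map $\dual_X\boxtimes^L\dual_Y\to\dual_{X\times Y}$ together with the K\"unneth decomposition $\Omega^p_{X\times Y}\cong\bigoplus_{i+j=p}\pr_1^*\Omega^i_X\otimes\pr_2^*\Omega^j_Y$ of the sheaves of tropical forms (a consequence of the functorial properties established earlier). Under these identifications the claim reduces to the statement that the Mikhalkin--Zharkov cochain of a product cell is the wedge of the cochains of its factors, which is immediate from the definitions.

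\emph{Intersection with Cartier divisors.} After the same reduction one may assume $X$ is an open polyhedral set, $Z$ a single weighted polyhedron, and the Cartier divisor is given locally by a piecewise-linear function $\varphi$. On the homological side the operation is the cap product with $c_1(\mO(\varphi))\in H^{1,1}(X)$, which is represented by the bending cocycle of $\varphi$ arising from the comparison of piecewise-linear functions with $\Omega^1$. On the tropical side, $\varphi\cdot Z$ is by \cite{MZeigenwave} (see also \cite{Lefschetz}) the sum over the facets of the non-linearity locus of $\varphi|_Z$, weighted by lattice bending numbers. The two agree cell by cell; checking this is a finite computation whose only subtlety is the bookkeeping of signs and lattice normal vectors, and it is essentially the computation already performed in \cite{MZeigenwave,Lefschetz}.

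\emph{Proper push-forward.} This is where I expect the main difficulty, since proper push-forward is not local on the source. The plan is to factor $f\colon X\to Y$ through its graph $\Gamma_f\colon X\hookrightarrow X\times Y$ and the projection $X\times Y\to Y$. Push-forward along the closed immersion $\Gamma_f$ is handled by locality — the polyhedral data of $Z$, and hence its Mikhalkin--Zharkov representative, does not change when read inside the larger space, and on the homological side this is the compatibility of the closed-immersion push-forward with $\dual$. For the projection one restricts to a dense open of the target over which, after subdivision, $f$ acquires a normal form: a coordinate projection postcomposed with a linear map of lattices that is unimodular up to a finite index. One then has to verify that the weights of the tropical push-forward cycle — which are exactly these lattice indices, summed over fibers — coincide with the image of $\cyc(Z)$ under the composite of the integration-over-the-fiber map on tropical forms and the trace map $Rf_!\dual_X\to\dual_Y$ on dualizing complexes. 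Reconciling the combinatorial push-forward multiplicities with the multiplicities produced by the trace map is the technical heart of the argument; once it is done, functoriality of cross products, cap products, and the cycle class map lets one reassemble the general statements, yielding the three results and hence the theorem.
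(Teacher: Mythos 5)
Your overall strategy---identify cycle classes with sections of a sheaf (equivalently, use that restriction to a dense open with complement of dimension $<k$ is injective in bidegree $(k,k)$, which is correct), reduce to generic points of the maximal locus, and compare multiplicities explicitly---is the same as the paper's. But the two hard cases have genuine gaps as written. For push-forwards, the ingredient that makes ``the multiplicities produced by the trace map'' computable is never supplied: one needs to know that the map $f_*\scrH^n_X\to\scrH^n_Y$ induced by $Rf_*\dual_X\to\dual_Y$ is exactly push-forward of relative singular cycles; the paper proves this as a separate lemma (via the soft resolution by infinitely barycentrically subdivided chains), and without it the comparison cannot start. Moreover, there is no ``integration-over-the-fiber map on tropical forms'' in this framework: the homological push-forward is built from the pull-back $\Omega^p_Y\to Rf_*\Omega^p_X$ and the trace on dualizing complexes, and the lattice index enters through the identity $\langle f^*\omega,\eta_x\rangle=[T^\Z_y Y:d_xf(T^\Z_x X)]\,\langle\omega,\eta_y\rangle$ at the finitely many isolated points of a generic fiber. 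The graph factorization is both unnecessary (the paper reduces directly to the equidimensional case $X=|A|$, $Y=f(|A|)$ using the definition of $\cyc$ and functoriality) and delicate as stated, since the projection $X\times Y\to Y$ is not proper, so its push-forward is only defined after keeping track of supports proper over $Y$.

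For divisors, the claim that ``the two agree cell by cell'' on single weighted polyhedra cannot be right: the identity at a codimension-one cell $\tau$ compares the Allermann--Rau weight $\sum_{\sigma}\langle \phi-l_\tau,n_{\sigma/\tau}\rangle A(\sigma)$ with a boundary computation, and the unwanted term $\left\langle \omega\wedge dl_\tau,\left(\sum_{\sigma}A(\sigma)\,n_{\sigma/\tau}\right)\wedge\eta_\tau\right\rangle$ only vanishes because of the balancing condition of $A$ around $\tau$, which involves all facets adjacent to $\tau$ simultaneously, not one cell at a time. One also needs an honest representative of $c_1(\mL(D))$ as a map of complexes (a roof through resolutions of $\Z_{|D|}$ and $\Divs_X$) together with a chain homotopy before the boundary computation makes sense, and the reduction to $X=|A|$ uses the projection formula plus $f^*\mL(D)=\mL(f^*D)$ and $c_1(f^*\mL)=f^*c_1(\mL)$, none of which appear in your outline. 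The cross-product case is essentially as you describe, except that ``immediate from the definitions'' conceals the one nontrivial input: an Eilenberg--Zilber-type identification of the map $\scrH^i_X\boxtimes\scrH^j_Y\to\scrH^{i+j}_{X\times Y}$ coming from $\dual_X\boxtimes^L\dual_Y\cong\dual_{X\times Y}$ with the relative singular cross product.
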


Finally, we restrict our attention to tropical manifolds. As tropical manifolds shall serve as the tropical counterpart of topological manifolds or smooth algebraic varieties, their homology should satisfy a form of Poincar\'e duality. Our sheaf-theoretic point of view allows us to use the well-established machinery of Verdier duality and makes it evident that in order to get a duality statement, one has to describe the Verdier dual $\vdual(\Omega_X^p)$ of the sheaf of tropical $p$-forms $\Omega_X^p$.

\begin{thm}[= Theorem \ref{thm:computing the Verdier dual}]
\label{thm:introduality}
Let $X$ be a purely $n$-dimensional tropical manifold. Then there is a natural isomorphism 
\begin{equation*}
\Omega^{n-p}_X[n]\cong \vdual(\Omega^p_X) \ .
\end{equation*} 
In particular, we have 
\[
H^{BM}_{p,q}(X) \cong H^{n-p,n-q}(X) \ .
\]
\end{thm}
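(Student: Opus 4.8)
The plan is to reduce the duality statement to a purely local computation of the Verdier dual $\vdual(\Omega^p_X)$, using the fact that a tropical manifold is locally isomorphic to an open subset of the tropical linear space associated to a matroid (in fact, étale-locally to a Bergman fan of a matroid, times a vector space). First I would recall from \S\ref{subsec:Comparison of homologies} that $\dual_X\cong \vdual(\Z_X)$ and that $H^{BM}_{p,q}(X)=H^{-q}R\Hom^\bullet(\Omega^p_X,\dual_X)$; combining this with the isomorphism $\Omega^{n-p}_X[n]\cong\vdual(\Omega^p_X)$ would give
\begin{equation*}
H^{BM}_{p,q}(X)\cong H^{-q}R\Hom^\bullet(\Omega^p_X,\vdual(\Z_X))\cong H^{-q}R\Gamma(X,\vdual(\Omega^p_X)^\vee)^\vee
\end{equation*}
up to the usual adjunction, and then substituting $\vdual(\Omega^p_X)\cong\Omega^{n-p}_X[n]$ identifies this with $H^{n-q}R\Gamma(X,\Omega^{n-p}_X)=H^{n-p,n-q}(X)$. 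So the entire content is the first isomorphism, and the ``In particular'' part is a formal consequence via Verdier duality together with $R\Hom(\vdual\mF,\vdual\mG)\cong R\Hom(\mG,\mF)$.

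To establish $\vdual(\Omega^p_X)\cong\Omega^{n-p}_X[n]$, I would proceed in the following steps. First, since Verdier duality and the sheaves $\Omega^\bullet_X$ are both local, it suffices to construct the isomorphism étale-locally and check it is independent of choices, so that the local isomorphisms glue. Second, on a local model $U\subseteq L_M$ (an open subset of the tropical linear space of a matroid $M$ of rank $n+1$, or more precisely the relevant polyhedral model of dimension $n$), one must compute both sides. The sheaf $\Omega^p_X$ is the sheafification of the combinatorially defined multi-tangent/cotangent data, and its stalks are given in terms of the matroid; the key algebraic input is that for a matroid the pairing between $\bigwedge^p$ of the ``tropical tangent space'' and $\bigwedge^{n-p}$ is perfect after suitable identification — this is essentially the statement that tropical linear spaces (Bergman fans) satisfy Poincaré duality in tropical homology, which in turn rests on the shellability/Cohen–Macaulayness of the matroid and the structure of its Orlik–Solomon-type algebra. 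Third, one computes $\vdual(\Z_U)=\dual_U$: for a tropical manifold this dualizing complex should be concentrated in degree $-n$ and there equal to $\Omega^n_X$ (the orientation sheaf is trivial by the balancing/matroid structure — this is where ``manifold'' as opposed to general rational polyhedral space is essential). Granting these, $\vdual(\Omega^p_X)=R\Homs^\bullet(\Omega^p_X,\dual_X)=R\Homs^\bullet(\Omega^p_X,\Omega^n_X[n])$, and the perfect pairing from the matroid computation identifies this with $\Omega^{n-p}_X[n]$.

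I expect the main obstacle to be the local computation showing that $R\Homs^\bullet(\Omega^p_X,\Omega^n_X)$ is concentrated in degree $0$ and canonically isomorphic to $\Omega^{n-p}_X$ on the matroidal local models — i.e. proving the relevant $\Ext$-sheaves vanish and the Hom-sheaf is the expected one. This is where the combinatorics of matroids enters in an essential way: one needs that the cellular cochain complex computing the stalk cohomology of $R\Homs^\bullet$ is exact except at the top, which is a form of the Cohen–Macaulay property for the order complex of the lattice of flats (together with torsion-freeness, to get the statement over $\Z$ rather than just over a field). A secondary difficulty is checking naturality: that the locally defined isomorphisms are compatible with the transition maps between local models and with pullback along étale maps of tropical manifolds, so that they descend to a global isomorphism of sheaves — this requires pinning down the canonical nature of the matroid duality pairing and its compatibility with the face maps, rather than just its existence.
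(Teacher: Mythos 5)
Your reduction of the ``in particular'' statement to the sheaf-level isomorphism is fine (though the detour through double duals is unnecessary: one just takes $\Hyper^{-q}$ of $\vdual(\Omega^p_X)\cong\Omega^{n-p}_X[n]$), and your localization to matroidal models is the right first move. But the core of your proposal has a genuine gap: the entire difficulty of the theorem is concentrated in the step you defer to ``shellability/Cohen--Macaulayness of the lattice of flats''. Shellability of the order complex controls the \emph{local homology} of the Bergman fan, i.e.\ the complex $\dual_{\berg M}$ (the $p=0$ case), but it does not by itself compute $R\Homs^\bullet(\Omega^p_X,\dual_X)$ for $p>0$: $\Omega^p_X$ is a constructible sheaf whose stalks jump across strata, $\Homs$ and the higher $\Ext$-sheaves are not computed stalkwise, and the ``perfect pairing of exterior powers of the tropical tangent space'' you invoke is a stalk-level statement that is not the statement you need (and is essentially a restatement of the duality you are trying to prove on the local models, so as written the argument is circular). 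You correctly identify that the $\Ext$-vanishing and the identification of the degree-zero sheaf over $\Z$ are the main obstacle, but you give no mechanism for proving them.

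The paper supplies that mechanism differently, and this is the missing idea. First, instead of gluing locally defined isomorphisms, it constructs a single global natural morphism $\Omega^{n-p}_X[n]\to\vdual(\Omega^p_X)$ using Proposition \ref{prop:canonical morphism for between n-cycles and hom} together with the fact that $\mathcal Z^X_n\cong\Z_X$ on a tropical manifold (the fundamental cycle gives $\Omega^n_X\to\scrH^n_X$), so naturality is automatic and only the local isomorphy must be checked. Second, the local statement on $\berg M$ is proved not by a direct combinatorial computation but by induction on the ground set via deletion--contraction: $\berg M$ is a tropical modification of $\berg{M\setminus i}$ along $\berg{M/i}$, and the exact sequence $0\to\Omega^p_X\to s^{-1}\Omega^p_{X'}\to\iota_*\Omega^{p-1}_D\to 0$ (Shaw) feeds into distinguished triangles for $\vdual$, so the five lemma reduces $\berg M$ to the two smaller matroids; the base cases are $\Rbar^n$ and products, handled separately. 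If you want to salvage your direct approach, you would have to actually prove, over $\Z$, that the relevant $\Ext$-sheaves vanish and that $\Homs(\Omega^p_X,\scrH^n_X)\cong\Omega^{n-p}_X$ on Bergman fans, which is precisely the content that the deletion--contraction induction (or, in \cite{Lefschetz}, a Mayer--Vietoris argument) is designed to establish; citing matroid shellability is not a substitute for that argument.
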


\subsection{Related work}

If one works with real (instead of integer) coefficients, there is an isomorphism between tropical cohomology and the Dolbeault cohomology of Lagerberg's superforms, established by Jell, Shaw, and Smacka \cite{Superforms}. The main ingredient for this isomorphism is the fact that an appropriate complex of superforms defines a soft resolution of $\Omega^p_X\otimes_\Z \R$. The authors of \cite{Superforms} then proceed to use the fact that top-dimensional compactly supported superforms can be integrated to prove a Poincar\'e duality theorem on tropical manifolds for tropical cohomology with real coefficients. In the course of the proof, they implicitly prove that $\Omega^p_X\otimes_{\Z_X}\R_X$ is Verdier dual to $\Omega^{n-p}_X\otimes_{\Z_X}\R_X [n]$. Smacka elaborated on this in his thesis \cite{SmackaThesis}, where he also proves a K\"unneth theorem for tropical cohomology with real coefficients, again by using superforms. We should note that the superform approach does not work in the case of integer coefficients so that our computation of $\vdual(\Omega^{p}_X)$ has to rely on different techniques.

 To prove Poincar\'e duality for tropical (co)homology over the integers, one does not necessarily have to use Verdier duality. This has been demonstrated in Jell, Rau, and Shaw's work  \cite{Lefschetz}, where they prove Poincar\'e duality over the integers using an Mayer-Vietoris argument. To do so, they use a well-behaved open cover of the tropical manifold, which they obtain by assuming that it comes equipped with a global face structure. We strengthen their result by eliminating the need for this additional structure.
 
We are hopeful that our sheaf-theoretic approach to tropical homology can be applied to spaces that are not necessarily rational polyhedral, but possibly have singularities in their affine structures. The resulting notion should be strongly related to the invariants of integral affine manifolds appearing in Ruddat's work \cite{Ruddat} in the context of mirror symmetry. 

\subsection{Structure of the paper}

In  sections \ref{sec:tropspace} and \ref{sec:tropical cycles} we recall the definitions of the objects and operations needed in the main part of the paper. We try to follow the literature \cite{MZeigenwave,TropHomology, Lefschetz, ShawIntersection, AllerRau ,TropIntro} as closely as possible, but will provide a new perspective on some things.
Most notably, we deviate from the literature in our definition of tropical forms in section \ref{sec:tropspace}, and our treatment of tropical cycles in section \ref{sec:tropical cycles} has an emphasis on working locally and highlighting their functorial properties.

In section \ref{sec:tropical (co)homology} we introduce tropical Borel-Moore homology and study its functorial behavior. We define proper push-forwards, cross products, cup products, and cap products, and will prove Theorem \ref{thm:introkuenneth}. Finally, we compare our theory with the on obtained via locally finite tropical chains by proving Theorem \ref{thm:introcomparison}.

In  section \ref{sec:cycle class map} we will define the tropical cycle class map and show that it is compatible with proper push-forwards, cross-products, and intersections with Cartier divisors, proving Theorem \ref{thm:introfunctoriality}. Furthermore, we show that our tropical cycle class map coincides with the one introduced in \cite{MZeigenwave, Lefschetz} if we are given a (global) face structure.

 Section \ref{sec:duality} is devoted to  prove Theorem \ref{thm:introduality}. To do so, we use the local nature of the statement, allowing us to use tropical modifications and reducing it to the case where $X=\Rbar^n$.
 
The main ingredients of our proof of Theorem \ref{thm:introcomparison} are of an entirely topological nature, dealing mostly with certain sheaves of singular chains on conically stratified spaces.  As these results are of a very different flavor, and potentially of independent interest, we decided to put them in Appendix \ref{sec:appendix}.

\subsection*{Acknowledgement} 
We would like to thank Philipp Jell, Amit Patel, Johannes Rau, and Kristin Shaw for helpful discussions and conversations. AG was supported by the ERC Starting Grant MOTZETA (project 306610) of the European Research Council (PI: Johannes Nicaise) during parts of this project.
FS was supported by the Danish National Research Foundation through the Centre for Symmetry and Deformation (DNRF92) during parts of this project.

\subsection*{Conventions}
The natural numbers $\N$ include $0$. \emph{All} homology and cohomology groups in this paper, whether classical or tropical, will be considered with \emph{integer coefficients}.

\section{Rational polyhedral spaces and tropical forms} 
\label{sec:tropspace}
\renewcommand*{\thethm}{\arabic{section}.\arabic{thm}}

\subsection{Rational polyhedral spaces}

We denote $\Rbar\coloneqq \R\cup \{+\infty\}$ and consider it with the order topology. For $n\in \N$, the $n$-fold product $\Rbar^n$ has a natural stratification $\Rbar^n= \bigsqcup_{I\subseteq \{1,\ldots, n\}} \R_I$, where the stratum
\begin{equation*}
\R_I =\{ (x_i)_{1\leq i\leq n} \mid x_i=\infty \text{ if and only iff } i\in I\} 
\end{equation*}
is naturally identified with $\R^{n-|I|}$.
Recall that a \emph{(rational) polyhedron} in $\R^n$ is a finite intersection of half-spaces of the form $\{x\in \R^n\mid \langle m,x\rangle\leq a\}$ with $m\in (\Z^n)^*$ and $a\in \R$, where $\langle \cdot, \cdot\rangle$ denotes the evaluation pairing. By a polyhedron in $\Rbar^n$ we mean any set occurring as the closure of a polyhedron in some stratum $\R_I$. Note that for any polyhedron $\sigma$ in $\R^n$ and subset $I\subset \{1,\ldots , n\}$, the intersection $\overline\sigma\cap \R_I$  is a polyhedron in $\R_I$. A {\em polyhedral set} in $\Rbar^n$ is a finite union of polyhedra. 

An \emph{integral affine linear function} on a subset $X\subseteq \Rbar^n$ is a continuous function $f\colon X\to \R$ that is of the form $x\mapsto \langle m, x\rangle + a$ for some $m\in (\Z^n)^*$ and $a\in \R$ locally around every point in $X$. Here, we use the convention that $0\cdot (\infty) =0$, and that $\langle m, x\rangle $ is only defined if for all $i$ such that the coordinate $m_i$ is nonzero, we have $x_i\neq \infty$. In particular, if $f$ is integral affine linear on $X$ and $f(x)= \langle m, x\rangle + a$ for $x\in X\cap \R_I$, then $m_i=0$ for $i\in I$. For every subset $X\subseteq \Rbar^n$, the integral affine linear functions on open subsets of $X$ define a sheaf of abelian groups on $X$, denoted by $\Aff_X$.

\begin{example}
Consider the polyhedral set
\[
X= \partial \conv\{(0,0),(1,0),(0,1),(1,1)\} 
\]
in $\R^2$, which is the boundary of a square with sides of length one. Consider the function on $X$ that is given by $0$ on the top and right edge of the square, and by $-1+ x_1+x_2$ on the bottom and left edge of the square. This function is continuous and locally the restriction of an integral affine linear function on $\R^2$. In fact, it coincides with the restriction of an integral affine linear function on the union of any two adjacent edges of the square. However, it is not equal to the restriction of a single integral affine linear function on $\R^2$ everywhere because it has different slopes on parallel edges.
\end{example}

\begin{defn}
A \emph{rational polyhedral space} is a second-countable Hausdorff topological space $X$, together with a sheaf $\Aff_X$ of continuous functions such that for every $x\in X$ there exists an open neighborhood $U\subseteq X$, an open subset $V$ of a polyhedral subset of $\Rbar^n$ for some $n\in \N$, and a homeomorphism $\phi\colon U\to V$ that induces an isomorphism $\phi^{-1}\Aff_V\to \Aff_U$ via the pullback of functions. The data of $U,V$ and $\phi$ is called a \emph{chart}. 
\end{defn}

\begin{defn}
A morphism between two rational polyhedral spaces $X$ and $Y$ is a continuous map $f\colon X\to Y$ that induces a morphism $f^{-1}\Aff_Y\to \Aff_X$ via the pullback of functions. A morphism is \emph{proper}, if it is proper as a continuous map of topological spaces, that is if the preimages of compact subsets of $Y$ are compact. 
\end{defn}

\begin{defn}
Let $X$ be a rational polyhedral space. 
\begin{enumerate}[label=(\alph*)]
\item A {\em polyhedron} in $X$ is a closed subset $P\subseteq X$ such that there exists a chart $X\supseteq U \xrightarrow{\phi} V\subseteq \Rbar^n$ such that $P\subset U$ and $\phi(P)\subseteq\Rbar^n$ is a polyhedron. The \emph{faces} of $P$ are the preimages under $\phi$ of the (finite or infinite) faces of $\phi(P)$. The \emph{relative interior} $\relint(P)$ of $P$ is the complement in $P$ of the union of its proper faces.
\item A \emph{local face structure} at a point $x\in X$ is a finite set $\Sigma$ of polyhedra in $X$ that is closed under taking faces and intersections (that is if $\tau$ is a face of $\sigma\in\Sigma$, then $\tau\in \Sigma$, and $\sigma\cap\delta\in\Sigma$ for all $\sigma,\delta\in\Sigma$ with nonempty intersection), such that $x$ is contained in the (topological) interior of $|\Sigma|=\bigcup_{\sigma\in \Sigma}\sigma$, there exists a chart $X\supseteq U\to V\subseteq \Rbar^n$ with $|\Sigma|\subseteq U$, and such that $x\in\sigma$ for all inclusion-maximal $\sigma\in \Sigma$. 
\item A (global) {\em face structure} on $X$ is a set $\Sigma$ of polyhedra in $X$ that is closed under taking faces and intersections such that $X=\bigcup_{\sigma\in\Sigma}\sigma$, and for every $x\in X$ the set of all faces of polyhedra in $\Sigma$ that contain $x$ is a local face structure at $x$.
\item We say that a closed subset $S\subseteq X$ is \emph{locally polyhedral} if at every point $x\in X$ there is a local face structure $\Sigma$ and a subset $\Sigma'\subseteq \Sigma$ such that $S\cap |\Sigma|=\bigcup_{\sigma\in\Sigma'}\sigma$.  
\end{enumerate}
\end{defn}

\subsection{Tangent spaces}

As constant functions are integral affine linear, there is an inclusion $\R_X \hookrightarrow \Aff_X$, where $\R_X$ denotes the constant sheaf associated to $\R$. Following \cite{MZjacobians}, we denote the quotient sheaf $\Aff_X/\R_X$ by $\Omega_X^1$ and call it the \emph{cotangent sheaf}. The sections of $\Omega_X^1$ are called \emph{tropical $1$-forms}. The reason for this is that the cotangent space at a point should consist of linear approximations of functions, and linear functions are simply affine linear ones modulo constants. For $x\in X$ we denote  by
\begin{align*}
T^\Z_x X &\coloneqq \Hom_\Z(\Omega^1_{X,x},\Z) \  \text{ and} \\
T_x X &\coloneqq \Hom_\Z(\Omega^1_{X,x},\R) 
\end{align*}
the (integral) tangent space of $X$ at $x$. It follows immediately from the definitions that a morphism $f\colon X\to Y$ of rational polyhedral spaces induces a morphism 
\[
f^\sharp\colon f^{-1}\Omega^1_Y \to \Omega^1_X \ ,
\]
and hence morphisms of stalks $\Omega^1_{Y,f(x)}\to \Omega^1_{X,x}$ for all $x\in X$. These dualize to a morphisms
\[
d_xf\colon T^\Z_x X\to T^\Z_{f(x)} Y
\]
between the integral tangent spaces. If $Y=\R$, that is if $f$ is an affine function on $X$, the germ at $x$ of the image of $f$ in $\Gamma(X,\Omega^1_X)$ under the quotient morphism $\Aff_X\to\Omega^1_X$ defines a morphism $T^\Z_xX \to \Z $ which coincides with $d_x f$ modulo the natural identification $T^\Z_{f(x)}\R\cong\Z$. For this reason, we use the notation $df$ for the image of $f$ in $\Gamma(X,\Omega^1_X)$.

Unfortunately, there is no known interpretation of $T_x^\Z X$ or $T_xX$ as the set of equivalence classes of ``smooth'' paths through $x$ as in differential geometry. There is, however, an interpretation of a subset $T_x^\Z X$ as germs of functions $(\R_{\geq 0},0)\to (X,x)$. Recall that such a germ is a morphism $[0,\epsilon)\to X$ for some $\epsilon>0$ that sends $0$ to $x$, up the equivalence relation that allows to shrink the interval, i.e. restricting to $[0,\epsilon')$ for $\epsilon'<\epsilon$ does not change the germ. To every germ $\gamma\colon(\R_{\geq 0},0)\to (X,x)$ we can associate the tangent vector $d_x\gamma(1)\in T^\Z_x X$, where we identify $T_0^\Z(\R_{\geq 0})$ with $\Z$ in the natural way. In fact, since affine linear functions on $\R_{\geq 0}$ are completely determined by the value and slope at $0$, the germ $\gamma$ is uniquely determined by $d_x\gamma(1)$.  We define the \emph{local cone} of $X$ at $x$ as the subset of $T_x X$ given by
\[
\LC_x X\coloneqq \{\lambda\cdot d_0\gamma(1) \mid \lambda\in \R_{\geq 0},\, \gamma\colon (\R_{\geq 0},0) \to (X,x) \text{ a germ}\}
\]

\begin{prop}
\label{prop:local cone}
Let $X$ be a rational polyhedral space, and let $x\in X$. Then $\LC_x X$ is a rational polyhedral subspace of $T_x X$ with tangent space
\[
T_0(\LC_x X)= T_x X
\]
at the origin. Furthermore, there exists a unique morphism of germs	
\[
(\LC_x X, 0)\to (X,x)
\]
such that the induced map
$T_x X=T_0(\LC_x X)\to T_x X$ is the identity.
\end{prop}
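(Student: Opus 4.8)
The statement is local, so first I would pass to a chart and assume $X$ is an open neighbourhood of $x$ in a polyhedral set $P\subseteq\Rbar^n$ with $x\in\R_I$ for some $I\subseteq\{1,\dots,n\}$, and, after shrinking, that $X=|\Sigma|$ for a finite local face structure $\Sigma$ at $x$ all of whose maximal cells contain $x$. By the convention on integral affine functions, near $x$ these depend only on the coordinates $x_j$ with $j\notin I$, so the surjection $e_j\mapsto dx_j$ from $\Z^{\{1,\dots,n\}\setminus I}$ onto $\Omega^1_{X,x}$ identifies $T_xX$ with a rational linear subspace of $\R^{\{1,\dots,n\}\setminus I}$; for $v$ in this subspace I write $x+v\in\R_I$ for the point obtained by adding $v$ to the finite coordinates of $x$.

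Next I would make $\LC_xX$ explicit. A germ $\gamma$ into $(X,x)$ has coordinates $\gamma_j(t)=x_j+m_jt$ with $m_j\in\Z$ for $j\notin I$, and $\gamma_i\equiv\infty$ near $0$ for $i\in I$ --- for the latter one invokes the fact that a morphism to $\Rbar$ taking the value $\infty$ at a point takes it on a whole neighbourhood. Thus $\gamma$ is the germ $t\mapsto x+tv$ with $v=d_0\gamma(1)$, and conversely $t\mapsto x+tv$ defines a germ into $(X,x)$ exactly when $x+tv\in X$ for all small $t>0$. Hence $\LC_xX$ coincides with the set
\[
C=\{\,v\in T_xX: x+tv\in X\text{ for all sufficiently small }t\ge 0\,\}=\bigcup_{\sigma\in\Sigma,\;x\in\sigma}\LC_x\sigma ,
\]
a finite union of tangent cones of polyhedra through $x$, hence a rational polyhedral subspace of $T_xX$; this gives the first claim. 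I would then check that the linear span of $C$ is all of $T_xX$ --- equivalently, that an integral affine function near $x$ whose differential vanishes along every direction in $\LC_xX$ is constant near $x$ --- whence, the tangent space of a cone at its vertex being its linear span, $T_0(\LC_xX)=T_xX$.

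Finally I would take the morphism of germs to be $\phi\colon(\LC_xX,0)\to(X,x)$, $v\mapsto x+v$ (defined for $v$ near $0$): it is a morphism, since the pullback of $\sum_{j\notin I}m_jx_j+a$ is the integral affine function $v\mapsto\sum_{j\notin I}m_jv_j+\mathrm{const}$ on $\LC_xX$; it sends $0$ to $x$; and $d_0\phi=\id_{T_xX}$ by construction. For uniqueness, given another such $\phi'$, I would read off coordinates after composing with the chart: for $j\notin I$, the function $x_j\circ\phi'$ is integral affine near $0$, has value $x_j$ at $0$, and has class $(\phi')^\sharp(dx_j)=dx_j$ in $\Omega^1_{\LC_xX,0}$ because $(\phi')^\sharp$ is dual to $d_0\phi'=\id$, so $x_j\circ\phi'=x_j\circ\phi$ near $0$; for $i\in I$, $x_i\circ\phi'$ takes the value $\infty$ at $0$, hence on a neighbourhood of $0$, so $x_i\circ\phi'=x_i\circ\phi$ near $0$. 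Therefore $\phi'=\phi$ as germs.

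The main obstacle is controlling morphisms near the stratum $\R_I$: both the identification of $\LC_xX$ with the naive tangent cone and the uniqueness statement rest on the fact that a morphism of rational polyhedral spaces cannot bring a coordinate down from $\infty$ at an isolated point --- i.e.\ on the piecewise-integral-affine nature of morphisms. The remaining non-formal point is checking that passing from a polyhedron $\sigma$ to its tangent cone $\LC_x\sigma$ loses no directions, so that $\LC_xX$ genuinely spans $T_xX$; this is an elementary computation with the local description of polyhedra in $\Rbar^n$.
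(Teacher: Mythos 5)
Your argument is correct and follows the same overall strategy as the paper's proof: pass to a chart, use the rigidity of morphisms at points with infinite coordinates to see that germs, the local cone, and the tangent space only depend on the finite-coordinate slice of $X$ through $x$, and then identify $\LC_x X$ with the naive tangent cone, a finite union of rational cones. Where you diverge is in the endgame. The paper invokes the standard fact that $x$ has a neighborhood in $X$ isomorphic to a neighborhood of $0$ in $\LC_x X$, which makes $T_0(\LC_x X)=T_x X$ and the existence and uniqueness of the germ immediate (uniqueness because a germ out of a cone is determined by its tangent map, by restricting to rays). You instead construct the germ explicitly as $v\mapsto x+v$, prove $T_0(\LC_x X)=T_x X$ by a spanning argument, and prove uniqueness coordinatewise, using that the stalks of $\Omega^1$ are free, so that $(\phi')^{\sharp}$ is determined by $d_0\phi'$, together with the $\infty$-coordinate rigidity; note that the spanning of $T_xX$ by $\LC_xX$ is derived in the paper as a corollary of this proposition, whereas for you it is an input, so proving it directly, as you indicate, is what keeps your logic non-circular. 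Two small cautions, both at the same level of informality as the paper: the fact you invoke, that a morphism to $\Rbar$ attaining $\infty$ does so on a neighbourhood, is false for general sources (the identity of $\Rbar$), so it must be stated for sources without points at infinity, such as $[0,\epsilon)$ and neighbourhoods of $0$ in $\LC_x X$, which is exactly how you use it (the underlying reason being that where the coordinate is finite it is locally affine on a connected set, hence bounded near the limit point); and, since germs have integral derivative vectors, $\LC_x X$ as literally defined consists only of the rational-direction points of the tangent cone, so identifying it with the full (closed) cone deserves a word --- the paper's ``easily seen to be equal'' elides the same point, so this is not a gap relative to the paper's own proof.
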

\begin{proof}
Since the definition of the local cone is local, we may assume that $X$ is a polyhedral subset of $\Rbar^n$ for some $n\in \N$, and $x=(x_1,\ldots, x_n)$. After a change of coordinates, we may further assume that there exists a $0\leq k\leq n$ such such $x\in \{\infty\}^k\times \R^{n-k}$. For every connected open subset $Y$ of a polyhedral set in $\R^n$, every morphism $Y\to \Rbar^n$ whose image contains $x$ has to map entirely into $\{\infty\}^k\times \R^{n-k}$. This applies in particular to open neighborhoods of $0$ in $\R_{\geq 0}$ or $\LC_x X$, so both the local cone and the set of germs of morphisms $(\LC_x X,0)\to (X,x)$ only depend on $X\cap (\{\infty\}^k\times \R^{n-k})$. The affine functions defined on a neighborhood of $x$ are, after potentially shrinking the neighborhood, precisely those that are pullbacks of affine functions on $\Rbar^{n-k}$ under the projection $X\to \Rbar^{n-k}$ onto the last $n-k$ coordinates. Therefore, the tangent space of $X$ at $x$ only depends on $X\cap (\{\infty\}^k\times \R^{n-k})$ as well. After replacing $X$ by $X\cap (\{\infty\}^k\times \R^{n-k})$, we may thus assume that $x\in \R^{n-k}$. In this case, the local cone at $x$ is easily seen to be equal to the set
\[
\{v\in \R^{n-k} \mid x+[0,\epsilon)v \subseteq X \text{ for some } \epsilon>0 \} \ ,
\]
which is well-known to be a finite union of polyhedral cones, and in particular a polyhedral set. In this case, it is equally well known that $x$ has a neighborhood in $X$ that is isomorphic to a neighborhood of $0$ in $\LC_x X$, so for the last part of the proof we may assume that $x=0$ and $X=\LC_x X$. It follows immediately that $T_x X= T_0(\LC_x X)$ and that the identity map defines a germ of maps $(\LC_x X, 0)\to (X,x)$ inducing the identity on tangent spaces. Since such a germ is determined by the associated map on tangent spaces, this finishes the proof. 
\end{proof}

\begin{cor}
Let $X$ be a rational polyhedral space, and let $x\in X$. Then the local cone $\LC_x X$ spans the tangent space $T_x X$.
\end{cor}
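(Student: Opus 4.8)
The plan is to deduce this directly from Proposition~\ref{prop:local cone}, which supplies exactly the two ingredients needed: the local cone $\LC_xX$ is a rational polyhedral \emph{subspace} of $T_xX$ --- hence carries the rational polyhedral structure induced from $T_xX$ --- and its tangent space at the apex satisfies $T_0(\LC_xX)=T_xX$. So it suffices to establish the following elementary fact: if $V$ is a finite-dimensional real vector space, regarded as a rational polyhedral space in the usual way, and $C\subseteq V$ is a rational polyhedral cone with apex $0$, endowed with the induced structure, then $T_0C$ equals the linear span $\Lin(C)$ of $C$ inside $V=T_0V$.

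To prove this I would work with the sheaf $\Aff$. By definition of the induced structure, and since being ``locally a restriction of an integral affine function'' near the single point $0$ means that on some neighbourhood of $0$ in $C$ the function is the restriction of one fixed integral affine function on $V$, every germ at $0$ of a section of $\Aff_C$ is the restriction to $C$ of an integral affine function on $V$ near $0$. Thus restriction gives a surjection $\Aff_{V,0}\twoheadrightarrow\Aff_{C,0}$. Its kernel consists of the integral affine functions on $V$ vanishing on $C$ near $0$; but $C$, being a cone with apex $0$, already spans $\Lin(C)$ inside every neighbourhood of $0$, so this kernel is exactly the set of integral affine functions vanishing on $\Lin(C)$, i.e.\ the kernel of $\Aff_{V,0}\to\Aff_{\Lin(C),0}$. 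Hence restriction induces an isomorphism $\Aff_{\Lin(C),0}\xrightarrow{\ \sim\ }\Aff_{C,0}$, compatibly with the maps from $\Aff_{V,0}$. Dividing by the constant subsheaf gives $\Omega^1_{\Lin(C),0}\cong\Omega^1_{C,0}$, and applying $\Hom_\Z(-,\R)$ yields $T_0C\cong T_0(\Lin(C))=\Lin(C)$, the isomorphism being the one induced by the inclusion $C\hookrightarrow V$; so $T_0C=\Lin(C)$ as a subspace of $V$.

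Feeding this back with $V=T_xX$ and $C=\LC_xX$, and invoking $T_0(\LC_xX)=T_xX$ from Proposition~\ref{prop:local cone}, we get $\Lin(\LC_xX)=T_xX$, which is precisely the claim.

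The only point I would take care over is that one must genuinely use the \emph{induced} rational polyhedral structure on $\LC_xX\subseteq T_xX$: it is this --- rather than some a priori larger structure carried by an abstract model of the cone --- that makes germs of tropical $1$-forms at the apex restrictions of linear functions on $T_xX$, and hence makes $T_0(\LC_xX)$ literally the linear span of $\LC_xX$ in $T_xX$. That $\LC_xX$ does carry this structure is part of the assertion of Proposition~\ref{prop:local cone}; granting it, the remaining argument is the short sheaf computation above.
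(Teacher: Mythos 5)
Your proposal is correct and is essentially the paper's argument: in both, the decisive points are that scaling invariance of $\LC_xX$ forces an affine-linear function on $T_xX$ vanishing near the apex to vanish on all of $\Lin(\LC_xX)$, and that Proposition \ref{prop:local cone} identifies $T_0(\LC_xX)$ with $T_xX$. The difference is only in packaging: the paper compares annihilators of linear functions directly, whereas you run the same duality at the level of stalks (restriction gives $\Aff_{\Lin(\LC_xX),0}\cong\Aff_{\LC_xX,0}$, hence $T_0(\LC_xX)=\Lin(\LC_xX)$ as a subspace of $T_xX$), which additionally invokes the extension of affine germs from the cone to the ambient space but is equally sound.
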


\begin{proof}
Because $\LC_x X$ is invariant under scaling, a linear function on $T_x X$ vanishes on a neighborhood of $0$ if and only if it vanishes on all of $\LC_x X$, which is true if and only if it vanishes on the span of $\LC_x X$. But by definition of the tangent space, a linear function on $T_x X$ vanishes on a neighborhood of $0$ in $\LC_X X$ if and only if it vanishes on $T_0 (\LC_x X)$. By Proposition \ref{prop:local cone}, this shows that $\LC_x X$ spans $T_x X= T_0(\LC_x X)$.
\end{proof}

To define sheaves of tropical $p$-forms, one would like to take the $p$-th exterior power of $\Omega_X^1$. Unfortunately, even for very well-behaved rational polyhedral spaces, $\bigwedge^p\Omega^1_X$ might very well be nontrivial for some $p>\dim(X)$. 
This is remedied with the following definition.
\begin{defn}
For a rational polyhedral space $X$ we denote by $X^{\max}$ the set of points in $X$ that has a neighborhood isomorphic to an open set in $\R^n$. By definition, $X^{\max}$ is an open subset of $X$. Let $\iota\colon X^{\max} \hookrightarrow X$  denote the inclusion. Then one defines the sheaf of graded rings $\Omega^*_X$ as the image of $\bigwedge^* \Omega^1_X$ in $\iota_*\left(\bigwedge^*\Omega^1_X|_{X^{\max}}\right)$. Sections of $\Omega^p_X$ are called \emph{tropical $p$-forms}.
\end{defn}

Note that $\Omega^1_X\to \iota_*\Omega^1_X|_{X^{\max}}$ is a monomorphism, so that the definition of $\Omega^1_X$ is unambiguous. Also note that for $p\in X^{\max}$ the rank of $\Omega^1_{X,p}$ equals the local dimension at $p$. Therefore, $\Omega^p_X=0$ for $p>\dim X$.

\begin{rem}
The sheaf $\Omega^p_X$ of tropical $p$-forms is closely related to the sheaf $\mathcal F^p_X$ considered in \cite{MZeigenwave}: $\Omega^p_X$ can be embedded into $\mathcal F^p_X$ such that $\Omega^p_{X,x}$ has finite index in $\mathcal F^p_{X,x}$ for all $x\in X$. If $X$ is the tropical linear space associated to a loopless matroid $M$, then $\Omega^p_X$ and $\mathcal F^p_X$ are equal and their stalks in the minimal stratum are isomorphic to the projective Orlik-Solomon algebra of the matroid \cite{OrlikSolomon}.
\end{rem}

\begin{example}
Let 
\[
X= \R_{\geq 0} (1,0) \cup  \R_{\geq 0} (0,1) \cup \R_{\geq 0} (-1,-1) \subseteq \R^2
\]
be the standard tropical line, depicted in Figure \ref{fig:tropical line}. Then the stalk $\Omega^1_{X,0}$ of the sheaf of tropical $1$-forms at the origin is isomorphic to the space of integer linear functions on $\R^2$, which we can identify with $\Z^2$. For every $x\in X$ not equal to $0$, the stalk $\Omega^1_{X,x}$ is isomorphic to $\Z$. The set $X^{\max}$ is the complement of the origin, and the restriction $\Omega^1_X|_{X^{\max}}$ is locally free of rank $1$. In particular, $\bigwedge^n \left(\Omega^1_X|_{X^{\max}}\right)= 0$ for all $n>1$, and hence $\Omega^n_X=0$ for $n>1$. On the other hand, we have 
\[
\left( \bigwedge\nolimits^2 \Omega^1_X\right)_0 \cong \bigwedge\nolimits ^2 \Omega^1_{X,0} \cong \bigwedge\nolimits^2 \Z^2 \neq 0 \ .
\] 
Finally, note that an integral linear function on $\R^2$ is completely determined by its slopes in the directions $(1,0)$ and $(0,1)$, and hence the natural morphism $\Omega_{X,0}^1\to (\iota_* \Omega_X^1|_{X^{\max}})_0$, where $\iota\colon X^{\max}\to X$ is the inclusion, is an embedding.
\end{example}

\begin{figure}
\begin{center}
\begin{tikzpicture}[font=\footnotesize]
\begin{scope}
\clip (0,0) circle (2);
\draw (0,0) -- (2,0);
\draw (0,0) -- (0,2);
\draw (0,0) --( -2,-2);
\end{scope}
\node at (0,0) [fill=black,circle, inner sep= .3mm ,pin= 60:{$\Z^2$}]{};
\node at (1,0) [fill=black,circle, inner sep= .3mm ,pin= -80:{$\Z^2/\langle (0,1) \rangle \cong \Z$}]{};
\node at (0,1) [fill=black,circle, inner sep= .3mm ,pin= 180:{$\Z^2/\langle (1,0) \rangle \cong \Z$}]{};
\node at (-1,-1) [fill=black,circle, inner sep= .3mm ,pin= -90:{$\Z^2/\langle (-1,1) \rangle \cong \Z$}]{};
\end{tikzpicture}
\end{center}
\caption{The standard tropical line and the stalks of its sheaf of tropical $1$-forms}
\label{fig:tropical line}
\end{figure}
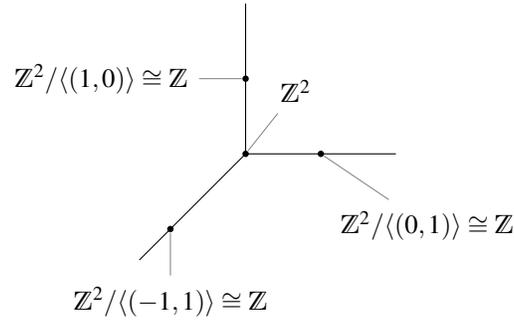

\begin{example}
Let $N$ be the lattice generated by elements $e_0,\ldots,e_3$ subject to the relation $\sum e_i=0$, and let $X$ be the union of the three half-planes in $N\otimes_\Z\R$ given by $H_i=\R e_0 + \R_{\geq 0} e_i$, where $i\in \{1,2,3\}$. Let $e_0^*,e_1^*,e_2^*\in \Hom(N,\Z)$ be the dual basis to $e_0,e_1,e_2$. Then $\bigwedge^2\Omega^1_{X,0}$ is freely generated by $e_0^*\wedge e_1^*$, $e_0^*\wedge e_2^*$, and $e_1^*\wedge e_2^*$. As both $e_1^*$ and $e_2^*$ vanish on $e_0$, the restrictions of $e_1^*\wedge e_2^*$ to the interiors of all three half planes $H_i$ vanish. On the other hand, no linear combination of $e_0^*\wedge e_1^*$ and $e_0^*\wedge e_2^*$ vanishes on the interiors of all three half planes. Since $X^{\max}= \bigcup \mathring H_i$, we conclude that $\Omega^2_{X,0}$ is the quotient of $\bigwedge^2\Omega^1_{X,0}$ by $\Z(e_1^*\wedge e_2^*)$.
\end{example}

Tropical $p$-forms can be pulled back along morphisms, as shown in the following proposition.
\begin{prop}
\label{prop:pullback of p-forms}
Let $f\colon X\to Y$ be a morphism of rational polyhedral spaces. Then the pullback
\[
f^\sharp\colon f^{-1}\Omega^1_Y\to \Omega^1_X
\]
induces a pull-back 
\[
f^{-1}\Omega^*_Y\to \Omega^*_X \ ,
\]
which we again denote by $f^\sharp$.
\end{prop}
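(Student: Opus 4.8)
The plan is to reduce the statement to a pointwise fact about tropical forms on local cones. Write $\mathcal{N}^p_Z\subseteq\bigwedge^p\Omega^1_Z$ for the kernel of the epimorphism $\bigwedge^p\Omega^1_Z\to\Omega^p_Z$; by the definition of $\Omega^p_Z$, a germ at $z$ lies in $\mathcal{N}^p_{Z,z}$ exactly when it vanishes on $Z^{\max}$ in some neighbourhood of $z$. Since $\bigwedge^p\Omega^1_X\to\Omega^p_X$ is surjective and $f^{-1}$ is exact, producing a morphism $f^{-1}\Omega^p_Y\to\Omega^p_X$ compatible with $\bigwedge^pf^\sharp$ is the same as checking that $\bigwedge^pf^\sharp$ carries $f^{-1}\mathcal{N}^p_Y$ into $\mathcal{N}^p_X$. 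Unwinding this at the level of stalks, and using the description of $\mathcal{N}^p_X$ just given, it suffices to show that for every $x'\in X^{\max}$, with $y'\coloneqq f(x')$, the homomorphism $\bigwedge^pf^\sharp_{x'}$ annihilates $\mathcal{N}^p_{Y,y'}$. Now $\Omega^1_{X,x'}$ is a free abelian group --- it is the stalk of the cotangent sheaf of an open subset of some $\R^n$ --- so $\bigwedge^p\Omega^1_{X,x'}$ is torsion free, and an element of it vanishes as soon as it pairs to zero with everything in $\bigwedge^pT^\Z_{x'}X$. As $\bigwedge^pf^\sharp_{x'}$ is dual to $\bigwedge^pd_{x'}f$, the claim becomes the identity $\langle\beta,\, d_{x'}f(v_1)\wedge\cdots\wedge d_{x'}f(v_p)\rangle=0$ for all $\beta\in\mathcal{N}^p_{Y,y'}$ and all $v_1,\dots,v_p\in T^\Z_{x'}X$. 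The first point is that $L\coloneqq d_{x'}f(T_{x'}X)$ is a linear subspace of $T_{y'}Y$ lying inside $\LC_{y'}Y$: linearity is clear, and since $x'\in X^{\max}$ forces $\LC_{x'}X=T_{x'}X$, Proposition \ref{prop:local cone} supplies a morphism of germs $h\colon(T_{x'}X,0)\to(X,x')$ inducing the identity on tangent spaces, so that for $v\in T^\Z_{x'}X$ the germ $t\mapsto (f\circ h)(tv)$ has associated tangent vector $d_{x'}f(v)$, whence $d_{x'}f(v)\in\LC_{y'}Y$; as $\LC_{y'}Y$ is a closed cone containing the subgroup $d_{x'}f(T^\Z_{x'}X)$, it contains its real span $L$ as well.

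It remains to establish the following local claim: \emph{if $\beta\in\bigwedge^p\Omega^1_{Y,y'}$ vanishes on $Y^{\max}$ near $y'$ and $L\subseteq\LC_{y'}Y$ is a linear subspace, then $\beta$ restricts to zero on $L$}, and hence $\langle\beta,w_1\wedge\cdots\wedge w_p\rangle=0$ for any $w_1,\dots,w_p\in L\cap T^\Z_{y'}Y$. Here I would use Proposition \ref{prop:local cone} once more to replace the germ $(Y,y')$ by $(\LC_{y'}Y,0)$; this loses nothing, since $Y^{\max}$, the sheaf $\Omega^1_Y$ and all tangent spaces are transported under the isomorphism of germs, and $\LC_0(\LC_{y'}Y)=\LC_{y'}Y$. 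So we may assume $Y$ is a finite union of rational polyhedral cones in $T_{y'}Y$ with apex the origin $y'=0$, fixed together with a rational polyhedral fan structure of the same support, and that $L\subseteq|Y|$. The cones $L\cap\sigma$, for $\sigma$ in the fan, cover $L$, so one of them --- say $L\cap\sigma_0$ --- has dimension $\dim L$ and therefore spans $L$; hence $L\subseteq\langle\sigma_0\rangle\subseteq\langle\sigma\rangle$ for any maximal cone $\sigma$ of the fan containing $\sigma_0$, where $\langle\,\cdot\,\rangle$ denotes linear span. For such a maximal cone $\sigma$, every point $z\in\relint\sigma$ has a neighbourhood in $Y$ that coincides with a neighbourhood of $z$ in $\sigma$, and is therefore isomorphic to an open subset of $\R^{\dim\sigma}$; thus $\relint\sigma\subseteq Y^{\max}$, the stalk $\Omega^1_{Y,z}$ is canonically the integral dual of $\langle\sigma\rangle$, and under this identification the germ $\beta$ evaluates at $z$ to the restriction $\beta|_{\langle\sigma\rangle}$. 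Since $\relint\sigma$ contains points arbitrarily close to $0$ and $\beta$ vanishes on $Y^{\max}$ near $0$, we conclude $\beta|_{\langle\sigma\rangle}=0$, whence $\beta|_L=0$.

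Running the reductions in reverse produces the morphisms $f^{-1}\Omega^p_Y\to\Omega^p_X$ for all $p$; they are compatible with wedge products because $\bigwedge^*f^\sharp$ and the quotient maps $\bigwedge^*\Omega^1_Z\to\Omega^*_Z$ are morphisms of sheaves of graded rings, so they assemble to the sought pull-back $f^\sharp\colon f^{-1}\Omega^*_Y\to\Omega^*_X$. I expect the main obstacle to be the local claim of the second paragraph, specifically the passage from ``$\beta$ vanishes on $Y^{\max}$'' to ``$\beta$ vanishes on every linear subspace of $\LC_{y'}Y$''. The delicate point is that the line through $y'$ in a direction $w\in L$ need not itself lie in $Y^{\max}$ --- this already fails when $Y$ is a union of several half-planes sharing a common line, which is exactly its non-smooth locus --- so the hypothesis cannot be applied directly along $L$; what rescues the argument is that any such direction is contained in the linear span of a \emph{maximal} cone of the local fan, whose relative interior always lies in $Y^{\max}$. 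Getting this combinatorial fact and the identification of the stalks $\Omega^1_{Y,z}$ along $\relint\sigma$ exactly right is the technical core of the proof.
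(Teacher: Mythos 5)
Your overall strategy is reasonable, and most of the steps are fine: the reduction to killing $\mathcal{N}^p_{Y,y'}$ at points $x'\in X^{\max}$, the duality between $\bigwedge^pf^\sharp_{x'}$ and $\bigwedge^pd_{x'}f$, the inclusion $d_{x'}f(T_{x'}X)\subseteq\LC_{y'}Y$, and the combinatorial argument with maximal cones of a fan are all correct. The genuine gap is the step where you ``replace the germ $(Y,y')$ by $(\LC_{y'}Y,0)$''. Proposition \ref{prop:local cone} only provides a \emph{morphism} of germs $(\LC_{y'}Y,0)\to(Y,y')$, not an isomorphism, and no isomorphism exists when $y'$ lies at infinity: already for $Y=\Rbar$, $y'=\infty$ one has $\Omega^1_{\Rbar,\infty}=0$, hence $\LC_{y'}Y=\{0\}$, while the germ of $\Rbar$ at $\infty$ is not a point. (In the proof of Proposition \ref{prop:local cone} the identification of a neighborhood of the point with a neighborhood of $0$ in the local cone is obtained only \emph{after} replacing $Y$ by its intersection with the stratum at infinity through $y'$.) This is not cosmetic: the local cone only records directions inside that stratum, and the maximal locus of $\LC_{y'}Y$ corresponds to points of $Y$ at infinity which in general are \emph{not} in $Y^{\max}$ --- e.g.\ for $Y=\Rbar\times\R$, $y'=(\infty,0)$, one has $\LC_{y'}Y=\R$ with maximal locus everything, while the corresponding points $(\infty,s)$ of $Y$ are not in $Y^{\max}$. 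So the hypothesis ``$\beta$ vanishes on $Y^{\max}$ near $y'$'' cannot simply be transported to ``the transported germ vanishes on $(\LC_{y'}Y)^{\max}$ near $0$'', which is exactly what your fan argument consumes; and transporting $\beta$ along the germ morphism of Proposition \ref{prop:local cone} would itself require knowing that this pullback preserves the property of vanishing on the maximal locus, i.e.\ an instance of the statement being proved.

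Your proof is complete whenever $y'$ has a neighborhood isomorphic to a neighborhood of $0$ in $\LC_{y'}Y$ (in particular whenever $y'$ avoids the strata at infinity), and in that range it is a genuinely different, pointwise argument from the paper's. What is missing is precisely the case of $y'$ at infinity, which is where the paper's proof does its work: there one chooses local face structures at $x'$ and $y'$ compatible with $f$, factors $f$ locally through a maximal cell $\delta$ containing $y'$, notes that $\relint(\delta)\subseteq Y^{\max}$, and uses that a section of $\bigwedge^*\Omega^1_\delta$ near a point of $\delta$ at infinity is determined by its restriction to the finite part of $\delta$ (this is how the paper's appeal to constancy of $\bigwedge^*\Omega^1_\delta$ is used), so vanishing on $\relint(\delta)$ forces the germ at $y'$ to vanish. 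Some ingredient of this kind --- relating the stalk of $\bigwedge^p\Omega^1_Y$ at a point at infinity to stalks at nearby points of $Y^{\max}$ --- must be added to your argument; contrary to your closing assessment, this boundary behavior, not the fan combinatorics, is the delicate point.
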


\begin{proof}
It is immediate that $f^\sharp$ induces a morphism
\[
\bigwedge f^\sharp\colon f^{-1}\bigwedge \Omega^1_Y \to \bigwedge\Omega^1_X \ .
\]
To see that this induces a morphism on the quotients $f^{-1}\Omega^*_Y \to \Omega^1_X$, we need to show that if $U\subset Y$ is open and we are given a section $\omega\in\Gamma(U,\bigwedge\nolimits^* \Omega^1_Y)$ that restricts to zero on $U\cap Y^{\max}$, then $\bigwedge f^\sharp(\omega)$ vanishes on $f^{-1}U\cap X^{\max}$. Let $x\in f^{-1}U\cap X^{\max}$, and let $\Sigma$ and $\Delta$ be local face structures around $x$ and $f(x)$, respectively, such that $f(\sigma)\in\Delta$ for all $\sigma\in\Sigma$. Because $\bigwedge f^\sharp(\omega)$ is constant on a neighborhood of $x$, it suffices to show that it vanishes on a maximal cell $\sigma\in\Sigma$. We may thus replace $X$ by $\sigma$, in which case $f$ factors through the rational polyhedral space $f(\sigma)$. Since $f(\sigma)\in \Delta$, there exists a maximal cell $\delta\in \Delta$ containing $f(\sigma)$. As $f$ factors through $\delta$ by construction, it then suffices to show that the pullback of $\omega$ to $\bigwedge^*\Omega^1_\delta$ vanishes. But as we assumed that the restriction of $\omega$ to the interior $\mathring \delta$ of $\delta$ vanishes, this follows from the fact that the sheaf $\bigwedge\nolimits^*\Omega^1_{\delta}$ on $\delta$ is constant.
\end{proof}

\section{Tropical cycles on rational polyhedral spaces}
\label{sec:tropical cycles}

\subsection{Tropical cycles}
To define tropical cycles on rational polyhedral spaces, we first need to recall their definition on affine space.
\begin{defn}
\label{def:tropical fan cycle}
Let $N$ be a lattice. A \emph{tropical fan $k$-cycle} on $N_\R=N\otimes_\Z \R$ is an integer valued function $A\colon N_\R\to \Z$ such that
\begin{enumerate}
\item For every $\lambda\in \R_{>0}$ and $x\in N_\R$ we have $A(\lambda x)=A(x)$,
\item The \emph{support} $|A|=\overline{\{x\in N_\R\mid A(x)\neq 0\}}$ of $A$ is a purely $k$-dimensional polyhedral subset of $N_\R$,
\item $A$ is locally constant on the open subset $|A|^{\max}$ of $|A|$ and $0$ on $|A|\setminus |A|^{\max}$,
\item $A$ satisfies the so-called balancing condition: if $\Sigma$ is a face structure on $|A|$ such that every $\sigma\in \Sigma$ is a cone, then $A$ is constant on the relative interiors of the inclusion-maximal cells of $\Sigma$. Therefore, $A$ and $\Sigma$ define a weighted fan in the sense of Allermann and Rau \cite{AllerRau}. We ask that this weighted fan satisfies the so-called balancing condition, that is that it is a tropical fan in the sense of \cite{AllerRau}. By \cite[Lemma 2.11]{AllerRau} this is independent of the choice of $\Sigma$.
\end{enumerate}
\end{defn}

\begin{rem}
It is immediate from the definition that every tropical fan cycle in the sense above defines a tropical fan cycle in the sense of \cite{AllerRau}. If, conversely, $A=[(\Sigma,\omega)]$ is a tropical fan cycle in the sense of \cite{AllerRau}, where we use their notation here, then $\omega$ defines a locally constant integer-valued function on the subset of $|A|^{\max}$ consisting of the union of the relative interiors of all inclusion-maximal cones of $\Sigma$. This function can be extended uniquely to a locally constant function on all of $|A|^{\max}$ that is independent of the representative $(\Sigma,\omega)$ and, if extended by $0$ to all of $N_\R$, is a tropical fan cycle in the sense of Definition \ref{def:tropical fan cycle}. 
\end{rem}

\begin{rem}
\label{rem:lattice normal vector and balancing}
We will rarely use the balancing-condition, but let us briefly recall its definition for the sake of being self-contained. If $\sigma$ is a cone in $N_\R$ and $\tau$ is a codimension-$1$ face of $\sigma$, then a \emph{lattice normal vector} for $\sigma$ with respect to $\tau$ is an element $n\in\sigma\cap N$ such that the morphism
\[
\bigwedge\nolimits^{\dim(\tau)}T^\Z(\tau) \to \bigwedge\nolimits^{\dim(\sigma)}T^\Z(\sigma), \;\; \eta\mapsto n\wedge\eta
\]
is an isomorphism, where the tangent spaces $T^\Z(\sigma)$ and $T^\Z(\tau)$ are taken at any point of the respective cones, and we consider them as sublattices of $N$. If $\Sigma$ is a purely $k$-dimensional rational polyhedral fan in $N_\R$, and $\omega\colon \Sigma(k)\to \Z$ gives integer weights to its maximal cones, then $(\Sigma,\omega)$ satisfies the balancing condition if for every $\tau\in\Sigma(k-1)$ we have
\[
\sum_{\sigma\colon \tau\subseteq \sigma\in\Sigma(k)} \omega(\sigma) n_{\sigma/\tau} \in T^\Z(\tau)
\]
for any, and hence every, choice of lattice normal vectors $n_{\sigma/\tau}$ of $\sigma$ with respect to $\tau$.
\end{rem}

Tropical fan $k$-cycles on $N_\R$ form an Abelian group. We remark that the sum of two such tropical cycles $c$ and $d$ is {\em not} simply the sum as integer-valued functions. This does hold, however, if we consider integer-valued functions modulo those functions whose support is a polyhedral set of dimension at most $k-1$.

\begin{defn}
Let $X$ be a rational polyhedral space. We say that a function $A\colon X\to \Z$ is \emph{locally constructible} if for every $x\in X$ there exists a local face structure $\Sigma$ at $x$ such that the restrictions $A|_{\relint(\sigma)}$ are constant for all $\sigma\in \Sigma$.
\end{defn}

Every integer-valued function $A\colon X\to \Z$ on a rational polyhedral space $X$ induces, at every $x\in X$, a function germ at the origin of the local cone $\LC_x X\subseteq T_x X$ via Proposition \ref{prop:local cone}.  If $A$ is locally constructible, then for every $v\in \LC_x X$ the value $A(\epsilon v)$ is independent of $\epsilon>0$, if chosen sufficiently small. This can be used to extend the germ to an $\R_{>0}$-invariant function $\LC_x X\to \Z$, which we extend by $0$ to a function 
\[
\LC_x(A)\colon T_x X\to \Z \ .
\]

\begin{defn}
\label{def:tropical cycles}
Let $X$ be a rational polyhedral space. A \emph{tropical $k$-cycle} on $X$ is a locally constructible function $A\colon X\to \Z$ such that $\LC_x(A)$ is a tropical fan $k$-cycle on $T_x X$ for all $x\in X$. The \emph{support} of a tropical cycle $A$ on $X$ is the set $|A|=\overline {\{x\in X\mid A(x)\neq 0\}}$.
\end{defn}

The addition for tropical fan $k$-cycles induces an addition for tropical $k$-cycles on a rational polyhedral space $X$ so that there is an abelian group $Z_k(X)$ of tropical $k$-cycles. Similarly as for tropical fan cycles, the sum $c+d$ of two tropical $k$-cycles agrees with the sum of $A$ and $B$ as integer-valued functions up to an integer-valued function whose support is a locally polyhedral subset of $X$ of dimension at most $k-1$. As both the definition of tropical $k$-cycles and the definition of the addition are local, the assignment $U\to Z_k(U)$ on open sets of $X$, with the obvious restriction morphisms, defines a sheaf $\mathcal Z_k^X$ of tropical $k$-cycles on $X$. 

\subsection{Proper push-forward of tropical cycles}
\label{subsec:push-forward of cycles}

\begin{defn}
\label{def:push-forward of cycles}
Let $A$ be a tropical $k$-cycle on a $k$-dimensional rational polyhedral space $X$, and let $f\colon X\to Y$ be a proper and surjective morphism of rational polyhedral spaces. Then for ever $y\in Y$ which is not an element of the at most $(k-1)$-dimensional locally polyhedral subset 
\[
f\left(X\setminus X^{\max}\right)\cup\left( Y\setminus Y^{\max}\right) 
\]
of $Y$, we define the \emph{push-forward} of $A$ along $f$ as
\[
f_*A(y)= \sum_{x\in f^{-1}\{y\}} [ T^\Z_y X \colon d_x f(T^\Z_x X)] A(x) \ ,
\] 
where we consider the lattice index as $0$ if it is not finite. We extend this function by $0$ to a function on $Y$. Note that the sum over $f^{-1}\{y\}$ is in fact finite, since we can only get a nonzero contribution for isolated points of $f^{-1}\{y\}$, of which there can only be finitely many because $f$ is proper. 

In the general case, where  $X$ is not necessarily $k$-dimensional and $f$ is not necessarily surjective (but still proper), we consider the (co)restriction $\widetilde f\colon |A|\to f|A|$ of $f$ and define $f_*A$ as the extension to $Y$ by $0$ of $\widetilde f_*A$.
\end{defn}

If $f\colon X\to Y$ is a proper morphism of rational polyhedral spaces, and $A\in Z_k(X)$, then it follows immediately from the construction that $f_*A$ is locally constructible. It is usually not a tropical cycle in the sense of Definition \ref{def:tropical cycles}, but there is a unique tropical $k$-cycle $B$ on $Y$ such that $f_*A$ and $B$ coincide away from a locally polyhedral subset of dimension at most $(k-1)$. The uniqueness is clear, and for the existence part one only needs to show balancing, which can be proven locally and thus follows exactly as in \cite[Proposition 2.25]{GKM}. From this it is clear that the push-forward induces a morphism
\[
Z_k(X)\to Z_k(Y)
\]
of groups of tropical $k$-cycles, which, by abuse of notation, we denote by $f_*$ as well.

\subsection{Cross products of tropical cycles}
\label{subsec:cross product of cycles}

Given two tropical cycles on two rational polyhedral spaces, one gets a tropical cycle on the product space by taking their cross-product.

\begin{defn}
Let $X$ and $Y$ be rational polyhedral spaces, and let $A\in Z_k(X)$ and $B\in Z_l(X)$. Then we define the \emph{cross-product} of $A$ and $B$ as the function
\[
A\times B\colon X\times Y \to \Z, \; (x,y)\mapsto A(x)\cdot B(y) \ .
\]
It is straightforward to check that this is a tropical $(k+l)$-cycle on $X\times Y$, and it is evident from the definition that the cross-product defines a bilinear map
\[
Z_k(X)\times Z_l(Y)\to Z_{k+l}(X\times Y) \ .
\]
\end{defn}

\subsection{Tropical Cartier divisors}
\label{subsec:Cartier divisors and line bundles}

\begin{defn}
Let $X$ be a rational polyhedral space. A continuous function $\phi\colon X\to \R$ is \emph{rational} if at every $x$ in $X$ there exists a local face structure $\Sigma$ such that $\phi|_\sigma\in \Gamma(\sigma,\Aff_\sigma)$ for all $\sigma\in \Sigma$. Sums of rational functions are rational. We denote the group of rational functions on $X$ by $\Rat(X)$. 
\end{defn}

\begin{rem}
Rational functions on a rational polyhedral space $X$ are precisely the piecewise linear function on $X$ with integral slopes. The terminology ``rational'' comes from the analogy with algebraic varieties, where the rational functions play a similar role in the definition of divisors.
\end{rem}

The condition on a function on a rational polyhedral space $X$ to be rational is a  local condition. Therefore, the presheaf $U\mapsto \Rat(U)$ on $X$ is in fact a sheaf, which we denote by $\Rat_X$. Every affine linear function on $X$ is rational, so there is an inclusion $\Aff_X\hookrightarrow\Rat_X$. Its quotient is the sheaf $\Divs_X$ of Cartier divisors, that is $\Divs_X$ is defined as the unique sheaf fitting into a short exact sequence
\[
0\to \Aff_X\to \Rat_X \to \Divs_X \to 0 \ .
\]

\begin{defn}
Let $X$ be a rational polyhedral space. The group
\[
\Div(X)\coloneqq \Gamma(X,\Divs_X)
\]
is the group of \emph{Cartier divisors} on $X$.  The \emph{support} $|D|$ of $D\in\Div(X)$ is defined in the sheaf-theoretic sense as the support of $D$ considered as a global section of $\Divs_X$.
\end{defn}

If $f\colon X\to Y$ is a morphism of rational polyhedral spaces, then it is straightforward to check that for every rational function $\phi$ on $Y$, the pull-back $f^*\phi=\phi\circ f$ is a rational function on $X$. Since the pull-back of rational functions is compatible with the pull-back of affine linear functions, we obtain a pull-back morphism
\[
f^*\colon \Div(Y)\to \Div(X) 
\]
for Cartier divisors.

There is an intersection pairing 
\[
\Div(X)\times Z_k(X)\to Z_{k-1}(X)
\]
on every rational polyhedral space $X$ due to Allermann and Rau \cite{AllerRau}. Let us briefly recall its construction. To define the product $D\cdot A$ of a divisor $D$ with a tropical $k$-cycle $A$, we first pull back $D$ to $|A|$, after which we can assume that $X=|A|$. We can then work locally around a point $x\in X$ and replace $X$ by its local cone $\LC_x X$. This allows us to assume that $X=|\Sigma|$ for some rational polyhedral cone $\Sigma$ in $\R^n$, that $A$ is represented by a balanced weight function on the $k$-dimensional cones of $\Sigma$, and that $D$ is represented by a piecewise linear function $\phi$ whose restrictions to the cones of $\Sigma$ are linear. The intersection $D\cdot A$ is then represented by the weight $\Sigma(k-1)\to \Z$ that assigns to $\tau\in\Sigma(k-1)$ the integer
\[
\sum_{\sigma\colon \tau\subseteq\sigma\in\Sigma(k)} \langle \phi-l_\tau, n_{\sigma/\tau}\rangle A(\sigma) \ ,
\]
which is independent of the choice of lattice normal vectors $n_{\sigma/\tau}$ (see Remark \ref{rem:lattice normal vector and balancing}) and an integer linear function $l_\tau$ on $\R^n$ with $l_\tau|_\tau=\phi|_\tau$.

\begin{rem}
Note that rational functions on compact rational polyhedral spaces are bounded by continuity. If one allows rational functions to obtain the value $\infty$, one obtains a less restrictive notion of tropical Cartier divisors, for which one should still be able to define the intersection pairing with tropical cycles, at least under some mild assumptions on the underlying rational polyhedral space. In the prototypical example of tropical toric varieties this has been done in \cite{MeyerDiss}.
\end{rem}

\subsection{Tropical line bundles}
\label{subsec:line bundles}

Following \cite{MZjacobians}, we work with the following definition of tropical line bundles:

\begin{defn}
A \emph{tropical line bundle} on a rational polyhedral space $X$ is a morphism $Y\to X$ of rational polyhedral spaces such that locally on $X$ there are identifications $Y\cong \Rbar\times X$ of spaces over $X$. Two tropical line bundles are isomorphic if they are isomorphic as rational polyhedral spaces over $X$.
\end{defn}

Note that the only automorphisms of $\Rbar$ are the ones of the form $x\mapsto \lambda+ x$ for some $\lambda\in \R$. Therefore, the automorphism group of $\Rbar\times U $ is naturally isomorphic to $\Gamma(U,\Aff_U)$ for any rational polyhedral space $U$. Using standard arguments involving \v{C}ech cohomology, this leads to the the following description of the set of isomorphism classes of tropical line bundles on a rational polyhedral space:

\begin{prop}[cf.\ {\cite{MZjacobians}}]
Let $X$ be a rational polyhedral space. Then there is a natural bijection between the set of all isomorphism classes of tropical line bundles on $X$ and the cohomology group $H^1(X,\Aff_X)$. In particular, the set of isomorphism classes of tropical line bundles on $X$ is a group.
\end{prop}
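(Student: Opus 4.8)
The plan is to identify the set of isomorphism classes of tropical line bundles on $X$ with the \v{C}ech cohomology group $\check H^1(X,\Aff_X)$, and then to invoke the fact that for sheaves of abelian groups on a paracompact (in particular, second-countable Hausdorff) space, \v{C}ech cohomology agrees with sheaf cohomology, so that $\check H^1(X,\Aff_X)\cong H^1(X,\Aff_X)$. The group structure on the set of isomorphism classes is then inherited from the group structure on $H^1(X,\Aff_X)$.

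First I would set up the cocycle description. Given a tropical line bundle $\pi\colon Y\to X$, choose an open cover $\mathcal U=(U_i)_{i\in I}$ of $X$ together with trivializations $\psi_i\colon \pi^{-1}(U_i)\xrightarrow{\ \sim\ }\Rbar\times U_i$ of rational polyhedral spaces over $X$. On overlaps $U_{ij}=U_i\cap U_j$, the composition $\psi_i\circ\psi_j^{-1}$ is an automorphism of $\Rbar\times U_{ij}$ over $U_{ij}$, hence, by the observation preceding the statement that $\operatorname{Aut}(\Rbar\times U)\cong\Gamma(U,\Aff_U)$, corresponds to a unique section $a_{ij}\in\Gamma(U_{ij},\Aff_X)$. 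One checks that the $a_{ij}$ satisfy the cocycle condition $a_{ij}+a_{jk}=a_{ik}$ on $U_{ijk}$ (using additive notation, since the transition functions $x\mapsto \lambda+x$ compose by adding the $\lambda$'s), so they define a class in $\check H^1(\mathcal U,\Aff_X)$, and passing to the colimit over refinements a class in $\check H^1(X,\Aff_X)$. A different choice of trivializations $\psi_i'=b_i+\psi_i$ with $b_i\in\Gamma(U_i,\Aff_X)$ changes the cocycle by the coboundary $b_i-b_j$, so the class is independent of the choice of trivializing cover and trivializations, and isomorphic line bundles give the same class. Conversely, given a cocycle $(a_{ij})$ subordinate to a cover $\mathcal U$, one glues the pieces $\Rbar\times U_i$ along the isomorphisms determined by the $a_{ij}$ to produce a rational polyhedral space $Y$ over $X$ that is locally isomorphic to $\Rbar\times X$; the cocycle condition is exactly what is needed for the gluing data to be consistent. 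These two constructions are mutually inverse up to isomorphism, giving the bijection, and the tensor-product/Baer-sum operation on line bundles corresponds to addition of cocycles, so the bijection is a group isomorphism; in particular the set of isomorphism classes is a group.

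The main obstacle — really the only nonroutine point — is justifying that gluing the local models $\Rbar\times U_i$ along affine transition maps yields an honest rational polyhedral space, i.e. that the glued space is second-countable, Hausdorff, and carries charts into polyhedral subsets of some $\Rbar^n$ with the transition data inducing isomorphisms of the $\Aff$-sheaves. Second-countability is fine because $X$ is second-countable and the fibers $\Rbar$ are, and Hausdorffness follows since the transition maps are defined on all of $\Rbar\times U_{ij}$ (there is no "line through the origin doubled" pathology). For the chart condition: locally on $U_i$ we have a chart $U_i\supseteq W\xrightarrow{\phi}V\subseteq\Rbar^n$, and $\operatorname{id}\times\phi$ gives a chart $\Rbar\times W\to\Rbar\times V\subseteq\Rbar^{n+1}$, with $\Rbar\times V$ open in a polyhedral subset; that the affine transition functions are morphisms of rational polyhedral spaces is immediate since translations $x\mapsto\lambda+x$ on $\Rbar$ (with $\lambda$ an affine function of the base coordinates) are integral affine linear. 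I would also remark that everything here is the standard \v{C}ech-cohomological classification of $\mathscr G$-torsors for the sheaf of groups $\mathscr G=\Aff_X$ (which is abelian), so no genuinely new argument is required — this is the content of the parenthetical ``cf.\ \cite{MZjacobians}.''
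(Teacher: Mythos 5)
Your proposal is correct and follows essentially the same route the paper intends: the identification $\operatorname{Aut}(\Rbar\times U)\cong\Gamma(U,\Aff_U)$ stated just before the proposition, followed by the standard \v{C}ech-cocycle classification of locally trivial objects and the comparison $\check H^1(X,\Aff_X)\cong H^1(X,\Aff_X)$. (Minor remark: you do not even need paracompactness for that last step, since \v{C}ech and derived-functor cohomology always agree in degree one; and in any case rational polyhedral spaces are locally compact, Hausdorff and second countable, hence paracompact.)
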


If $X$ is a rational polyhedral space, then the first boundary map in the long exact cohomology sequence associated to the short exact sequence
\[
0\to \Aff_X\to \Rat_X \to \Divs_X \to 0 
\]
associates to every Cartier divisor $D\in H^0(X,\Divs_X)$ a tropical line bundle 
\[
\mL(D)\in H^1(X,\Aff_X) \ .
\] 

\begin{rem}
It follows from \cite[Lemma 4.5]{Lefschetz} that every tropical line bundle is of this form if $X$ admits a face structure. We expect this to remain true even in the absence of face structures, but will neither prove nor use this fact in the remainder of this paper.
\end{rem}

If $f\colon X\to Y$ is a morphism between rational polyhedral spaces, then applying $H^1$ to the pull-back morphism $f^\sharp\colon f^{-1}\Aff_Y\to \Aff_X$ induces a pull-back morphism 
\[
f^*\colon H^1(Y,\Aff_Y)\to H^1(X,\Aff_X)
\]
for tropical line bundles.

\begin{prop}
\label{prop:pullback and taking line bundles commutes}
Let $f\colon X\to Y$ be a morphism between rational polyhedral spaces, and let $D\in \Div(Y)$ be a Cartier divisor on $Y$. Then we have 
\[
f^*\mL(D)= \mL(f^*D) \ .
\]
\end{prop}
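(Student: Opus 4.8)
The plan is to unwind both sides of the claimed equality $f^*\mL(D) = \mL(f^*D)$ down to the level of \v{C}ech cocycles and observe that both are computed by the same formula. The key point is that all the constructions involved — the connecting homomorphism $H^0(\Divs) \to H^1(\Aff)$ and the pull-back on $H^1$ — admit explicit descriptions on \v{C}ech cochains, and these descriptions are manifestly compatible with pulling back functions along $f$. Concretely, I would first fix an open cover $\mathfrak{U} = (U_i)$ of $Y$ on which $D$ is represented by rational functions $\phi_i \in \Rat(U_i)$ with $\phi_i - \phi_j \in \Aff(U_i \cap U_j)$. Then $\mL(D)$ is represented by the \v{C}ech $1$-cocycle $(\phi_i - \phi_j)_{i,j}$ with values in $\Aff_Y$.

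Next I would observe that the preimage cover $f^{-1}\mathfrak{U} = (f^{-1}U_i)$ is an open cover of $X$, and that by the compatibility of pull-back of rational functions with pull-back of affine linear functions (established in the paragraph preceding this proposition), the functions $f^*\phi_i = \phi_i \circ f \in \Rat(f^{-1}U_i)$ represent the Cartier divisor $f^*D$ on this cover, with $f^*\phi_i - f^*\phi_j = f^*(\phi_i - \phi_j) \in \Aff(f^{-1}U_i \cap f^{-1}U_j)$. Hence $\mL(f^*D)$ is represented by the \v{C}ech $1$-cocycle $(f^*\phi_i - f^*\phi_j)_{i,j} = (f^*(\phi_i - \phi_j))_{i,j}$. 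On the other hand, the pull-back $f^*\mL(D) \in H^1(X, \Aff_X)$ is by definition obtained by applying $H^1$ to $f^\sharp \colon f^{-1}\Aff_Y \to \Aff_X$, which on \v{C}ech cochains for the cover $f^{-1}\mathfrak{U}$ is exactly precomposition with $f$; so it too is represented by $(f^*(\phi_i - \phi_j))_{i,j}$. The two cocycles agree, hence so do their cohomology classes.

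To make this fully rigorous I would phrase it via the naturality of the connecting homomorphism: the morphism $f$ induces a morphism of short exact sequences from $0 \to f^{-1}\Aff_Y \to f^{-1}\Rat_Y \to f^{-1}\Divs_Y \to 0$ to $0 \to \Aff_X \to \Rat_X \to \Divs_X \to 0$ (using that $f^\sharp$ extends to pull-back of rational functions and hence of Cartier divisors), and therefore the connecting maps commute with the induced maps on cohomology. Chasing $D$ (more precisely its image in $H^0(Y,\Divs_Y)$, pulled back to $H^0(X,\Divs_X)$, which is $f^*D$) through the resulting commutative square gives precisely $f^*\mL(D) = \mL(f^*D)$.

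The main obstacle — really the only non-formal point — is verifying that $f$ genuinely induces a morphism of the two short exact sequences, i.e.\ that pull-back along $f$ carries rational functions to rational functions compatibly with the inclusions $\Aff \hookrightarrow \Rat$. This is already asserted in the text just before the proposition, so in the write-up it amounts to invoking that fact together with the general naturality of connecting homomorphisms in sheaf cohomology; no genuinely new computation is required.
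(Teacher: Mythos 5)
Your proposal is correct, and its ``fully rigorous'' formulation --- naturality of the connecting homomorphism applied to the morphism of short exact sequences $0\to f^{-1}\Aff_Y\to f^{-1}\Rat_Y\to f^{-1}\Divs_Y\to 0$ into $0\to \Aff_X\to \Rat_X\to \Divs_X\to 0$ induced by $f^\sharp$ and pull-back of rational functions --- is exactly the argument the paper gives, via the commutative diagram of boundary maps. The explicit \v{C}ech cocycle computation is a harmless concrete elaboration of the same point.
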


\begin{proof}
This follows immediately from the commutativity of the diagram
\begin{center}
\begin{tikzpicture}[auto]
\matrix[matrix of math nodes, row sep= 5ex, column sep= 4em, text height=1.5ex, text depth= .25ex]{
|(DivY)| H^0(Y, \Divs_Y) &
|(AffY)|  H^1(Y, \Aff_Y) \\
|(XDivY)| H^0(X,f^{-1}\Divs_Y)	&
|(XAffY)| H^1(X,f^{-1}\Aff_Y) \\
|(DivX)| H^0(X,\Divs_X)	&
|(AffX)| H^1(X,\Aff_X ) \ ,\\
};
\begin{scope}[->,font=\footnotesize]
\draw (DivY) -- (AffY);
\draw (XDivY) -- (XAffY);
\draw (DivX) -- (AffX);
\draw (AffY)--node{$f^{-1}$}(XAffY);
\draw (DivY)--node{$f^{-1}$}(XDivY);
\draw (XAffY) --node{$H^1(f^\sharp)$} (AffX);
\draw (XDivY)-- (DivX);
\end{scope}
\end{tikzpicture} 
\end{center}
where the horizontal morphisms are the first boundary maps in the long exact cohomology sequences associated to the short exact sequences
\begin{center}
\begin{tikzpicture}[auto]
\matrix[matrix of math nodes, row sep= 1ex, column sep= 2em, text height=1.5ex, text depth= .25ex]{
|(0Y1)| 0& [-1em]
|(AffY)| \Aff_Y &
|(MY)|  \Rat_Y &
|(DivY)| \Divs_Y &[-1em]
|(0Y2)| 0  &[-1.8em]
 , \phantom{\text{ and}}
\\
|(X0Y1)| 0&
|(XAffY)| f^{-1}\Aff_Y &
|(XMY)|  f^{-1}\Rat_Y &
|(XDivY)| f^{-1}\Divs_Y &
|(X0Y2)| 0 \  &
,\text{ and}\\
|(0X1)| 0 &
|(AffX)| \Aff_X &
|(MX)|  \Rat_X &
|(DivX)| \Divs_X &
|(0X2)| 0 &
 , \phantom{\text{ and}} \\
};
\begin{scope}[->,font=\footnotesize]
\draw (0Y1) -- (AffY);
\draw (AffY) -- (MY);
\draw (MY) -- (DivY);
\draw (DivY) -- (0Y2);
\draw (X0Y1) -- (XAffY);
\draw (XAffY) -- (XMY);
\draw (XMY) -- (XDivY);
\draw (XDivY) -- (X0Y2);
\draw (0X1) -- (AffX);
\draw (AffX) -- (MX);
\draw (MX) -- (DivX);
\draw (DivX) -- (0X2);
\end{scope}
\end{tikzpicture} 
\end{center}
respectively.
\end{proof}

\section{Tropical (co)homology and its functorial properties}
\label{sec:tropical (co)homology}

\subsection*{Notation and general references} We will denote the constant sheaf associated to an abelian group $A$ on a topological space $X$ by $A_X$. If $\mF$ is any sheaf on $X$ and $S\subseteq X$ is a locally closed subset, we will denote $\mF_S= \iota_! \iota^{-1}\mF$, where $\iota\colon S\to X$ denotes the inclusion. For an abelian group $A$, we will sometimes denote $(A_X)_S$ by $A_S$ if $X$ is clear from the context. We will denote the group of morphism between two $\Z_X$-modules (sheaves of abelian groups on $X$, that is) $\mF$ and $\mG$ by $\Hom_{\Z_X}(\mF,\mG)$, where we will omit the subscript $\Z_X$ if $X$ is clear from the context. The bounded derived category of $\Z_X$-modules will be denoted by $\der X$, and we will omit the subscript $X$ if $X$ is a point, that is $\der {}$ denotes the bounded derived category of abelian groups. If $\mathcal C^\bullet$ and $\mathcal D^\bullet$ are two cochain complexes of $\Z_X$-modules, then $\Hom_{\der X}(\mathcal C^\bullet, \mathcal D^\bullet)$ denotes the group of morphisms between them in $\der X$. As usual $\Hom^\bullet(\mathcal C^\bullet,\mathcal D^\bullet)$ denotes the $\Hom$-complex and $R\Hom^\bullet(\mathcal C^\bullet,\mathcal D^\bullet)$ the derived $\Hom$-complex, and similarly for the internal hom $\Homs$. The $i$-th cohomology sheaf of $\mathcal C^\bullet$ will be denoted by $H^i(\mathcal C^\bullet)$, whereas the $i$-th hypercohomology will be denoted by $\Hyper^i(\mathcal C^\bullet)$.

For background on sheaf theory and Verdier duality we refer the reader to  \cite{IntersectionCohomology,Bredon,Iversen,KashiwaraSchapira}

\subsection{Tropical cohomology}

Tropical cohomology groups are defined in analogy with the cohomology groups appearing in the Hodge decomposition in algebraic geometry.

\begin{defn}
Let $X$ be a rational polyhedral space, and let $p,q\in \N$. Then the $(p,q)$-\emph{th tropical cohomology group} is defined as
\[
H^{p,q}(X)=H^q(X,\Omega^p_X) \ .
\]
If $Z$ is a closed subset of $X$, the $(p,q)$-th tropical cohomology \emph{with supports in $Z$} is defined as
\[
H^{p,q}_{Z}(X)=H^q_Z(X,\Omega^p_X) \ ,
\]
where $H^q_{Z}$ denotes cohomology with support in $Z$, that is the $q$-th right derived functor associated to the functor $H^0_Z=\Gamma_Z$ that takes sections with support contained in $Z$.
\end{defn}

\begin{rem}
\label{rem:cohomology classes as morphisms}
We will frequently use the isomorphism
\[
H^{p,q}_Z(X)\cong \Hom_{\der X}(\Z_Z, \Omega^p_X[q])
\]
and thus write a tropical $(p,q)$-cohomology class $\alpha\in H^{p,q}_Z(X)$ as an arrow $\Z_Z\xrightarrow{\alpha} \Omega^p_X[q]$ in $\der X$.
\end{rem}

\subsection{Tropical Borel-Moore homology}
Similarly to the definition of the classical (i.e.\ non-tropical) Borel-Moore homology, our definition of tropical Borel-Moore homology will utilize the dualizing complex.

The \emph{dualizing complex} $\dual_X$ of a rational polyhedral space $X$ is an element of $\der X$ representing the functor
\begin{equation*}
\der X\to \der {} \colon A\mapsto \Hom_{\der{}} (R\Gamma_c A, \Z) \ ,
\end{equation*} 
where $R\Gamma_c$ is the (total) right derived functor of taking global sections with compact support, and $\Z$ is considered as a complex concentrated in degree $0$. 
The universal element of the representation, that is the image of $\id_{\dual_X}$ under the isomorphism 
\[
\Hom_{\der X}(\dual_X,\dual_X) \xrightarrow{\cong} \Hom_{\der{}}(R\Gamma_c\dual_X, \Z) \ ,
\]
is called the \emph{trace map} and we will denote it by $\int_X$.

\begin{defn}
Let $X$ be a rational polyhedral space, and let $p,q\in \N$. We define the  \emph{$(p,q)$-th (integral) tropical Borel-Moore homology group} as
\begin{equation*}
H^{BM}_{p,q}(X) \coloneqq \Hyper ^0R\Homs^\bullet(\Omega_X^p[q],\dual_X) \ ,
\end{equation*}
and the \emph{$(p,q)$-th (integral) tropical homology group with compact support} as
\[
H_{p,q}(X)\coloneqq \Hyper_c^0 R\Homs^\bullet(\Omega_X^p[q],\dual_X) \ .
\]
Furthermore, for a closed subset $Z\subseteq X$ we define
\[
H^{BM}_{p,q}(Z,X) \coloneqq \Hyper _Z^0 R\Homs^\bullet(\Omega_X^p[q],\dual_X) \ .
\]
\end{defn}

\begin{rem}
\label{rem:homology classes as morphisms}
Similarly as for cohomology classes, we will usually identify Borel-Moore homology classes with morphisms in the derived category. To do so, we use the identification
\begin{multline*}
H_{p,q}^{BM}(Z,X) = 
\Hyper^0_Z R\Homs^\bullet(\Omega_X^p[q],\dual_X) \cong \\
\cong H^0 R\Hom^\bullet((\Omega_X^p)_Z[q],\dual_X)
\cong \Hom_{\der X}((\Omega_X^p)_Z[q],\dual_X) 
\end{multline*}
that allows as to write $\alpha\in H_{p,q}^{BM}(Z,X)$ as an arrow $(\Omega_X^p)_Z[q]\xrightarrow{\alpha} \dual_X$.
\end{rem}

\begin{rem}
\label{rem:homology using Verdier duality}
Using Verdier duality, one obtains an identification
\begin{multline*}
H_{p,q}^{BM}(X) = 
\Hyper^0 R\Homs^\bullet(\Omega_X^p[q],\dual_X) \cong \\
\cong H^{0} R\Hom^\bullet(\Omega_X^p[q],\dual_X)
\cong H^{-q}R\Hom^\bullet(R\Gamma_c\Omega_X^p,\Z) \ .
\end{multline*}

\end{rem}

The dualizing complex $\dual_X$ on a rational polyhedral space can be described explicitly in terms of sheaves of singular chains.
\begin{defn} 
Let $X$ be a rational polyhedral space.
\begin{itemize}
\item[(a)] For $i\in \N$, let $\Delta_X^i$ denote the sheafification of the presheaf 
\[
U \mapsto C_{-i}(X, X\setminus U) \ ,
\] 
 where $C_j(A,B)$ denotes the group of relative singular $j$-chains with $\Z$-coefficients of the pair $A\supseteq B$. With the usual (co)boundary maps we obtain a cochain complex $\Delta_X^\bullet$.
\item[(b)] The $i$-th homology sheaf $\scrH^i_X \coloneqq H^{-i}(\Delta_X^\bullet)$ is the sheafification of the presheaf 
\[
U\mapsto H_{i}(X,X\setminus U) \ ,
\] whose stalk at $x\in X$ is canonically identified with $H_{i}(X,X\setminus \{x\})$.
\end{itemize}
\end{defn}

The complex $\Delta^\bullet_X$ is homotopically fine \cite[VI, Proposition 7]{SwanSheaves}, which implies that the natural morphism
\[
\Gamma_c\Delta^\bullet_X\to R\Gamma_c\Delta^\bullet
\]
is a quasi-isomorphism. The global sections with compact support of $\Delta^{-i}_X$ are naturally identified with $C_i(X)$, so the natural augmentation $C_\bullet(X)\to \Z$ defined by taking degrees of $0$-chains defines a morphism $\Delta_X^\bullet\to \dual_X$ in the derived category (using the universal property of $\dual_X)$, which is well-known to be an isomorphism. In particular, we have $\scrH^i_X\cong H^{-i}(\dual_X)$. This is one way of seeing that $H^i(\dual_X)=0$ for $i< -\dim(X)$. 

\begin{rem}
\label{rem:combinatorial dualizing complex}
If $X$ is a rational polyhedral space with a face structure $\Sigma$, the dualizing complex has a completely combinatorial description due to Shepard \cite{ShepardThesis}. More precisely, $\dual_X$ is quasi-isomorphic to the complex which is given by $\bigoplus_{\sigma\in\Sigma(k)} \Z_\sigma$ in degree $-k$, where $\Sigma(k)$ denotes the set of all $k$-dimensional polyhedra in $\Sigma$. To define the differentials, one needs to pick orientations on all $\sigma\in \Sigma$. Having done this, the differential is $0$ between the components $\Z_\sigma$ and $\Z_\tau$ of $\bigoplus_{\sigma\in\Sigma(k)}\Z_\sigma$ and $\bigoplus_{\tau\in\Sigma(k-1)}\Z_\tau$ if $\tau\nsubseteq \sigma$, and multiplication by $\epsilon_{\sigma/\tau}$ else, where $\epsilon_{\sigma/\tau}$ is $1$ if the orientations on $\sigma$ and $\tau$ agree, and $-1$ else.
\end{rem}

\begin{lemma}\phantomsection \label{lem:basicBM}
\begin{itemize} 
\item[]
\item[(a)] The classical $q$-th (non-tropical) Borel-Moore homology group of $X$ is isomorphic to $H^{BM}_{0,q}(X)$.
\item[(b)] Let $Z$ be a locally polyhedral subset of dimension $d$, and let $\iota\colon Z\to X$ denote the inclusion. Then for all $p\in \N$ we have
\[
H^{BM}_{p,d}(Z,X)= \Hom_{\Z_X}(\Omega^p_X, \iota_*\scrH_Z^d) \ .
\]
In particular, the presheaf $U\mapsto H^{BM}_{p,d}(U\cap Z, U)$ on $X$ is a sheaf.
\end{itemize}
\end{lemma}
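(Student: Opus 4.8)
For part (a), the plan is to unwind the definition. We have $H^{BM}_{0,q}(X) = \Hyper^0 R\Homs^\bullet(\Omega^0_X[q], \dual_X)$, and since $\Omega^0_X = \Aff_X/\text{(nothing)}$... wait, let me reconsider: $\Omega^0_X$ should be $\Z_X$ (the structure sheaf in degree $0$ of the graded ring $\Omega^*_X$). So $\Omega^0_X[q] = \Z_X[q]$, and using the identification from Remark~\ref{rem:homology using Verdier duality}, $H^{BM}_{0,q}(X) \cong H^{-q}R\Hom^\bullet(R\Gamma_c \Z_X, \Z)$. The right-hand side is by definition (the degree-$q$ part of) the classical Borel-Moore homology of $X$, since classical Borel-Moore homology is $\Hom$-dual to compactly supported cohomology (up to the usual universal coefficient corrections, which are already absorbed into the $R\Hom$). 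Alternatively, and perhaps more cleanly, use the quasi-isomorphism $\Delta^\bullet_X \to \dual_X$ together with the fact that $\Homs^\bullet(\Z_X[q], \dual_X) = \dual_X[-q] \simeq \Delta^\bullet_X[-q]$, whose hypercohomology in degree $0$ is $\Hyper^{q}(X, \Delta^\bullet_X) = H^{-q}(R\Gamma \Delta^\bullet_X)$; identifying this with Borel-Moore homology is standard (e.g.\ via \cite{Bredon} or \cite{Iversen}). I would cite one of the background references rather than reprove it.

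For part (b), the plan is to compute the complex $R\Homs^\bullet((\Omega^p_X)_Z[d], \dual_X)$ and show its $0$-th hypercohomology with supports in $Z$ reduces to an honest $\Hom$ group. The key input is a vanishing statement: since $\iota\colon Z\to X$ is the inclusion of a locally polyhedral subset of dimension $d$, the dualizing complex of $Z$ satisfies $H^{-i}(\dual_Z) = \scrH^i_Z = 0$ for $i > d$, and $\dual_Z = \iota^! \dual_X$. By adjunction, $R\Homs^\bullet((\Omega^p_X)_Z, \dual_X) = R\iota_* R\Homs^\bullet_Z(\iota^{-1}\Omega^p_X, \iota^!\dual_X) = R\iota_* R\Homs^\bullet_Z(\Omega^p_Z, \dual_Z)$, where I use that $\iota^{-1}\Omega^p_X \twoheadrightarrow \Omega^p_Z$ and more precisely that $\Homs$ into $\dual_Z$ only sees the quotient onto $Z$. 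Wait — one must be careful: it is $\iota^{-1}\Omega^p_X$ that appears, not $\Omega^p_Z$, and these differ; but since we are mapping into a complex concentrated in degrees $\geq -d$ and the relevant local computation at each $x\in Z$ involves $\Hom_\Z(\Omega^p_{X,x}, H_d(Z, Z\setminus\{x\}))$, only the behaviour along $Z$ matters. So the plan is: first, reduce via the adjunction $(\iota_!, \iota^{-1})$ or $(\iota^{-1}, \iota_*)$ appropriately; second, observe $R\Homs^\bullet(\iota^{-1}\Omega^p_X, \dual_Z)$ is concentrated in degrees $\geq -d$ since $\dual_Z$ is, and has $H^{-d} = \Homs(\iota^{-1}\Omega^p_X, \scrH^d_Z)$; third, take $\Hyper^0_Z$ and use the hypercohomology spectral sequence, which collapses in the relevant range because $R\Gamma_Z R\iota_* = R\Gamma(Z,-)$ applied to a complex in degrees $\geq -d$ contributes to $\Hyper^0$ only through $H^0(Z, H^{-d}(-)[d]) = \Hom_{\Z_X}(\Omega^p_X, \iota_*\scrH^d_Z)$.

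The main obstacle I expect is the degree bookkeeping in the last step: showing that the hypercohomology spectral sequence $E_2^{s,t} = H^s_Z(X, H^t R\Homs^\bullet(\dots)) \Rightarrow \Hyper^{s+t}_Z(\dots)$ contributes to total degree $0$ only from $(s,t) = (0,-d)$ with the $d$ shift, i.e.\ that the potentially interfering terms $H^s_Z$ with $s > 0$ of the higher cohomology sheaves vanish or do not reach total degree $0$. This requires knowing both that $R\Homs^\bullet$ is supported in degrees $\geq -d$ (from $\dim Z = d$, which gives $\scrH^i_Z = 0$ for $i>d$, hence $\dual_Z$ in degrees $\geq -d$) \emph{and} controlling the cohomological dimension of $\Gamma_Z$ — but actually the cleaner route avoids the spectral sequence entirely: one shows $R\Homs^\bullet((\Omega^p_X)_Z[d], \dual_X) \simeq \Homs_{\Z_X}(\Omega^p_X, \iota_*\scrH^d_Z)$ as an honest sheaf placed in degree $0$ by a direct local stalk computation (at $x\in Z$ it is $\Hom_\Z(\Omega^p_{X,x}, H_d(Z, Z\setminus\{x\}))$, using that $\dual_{Z,x} \simeq H_d(Z,Z\setminus\{x\})[d]$ when $\dim Z = d$ — here one may need $Z$ to be nice enough near $x$, which local polyhedrality provides since near $x$, $Z$ is a union of $d$-cells and lower, with the top homology concentrated in degree $d$), and then $\Hyper^0_Z$ of a sheaf in degree $0$ is just $\Gamma_Z = \Gamma$ of that sheaf since it is already supported on $Z$. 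The sheaf property of $U\mapsto H^{BM}_{p,d}(U\cap Z, U)$ is then immediate, as the right-hand side $\Hom_{\Z_X}(\Omega^p_X, \iota_*\scrH^d_Z)$ is the global sections of a sheaf.
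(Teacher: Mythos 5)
Your overall route coincides with the paper's: for (a) you identify $\Omega^0_X=\Z_X$ and reduce to the hypercohomology of $\dual_X$ (equivalently to $H^{-q}R\Hom^\bullet(R\Gamma_c\Z_X,\Z)$), which is the classical Borel--Moore group by its very definition; for (b) you pass, via the adjunction for $\iota$ and $\iota^!\dual_X=\dual_Z$, from $\Hom_{\der X}((\Omega^p_X)_Z[d],\dual_X)$ to $\Hom_{\der Z}(\Omega^p_X|_Z[d],\dual_Z)$ and use $\dim Z=d$ to collapse the derived Hom onto $\scrH^d_Z$, the sheaf statement then being immediate because the right-hand side is $\Gamma$ of $\Homs(\Omega^p_X,\iota_*\scrH^d_Z)$ (the paper additionally notes $\dual_U\cong\dual_X|_U$).

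There is, however, a flaw in the justification you finally settle on for the collapse (your ``cleaner route''): it is not true in general that $R\Homs^\bullet((\Omega^p_X)_Z[d],\dual_X)$ is a single sheaf placed in degree $0$, nor that $\dual_{Z,x}\simeq H_d(Z,Z\setminus\{x\})[d]$. Local polyhedrality and $\dim Z=d$ only give the one-sided vanishing $\scrH^i_Z=0$ for $i>d$; lower local homology can be nonzero. For example, if $Z\subseteq\R^4$ is the union of two rational $2$-planes meeting in a single point $x$, then $H_2(Z,Z\setminus\{x\})\cong\Z^2$ while $H_1(Z,Z\setminus\{x\})\cong\Z$, so $\dual_Z$ has stalk cohomology in two degrees (and the lemma also allows $Z$ to be non-pure, e.g.\ a $2$-cell with a segment attached, where the stalk at the attaching point sits only in degree $-1$). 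Fortunately only the one-sided vanishing is needed, and you state it correctly earlier in your plan: since $H^i(\dual_Z)=0$ for $i<-d$, one may represent $\dual_Z$ by a complex of injectives vanishing in degrees $<-d$, whence $\Hom_{\der Z}(\Omega^p_X|_Z[d],\dual_Z)\cong\Hom_{\Z_Z}(\Omega^p_X|_Z,\scrH^d_Z)$ directly --- no spectral sequence and no concentration claim required. This truncation argument is exactly how the paper proceeds, and with it in place of the overstated claims your proposal is correct and essentially identical to the paper's proof.
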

\begin{proof}
For (a) we use the fact that $\Omega^0_X=\Z_X$ and the natural isomorphisms
\[
 H^{BM}_{0,q}(X)= H^0R\Hom^\bullet(\Z_X[q],\dual_X)\cong H^{-q}R\Gamma\dual_X = \Hyper^{-q}\dual_X \ .
\] 
By definition, $\Hyper^{-q}\dual_X$ is the $q$-th classical Borel-Moore homology group of $X$ (as introduced in \cite{BorelMoore}).

For part (b) we can use Remark \ref{rem:homology classes as morphisms} and the universal property of the dualizing complex, to obtain the isomorphism
\[
H^{BM}_{p,d}(Z,X)\cong\Hom_{\der X}((\Omega_X^p)_Z[d], \dual_X)\cong \Hom_{\der Z}(\Omega_X^p|_Z[d],\dual_Z) \ .
\]
Since $d=\dim(Z)$, the cohomology groups $H^i(\dual_Z)$ vanish for $i<-d$. Therefore, $\dual _Z$ is quasi-isomorphic to a complex of injectives that is $0$ in degrees $<-d$. It follows that 
\[
\Hom_{\der Z}(\Omega^p_X|_Z[d],\dual_Z)\cong \Hom_{\Z_Z}(\Omega^p_X|_Z, H^{-d}(\dual_Z)) \ ,
\]
which equals $\Hom_{\Z_Z}(\Omega^p_X|_Z, \scrH^d_Z)$ by definition of $\scrH^d_Z$. This in turn is isomorphic to $\Hom_{\Z_X}(\Omega^p_X,\iota_*\scrH^d_Z)$. For the ``in particular''-statement we note that for every open subset $U\subseteq X$ we have $\dual_U\cong \dual_X|_U$. Therefore, the presheaf $U\mapsto H^{BM}_{p,d}(U\cap Z,U)$ is isomorphic to the presheaf $U\mapsto \Hom_{\Z_U}(\Omega_X^p|_U, \iota_*\scrH^d_Z|_U)$, which equals the sheaf $\Homs_{\Z_X}(\Omega^p_X,\iota_*\scrH^d_Z)$.

\end{proof}

\subsection{Pull-backs} \label{subsec:pullback}
Let $f\colon X\to Y$ be a morphism of rational polyhedral spaces. Recall from Proposition \ref{prop:pullback of p-forms} that pulling back tropical forms defines a morphism of graded sheaves of rings
\[
f^\sharp\colon f^{-1}\Omega_Y^*\to \Omega_X^* \ .
\]
Let $(\Z_Y \xrightarrow{c} \Omega^p_Y[q])\in H^{p,q}(Y)$ be a tropical $(p,q)$-cohomology class. As the pull-back $f^{-1}$ of sheaves of abelian groups defines an exact functor, it induces a functor $f^{-1}\colon \der Y\to \der X$. Applying this functor to $c$ and composing the resulting arrow with $f^\sharp$ defines the \emph{pull-back} 
\[
f^*c\in H^{p,q}(X) \ .
\]
In other words, $f^*c$ is represented by the composite
\[
\Z_X\cong f^{-1}\Z_Y\xrightarrow{f^{-1}c} f^{-1}\Omega^p_Y[q]\xrightarrow{f^\sharp[q]} \Omega^p_X[q] \ .
\]
The map $f^*\colon H^{p,q}(Y)\to H^{p,q}(X)$ is a morphism of abelian groups.

\subsection{Proper push-forwards} \label{sec:BMfunc}
If $f\colon X\to Y$ is a proper morphism of rational polyhedral spaces, then precomposing the trace $\int_X \colon R\Gamma_c\dual_X\to \Z$ with the natural isomorphism $R\Gamma_c\circ Rf_*(\dual_X) \xrightarrow{\cong} R\Gamma_c\dual_X$ defines a morphism $R\Gamma_c(Rf_*\dual_X) \to \Z$. By the universal property of the dualizing complex $\dual_Y$, this corresponds to a morphism 
\begin{equation}\label{eq:Rf}
Rf_*\dual_X \to \dual_Y\ .
\end{equation}
Together with the composite
\begin{equation}\label{eq:Omp}
\Omega^p_Y\to f_*\Omega^p_X\to Rf_*\Omega^p_X 
\end{equation}
obtained by pulling back tropical $p$-forms, this defines a push-forward on tropical Borel-Moore homology:

\begin{defn}
\label{defn:top dimensional cycle map}
Let $f\colon X\to Y$ be a proper morphism of rational polyhedral spaces, and let $p,q\in \N$. The \emph{pushforward map}
\[
f_* \colon H^{BM}_{p,q}(X) \rightarrow H^{BM}_{p,q}(Y)
\]
associated to $f$ is the composite of the morphism
\[
\Hom_{\der X}(\Omega^p_X[q],\dual_X) \rightarrow 
\Hom_{\der Y}(Rf_* \Omega^p_X[q],Rf_*\dual_X)
\] 
obtained by taking the derived push-forward and the morphism
\[
\Hom_{\der Y}(Rf_* \Omega^p_X[q],Rf_*\dual_X)\rightarrow
\Hom_{\der Y}(\Omega^p_Y[q], \dual_Y)
\]
defined via composition with the natural morphisms $\Omega^p_Y\to Rf_*\Omega^p_X$ in \eqref{eq:Omp} and $Rf_*\dual_X\to \dual_Y$ in \eqref{eq:Rf}.
\end{defn}

\begin{rem}
\label{rem:push-forward with support}
Since the morphism $\Omega^p_Y\to f_*\Omega^p_X$ factors through $(\Omega^p_Y)_{f(X)}$, the push-forward map factors through $H^{BM}_{p,q}(f(X), Y)$.
\end{rem}

It follows immediately from the functoriality of the derived push-forward and the pull-back of tropical forms that  the push-forward on tropical Borel-Moore homology is functorial, that is $(f\circ g)_*=f_*\circ g_*$ whenever $f$ and $g$ are composable proper morphisms of rational polyhedral spaces.

For a better understanding of the push-forward we will need the following lemma:

\begin{lemma}
\label{lem:description push-forward of dualizing complex}
Let $f\colon X\to Y$ be a proper morphisms of rational polyhedral spaces, and let $n=\dim X$. Then the morphism
\[
f_* \scrH^n_X \to \scrH^n_Y
\]
induced by the natural morphism $Rf_*\dual_X\to \dual_Y$ in \eqref{eq:Rf} is induced by the push-forwards $H_n(X,X\setminus f^{-1}U)\to H_n(Y, U)$ of relative singular cycles for open subsets $U\subseteq Y$.
\end{lemma}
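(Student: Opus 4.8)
The plan is to unwind all the identifications that go into the morphism $Rf_*\dual_X\to\dual_Y$ of \eqref{eq:Rf} and into the isomorphism $\scrH^n_X\cong H^{-n}(\dual_X)$, and to check that each of them is compatible with the pushforward of relative singular chains. The key point is that every object in sight has an explicit model in terms of the complexes $\Delta^\bullet_X$ and $\Delta^\bullet_Y$ of sheaves of singular chains discussed just before Remark~\ref{rem:combinatorial dualizing complex}, and that the morphism \eqref{eq:Rf} is, up to the canonical quasi-isomorphisms $\Delta^\bullet_X\xrightarrow{\sim}\dual_X$ and $\Delta^\bullet_Y\xrightarrow{\sim}\dual_Y$, induced by a genuine morphism of complexes of sheaves, not merely a morphism in the derived category.

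First I would recall that properness of $f$ gives a natural map of complexes $f_*\Delta^\bullet_X\to\Delta^\bullet_Y$: on the level of presheaves it sends $C_{-i}(X,X\setminus f^{-1}U)\to C_{-i}(Y,Y\setminus U)$, which is well-defined because $f$ proper implies $f^{-1}(Y\setminus U)=X\setminus f^{-1}U$ and because the singular chain functor is covariant; sheafifying gives the claimed map. (One needs $f_*=Rf_*$ here on $\Delta^\bullet_X$, which holds because $\Delta^\bullet_X$ is homotopically fine, hence so is its pushforward along a proper map; alternatively one works with $\Gamma_c$ throughout.) The second step is to show that the induced map $Rf_*\dual_X\to\dual_Y$ under the identifications $\Delta^\bullet_X\simeq\dual_X$, $\Delta^\bullet_Y\simeq\dual_Y$ agrees with the canonical map \eqref{eq:Rf}. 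This is a uniqueness statement: by the universal property of $\dual_Y$ (Yoneda), a morphism $Rf_*\dual_X\to\dual_Y$ is determined by the induced map $R\Gamma_c Rf_*\dual_X\cong R\Gamma_c\dual_X\to\Z$, so I would check that the map coming from $f_*\Delta^\bullet_X\to\Delta^\bullet_Y$ induces on global compactly-supported sections the augmentation-compatible map $C_\bullet(X)\to C_\bullet(Y)\to\Z$, which it manifestly does since pushforward of $0$-chains preserves degree. Hence the two morphisms $Rf_*\dual_X\to\dual_Y$ coincide.

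Having reduced \eqref{eq:Rf} to the honest map of complexes $f_*\Delta^\bullet_X\to\Delta^\bullet_Y$, the last step is purely a matter of taking cohomology sheaves in degree $-n$. The cohomology sheaf $\scrH^n_X=H^{-n}(\Delta^\bullet_X)$ is the sheafification of $U\mapsto H_n(X,X\setminus U)$, and the map $f_*\Delta^\bullet_X\to\Delta^\bullet_Y$ was built from the presheaf maps $C_\bullet(X,X\setminus f^{-1}U)\to C_\bullet(Y,Y\setminus U)$, whose effect on $H_n$ is by definition the pushforward $H_n(X,X\setminus f^{-1}U)\to H_n(Y,Y\setminus U)$ of relative singular cycles. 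Since $H^{-n}$ commutes with $f_*$ on this complex (it suffices to check stalks, or use that $f_*$ is left exact and we are looking at the top nonvanishing cohomology in the relevant range), applying $H^{-n}$ to $f_*\Delta^\bullet_X\to\Delta^\bullet_Y$ yields exactly the map $f_*\scrH^n_X\to\scrH^n_Y$ induced by these relative-cycle pushforwards, which is the assertion.

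The main obstacle is the bookkeeping in the second step: one must be careful that the quasi-isomorphism $\Delta^\bullet_X\to\dual_X$ is itself the one induced by the augmentation $C_\bullet(X)\to\Z$ via the universal property, so that "the map of complexes $f_*\Delta^\bullet_X\to\Delta^\bullet_Y$ induces \eqref{eq:Rf}" is genuinely a consequence of Yoneda rather than something needing an independent verification. A secondary subtlety is the interchange of $f_*$ with $Rf_*$ and with $H^{-n}$; both are handled by invoking homotopic fineness of $\Delta^\bullet_X$ (so $\Gamma_c\Delta^\bullet_X\to R\Gamma_c\Delta^\bullet_X$ is a quasi-isomorphism, as already noted in the text) and by passing to stalks, where $(\scrH^n_X)_x\cong H_n(X,X\setminus\{x\})$, everything being computed at the level of singular chains. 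None of these steps involves a genuinely hard computation; the content is entirely in pinning down the canonical identifications.
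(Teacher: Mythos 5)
Your overall route is the same as the paper's: model $\dual_X$ and $\dual_Y$ by complexes of sheaves of singular chains, show that the morphism \eqref{eq:Rf} is represented by a chain-level push-forward (checking agreement via $R\Gamma_c$, the traces, and the degree map on $0$-chains, exactly as in the paper), and then take cohomology in degree $-n$. However, your first step has a genuine gap. The push-forward of chains is a morphism of \emph{presheaves} on $Y$, from $U\mapsto C_i(X,X\setminus f^{-1}U)$ to $U\mapsto C_i(Y,Y\setminus U)$; sheafifying it produces a morphism out of the sheafification of the presheaf push-forward, and the canonical comparison map goes \emph{from} that sheafification \emph{into} $f_*\Delta^{-i}_X$, not the other way around. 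So "sheafifying gives the claimed map" does not define anything on $f_*\Delta^\bullet_X$: push-forward does not commute with sheafification in general (a section of $f_*\Delta^{-i}_X$ over $U$ is only locally represented by relative chains, and its image is not obviously a section of $\Delta^{-i}_Y$). This is precisely the point the paper's proof is devoted to. It replaces $\Delta^\bullet_X$ by the soft complex $\SoftDual^\bullet_X=\injlim_\N\Delta^\bullet_X$ of barycentrically subdivided chains and uses properness of $f$ together with Bredon's Lemma V-1.7 to show that the stalk at $y$ of the presheaf push-forward equals $\SoftDual^i_X(f^{-1}\{y\})$, hence equals the stalk of $f_*\SoftDual^i_X$; only then does the chain-level push-forward descend to a morphism of sheaves $f_*\SoftDual^\bullet_X\to\SoftDual^\bullet_Y$. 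Note that properness enters exactly here (sections over the compact fibers), whereas the identity $f^{-1}(Y\setminus U)=X\setminus f^{-1}U$ that you attribute to properness holds for every map.

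Two secondary points. First, your claim that the push-forward of a homotopically fine complex along a proper map is again homotopically fine (so that $f_*\Delta^\bullet_X$ computes $Rf_*\dual_X$) is not justified; what is needed is $f_*$-acyclicity of the terms, which the paper obtains from softness of $\SoftDual^\bullet_X$. Second, the interchange of $H^{-n}$ with $f_*$ is delicate for the underived push-forward of a non-acyclic complex; it becomes immediate once one knows that $f_*\SoftDual^\bullet_X$ already represents $Rf_*\dual_X$, since then $H^{-n}(Rf_*\dual_X)\cong f_*\scrH^n_X$ follows from the vanishing of $H^j(\dual_X)$ for $j<-n$, and the induced map on $H^{-n}$ is visibly the push-forward of relative cycles. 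Your middle step (the Yoneda/trace comparison) is correct and coincides with the paper's argument.
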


\begin{proof}
We will describe the morphism $Rf_*\Delta_X^\bullet \to \Delta_Y^\bullet$ induced by the natural morphism $Rf_*\dual_X\to\dual_Y$ explicitly. To do so, we will first need to describe $Rf_*\Delta_X^\bullet$. Note that the barycentric subdivision defines an endomorphism on $\Delta_X^\bullet$. Let $\SoftDual^\bullet_X= \injlim_\N \Delta_X^\bullet$ denote the direct limit obtained by allowing repeated barycentric subdivision, and define $\SoftDual^\bullet_Y$ similarly. The natural morphism $\Delta_X^\bullet\to \SoftDual^\bullet_X$ is a quasi-isomorphisms since taking homology commutes with direct limits and the barycentric subdivision is a quasi-isomorphism. In particular, $Rf_*\Delta^\bullet_X=Rf_*\SoftDual^\bullet_X$. By \cite[Prop.\ V-1.8 and Thm.\ V-12.14]{Bredon}, $\SoftDual_X^\bullet$ is a complex of soft sheaves, so the natural morphisms $f_*\SoftDual_X^\bullet\to Rf_*\SoftDual_X^\bullet$ and $\Gamma_c\SoftDual_X^\bullet\to R\Gamma_c\SoftDual_X^\bullet$ are quasi-isomorphisms. 

Next, we will show that the push-forward of relative singular cycles induces a morphism $f_*\SoftDual_X^\bullet\to \SoftDual_Y^\bullet$. 
If $\SoftDual^{pre,i}_X$ denotes the presheaf $U\mapsto \injlim_\N C_{-i}(X,X\setminus U)$ on $X$, and $\SoftDual^{pre,i}$ the analogous presheaf on $Y$, the push-forwards of relative chains $C_i(X,X\setminus f^{-1}U)\to C_i(Y,Y\setminus U)$ for $U\subseteq Y$ open  and $i\in\Z$ induce a morphism $f_*\SoftDual^{pre,\bullet}_X\to \SoftDual^{pre,\bullet}_Y$ by the functoriality of the barycentric subdivision. Note that this does \emph{not} automatically induce a morphism of complexes of sheaves $f_*\SoftDual^\bullet_X\to \SoftDual^\bullet_Y$ because push-forward does a priori not commute with sheafification. However using \cite[V-Lemma 1.7]{Bredon} one sees that the presheaves $\SoftDual^{pre,i}_X$ have the property that for every compact set $K\subseteq X$ there is an equality
\begin{equation*}
\injlim_{K\subseteq U} \SoftDual^{pre,i}_X (U) = \injlim_{K\subseteq U} \SoftDual^i_X (U)= \SoftDual^i_X(K) \ ,
\end{equation*}
where the direct limits are taken over all open subsets containing $K$ (note that the second equality holds for every sheaf). Applying this to the fibers of $f$ we obtain isomorphisms of stalks $(f_*\SoftDual^{pre,i}_X)_y\cong \SoftDual^i_X(f^{-1}\{y\})\cong (f_*\SoftDual^i_X)_y$
for all $y$, and hence sheafification commutes with push-forwards for the presheaves $\SoftDual^{pre,i}_X$. Consequently, the push-forward of relative singular cycles does in fact induce a morphism $f_*\SoftDual^\bullet_X \to \SoftDual^\bullet_Y$. To show that this coincides with the natural morphism $Rf_*\dual_X\to \dual_Y$ we apply $R\Gamma_c$ and obtain a morphism
\[
\Gamma_c\SoftDual^\bullet_X=\Gamma_c f_*\SoftDual^\bullet_X\to \Gamma_c\SoftDual^\bullet_Y \ ,
\]	
which is the one induced by pushing forward singular chains. In particular, in degree $0$ it is the morphism
\[
C_0(X) \to C_0(Y)
\]
that pushes forward points along $f$. This commutes with the degree morphisms to $\Z$, which are the traces defining the isomorphisms $\SoftDual^\bullet_X\cong \dual _X$ and $\SoftDual^\bullet_Y\cong \dual Y$. So by the definition of the natural morphism $Rf_*\dual_X\to \dual_Y$ it must agree with the  morphism $f_*\SoftDual^\bullet_X \to \SoftDual^\bullet_Y$ obtained by pushing forward relative chains. From this description it is clear that the morphism
\[
f_* \scrH^n_X \to \scrH^n_Y
\]
is induced by the push-forwards of relative singular cycles as well.
\end{proof}

\subsection{Cross products and the K\"unneth Theorem}
\label{subsec:cross products in homology}
In this section we study the tropical homology group on a product $X\times Y$ of two rational polyhedral spaces $X$ and $Y$.  Let $p_X\colon X\times Y\to X$ and $p_Y\colon X\times Y\to Y$ denote the projections. In what follows we will use the notation 
\[
\mathcal F \boxtimes \mathcal G = p_X^{-1} \mathcal F \otimes_{\Z_{X\times Y}} p_Y^{-1} \mathcal G
\]
for sheaves $\mathcal F$ on $X$ and $\mathcal G$ on $Y$, and
\[
\mathcal C^\bullet \boxtimes^L \mathcal D^\bullet = p_X^{-1} \mathcal C^\bullet \otimes^L_{\Z_{X\times Y}} p_Y^{-1} \mathcal D^\bullet
\]
for complexes of sheaves $\mathcal C^\bullet$ on $X$ and $\mathcal D^\bullet$ on $Y$, where $\otimes^L$ denotes the derived tensor product.

To define the cross-product in tropical homology, we first need to relate the dualizing complex of $X\times Y$ the dualizing complexes of the factors $X$ and $Y$. The trace maps $\int_X\colon R\Gamma_c\dual_X\to \Z$ and $\int_Y \colon R\Gamma_c\dual_Y\to \Z$ induce a morphism 
\[
R\Gamma_c\dual_X\otimes_\Z^L R\Gamma_c\dual_Y \to \Z\otimes_\Z^L\Z= \Z \ .
\]
By the K\"unneth formula \cite[VII-2.7]{Iversen}, $R\Gamma_c\dual_X\otimes^L R\Gamma_c\dual_Y$ is naturally isomorphic to $R\Gamma_c(\dual_X \boxtimes^L \dual_Y)$, so by the universal property of $\dual_{X\times Y}$ there is an induced morphism $\dual_X \boxtimes^L \dual_Y \to \dual_{X\times Y}$.
Indeed, this is an isomorphism by \cite[V, 10.26]{IntersectionCohomology}. 

It will be convenient for us later to have an explicit description of this isomorphism in terms of sheaves of singular chains.

\begin{lemma}
\label{lem:Eilenberg-Zilber}
Let $X$ and $Y$ be rational polyhedral spaces. Then for all $i,j\in\N$, the  morphism
\[
\scrH^i_X\boxtimes \scrH^j_Y \to \scrH^{i+j}_{X\times Y}
\]
defined via the natural isomorphism $\dual_X\boxtimes^L\dual_Y \xrightarrow{\cong} \dual_{X\times Y}$ is the one induced by the relative cross products
\[
H_i(X,X\setminus U) \otimes_\Z H_j(Y,Y\setminus V) \to H_{i+j}(X\times Y, (X\times Y)\setminus (U\times V))
\]
for open subsets $U\subseteq X$ and $V\subseteq Y$.
\end{lemma}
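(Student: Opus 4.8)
The statement is local on $X\times Y$, and moreover, since the cross product on singular homology and the Verdier--Künneth isomorphism $\dual_X\boxtimes^L\dual_Y\xrightarrow{\cong}\dual_{X\times Y}$ are both compatible with restriction to open subsets, it suffices to identify the two morphisms at the level of stalks, or better, to identify two morphisms of complexes after passing to suitable soft resolutions. The plan is to reuse the machinery from the proof of Lemma~\ref{lem:description push-forward of dualizing complex}: replace $\dual_X$, $\dual_Y$, and $\dual_{X\times Y}$ by the soft complexes $\SoftDual^\bullet_X$, $\SoftDual^\bullet_Y$, and $\SoftDual^\bullet_{X\times Y}$ obtained from sheafified relative singular chains after allowing repeated barycentric subdivision, so that all the derived functors in sight become underived.

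First I would recall that, because $\SoftDual^\bullet_X$ and $\SoftDual^\bullet_Y$ are complexes of soft sheaves and softness is preserved under the relevant external tensor operations (or, more simply, because $\Gamma_c$ of a soft complex computes $R\Gamma_c$ and the ordinary external tensor product of soft sheaves on a locally compact space is $\Gamma_c$-acyclic), the complex $\SoftDual^\bullet_X\boxtimes\SoftDual^\bullet_Y$ represents $\dual_X\boxtimes^L\dual_Y$. Next, the Eilenberg--Zilber theorem furnishes a natural chain map $C_\bullet(X)\otimes_\Z C_\bullet(Y)\to C_\bullet(X\times Y)$ (the cross product of singular chains), compatible with barycentric subdivision up to natural chain homotopy and with the relative versions; sheafifying in the two variables and passing to the direct limit over subdivisions, this induces a morphism of complexes of sheaves
\[
\SoftDual^\bullet_X\boxtimes\SoftDual^\bullet_Y\to \SoftDual^\bullet_{X\times Y}\ .
\]
As in Lemma~\ref{lem:description push-forward of dualizing complex}, one must check that sheafification commutes with the external tensor product for these presheaves; this follows from the same stalk computation, using that the stalk of $\SoftDual^{pre,i}_X\boxtimes\SoftDual^{pre,j}_Y$ at $(x,y)$ is $\SoftDual^i_X(\{x\})\otimes_\Z\SoftDual^j_Y(\{y\})$ together with the flatness needed to commute the tensor product past the direct limits and the identification of relative chain groups with limits over neighborhoods.

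It then remains to show that the morphism just constructed agrees with the one coming from the Verdier--Künneth isomorphism $\dual_X\boxtimes^L\dual_Y\xrightarrow{\cong}\dual_{X\times Y}$. Exactly as in the previous lemma, I would apply $R\Gamma_c$: by the softness remarks this turns the map into $\Gamma_c\SoftDual^\bullet_X\otimes_\Z\Gamma_c\SoftDual^\bullet_Y\to \Gamma_c\SoftDual^\bullet_{X\times Y}$, i.e.\ into the singular cross product $C_\bullet(X)\otimes_\Z C_\bullet(Y)\to C_\bullet(X\times Y)$ at the level of global compactly supported sections. In degree $0$ this is the map sending $[x]\otimes[y]$ to $[(x,y)]$, and it is visibly compatible with the three augmentations to $\Z$, hence with the three trace maps; since $\dual_X\boxtimes^L\dual_Y\to\dual_{X\times Y}$ is by definition the unique morphism compatible with traces under $R\Gamma_c$ and the Künneth isomorphism, the two morphisms coincide. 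Passing to the $(i+j)$-th cohomology sheaf (equivalently, to $\scrH^i_X\boxtimes\scrH^j_Y\to\scrH^{i+j}_{X\times Y}$, using that $\boxtimes$ of the truncations computes the relevant cohomology sheaf in top degree) gives precisely the map induced by the relative cross products $H_i(X,X\setminus U)\otimes_\Z H_j(Y,Y\setminus V)\to H_{i+j}(X\times Y,(X\times Y)\setminus(U\times V))$, as claimed.

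\textbf{Main obstacle.} The delicate point, just as in Lemma~\ref{lem:description push-forward of dualizing complex}, is the interchange of sheafification with the external tensor product: a priori $p_X^{-1}(\,\cdot\,)\otimes p_Y^{-1}(\,\cdot\,)$ need not commute with sheafification, so one cannot directly build the chain map on sheaves from the presheaf-level Eilenberg--Zilber map. Resolving this requires the same fine analysis of the presheaves $\SoftDual^{pre,i}$ via \cite[V-Lemma~1.7]{Bredon} (computing sections over compact sets as limits over neighborhoods) together with a flatness argument to move the tensor product past the colimits; I expect this bookkeeping, rather than any conceptual issue, to be the bulk of the work.
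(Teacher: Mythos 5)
Your proposal is correct and follows essentially the same route as the paper: sheafify the relative Eilenberg--Zilber maps to obtain a morphism of complexes of sheaves into $\Delta^\bullet_{X\times Y}$, use flatness so that the external product of singular-chain complexes represents $\dual_X\boxtimes^L\dual_Y$, apply $R\Gamma_c$, and compare the two resulting maps to $\Z$ on degree-zero pure tensors via the universal property of $\dual_{X\times Y}$. The only notable difference is that the paper works with $\Delta^\bullet_X\boxtimes\Delta^\bullet_Y$ directly and your ``main obstacle'' does not actually arise: since the products $U\times V$ form a basis of $X\times Y$ and one maps \emph{out of} a sheafification \emph{into} the sheaf $\Delta^\bullet_{X\times Y}$, no interchange of sheafification with $\boxtimes$ (and hence no passage to the soft complexes $\SoftDual^\bullet$) is needed.
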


\begin{proof}
The Eilenberg-Zilber map defines a morphism
\[
C_\bullet(X)\otimes_Z C_\bullet(Y)\to C_\bullet(X\times Y )
\]
that induces morphisms
\[
C_\bullet(X,X\setminus U)\otimes_\Z C_\bullet(Y,Y\setminus V)\to C_\bullet(X\times Y, (X\times Y)\setminus (U\times V) )
\]
for all pairs of open subsets $U\subseteq X$ and $V\subseteq Y$. Since the products $U\times V$ for $U\subseteq X$ and $V\subseteq Y$ open form a basis for $X\times Y$, we obtain a morphism
\[
\Delta_X^\bullet \boxtimes \Delta_Y^\bullet \to \Delta_{X\times Y}^\bullet
\] 
after sheafifying. By construction, the morphisms $\scrH^i_X\boxtimes\scrH^j_Y\to \scrH^{i+j}_{X\times Y}$ induced by this is defined by relative cross products. It thus suffices to show that this morphism describes the natural morphism $\dual_X\boxtimes^L\dual_Y\to \dual_{X\times Y}$.

 Since $\Delta_X^\bullet$ and $\Delta_Y^\bullet$ are complexes of flat sheaves, the natural isomorphisms $\Delta_X^\bullet\xrightarrow{\cong} \dual_X$ and $\Delta_Y^\bullet\xrightarrow{\cong} \dual_Y$ define an isomorphism
\[
\Delta_X^\bullet\boxtimes\Delta_Y^\bullet\xrightarrow{\cong} \dual_X\boxtimes^L\dual_Y \ .
\]
To finish the proof, we must show that the diagram
\begin{center}
\begin{tikzpicture}[auto]
\matrix[matrix of math nodes, row sep= 5ex, column sep= 4em, text height=1.5ex, text depth= .25ex]{
|(DelDel)| \Delta^\bullet_X\boxtimes \Delta^\bullet_Y 	&
|(Del)|	\Delta^\bullet_{X\times Y}	\\
|(DualDual)| \dual_X\boxtimes^L\dual_Y	&	
|(Dual)| \dual_{X\times Y}	\\
};
\begin{scope}[->,font=\footnotesize]
\draw (DelDel) -- (Del);
\draw (DualDual) --(Dual);
\draw (DelDel) --node{$\cong$} (DualDual);
\draw (Del) --node{$\cong$}  (Dual);
\end{scope}
\end{tikzpicture}
\end{center}
is commutative. By the universal property of $\dual_{X\times Y}$, we can apply $R\Gamma_c$ and need to show that the two morphisms $C_{-\bullet}(X)\otimes_\Z C_{-\bullet}(Y) \to \Z$ in the diagram
\begin{center}
\begin{tikzpicture}[auto]
\matrix[matrix of math nodes, row sep= 5ex, column sep= 2.5em, text height=1.5ex, text depth= .25ex]{
|(CC)| C_{-\bullet}(X)\otimes_\Z C_{-\bullet}(Y) &
|(DelDel)| R\Gamma_c(\Delta^\bullet_X\boxtimes \Delta^\bullet_Y) 	&
|(Del)|	C_{-\bullet}(X\times Y)	&
\\
|(RDualRDual)| R\Gamma_c\dual_X\otimes^L_\Z R\Gamma_c\dual_Y &
|(DualDual)| R\Gamma_c(\dual_X\boxtimes^L\dual_Y)	&	
|(Dual)| R\Gamma_c(\dual_{X\times Y})	&
|(Z)| \Z  \ ,\\
};
\begin{scope}[->,font=\footnotesize]
\draw (CC)--node{$\cong$}(DelDel);
\draw (DelDel) -- (Del);
\draw (RDualRDual)--node{$\cong$}(DualDual);
\draw (DualDual) --(Dual);
\draw (Dual)-- node{$\int_{X\times Y}$}(Z);
\draw (CC)--node{$\cong$} (RDualRDual);
\draw (DelDel) --node{$\cong$} (DualDual);
\draw (Del) --node{$\cong$}  (Dual);
\end{scope}
\end{tikzpicture}
\end{center}
where the leftmost horizontal isomorphisms use the K\"unneth formula \cite[VII-2.7]{Iversen}. Note that the left square of the diagram is commutative by the functoriality of the K\"unneth formula, coincide. By construction, the composite of the two morphisms in the top row is the Eilenberg-Zilber map. In particular the morphism $C_{-\bullet}(X)\otimes_\Z C_{-\bullet}(Y) \to \Z$ obtained by moving clockwise through the diagram assigns $1$ to a pure tensor $[x]\otimes[y] \in C_0(X)\otimes_\Z C_0(Y)$ of $0$-simplices (i.e.\ a point in $X\times Y$). On the other hand, by the definition of the natural morphism $\dual_X\boxtimes^L\dual_Y\to\dual_{X\times Y}$, the composite of the three morphisms in the lower row is the tensor product of the traces on $X$ and $Y$. By the definition of the morphisms $\Delta_X^\bullet\to \dual_X$ and $\Delta_Y^\bullet\to\dual_Y$ it follows that the morphism $C_{-\bullet}(X)\otimes_\Z C_{-\bullet}(Y) \to \Z$ obtained by moving counterclockwise through the diagram is the tensor product of the two augmentations $C_{-\bullet}(X)\to \Z$ and $C_{-\bullet}(Y)\to\Z$ defined by the degree of $0$-cycles. This product also assigns to $1$ to any pure tensor $[x]\otimes[y]\in C_0(X)\otimes C_0(Y)$. This finishes the proof
\end{proof}

\begin{rem}
In the proof of Lemma \ref{lem:Eilenberg-Zilber}, we did not use the fact that the Eilenberg-Zilber map $C_\bullet(X)\otimes C_\bullet(Y) \to C_\bullet(X\times Y)$ is a chain homotopy equivalence. If one incorporates this into the proof of the lemma carefully, then it also shows that the natural morphism $\dual_X\boxtimes^L\dual_Y\to\dual_{X\times Y}$ is an isomorphism.
\end{rem}

To construct the tropical cross product we also need to relate the sheaves of tropical forms on $X\times Y$ with the sheaves of tropical forms on the factors. The projections $p_X$ and $p_Y$ induce morphisms 
\[
p_X^\sharp\colon p^{-1}_X\Omega^*_X \to \Omega^*_{X\times Y} \quad , \quad 
p_Y^\sharp\colon p^{-1}_Y\Omega^*_Y \to \Omega^*_{X\times Y}
\]
of sheaves of skew-commutative graded rings. These morphisms induce a morphism
\begin{equation*}
p^\sharp_X\otimes p^\sharp_Y \colon \Omega^*_X \boxtimes \Omega^*_Y \to \Omega^*_{X\times Y} 
\end{equation*}
of sheaves of skew-commutative graded rings, where we view $\Omega^*_X \boxtimes \Omega^*_Y$ as the skew tensor product (i.e. the usual tensor product of $\Z$-algebras with a slightly modified multiplication to make it skew-symmetric (cf.\ \cite[p. 571]{Eisenbud})) of $p_X^{-1}\Omega^*_X$ and $p_Y^{-1}\Omega^*_Y$.

\begin{lemma}
\label{lem:differentials on product}
The morphism
\[
p^\sharp_X\otimes p^\sharp_Y \colon\Omega^*_X \boxtimes \Omega^*_Y \to \Omega^*_{X\times Y} 
\]
is an isomorphism.
\end{lemma}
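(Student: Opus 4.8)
The plan is to check that $p^\sharp_X\otimes p^\sharp_Y$ is an isomorphism on stalks. At a point $(x,y)$ we may, by Proposition \ref{prop:local cone}, replace $X$ and $Y$ by their local cones $\LC_x X$ and $\LC_y Y$ — using that $\LC_{(x,y)}(X\times Y)=\LC_x X\times\LC_y Y$, which is immediate from the germ description of tangent vectors — and thus assume that $X\subseteq\R^n$ and $Y\subseteq\R^m$ are polyhedral fans and that we compute stalks at the origin. The same circle of ideas yields $(X\times Y)^{\max}=X^{\max}\times Y^{\max}$, which I would record separately: a point lies in $Z^{\max}$ precisely when its local cone is a linear subspace of the tangent space, and a product of cones through the origin is a linear subspace if and only if each factor is.

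First I would dispose of the case $*=1$. Since $\Omega^1=\Aff/\R$ and a product of charts is a chart, the projections induce an isomorphism $\Omega^1_{X,x}\oplus\Omega^1_{Y,y}\xrightarrow{\cong}\Omega^1_{X\times Y,(x,y)}$ compatible with $p^\sharp_X$ and $p^\sharp_Y$; in the fan-at-the-origin picture this is just the splitting $(\Z^{n+m})^*/\ann(X\times Y)\cong (\Z^n)^*/\ann(X)\oplus(\Z^m)^*/\ann(Y)$ coming from $\ann(X\times Y)=\ann(X)\oplus\ann(Y)$. Passing to exterior algebras, and recalling that $\bigwedge^*(M\oplus N)$ is the skew tensor product $\bigwedge^*M\otimes\bigwedge^*N$, this upgrades to an isomorphism of sheaves of skew-commutative graded rings $\bigl(\bigwedge^*\Omega^1_X\bigr)\boxtimes\bigl(\bigwedge^*\Omega^1_Y\bigr)\xrightarrow{\cong}\bigwedge^*\Omega^1_{X\times Y}$.

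Now $p^\sharp_X\otimes p^\sharp_Y$ fits into a commutative square whose top row is the isomorphism just obtained, whose vertical maps are the canonical surjections $\bigwedge^*\Omega^1_\bullet\twoheadrightarrow\Omega^*_\bullet$ (box-tensored on the left), and whose bottom row is $p^\sharp_X\otimes p^\sharp_Y$. Surjectivity is then immediate, and for injectivity it suffices to see that the two surjections from $\bigwedge^*\Omega^1_{X\times Y}$ onto $\Omega^*_X\boxtimes\Omega^*_Y$ and onto $\Omega^*_{X\times Y}$ have the same kernel. Writing $K_X=\ker(\bigwedge^*\Omega^1_X\to\Omega^*_X)$, the subsheaf of forms vanishing on $X^{\max}$, and $K_Y$ similarly, right-exactness of the tensor product identifies the first kernel with $K_X\boxtimes\bigwedge^*\Omega^1_Y+\bigwedge^*\Omega^1_X\boxtimes K_Y$; this is contained in the second kernel because a local section of the form $p^\sharp_X\alpha\wedge p^\sharp_Y\beta$ with $\alpha|_{X^{\max}}=0$ vanishes on $X^{\max}\times Y^{\max}=(X\times Y)^{\max}$.

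The reverse inclusion is the crux. Working at the stalk at the origin, fix small balls and set $A=X^{\max}\cap B_X$, $C=Y^{\max}\cap B_Y$; by the cone structure $\Omega^*_{X,0}$ is the image of $\bigwedge^*\Omega^1_{X,0}\to\Gamma(A,\bigwedge^*\Omega^1_X)$, and similarly for $Y$, while both kernels above are described by vanishing on $A\times C$. I would factor the restriction map $\bigwedge^*\Omega^1_{X,0}\otimes\bigwedge^*\Omega^1_{Y,0}\to\Gamma(A\times C,\bigwedge^*\Omega^1_{X\times Y})$ through the multiplication map $\mu\colon\Gamma(A,\bigwedge^*\Omega^1_X)\otimes\Gamma(C,\bigwedge^*\Omega^1_Y)\to\Gamma(A\times C,\bigwedge^*\Omega^1_{X\times Y})$, using the degree-$1$ case over the open set $A\times C\subseteq(X\times Y)^{\max}$ to identify $\bigwedge^*\Omega^1_{X\times Y}|_{A\times C}$ with the external tensor product of $\bigwedge^*\Omega^1_X|_A$ and $\bigwedge^*\Omega^1_Y|_C$. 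Two observations finish it: $\mu$ is injective — a degree-$0$ K\"unneth statement for the locally constant sheaves of lattices $\bigwedge^*\Omega^1_X|_{X^{\max}}$ and $\bigwedge^*\Omega^1_Y|_{Y^{\max}}$, proved by restricting to slices $A\times\{c\}$ and using that the relevant stalks are free abelian; and $\Gamma(A,\bigwedge^*\Omega^1_X)$, $\Gamma(C,\bigwedge^*\Omega^1_Y)$ are torsion-free while $\Omega^*_{X,0}$, $\Omega^*_{Y,0}$ are finitely generated, hence free abelian. Since torsion-free abelian groups are $\Z$-flat, $\Omega^*_{X,0}\otimes\Omega^*_{Y,0}$ then injects into $\Gamma(A,\bigwedge^*\Omega^1_X)\otimes\Gamma(C,\bigwedge^*\Omega^1_Y)$ and, via $\mu$, into $\Gamma(A\times C,\bigwedge^*\Omega^1_{X\times Y})$; so a germ that dies in the latter already dies in $\Omega^*_{X,0}\otimes\Omega^*_{Y,0}$, which is exactly the reverse inclusion. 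I expect the main obstacle to be this last paragraph: making rigorous the external-tensor-product description of $\bigwedge^*\Omega^1_{X\times Y}$ over $A\times C$ and the injectivity of $\mu$, after which the argument is a formal manipulation of flat $\Z$-modules; the auxiliary fact $(X\times Y)^{\max}=X^{\max}\times Y^{\max}$ also requires its own short argument.
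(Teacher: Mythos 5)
Your main line of attack is the same as the paper's: prove the degree-one splitting, deduce the isomorphism $\bigwedge^*\Omega^1_X\boxtimes\bigwedge^*\Omega^1_Y\cong\bigwedge^*\Omega^1_{X\times Y}$, and then show the two quotient maps onto $\Omega^*_X\boxtimes\Omega^*_Y$ and $\Omega^*_{X\times Y}$ have the same kernel, the hard inclusion resting on freeness/torsion-freeness of the lattices of forms. That core is sound. The genuine gap is in your two preliminary reductions, both of which fail at points of positive sedentarity (points with a coordinate equal to $\infty$). Proposition \ref{prop:local cone} only produces a morphism of germs $(\LC_x X,0)\to (X,x)$; it identifies the germ of $X$ at $x$ with the germ of its local cone only when $x$ lies in the finite stratum of a chart. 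For $X=\Rbar$ and $x=\infty$ the local cone is a single point while the germ of $X$ at $\infty$ is not, and, more to the point, $\Omega^p_{X,x}$ is defined via restriction to $X^{\max}$ near $x$, which for a point at infinity is typically disjoint from the stratum of $x$ and hence invisible in $\LC_x X$; so "replace $X$ and $Y$ by their local cones and compute at the origin of a fan" is not licensed and would have to be justified separately for every $p\geq 2$. Likewise your criterion "$z\in Z^{\max}$ iff $\LC_z Z$ is a linear subspace of $T_z Z$" is false at such points: for $\infty\in\Rbar$ one has $T_\infty\Rbar=0$ and $\LC_\infty\Rbar=\{0\}$, trivially a linear subspace, yet $\infty\notin\Rbar^{\max}$. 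Consequently your proof of $(X\times Y)^{\max}=X^{\max}\times Y^{\max}$ does not go through, and note that your "easy containment" of kernels actually invokes the nontrivial direction $(X\times Y)^{\max}\subseteq X^{\max}\times Y^{\max}$.

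Both problems are avoidable, and the paper's proof shows how. The easy containment needs no statement about $(X\times Y)^{\max}$ at all: the map of the lemma is already defined on the quotients because pullback of $p$-forms is well defined on $\Omega^*$ (Proposition \ref{prop:pullback of p-forms}, proved there with local face structures), so only surjectivity and injectivity remain. For injectivity the paper argues directly at stalks at an arbitrary point, using only the trivial inclusion that $(x',y')\in(X\times Y)^{\max}$ whenever $x'\in X^{\max}$ and $y'\in Y^{\max}$, together with freeness of the stalks of $\bigwedge^*\Omega^1$. Your flatness/K\"unneth argument for the hard inclusion survives in that setting if you drop the fan reduction and work in a chart with a local face structure at the given point (constructibility of $\bigwedge^*\Omega^1$ gives that the stalk of $\iota_*(\cdot|_{X^{\max}})$ is computed on a single small neighborhood, also at points at infinity, which is all your "fixed ball $A$, $C$" step really uses); but as written, the reduction to local cones and the characterization of the maximal locus are gaps that affect exactly the points where $\Omega^p$ is subtlest.
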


\begin{proof}
This is obvious in degree $1$: working in charts this comes down to the facts that the linear span of a product is the product of the linear spans and that the dual of a direct sum of lattices is the direct sum of the duals. Because the exterior product of a sum is the skew tensor product of the exterior products of the summands, and everything commutes with pullbacks, we obtain an isomorphism
\begin{equation*}
\bigwedge\nolimits^* \Omega^1_X \boxtimes\bigwedge\nolimits^* \Omega^1_Y \xrightarrow{\cong} \bigwedge\nolimits^* \Omega^1_{X\times Y} 
\end{equation*}
induced by $p_X$ and $p_Y$. What is left to show is that if $\alpha\boxtimes \beta$ vanishes on $(X\times Y)^{\max}$, then either $\alpha$ vanishes on $X^{\max}$ or $\beta$ vanishes on $Y^{\max}$. Assume the opposite. Then there exists a point $x\in X^{\max}$ at which $\alpha$ is nonzero and a point $y\in Y^{\max}$ at which $\beta$ is nonzero. But since the stalks of $\bigwedge\nolimits^*\Omega_X^1$ are all free, this implies that $\alpha\boxtimes\beta$ is nonzero an $(x,y)$, which is a point in $(X\times Y)^{\max}$,  a contradiction.
\end{proof}

Since $p_X^\sharp\otimes p_Y^\sharp$ is an isomorphism, we obtain a canonical splitting of the inclusion $\Omega^p_X\boxtimes \Omega^{p'}_Y \to \Omega^{p+p'}_{X\times Y}$ for every $p, p'\in \N$. Having established this we are ready to define the cross-product in tropical Borel-Moore homology. 

\begin{defn}
\label{def:cross product}
Let $X$ and $Y$  be rational polyhedral spaces, and let $\alpha\in H^{BM}_{p,q}(X)$ and $\beta\in H^{BM}_{p',q'}(Y)$. Then the composite 
\begin{equation*}
\Omega^{p+p'}_{X\times Y}[q+q'] \to \Omega^p_X[q]\boxtimes \Omega^{p'}_Y[q'] \xrightarrow{ \alpha \boxtimes^L \beta} \dual_X\boxtimes^L\dual_Y \cong \dual_{X\times Y} \ ,
\end{equation*}
where the leftmost morphism is the natural splitting of $\Omega^p_X\boxtimes \Omega^{p'}_Y\to \Omega^{p+p'}_{X\times Y}$, defines an element $\alpha\times \beta\in H^{BM}_{p+p',q+q'}(X\times Y)$, the \emph{cross product} of $\alpha$ and $\beta$. This defines a graded bilinear morphism
\begin{equation*}
\times\colon H^{BM}_{*,*}(X)\otimes H^{BM}_{*,*}(Y) \to H^{BM}_{*,*} (X\times Y) \ .
\end{equation*} 
\end{defn}

As both the identification $\dual_X\boxtimes^L\dual_Y\cong \dual_{X\times Y}$ and the pull-back of tropical forms is functorial, the same is true for cross-products. In other words, if $f\colon X\to X'$ and $g\colon Y\to Y'$ are proper morphisms of rational polyhedral spaces, then
\begin{equation*}
f_*(\alpha)\times g_*(\beta)=(f\times g)_*(\alpha\times \beta)
\end{equation*}
for all $\alpha\in H^{BM}_{*,*}(X)$ and $\beta\in H^{BM}_{*,*}(Y)$.

\begin{thm}[Tropical K\"unneth theorem]
\label{thm:Kuenneth}
Let $X$ and $Y$ be rational polyhedral spaces, and assume that $H^p_c(X, \Omega_X^q)$ is finitely generated for all $p,q\in \N$. Then for every $p,q\in \N$, there is a natural decomposition $H^{BM}_{p,q}(X\times Y)= \bigoplus_{i+j=p} A_{i,j,q}$,  where
\[
A_{i,j,q}\cong \Hom_{\der{X\times Y}}(\Omega_X^i\boxtimes \Omega_Y^j[q], \dual_{X\times Y}) ,
\]
and for each $i,j\in \N$ there is a short exact sequence
\begin{multline*}
0\to \bigoplus_{k+l=q} H^{BM}_{i,k}(X)\otimes_\Z H^{BM}_{j,l}(Y) \to A_{i,j,q} \to \\
\to \bigoplus_{k+l=q-1} \Tor^\Z_1(H^{BM}_{i,k}(X),H^{BM}_{j,l}(Y)) \to 0 \ ,
\end{multline*}
where the first morphism is given by the cross product.
\end{thm}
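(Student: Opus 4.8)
The plan is to deduce the theorem from the classical algebraic Künneth theorem over $\Z$, using Verdier duality together with the topological Künneth isomorphism of Iversen to translate the geometric statement into a purely homological one. First I would record the direct sum decomposition. By Lemma \ref{lem:differentials on product} the morphism $p_X^\sharp\otimes p_Y^\sharp$ identifies $\Omega^*_{X\times Y}$ with the skew tensor product $\Omega^*_X\boxtimes\Omega^*_Y$ of bigraded sheaves of rings; extracting the part of total degree $p$ gives a canonical decomposition $\Omega^p_{X\times Y}\cong\bigoplus_{i+j=p}\Omega^i_X\boxtimes\Omega^j_Y$, whose summands are the images of the splittings of $\Omega^i_X\boxtimes\Omega^j_Y\hookrightarrow\Omega^{i+j}_{X\times Y}$ used in Definition \ref{def:cross product}. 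Since $\Hom_{\der{X\times Y}}(-,\dual_{X\times Y})$ carries finite direct sums to direct sums, this yields $H^{BM}_{p,q}(X\times Y)=\bigoplus_{i+j=p}A_{i,j,q}$ with $A_{i,j,q}=\Hom_{\der{X\times Y}}(\Omega^i_X\boxtimes\Omega^j_Y[q],\dual_{X\times Y})$, and naturality is inherited from that of Lemma \ref{lem:differentials on product}.

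Next I would rewrite $A_{i,j,q}$ homologically. The argument of Remark \ref{rem:homology using Verdier duality} uses only the defining property of $\dual_{X\times Y}$, so it applies with $\Omega^i_X\boxtimes\Omega^j_Y$ in place of $\Omega^p_X$ and gives $A_{i,j,q}\cong H^{-q}R\Hom^\bullet(R\Gamma_c(\Omega^i_X\boxtimes\Omega^j_Y),\Z)$. The stalks of $\Omega^i_X$ are torsion-free, hence flat over $\Z$ --- each is a subgroup of a stalk of $\iota_*$ of a sheaf that is locally free on $X^{\max}$ --- so $p_X^{-1}\Omega^i_X$ is a flat $\Z_{X\times Y}$-module and $\Omega^i_X\boxtimes\Omega^j_Y=\Omega^i_X\boxtimes^L\Omega^j_Y$; the Künneth formula \cite[VII-2.7]{Iversen} then gives $R\Gamma_c(\Omega^i_X\boxtimes^L\Omega^j_Y)\cong M^\bullet\otimes^L_\Z N^\bullet$, where $M^\bullet=R\Gamma_c(X,\Omega^i_X)$ and $N^\bullet=R\Gamma_c(Y,\Omega^j_Y)$. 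The complex $M^\bullet$ is bounded with finitely generated cohomology $H^p_c(X,\Omega^i_X)$ (the latter by hypothesis), hence a perfect complex of abelian groups. Writing $M^\vee=R\Hom^\bullet(M^\bullet,\Z)$ and $N^\vee=R\Hom^\bullet(N^\bullet,\Z)$, the tensor-hom adjunction combined with the identity $R\Hom^\bullet(M^\bullet,-)\cong M^\vee\otimes^L_\Z(-)$ for perfect $M^\bullet$ gives
\[
A_{i,j,q}\cong H^{-q}R\Hom^\bullet(M^\bullet\otimes^L_\Z N^\bullet,\Z)\cong H^{-q}(M^\vee\otimes^L_\Z N^\vee)\ ,
\]
and $H^{-k}(M^\vee)=H^{BM}_{i,k}(X)$, $H^{-l}(N^\vee)=H^{BM}_{j,l}(Y)$ by Remark \ref{rem:homology using Verdier duality}.

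Applying the algebraic Künneth theorem over the principal ideal domain $\Z$ to the bounded complexes $M^\vee$ and $N^\vee$ now produces, for $n=-q$, a short exact sequence $0\to\bigoplus_{a+b=-q}H^a(M^\vee)\otimes_\Z H^b(N^\vee)\to H^{-q}(M^\vee\otimes^L_\Z N^\vee)\to\bigoplus_{a+b=-q+1}\Tor^\Z_1(H^a(M^\vee),H^b(N^\vee))\to 0$; substituting $a=-k$, $b=-l$ and the identifications above turns this into exactly the sequence in the statement, with $A_{i,j,q}$ in the middle.

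The one point that requires genuine care --- and which I expect to be the main obstacle --- is that the first map of this sequence is the cross product of Definition \ref{def:cross product}. Concretely, $\alpha\times\beta$ is built from $\alpha\boxtimes^L\beta$ by postcomposing with $\dual_X\boxtimes^L\dual_Y\xrightarrow{\cong}\dual_{X\times Y}$ and precomposing with the splitting above, and one must check that under Verdier duality for $X$, $Y$ and $X\times Y$ and the Iversen Künneth isomorphism this becomes the evident pairing $H^a(M^\vee)\otimes_\Z H^b(N^\vee)\to H^{a+b}(M^\vee\otimes^L_\Z N^\vee)$. The key input is that the isomorphism $\dual_X\boxtimes^L\dual_Y\cong\dual_{X\times Y}$ is, by construction (recalled before Lemma \ref{lem:Eilenberg-Zilber}), precisely the one corresponding under the trace maps to the Iversen Künneth isomorphism $R\Gamma_c\dual_X\otimes^L_\Z R\Gamma_c\dual_Y\cong R\Gamma_c\dual_{X\times Y}$; granting this, the comparison of the two maps is a diagram chase through the tensor-hom adjunction and the perfectness isomorphism $R\Hom^\bullet(M^\bullet,N^\vee)\cong M^\vee\otimes^L_\Z N^\vee$. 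Writing out this bookkeeping carefully is the substantive part of the argument; the rest is formal.
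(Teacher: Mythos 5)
Your proposal is correct and follows essentially the same route as the paper's proof: the decomposition via Lemma \ref{lem:differentials on product}, the reduction to $H^{-q}R\Hom^\bullet(R\Gamma_c(\Omega^i_X\boxtimes\Omega^j_Y),\Z)$ by the universal property of the dualizing complex, Iversen's K\"unneth formula, the finite-generation hypothesis to dualize the derived tensor product, the algebraic K\"unneth theorem over $\Z$, and a final diagram chase identifying the first map with the cross product via the trace-map construction of $\dual_X\boxtimes^L\dual_Y\cong\dual_{X\times Y}$. The only difference is cosmetic: you make explicit the flatness of $\Omega^i_X$ and the perfect-complex justification of the duality isomorphism, which the paper leaves implicit.
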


\begin{proof}
By Lemma \ref{lem:differentials on product}, there is a natural decomposition
\[
\Omega^{p}_{X\times Y}\cong \bigoplus_{i+j=p} \Omega^i_X \boxtimes \Omega^j_Y \ ,
\]
which induces a decomposition 
\begin{multline*}
H^{BM}_{p,q}(X\times Y)= \Hom_{\der{X\times Y}}(\Omega^p_{X\times Y}[q]  , \dual_{X\times Y}) =\\
=\bigoplus_{i+j=p} \Hom_{\der{X\times Y}}(\Omega^i_X \boxtimes \Omega^j_Y[q],\dual_{X\times Y})
\end{multline*}
which equals $\bigoplus_{i,j} A_{i,j,q}$ if we set
\[
A_{i,j,q}= \Hom_{\der{X\times Y}}(\Omega_X^i\boxtimes \Omega_Y^j[q], \dual_{X\times Y}) \ .
\]
Using the universal property of the dualizing complex, we see that this equals
\[
\Hom_{\der{}} (R\Gamma_c (\Omega_X^i\boxtimes \Omega_Y^j)[q], \Z)\cong H^{-q} R\Hom(R\Gamma_c (\Omega_X^i\boxtimes \Omega_Y^j), \Z) \ .
\]
By a version of the K\"unneth theorem \cite[VII 2.7]{Iversen}, there is a natural isomorphism
\[
R\Gamma_c(\Omega_X^i\boxtimes \Omega_Y^j)\cong R\Gamma_c\Omega_X^i\otimes_\Z^L R\Gamma_c\Omega_Y^j \ ,
\]
and hence we have
\[
R\Hom(R\Gamma_c (\Omega_X^i\boxtimes \Omega_Y^j), \Z)\cong R\Hom(R\Gamma_c\Omega_X^i\otimes_\Z^L R\Gamma_c\Omega_Y^j,\Z) \ .
\]
By assumption, the cohomology groups of $R\Gamma_c\Omega_X^i$ are finitely generated, so the natural morphism
\[
R\Hom(R\Gamma_c\Omega_X^i,\Z)\otimes_\Z^L R\Hom(R\Gamma_c\Omega_Y^j,\Z) \xrightarrow{\cong}
R\Hom(R\Gamma_c\Omega_X^i\otimes_\Z^L R\Gamma_c\Omega_Y^j,\Z) 
\]
is an isomorphism. Combining these isomorphisms, we obtain an isomorphism
\[
A_{i,j,q}\cong H^{-q}\bigg( R\Hom(R\Gamma_c\Omega_X^i,\Z)\otimes_\Z^L R\Hom(R\Gamma_c\Omega_Y^j,\Z)\bigg) \ .
\]
So by the K\"unneth theorem for complexes \cite[Theorem 3.6.3]{Weibel95} and the identifications (see Remark \ref{rem:homology using Verdier duality})
\begin{align*}
H^{-k}R\Hom(R\Gamma_c\Omega_X^i,\Z)&\cong H^{BM}_{i,k}(X) \quad\text{, and} \\
H^{-l}R\Hom(R\Gamma_c\Omega_Y^j,\Z)&\cong H^{BM}_{j,l}(X) \quad ,
\end{align*}
we obtain the desired short exact sequence. It remains to show that the first map in this sequence is given by the cross product. The map in the exact sequence is the given by the composites
\begin{multline*}
\Hom_{\der{}}(R\Gamma_c\Omega_X^i[k],\Z)\otimes_\Z \Hom_{\der{}}(R\Gamma_c\Omega_Y^j[l],\Z)\to\\
\to \Hom_{\der{}}(R\Gamma_c\Omega_X^i[k]\otimes_\Z^L R\Gamma_c\Omega_Y^j[l],\Z)\cong \\
\cong \Hom_{\der{}}(R\Gamma_c(\Omega^i_X[k]\boxtimes\Omega^j_Y[l]),\Z)
\end{multline*}
whereas by Definition \ref{def:cross product}, the cross product $H_{i,k}^{BM}(X)\otimes_\Z H_{j,l}^{BM}(Y) \to A_{i,j,q}$ is given by the composite
\begin{multline*}
\Hom_{\der X}(\Omega_X^i[k],\dual_X)\otimes_\Z \Hom_{\der Y}(\Omega_Y^j[l], \dual_Y)\to \\
\to \Hom_{\der{X\times Y}} (\Omega_X^i[k]\boxtimes \Omega_Y^j[l],\dual_X\boxtimes^L \dual_Y)\to\\
\to \Hom_{\der{X\times Y}}(\Omega_X^i[k]\boxtimes\Omega_Y[l], \dual_{X\times Y}) \ .
\end{multline*}
The domains and codomains of these composites are naturally isomorphic by the universal property of the dualizing complexes. Showing that these natural isomorphisms are compatible with the morphisms requires a quick diagram chase that eventually boils down to the facts that the K\"unneth isomorphism
\[
R\Gamma_c(\mathcal C^\bullet \boxtimes^L \mathcal D^\bullet )\cong R\Gamma_c\mathcal C^\bullet \otimes_\Z^L R\Gamma_c\mathcal D^\bullet
\]
is functorial in both $\mathcal C^\bullet $ and $\mathcal D^\bullet$, and that the morphism in 
\[
\Hom_{\der{}}(R\Gamma_c(\dual_X \boxtimes^L \dual_Y),\Z)
\]
 corresponding to the natural isomorphism $\dual_X\boxtimes^L\dual_Y\to \dual_{X\times Y}$ involved in the definition of the cross product is defined via the K\"unneth isomorphism.
\end{proof}

\subsection{Cup and Cap Products}
Let $X$ be a rational polyhedral space. Since $\Omega^*_X$ is a sheaf of rings, its cohomology group has a ring structure again, the multiplication being the so-called \emph{cup product}. If 
\begin{align*}
(\Z_V\xrightarrow{c} \Omega_X^p[q])&\in H^{p,q}_V(X) \ \;\text{, and} \\
(\Z_W\xrightarrow{d} \Omega_X^{p'}[q'])&\in H^{p',q'}_V(X) \ ,
\end{align*}
where $p,p',q,q'$ are integers and $V$ and $W$ are locally polyhedral subsets of $X$, then their cup product 
\[
c\smile d\in H^{p+p',q+q'}_{V\cap W}(X)
\]
is represented by the composite
\[
\Z_{V\cap W} \xrightarrow{\cong} \Z_V\otimes_{\Z_X} \Z_W\xrightarrow{c\otimes d} \Omega^p_X\otimes_{\Z_X} \Omega^{p'}_X [q+q']\to \Omega^{p+p'}_X[q+ q'] \ .
\]
Here, the last morphism is the product on $\Omega_X^*$, and the morphism in the middle can be obtained using the the fact that $\Omega_X^p\otimes_{\Z_X}\Omega_X^{p'}\cong \Omega_X^p\otimes^L_{\Z_X}\Omega_X^{p'}$ because $\Omega^*_X$ is flat and the functoriality of the derived tensor product.
It follows directly from the associativity property of the sheaf of rings $\Omega^*_X$  that the cup product on $H^{*,*}(X)$ is associative. It is also unital, the unity being represented by the identity map on $\Z_X\to \Z_X=\Omega^0_X$. It is clear from the construction that the restriction of the cup product to $H^{0,*}(X)$ is the classical cup product on the cohomology of $X$ (cf.\ Lemma \ref{lem:basicBM} (a) and \cite{Iversen}[II.9.9]).  

\begin{prop}
\label{prop:pullback is ring morphism}
Let $f\colon X\to Y$ be a morphism of rational polyhedral spaces. Then the pull-back (defined in \S\ref{subsec:pullback})
\[
f^*\colon H^{*,*}(Y)\to H^{*,*}(X)
\]
is a ring homomorphism.
\end{prop}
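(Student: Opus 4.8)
The plan is to reduce the statement to the single fact that $f^\sharp\colon f^{-1}\Omega^*_Y\to\Omega^*_X$ is a morphism of sheaves of skew-commutative graded rings, which is built into Proposition \ref{prop:pullback of p-forms}: there $f^\sharp$ is obtained from the morphism of exterior algebras $\bigwedge f^\sharp\colon f^{-1}\bigwedge^*\Omega^1_Y\to\bigwedge^*\Omega^1_X$ by passing to the quotients that define $\Omega^*_Y$ and $\Omega^*_X$, and both the exterior-algebra map and the two quotient maps are ring homomorphisms. Writing $m_X$ and $m_Y$ for the multiplications of the sheaves of rings $\Omega^*_X$ and $\Omega^*_Y$, this says precisely that $f^\sharp\circ f^{-1}(m_Y)=m_X\circ(f^\sharp\otimes f^\sharp)$ after the canonical identification $f^{-1}(\Omega^*_Y\otimes_{\Z_Y}\Omega^*_Y)\cong f^{-1}\Omega^*_Y\otimes_{\Z_X}f^{-1}\Omega^*_Y$.

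First I would recall from \S\ref{subsec:pullback} that a class $c\in H^{p,q}(Y)$, viewed as an arrow $\Z_Y\xrightarrow{c}\Omega^p_Y[q]$ in $\der Y$, pulls back to the composite $\Z_X\cong f^{-1}\Z_Y\xrightarrow{f^{-1}c}f^{-1}\Omega^p_Y[q]\xrightarrow{f^\sharp[q]}\Omega^p_X[q]$, where $f^{-1}\colon\der Y\to\der X$ is the inverse image functor; this functor is exact and carries flat $\Z_Y$-modules to flat $\Z_X$-modules (stalks are unchanged), hence commutes with the derived tensor product and sends the canonical isomorphism $\Z_Y\cong\Z_Y\otimes_{\Z_Y}\Z_Y$ to $\Z_X\cong\Z_X\otimes_{\Z_X}\Z_X$. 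Applying $f^{-1}$ to the composite defining $c\smile d$ for a second class $d\in H^{p',q'}(Y)$ thus exhibits $f^{-1}(c\smile d)$ as
\[
\Z_X\cong\Z_X\otimes_{\Z_X}\Z_X\xrightarrow{f^{-1}c\otimes f^{-1}d}f^{-1}\Omega^p_Y\otimes_{\Z_X}f^{-1}\Omega^{p'}_Y[q+q']\xrightarrow{f^{-1}(m_Y)}f^{-1}\Omega^{p+p'}_Y[q+q'].
\]
Postcomposing with $f^\sharp[q+q']$, using $f^\sharp\circ f^{-1}(m_Y)=m_X\circ(f^\sharp\otimes f^\sharp)$ together with $(f^\sharp\otimes f^\sharp)\circ(f^{-1}c\otimes f^{-1}d)=(f^\sharp\circ f^{-1}c)\otimes(f^\sharp\circ f^{-1}d)$, I would identify $f^*(c\smile d)$ with
\[
\Z_X\cong\Z_X\otimes_{\Z_X}\Z_X\xrightarrow{f^*c\otimes f^*d}\Omega^p_X\otimes_{\Z_X}\Omega^{p'}_X[q+q']\xrightarrow{m_X}\Omega^{p+p'}_X[q+q'],
\]
which is exactly the representative of $f^*c\smile f^*d$. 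Finally, since $f^\sharp$ is the identity in degree $0$ under $\Omega^0_Y=\Z_Y$ and $\Omega^0_X=\Z_X$, the unit $\id_{\Z_Y}$ of $H^{*,*}(Y)$ is carried to the unit $\id_{\Z_X}$ of $H^{*,*}(X)$, so $f^*$ respects units as well.

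The argument is essentially formal, and the only point requiring care is the coherence bookkeeping in the second paragraph: one must check that $f^{-1}$ genuinely commutes with the derived tensor product used to define $\smile$ — via the identification $\Omega^*_X\otimes_{\Z_X}\Omega^*_X\cong\Omega^*_X\otimes^L_{\Z_X}\Omega^*_X$, which is preserved because $f^{-1}$ takes flat sheaves to flat sheaves — and that it is compatible with the isomorphisms $\Z_{V\cap W}\cong\Z_V\otimes\Z_W$ underlying the construction, noting $f^{-1}\Z_V=\Z_{f^{-1}V}$. In particular the same proof shows that $f^*$ is compatible with the refined supports, i.e.\ $f^*(c\smile d)=f^*c\smile f^*d$ already in $H^{*,*}_{f^{-1}(V\cap W)}(X)$. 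None of this is an obstacle; it is standard behaviour of the inverse image functor.
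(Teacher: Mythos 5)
Your proposal is correct and follows the same route as the paper, whose proof is exactly the one-line observation that everything reduces to $f^\sharp\colon f^{-1}\Omega^*_Y\to\Omega^*_X$ being a morphism of sheaves of graded rings; you have simply written out the formal bookkeeping (exactness of $f^{-1}$, preservation of flatness, compatibility with $\Z_V\otimes\Z_W$ and with units) that the paper leaves to the reader. Nothing is missing, and your remark about compatibility with supports is a harmless strengthening in the same spirit.
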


\begin{proof}
Examining the definitions of cup products and pull-backs, we see that this directly follows from the fact that the pull-back $f^\sharp\colon f^{-1}\Omega^*_Y\to \Omega^*_X$ of tropical forms is a morphism of sheaves of graded rings.
\end{proof}

Similarly as the cup product, the {\em cap product} also generalizes from the classical to the tropical setting. To define it, let 
\begin{align*}
(\Z_V\xrightarrow{c} \Omega_X^i[j])&\in H^{i,j}_V(X) \ \;\text{, and} \\
( (\Omega_X^{p})_W[q]\to \dual_X)&\in H^{BM}_{p,q}(W,X) \ ,
\end{align*}
where $p,q,i,j$ are integers and $V$ and $W$ are locally polyhedral subsets of $X$. Then the \emph{cap product}
\[
c\frown \alpha \in H^{BM}_{p-i,q-j}(V\cap W,X)
\]
is represented by the composite
\begin{multline*}
(\Omega^{p-i}_X)_{V\cap W}[q-j]\xrightarrow{\cong} \Z_V\otimes_{\Z_X}(\Omega^{p-i}_X)_W[q-j]\xrightarrow{c\otimes\id} \Omega^i_X\otimes_{\Z_X}(\Omega^{p-i}_X)_W[q]\to\\
 \to (\Omega^p_X)_W[q]\xrightarrow{\alpha} \dual_X \ ,
\end{multline*} 
where the second arrow can be defined using the functoriality of the derived tensor product, and the third morphism is the product on $\Omega^*_X$. Again by the associativity property of the sheaf of rings $\Omega^*_X$, the cap product makes $H^{BM}_{*,*}(X)$ a left $H^{*,*}(X)$-module. It is clear from the construction that the restriction
\[
H^{0,*}(X)\times H^{BM}_{0,*}(X)\to H^{BM}_{0,*}(X)
\]
of the cap product is the cap product in classical Borel-Moore homology (cf.\ Lemma \ref{lem:basicBM} (a) and \cite[IX.3]{Iversen}).

\begin{prop}[Projection formula]
\label{prop:projection formula}
Let $f\colon X\to Y$ be a proper morphism of rational polyhedral spaces, let $\alpha\in H^{BM}_{p,q}(X)$ and $c\in H^{i,j}(Y)$. Then we have the equality
\[
f_*(f^*c\frown \alpha)= c\frown f_*\alpha 
\]
in $H^{BM}_{p-i,q-j}(Y)$.
\end{prop}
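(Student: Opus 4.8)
The plan is to unwind both sides of the claimed identity into composites of morphisms in the derived category and then verify that the two resulting composites agree by a diagram chase, using the functoriality of the derived push-forward together with the compatibility of the pull-back of tropical forms with the ring structure. Concretely, write $\alpha$ as an arrow $(\Omega^p_X)[q]\xrightarrow{\alpha}\dual_X$ and $c$ as an arrow $\Z_Y\xrightarrow{c}\Omega^i_Y[j]$ (using Remarks \ref{rem:cohomology classes as morphisms} and \ref{rem:homology classes as morphisms}). Then $f^*c$ is the composite $\Z_X\cong f^{-1}\Z_Y\to f^{-1}\Omega^i_Y[j]\xrightarrow{f^\sharp} \Omega^i_X[j]$, and $f^*c\frown\alpha$ is obtained by tensoring and multiplying as in the definition of the cap product, followed by $\alpha$.

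**Next I would** spell out the two sides as follows. For the left-hand side, after forming $f^*c\frown\alpha\in H^{BM}_{p-i,q-j}(X)$ one applies $Rf_*$ and postcomposes with the natural transformations $\Omega^{p-i}_Y\to Rf_*\Omega^{p-i}_X$ of \eqref{eq:Omp} and $Rf_*\dual_X\to\dual_Y$ of \eqref{eq:Rf}. For the right-hand side, one first forms $f_*\alpha$ (using the same two natural transformations with $p$ in place of $p-i$) and then caps with $c$, which tensors with $\Z_V$ and multiplies by the ring structure of $\Omega^*_Y$. The key point is that the morphism $\Omega^i_Y\otimes_{\Z_Y}\Omega^{p-i}_Y\to\Omega^p_Y$ and the morphism $f^\sharp\colon f^{-1}\Omega^*_Y\to\Omega^*_X$ are both maps of sheaves of graded rings, so the square relating $Rf_*(\Omega^i_X\otimes\Omega^{p-i}_X)$, $Rf_*\Omega^p_X$, $\Omega^i_Y\otimes\Omega^{p-i}_Y$ and $\Omega^p_Y$ commutes; combined with the projection-formula-type compatibility $Rf_*(f^{-1}\mathcal G\otimes^L\mathcal F)\cong\mathcal G\otimes^L Rf_*\mathcal F$ for the derived tensor product, this lets one slide the class $c$ past the push-forward. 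Since all sheaves of tropical forms are flat (this was used for the cup and cap products already), the derived and underived tensor products coincide and there are no higher $\Tor$ terms to worry about.

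**The main obstacle** will be bookkeeping: one has to track the supports $V$, $f^{-1}(V)$, $f(X)$, and $X$ through the various $\iota_!\iota^{-1}$ functors and make sure the natural isomorphisms $\Z_{f^{-1}V}\cong f^{-1}\Z_V$ and the projection isomorphism for $Rf_*$ restrict correctly, and that the identification $\Omega^{p-i}_Y\cong$ (a summand, via flatness) interacts well with taking sections with support. I expect the cleanest route is to reduce, via the universal property of $\dual_Y$, to checking an equality of morphisms $R\Gamma_c(-)\to\Z$, i.e. to apply $R\Gamma_c$ everywhere; then the natural isomorphism $R\Gamma_c\circ Rf_*\cong R\Gamma_c$ collapses the push-forward on the level of compact supports, the trace $\int_X$ on $X$ literally becomes the trace data used to define $f_*$, and both sides of the projection formula become the same composite involving $c$, the Eilenberg--Zilber-type product on chains, and the augmentation to $\Z$ — exactly as in the classical proof (cf.\ \cite[IX.4]{Iversen}).

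**Finally,** once the two composites are seen to coincide after applying $R\Gamma_c$, the universal property of $\dual_Y$ forces them to coincide before applying $R\Gamma_c$, which is the asserted identity in $H^{BM}_{p-i,q-j}(Y)$. The only genuinely new input beyond the classical argument is Proposition \ref{prop:pullback of p-forms} (that $f^\sharp$ is a morphism of sheaves of graded rings) and the fact, recorded before Definition \ref{def:cross product} and in Lemma \ref{lem:differentials on product}'s neighborhood, that tropical forms are flat $\Z_X$-modules; everything else is a formal consequence of the functoriality built into the definitions of $f_*$, $f^*$, and $\frown$ in \S\ref{sec:BMfunc} and the preceding subsections.
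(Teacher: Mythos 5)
Your proposal is correct and follows essentially the same route as the paper: both sides are unwound into composites $\Omega^{p-i}_Y[q-j]\to\dual_Y$ ending in $Rf_*\alpha$ and the natural map $Rf_*\dual_X\to\dual_Y$, and the identity reduces to the commutativity of a square which, via the adjunction between $f^{-1}$ and $Rf_*$, is exactly the statement that $f^\sharp\colon f^{-1}\Omega^*_Y\to\Omega^*_X$ is a morphism of sheaves of graded rings. Your additional detour through $R\Gamma_c$, the sheaf-theoretic projection formula for $Rf_*$, and the Eilenberg--Zilber map is harmless but unnecessary (and the supports are vacuous here, since $c$ has no support condition); the paper closes the argument entirely at the level of that square on $Y$.
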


\begin{proof}
Both sides of the equation correspond to a morphism 
\[
\Omega^{p-i}_Y[q-j]\to \dual_Y \ .
\]
More precisely, the left side of the equation corresponds to the morphism obtained by moving counterclockwise along the square in the diagram below, whereas the right side of the equation corresponds to the morphism obtained by moving clockwise along the square.
\begin{center}
\begin{tikzpicture}[auto]
\node (YOpi) at  (0,0) {$\Omega_Y^{p-i}[q-j]$}	;
\node (YOiOpi) at (5,0) {$\Omega_Y^{i}\otimes_{\Z_Y} \Omega_Y^{p-i}[q]$}	;
\node (YOp) at (10,0) {$\Omega^{p}_{Y}[q]$} ;
\node (XOpi) at (0,-1.5) {$Rf_*\Omega_X^{p-i}[q-j]$}	;
\node (YOiXOpi) at (0,-3) {$Rf_*(f^{-1}\Omega_Y^i\otimes_{\Z_X} \Omega_X^{p-i})[q]$};
\node (XOiOpi) at (5,-3) {$Rf_*(\Omega_X^i\otimes_{\Z_X} \Omega_X^{p-i})[q]$};
\node (XOp) at (10,-3) {$Rf_*\Omega_X^p[q]$} ;
\node (DX) at (5,-5)  {$Rf_*\dual_X$};
\node (DY) at (10,-5) {$\dual_Y$};

\begin{scope}[->,font=\footnotesize]
\draw (YOpi) --node{$c\otimes\id$} (YOiOpi);
\draw (YOiOpi) -- (YOp);
\draw (XOpi) --node{$Rf_*(f^{-1}c\otimes\id)$} (YOiXOpi);
\draw (YOiXOpi) -- (XOiOpi);
\draw (XOiOpi)--(XOp);
\draw (DX)--(DY);
\draw (YOpi)--(XOpi);
\draw (YOp)--(XOp);
\draw (XOp)--node{$Rf_*\alpha$}(DX);

\end{scope}
\end{tikzpicture}
\end{center}
It thus suffices to show that the square commutes. Using the fact that $f^{-1}$ and $Rf_*$ are adjoints, this boils down to the fact that $f^\sharp\colon f^{-1}\Omega^*_Y\to \Omega^*_X$ is a morphism of sheaves of rings.
\end{proof}

\subsection{Comparison with singular tropical homology} 
\label{subsec:Comparison of homologies}

Let $X$ be a rational polyhedral space, and let $\Sigma$ be a face structure on $X$.
We say that a singular simplex $\sigma\colon\Delta^q\to X$ (where $\Delta^q$ denotes the standard $q$-simplex) respects the face structure $\Sigma$ if for every face $\Theta\subseteq \Delta^q$ there exists a polyhedron $P\in\Sigma$ such that $\sigma(\relint(\Theta))\subseteq P$. A \emph{tropical $(p,q)$-simplex (with respect to $\Sigma$)}, is a pair $(\sigma, s)$, where $\sigma\colon \Delta^q\to X$ is a singular $q$-simplex respecting the face structure $\Sigma$ and $s\in \Hom(\Omega_X^p|_{\sigma(\Delta^q)},\Z_{\sigma(\Delta^q)})$. We denote by $C_{p,q}(X;\Sigma)$ the free abelian group generated by tropical $(p,q)$-simplices (w.r.t.\ $\Sigma$).
If $(\sigma,s)$ is a tropical $(p,q)$-simplex, then pulling back $(\sigma,s)$ along the $i$-th face morphism $\delta^{q,i}\colon\Delta^{q-1}\to \Delta^q$ yields a tropical $(p,q-1)$-simplex $\partial_{p,q,i}(\sigma, s)=(\sigma\circ \delta^{q,i}, s|_{\sigma(\delta^{q,i}(\Delta^{q-1}))})$. Extending $\partial_{p,q,i}$ by linearity and taking alternating sums, one defines the differentials 
\[
\partial_{p,q} =\sum_{i=0}^q (-1)^i\partial_{p,q,i}
\]
and obtains a chain complexes $C_{p,\bullet}(X;\Sigma)$.
We call their homology groups
\[
H^\sing_{p,q}(X;\Sigma)\coloneqq H_q(C_{p,\bullet}(X;\Sigma))
\]
 the \emph{singular tropical homology groups of $X$ (with respect to $\Sigma$)}. They agree with the tropical homology groups introduced in \cite{TropHomology}. It is well-known that they do not depend on $\Sigma$, which will also follow from Theorem \ref{thm:compatibility of homology theories}.

Allowing locally finite chains, that is infinite sums of tropical simplices such that every point has a neighborhood intersecting only finitely many of them, instead of only finite chains we obtain chain complexes $C^{lf}_{p,\bullet}(X;\Sigma)$ whose homology groups
\[
H^{lf}_{p,q}(X;\Sigma)=H_q(C^{lf}_{p,\bullet}(X;\Sigma))
\]
are the \emph{locally finite tropical homology groups of $X$ (with respect to $\Sigma$}). They agree with with  tropical homology groups studied in \cite{Lefschetz}. Again, it will follow from Theorem \ref{thm:compatibility of homology theories} that they are independent of the face structure $\Sigma$.

\begin{thm}
\label{thm:compatibility of homology theories}
Let $X$ be a rational polyhedral space equipped with a face structure $\Sigma$. Then there are natural isomorphisms
\begin{align*}
H^{lf}_{p,q}(X;\Sigma)&\cong H^{BM}_{p,q}(X) \quad \text{, and} \\
H^\sing_{p,q}(X;\Sigma)&\cong H_{p,q}(X) \ .
\end{align*}
\end{thm}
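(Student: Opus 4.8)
The plan is to construct, for a rational polyhedral space $X$ with face structure $\Sigma$, a complex of sheaves whose global sections (respectively, global sections with compact support) compute the singular tropical chain complexes, and which is at the same time a resolution of $\Omega_X^p$ in a way compatible with the dualizing complex. Concretely, I would sheafify the construction of $C_{p,\bullet}$: for an open set $U$, let $S^i_{p}(U)$ be built from tropical $(p,-i)$-simplices $(\sigma,s)$ with $\sigma$ a singular $(-i)$-simplex in $X$ whose image meets $U$, modulo those whose image misses $U$ — i.e.\ the sheafification of $U\mapsto C_{p,-i}(X,X\setminus U)$ in the tropical sense, twisted by the choice of a section $s\in\Hom(\Omega_X^p|_{\sigma(\Delta^{-i})},\Z)$. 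The point is that this is nothing but $\Homs(\Omega_X^p,\Delta_X^\bullet)$ once one checks that a section of $\Omega_X^p$ over the (contractible, hence face-structure-respecting) image of a simplex is determined by finitely many integers, so that the "tropical" decoration $s$ is exactly a map $\Omega_X^p|_{\sigma(\Delta^q)}\to\Z_{\sigma(\Delta^q)}$. Granting this identification, $\Gamma_c$ of this complex is $C^{}_{p,\bullet}(X;\Sigma)$ and $\Gamma$ of it is $C^{lf}_{p,\bullet}(X;\Sigma)$, at least after one knows that arbitrary (not necessarily face-structure-respecting) singular simplices can be replaced by face-structure-respecting ones without changing homology.

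The key steps, in order, would be: (1) Using $\Omega_X^0=\Z_X$, reduce the "bookkeeping" part to the classical statement that $\Delta_X^\bullet$ computes the dualizing complex, and that its compactly supported global sections compute singular homology and its global sections compute Borel–Moore homology — this is the content pushed to the appendix, dealing with sheaves of singular chains on conically stratified spaces, and I would cite it. (2) Show $\Homs^\bullet(\Omega_X^p,\Delta_X^\bullet)$ represents $R\Homs^\bullet(\Omega_X^p,\dual_X)$; since $\Delta_X^\bullet\simeq\dual_X$ and the $\Delta_X^{-i}$ are homotopically fine (hence $\Gamma$- and $\Gamma_c$-acyclic and flat enough), and $\Omega_X^p$ is a constructible sheaf that is locally a finite free module on each stratum, the naive Hom-complex computes the derived one. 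Taking $H^{-q}R\Gamma$ of this gives $H^{BM}_{p,q}(X)$ by definition, and $H^{-q}R\Gamma_c$ gives $H_{p,q}(X)$ by Remark on homology via Verdier duality. (3) Identify $\Gamma\Homs^\bullet(\Omega_X^p,\Delta_X^\bullet)$ with $C^{lf}_{p,\bullet}(X;\Sigma)$ (up to the reindexing $q\leftrightarrow -i$ and a sign convention), and similarly $\Gamma_c$ with $C_{p,\bullet}(X;\Sigma)$; this is where the face-structure enters, and where one invokes that $\Sigma$-respecting simplices form a subcomplex that is a deformation retract, so the homology is unchanged and independent of $\Sigma$. (4) Check naturality — that the comparison maps are induced by an actual map of complexes, compatible with restriction — which is automatic from the sheaf-theoretic construction.

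The main obstacle I expect is Step (3): matching the explicitly simplicial, face-structure-dependent complexes $C_{p,\bullet}(X;\Sigma)$ and $C^{lf}_{p,\bullet}(X;\Sigma)$ with the sections of the sheafy Hom-complex $\Homs^\bullet(\Omega_X^p,\Delta_X^\bullet)$. One has to see that an element of $\Hom(\Omega_X^p|_{\sigma(\Delta^q)},\Z)$ really is the same data as a global section of $\Homs(\Omega_X^p,\Delta_X^{-q})$ restricted along a simplex, which requires controlling how $\Omega_X^p$ behaves on images of singular simplices that respect $\Sigma$ — in particular that such an image is contained in a single closed star and $\Omega_X^p$ is "locally constant enough" there — and that the boundary maps match on the nose. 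The subtlety that not every singular simplex respects $\Sigma$, so that $C_{p,\bullet}(X;\Sigma)$ is only a subcomplex of the full sheaf sections, must be handled by a subdivision/retraction argument, and this is precisely the sort of topological input that the paper defers to the appendix on conically stratified spaces.
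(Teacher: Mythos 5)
Your overall architecture is the same as the paper's (realize tropical chains as $\Gamma$, resp.\ $\Gamma_c$, of a Hom-complex into a sheafified singular chain complex, use homotopical fineness to replace hypercohomology by naive cohomology), but there is a genuine gap precisely at the step you treat as routine. You work with the full complex $\Delta_X^\bullet$ and assert that $\Homs^\bullet(\Omega_X^p,\Delta_X^\bullet)$ represents $R\Homs^\bullet(\Omega_X^p,\dual_X)$ because the $\Delta_X^{-i}$ are homotopically fine and $\Omega_X^p$ is constructible. Homotopical fineness is an acyclicity property for $\Gamma$ and $\Gamma_c$; it gives no control over $\Homs(\Omega_X^p,-)$, and since $\Omega_X^p$ is not locally constant there is no reason that applying $\Homs(\Omega_X^p,-)$ to a quasi-isomorphism $\Delta_X^\bullet\to\mathcal I^\bullet$ onto an injective resolution stays a quasi-isomorphism. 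This is exactly the hard point of the argument: the paper proves the statement (Proposition \ref{prop:Hom is RHom}) only for the subcomplex $\Delta^{\Sigma,\bullet}_X$ of chains \emph{respecting} the face structure, and the proof needs the stratification to be admissible, the decomposition $\Delta^{\Sigma,-i}_X\cong\bigoplus_\sigma\Z_{\sigma(\Delta^i)}$ over $\Sigma$-respecting simplices, Lemma \ref{lem:Sections of dual} (which uses that $(\overline S,S)$ is a disc pair and that the relative interior of each face of a $\Sigma$-respecting simplex maps into a single stratum), a d\'evissage over the strata supporting the sheaf, and Propositions \ref{prop:dualizing complex using singular cycles respecting the stratification} and \ref{prop: Hom is equal to derived hom for locally closed sets}. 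None of this machinery applies to arbitrary singular simplices, so your Step (2) is not merely unjustified: the restriction to $\Sigma$-respecting chains appears to be needed for the statement itself.

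Relatedly, your Step (3) cannot be repaired by the ``deformation retract onto $\Sigma$-respecting simplices'' you invoke. The sections of $\Homs^\bullet(\Omega_X^p,\Delta_X^\bullet)$ are chains decorated by homomorphisms $\Omega_X^p|_{\sigma(\Delta^q)}\to\Z$ for \emph{all} singular simplices, whereas $C^{lf}_{p,\bullet}(X;\Sigma)$ and $C_{p,\bullet}(X;\Sigma)$ use only $\Sigma$-respecting ones. The subdivision/retraction statement you appeal to is a constant-coefficient result: it gives a quasi-isomorphism $\Delta^{\Sigma,\bullet}_X\to\Delta_X^\bullet$ (Proposition \ref{prop:dualizing complex using singular cycles respecting the stratification}), and $\Homs(\Omega_X^p,-)$ does not preserve quasi-isomorphisms; moreover, subdividing a simplex changes the coefficient group $\Hom(\Omega_X^p|_{\sigma(\Delta^q)},\Z)$ attached to it, so the retraction does not act on the tropical chain complex in any obvious way. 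The correct order of operations is the paper's: first pass from $\Delta_X^\bullet$ to $\Delta^{\Sigma,\bullet}_X$ with constant coefficients, then prove that $\Homs^\bullet(\Omega_X^p,\Delta^{\Sigma,\bullet}_X)$ computes the derived Hom, and only then identify its global (compactly supported) sections with the locally finite (finite) tropical chain complexes. To salvage your route you would need a sheaf-level chain homotopy equivalence $\Delta^{\Sigma,\bullet}_X\simeq\Delta_X^\bullet$, which is not established and is much stronger than what the stratified subdivision argument yields.
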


\begin{proof}
The face structure $\Sigma$ defines an admissible stratification on the space $X$ in the sense of Definition~\ref{def:admis_strat}.
By Proposition \ref{prop:dualizing complex using singular cycles respecting the stratification}, the subcomplex $\Delta_X^{\Sigma,\bullet}$ of $\Delta_X^\bullet$ consisting of chains respecting $\Sigma$ (we refer to Appendix \ref{sec:appendix} for a precise definition of $\Delta_X^{\Sigma,\bullet}$) is quasi-isomorphic to $\Delta_X^\bullet$.
Furthermore, by Proposition \ref{prop:Hom is RHom}, there is a natural quasi-isomorphism
\[
\Homs^\bullet(\Omega^p_X, \Delta_X^{\Sigma,\bullet})\xrightarrow{\cong} R\Homs^\bullet(\Omega^p_X, \dual_X) \ .
\]
Taking hypercohomology with closed/compact supports we obtain natural isomorphisms
\begin{align*}
\Hyper^{-q}\Homs^\bullet(\Omega^p_X, \Delta_\mS^\bullet)&\xrightarrow{\cong} \Hyper^{-q} R\Homs^\bullet(\Omega^p_X, \dual_X) = H^{BM}_{p,q}(X) \ ,\text{ and}\\
\Hyper_c^{-q}\Homs^\bullet(\Omega^p_X, \Delta_\mS^\bullet)&\xrightarrow{\cong} \Hyper_c^{-q}R\Homs^\bullet(\Omega^p_X, \dual_X) =H_{p,q}(X) \ .
\end{align*}
Since $\Delta_X^{\Sigma,\bullet}$ is homotopically fine, the same is true for $\Homs^\bullet(\Omega^p_X,\Delta_X^{\Sigma,\bullet})$. It follows that the natural morphisms
\begin{align*}
H^{-q}\Homs^\bullet(\Omega^p_X,\Delta_X^{\Sigma,\bullet})&\to\Hyper^{-q}\Homs^\bullet(\Omega^p_X,\Delta_X^{\Sigma,\bullet}) \ \text{ and}\\
H_c^{-q}\Homs^\bullet(\Omega^p_X,\Delta_X^{\Sigma,\bullet})&\to\Hyper_c^{-q}\Homs^\bullet(\Omega^p_X,\Delta_X^{\Sigma,\bullet})
\end{align*}
are isomorphisms. Note that for each $i\in \N$ there is a morphism 
\[
\bigoplus_{\sigma} \Z_{\sigma(\Delta^{i})}\to \Delta_X^{\Sigma,-i} \ ,
\]
where the sum is taken over all singular $i$-simplices $\sigma\colon \Delta^{i}\to X$ respecting $\Sigma$, that sends the generator of $\Z_{\sigma(\Delta^{i})}$ to the global section represented by $\sigma\in C_{i}(X;\Sigma)=C_{i}(X,X\setminus X;\Sigma)$ (see \eqref{eq:relchains}). This morphism is in fact an isomorphism: the stalk of both $\bigoplus_{\sigma}\Z_{\sigma(\Delta^i)}$ and $\Delta^{\Sigma,-i}_X$ at $x\in X$ is isomorphic to the free abelian group on singular $i$-simplices that respect $\Sigma$ and contain $x$ in their image. Therefore, if we write  $\iota_\sigma\colon\sigma(\Delta^{i})\to X$ for the inclusion , we have
\begin{multline*}
\Homs\left(\Omega^p_X, \Delta_X^{\Sigma,-i}\right)
\cong \Homs\left(\Omega^p_X, \bigoplus_{\sigma}\Z_{\sigma(\Delta^{i})}\right) \cong \\
\cong \bigoplus_{\sigma}\Homs\left(\Omega^p_X,\Z_{\sigma(\Delta^{i})}\right)
\cong \bigoplus_\sigma (\iota_\sigma)_*\Homs \left(\Omega^p_X|_{\sigma(\Delta^i)}, \Z_{\sigma(\Delta^i)}\right) ,
\end{multline*}
where the direct sum commutes with $\Homs$ because $\Omega^p_X$ is constructible. 
The group of global sections of this last sheaf is precisely $C^{lf}_{p,i}(X;\Sigma)$, and the group of its global sections with compact support is precisely $C_{p,i}(X;\Sigma)$. Leaving the straightforward check that this identification commutes with the differentials to the reader, we obtain an isomorphism
\begin{align*}
\Gamma\left(\Homs(\Omega^p_X,\Delta^{\Sigma,\bullet}_X)\right)&\cong C_{p,-\bullet}^{lf}(X;\Sigma) \ \;  \text{ and} \\
\Gamma_c\left(\Homs(\Omega^p_X,\Delta^{\Sigma,\bullet}_X)\right)&\cong C_{p,-\bullet}(X;\Sigma) 
\end{align*}
of cochain complexes of abelian groups. Taking their $(-q)$-th cohomology finishes the proof.
\end{proof}

\section{The tropical cycle class map}
\label{sec:cycle class map}

\subsection*{Conventions for orientations} 
To define the tropical cycle class map one needs to make a choice regarding orientations. There are two ways of defining an orientation for $\R^n$ that are relevant for us, one being the choice of a generator for $\bigwedge ^n T_0^\Z\R^n\cong \bigwedge^n\Z^n$, the other being the choice of a generator for $H_n(\R^n,\R^n\setminus\{0\})$. For the construction of the tropical cycle class map we need to choose, once and for all, an isomorphism 
\[
\bigwedge\nolimits^n T_0^\Z\R^n \xrightarrow{\cong} H_n(\R^n,\R^n\setminus\{0\})
\]
that allows us to compare these two notions of orientation, and our choice will be the one that sends $e_1\wedge \ldots \wedge e_n$ to class of the (linearly embedded) singular simplex $[e_n,e_{n-1},\ldots ,e_1,e_0]$, where $e_1,\ldots, e_n$ is any basis for $\Z^n$ and $e_0=-\sum_{i=1}^n e_i$.

Suppose that $\sigma$ is a $k$-dimensional polyhedron in $\R^n$ and  $\tau\subseteq \sigma$ is one of its  faces of codimension $1$. Assume we have chosen orientations $\eta_\sigma\in \bigwedge^{k}T^\Z(\sigma)$ and $\eta_\tau\in \bigwedge^{k-1}T^\Z(\tau)$. With our convention in place, cellular homology provides us with classes of locally finite cycles
\begin{align*}
[\sigma]\in H^{lf}_{k}(\sigma,\partial \sigma) \  \text{ and }\ 
[\tau]\in H^{lf}_{k-1}(\tau,\partial \tau)  \ .
\end{align*}
If $\partial_\tau$ denotes the composite 
\[
H^{lf}_{k}(\sigma,\partial \sigma) \to H^{lf}_{k-1} (\partial\sigma) \to H^{lf}_{k-1}(\partial\sigma,\partial\sigma\setminus\relint(\tau)) \cong  H^{lf}_{k-1}(\tau,\partial\tau) ,
\]
and $n_{\sigma/\tau}\in T^\Z(\sigma)$ is any lattice normal vector of $\sigma$ with respect to $\tau$, 
the equalities $\partial_\tau[\sigma]=[\tau]$  and $\eta_\sigma=\eta_\tau\wedge n_{\sigma/\tau}$ are equivalent. In this case, we say that the chosen orientations on $\sigma$ and $\tau$ are compatible.

\subsection{Tropical cycles as sheaf hom}
The following crucial observation, together with Lemma~\ref{lem:basicBM}~(b), will allow us define the tropical cycle class map.

\begin{prop}
\label{prop:canonical morphism for between n-cycles and hom}
Let $X$ be an $n$-dimensional rational polyhedral space. Then there is a natural isomorphism of sheaves
\[
\mathcal Z^X_n \cong \Homs(\Omega^n_X,\scrH_X^n) \ .
\]
\end{prop}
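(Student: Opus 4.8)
The statement is local on $X$, so I would immediately reduce to the case where $X$ is a purely $n$-dimensional polyhedral subset of $\Rbar^N$, and in fact, since $\mathcal Z^X_n$ and $\Homs(\Omega^n_X,\scrH_X^n)$ are both sheaves, it suffices to exhibit a natural isomorphism of stalks at every $x\in X$, compatibly with restriction. Using Proposition~\ref{prop:local cone}, the germ of $X$ at $x$ agrees with the germ of $\LC_x X$ at the origin, so we may replace $X$ by a fan $|\Sigma|$ in $\R^n$ (after the usual reduction discarding $\infty$-coordinates as in the proof of Proposition~\ref{prop:local cone}) and compute the stalk at the cone point. Concretely, a germ of a section of $\mathcal Z^X_n$ at $x$ is exactly a tropical fan $n$-cycle on $T_xX$ supported on the local cone, i.e.\ a balanced integer weighting of the maximal cones of $\Sigma$; I want to match this with $\Hom_{\Z}(\Omega^n_{X,x},\scrH^n_{X,x})$.

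The key linear-algebra input is the following description of both sides in terms of a face structure $\Sigma$. On one hand, $\Omega^n_{X,x}$ is by definition the image of $\bigwedge^n\Omega^1_{X,x}$ in $\prod_{\sigma\ \mathrm{maximal}} \bigwedge^n\Omega^1_{\sigma,x}$, and each $\bigwedge^n\Omega^1_{\sigma,x}$ is a rank-one lattice canonically isomorphic (via our chosen orientation convention) to $H_n(\sigma,\partial\sigma)$ at an interior germ; a choice of orientation $\eta_\sigma$ on each maximal $\sigma$ identifies $\Hom_\Z(\Omega^n_{X,x},\Z^{\Sigma(n)})$ with $\Z^{\Sigma(n)}$. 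On the other hand, by the combinatorial description of the dualizing complex (Remark~\ref{rem:combinatorial dualizing complex}), $\scrH^n_{X,x}=H^{-n}(\dual_X)_x$ is the kernel of the differential $\bigoplus_{\sigma\in\Sigma(n)}\Z_{\sigma,x}\to \bigoplus_{\tau\in\Sigma(n-1)}\Z_{\tau,x}$, i.e.\ the group of cellular $n$-cycles relative to the complement of $x$, which via the orientations is exactly the set of tuples $(a_\sigma)\in\Z^{\Sigma(n)}$ satisfying $\sum_{\sigma\supseteq\tau}\epsilon_{\sigma/\tau}a_\sigma=0$ for every ridge $\tau$. A morphism $\Omega^n_{X,x}\to\scrH^n_{X,x}$ is thus a tuple $(a_\sigma)$ that (i) is compatible with the relations defining $\Omega^n_{X,x}$ as a quotient of $\bigwedge^n\Omega^1$ and (ii) lies in that kernel. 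The plan is to show that condition (ii) — the cellular cycle condition — is, under the orientation dictionary of the ``Conventions for orientations'' subsection (where $\partial_\tau[\sigma]=[\tau]$ is translated into a statement about lattice normal vectors), literally the balancing condition of Remark~\ref{rem:lattice normal vector and balancing}, and that condition (i) encodes exactly the requirement that the weight function be a well-defined tropical \emph{cycle} (rather than merely a weighted fan), handling the passage to the image in $\iota_*\bigwedge^n\Omega^1_X|_{X^{\max}}$.

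I would then assemble these identifications into a map of sheaves independent of all choices: the natural candidate sends a tropical $n$-cycle $A$ to the morphism $\Omega^n_X\to\scrH^n_X$ which on the germ of $\bigwedge^n\Omega^1$ along a maximal $\sigma$ is ``pair with $A(\sigma)$ times the fundamental class $[\sigma]$'', using the canonical pairing $\bigwedge^n\Omega^1_{\sigma}\otimes H_n(\sigma,\partial\sigma)\to\Z$; naturality in $X$ follows because this pairing and the orientation convention are functorial. The main obstacle, I expect, is the careful bookkeeping to check that the balancing condition and the cellular $n$-cycle condition really do coincide with the \emph{correct signs} — this is exactly the content of the equivalence ``$\partial_\tau[\sigma]=[\tau]\iff\eta_\sigma=\eta_\tau\wedge n_{\sigma/\tau}$'' set up just before this proposition, and making sure that the signs $\epsilon_{\sigma/\tau}$ from the dualizing-complex differential match the orientation-compatibility signs requires threading through the chosen isomorphism $\bigwedge^n T^\Z_0\R^n\cong H_n(\R^n,\R^n\setminus 0)$ consistently. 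Once the signs are pinned down, independence of the face structure is automatic because the source and target sheaves are defined without reference to $\Sigma$, and the local isomorphisms glue.
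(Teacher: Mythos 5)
Your overall strategy (work locally with a face structure, use Shepard's cellular description of $\scrH^n_X$ from Remark \ref{rem:combinatorial dualizing complex}, the orientation dictionary, and the candidate map sending $A$ to $\omega\mapsto\sum_\sigma A(\sigma)\langle\omega,\eta_\sigma\rangle[\sigma]$) is the paper's, but there is a genuine gap at the decisive step. You reduce to an isomorphism of stalks and then compute with $\Hom_\Z(\Omega^n_{X,x},\scrH^n_{X,x})$; however, the stalk of $\Homs(\Omega^n_X,\scrH^n_X)$ at $x$ is $\varinjlim_U\Hom(\Omega^n_X|_U,\scrH^n_X|_U)$, which is not the Hom of stalks, and the two differ exactly at the points that matter. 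For the standard tropical line at the origin, $\Hom(\Omega^1_{X,0},\scrH^1_{X,0})\cong\Hom(\Z^2,\Z^2)$ has rank $4$, whereas the stalk of the sheaf Hom must be $\mathcal Z^X_{1,0}\cong\Z$; for $X=\Rbar$ at $x=\infty$ the Hom of stalks is $0$ (both stalks vanish), while germs of tropical $1$-cycles at $\infty$ form a copy of $\Z$. The same example shows the local-cone reduction fails at points at infinity: Proposition \ref{prop:local cone} only provides a morphism of germs $(\LC_xX,0)\to(X,x)$, not an isomorphism, and $\LC_\infty\Rbar$ is a point. Relatedly, your identification of $\Hom_\Z(\Omega^n_{X,x},\Z^{\Sigma(n)})$ with $\Z^{\Sigma(n)}$ is false whenever $\Omega^n_{X,x}$ has rank greater than one, which is the typical situation off $X^{\max}$. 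The real content of the proposition is the step you elide: a sheaf morphism $\Omega^n_X\to\scrH^n_X$ on a neighborhood restricts on each open facet to multiplication by an integer $a_\sigma$, and one must show it is determined by these integers and that a tuple $(a_\sigma)$ arises from such a morphism precisely when it is balanced. The paper does this by applying the left exact functor $\Homs(\Omega^n_X,-)$ to the resolution $0\to\scrH^n_X\to\bigoplus_{\sigma\in\Sigma(n)}\Z_\sigma\to\bigoplus_{\tau}\Z_\tau$ and invoking Lemma \ref{lem:Sections of dual} to identify $\Homs(\Omega^n_X|_\delta,\Z_\delta)$ with its restriction to $\relint(\delta)$; nothing in your outline plays this role.

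Your division of the argument into conditions (i) and (ii) is also off. Being a tropical cycle rather than merely a weighted fan \emph{is} the balancing condition, so it cannot be what condition (i) encodes while condition (ii) is ``literally the balancing condition''; in the correct argument there is a single condition on the tuple $(a_\sigma)$, namely that its image in the $\tau$-components vanishes. The image definition of $\Omega^n_X$ does not impose an extra constraint on tuples; its role is to compute $\Homs(\Omega^n_X,\Z_\tau)$ along codimension-one cells: at $y\in\relint(\tau)$ the homomorphisms $\Omega^n_{X,y}\to\Z$ are pairings with wedges of tangent vectors of the adjacent facets inside $\bigwedge^nT^\Z_yX$, and it is through this identification that the cellular boundary map becomes $1\mapsto\eta_\tau\wedge n_{\sigma/\tau}$, so that the kernel condition is the balancing condition of Remark \ref{rem:lattice normal vector and balancing}. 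Without this computation the asserted equivalence between the cellular cycle condition and balancing, on which your plan hinges, does not follow; the sign bookkeeping you flag as the main obstacle is comparatively routine. Finally, faces at infinity need separate treatment (their components of the boundary map are zero), which your reduction to a fan in $\R^n$ silently discards.
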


\begin{proof}
We will first define the isomorphism locally around a point $x\in X$. Let $\Sigma$ be a local face structure at $x$. After potentially shrinking $\Sigma$, we may assume that $|\Sigma|$ is compact, in which case $\Sigma$ gives a CW-complex structure to the neighborhood $|\Sigma|$ of $x$. After choosing orientations $\eta_\sigma\in\bigwedge^{\dim(\sigma)}T^\Z(\sigma)$ for all $\sigma\in\Sigma$, cellular homology provides a description of  $\scrH_{X,x}^n=H_n(X,X\setminus \{x\})$ as a subgroup of the free group on $\Sigma(n)$, where $\Sigma(i)$ denotes the set of $i$-dimensional cells of $\Sigma$. Namely, it is the group of all weights $w\colon \Sigma(n)\to \Z$ such that for every $(n-1)$-dimensional cell $\tau\in\Sigma$ containing $x$ we have
\begin{equation} 
\label{equ:cycle condition}
\sum_{\sigma\colon \tau\subseteq\sigma\in\Sigma(n)} \epsilon_{\sigma/\tau} w(\sigma) = 0 \ , 
\end{equation}
where $\epsilon_{\sigma/\tau}$ is either $1$ or $-1$, depending on whether the chosen orientations on $\sigma$ and $\tau$ agree or not. If $y$ is a point in the interior of $|\Sigma|$, then the set
\[
\Sigma_y=\{\sigma\in\Sigma \mid \text{ there exists } \tau\in\Sigma \text{ so that } \sigma\subseteq \tau \text{ and }y\in \tau\}
\]
is a local face structure at $y$, so with the same reasoning we conclude that $\scrH_{X,y}^n$ is the group of weights $w\colon \Sigma_y(n)\to \Z$ satisfying the condition displayed in (\ref{equ:cycle condition}) for all $(n-1)$-dimensional cells $\tau\in\Sigma$ containing $y$. This description is continuous in the sense that we obtain an exact sequence 
\begin{equation*}
0\to\scrH^n_X \to \bigoplus_{\sigma\in\Sigma(n)} \Z_\sigma \to \bigoplus_{\tau:x\in\tau\in \Sigma(n-1)} \Z_\tau
\end{equation*}
on the interior of $|\Sigma|$, where the map to the right is determined by the condition (\ref{equ:cycle condition}). Note that this is precisely the description of $\scrH^n_X$ one obtains from Shepard's combinatorial description of the dualizing complex mentioned in Remark \ref{rem:combinatorial dualizing complex}. Applying $\Homs(\Omega^n_X,-)$ to the sequence, we obtain an exact sequence
\begin{multline*}
0\to\Homs(\Omega^n_X,\scrH^n_X) \to 
\bigoplus_{\sigma\in\Sigma(n)} \iota_{\sigma *}\Homs(\Omega^n_X|_\sigma,\Z_\sigma) \to  \\
\to \bigoplus_{\tau:x\in\tau\in \Sigma(n-1)} \iota_{\tau *}\Homs(\Omega^n_X|_\tau,\Z_\tau) \ ,
\end{multline*}
where $\iota_\delta\colon \delta\to X$ denotes the inclusion for every $\delta\in\Sigma$. By Lemma \ref{lem:Sections of dual}, for every $\delta\in\Sigma$ we have an isomorphism
\[
\Homs(\Omega^n_X|_{\delta},\Z_\delta)\xrightarrow{\cong} (\kappa_\delta)_*\Homs(\Omega^n_X|_{\relint(\delta)},\Z_{\relint(\delta)}) \ ,
\]
where $\kappa_\delta \colon \relint(\delta)\to\delta$ denotes the inclusion, and the latter sheaf is in turn naturally isomorphic to the constant sheaf
\[
\left(\bigwedge\nolimits^n T^\Z_\delta X\right)_\delta
\]
on $\delta$. Here, we denote $T^\Z_\delta X\coloneqq \Gamma(\relint(\delta), \Omega^1_X|_{\relint(\delta)})^*$, which is naturally isomorphic to $T^\Z_x X$ for any $x\in \relint(\delta)$. If $\sigma\in\Sigma(n)$, then $T_\sigma^\Z X$ is isomorphic to $\Z_\sigma$ and is generated by $\eta_\sigma$. We thus obtain an exact sequence
\[
0\to\Homs(\Omega^n_X,\scrH^n_X) \to 
\bigoplus_{\sigma\in\Sigma(n)} \Z_\sigma   
\to \bigoplus_{\tau:x\in\tau\in \Sigma(n-1)} \left(\bigwedge\nolimits^n T_\tau^\Z X \right)_\tau \ ,
\]
where the component of the rightmost morphism going from $\Z_\sigma$ to $\left(\bigwedge\nolimits^n T^\Z(\tau) \right)_\tau$ is $0$ if $\tau$ is a face of $\sigma$ at infinity, and sends the generator $1$ of $\Z_\sigma$ to $\eta_\tau\wedge n_{\sigma/\tau}$ else, where $n_{\sigma/\tau}\in T^\Z(\sigma)$ is any lattice normal vector of $\sigma$ relative to $\tau$. 
This effectively yields a  presentation of $\Homs(\Omega_X^n,\scrH^n_X)$ on a neighborhood of $x$ as the sheaf of locally constant functions $A$ on $X^{\max}$ such that for every codimension-$1$ face $\tau\in\Sigma(n-1)$ we have
\[
\eta_\tau\wedge\left(\sum_{\sigma\in \Sigma_\tau(n)} A(\sigma)n_{\sigma/\tau} =0 \right) \ ,
\]
where $\Sigma_\tau(n)$ denotes the subset of $\Sigma(n)$ consisting of cells that have $\tau$ as a finite face. This equality holds if and only if
\[
\sum_{\sigma\in \Sigma_\tau(n)} A(\sigma)n_{\sigma/\tau} \in T^\Z(\tau) \ ,
\]
which is precisely the balancing condition (see Remark \ref{rem:lattice normal vector and balancing}). In other words, we obtain an isomorphism $\mathcal Z^X_n \cong \Homs(\Omega^n_X,\scrH^n_X)$ in a neighborhood of $x$.

To show that these local isomorphisms glue to a global isomorphism, we essentially need to show that the local isomorphisms are independent of all choices. The choices we made were the local face structure and the orientations on them.  We used the same orientations to pick generators for $\bigwedge^n T^\Z(\sigma)$ and $H_n(\sigma,\partial \sigma)$, so if we picked the opposite orientation on one of the cells $\sigma$, we would change signs twice and hence obtain the same isomorphism. It remains to show that a different local face structure would also provide the same isomorphism. But to compare two different choices of local face structures one can always pass to a common refinement, and it is clear that the construction of the isomorphism is compatible with refinements.
\end{proof}

\begin{rem}
\label{rem:explicit description of natural morphism}
Going through the proof of Proposition \ref{prop:canonical morphism for between n-cycles and hom} we obtain the following description of the isomorphism $\mathcal Z^X_n \cong \Homs(\Omega^n_X, \scrH^n_X)$ locally around a point $x$ using a local face structure $\Sigma$ at $x$ and orientations $\eta_\sigma\in\bigwedge^n T^\Z(\sigma)$ on each of its maximal cells $\sigma\in\Sigma$. The description of the morphisms $\Omega^n_X\to \scrH^n_X$ corresponding to $A\in Z_n(X)$ uses three ingredients:
\begin{enumerate}
\item 
Using the orientations and cellular homology, we can choose for each facet $\sigma\in \Sigma(n)$ of dimension $n$ a chain $[\sigma]$ supported on $\sigma$ such that $\scrH^n_X$ is, locally around $x$, isomorphic to the subsheaf of $\Delta^{-n}_X$ generated by the $[\sigma]$, $\sigma\in\Sigma$.

\item Whenever $\omega\in \Gamma(U,\Omega^n_X)$ is a tropical $n$-form defined on an open subset of the interior of $|\Sigma|$, we can pair it with $\eta_\sigma$ to obtain a locally constant, integer-valued function $\langle \omega, \eta_\sigma\rangle$ on $U\cap \sigma$. 

\item If $A$ is a tropical $n$-cycle defined on the interior of $|\Sigma|$, then it defines a multiplicity $A(\sigma)$ on every $\sigma\in\Sigma(n)$.
\end{enumerate}

With notation as in (1), (2), and (3), the morphism $\Omega^n_X\to \scrH^n_X$ that $A$ is mapped to by the isomorphism can now be described by the rule
\[
\omega \mapsto \sum_{\sigma\in \Sigma(n)} A(\sigma)\langle \omega, \eta_\sigma\rangle [\sigma] \ .
\]
\end{rem}

\subsection{The tropical cycle class map} 
We can now define the tropical cycle class map. We do this in two steps, first for top-dimensional tropical cycles and then in general.

\begin{defn} \label{def:cyctop}
Let $X$ be an $n$-dimensional rational polyhedral space. We define the \emph{tropical cycle class map  on $n$-dimensional tropical cycles} 
\[
\tcyc_X\colon Z_n(X)\to H^{BM}_{n,n}(X)
\] 
as the composite of the canonical map 
\[Z_n(X)\to \Hom(\Omega^n_X,\scrH^n_X)\] 
and the canonical identification
\[
\Hom(\Omega^n_X,\scrH^n_X)
=\Hom_{D(X)}(\Omega^n_X[n],\dual_X)
=H^{BM}_{n,n}(X) \, ,
\]
where the first equality holds since $H^{-j}(\dual_X)=\scrH^j_X=0$ for $j>n$.
\end{defn}

To define the tropical cycle class map in the remaining dimensions we use push-forwards:

\begin{defn} \label{defn:all dimensional cycle map}
Let $X$ be an $n$-dimensional rational polyhedral space. For a tropical cycle $A\in Z_i(X)$, $i\in \N$, we define its \emph{tropical cycle class} by 
\[
\cyc_X(A)\coloneqq 
\iota_*(\tcyc_{|A|}(A)) \in H^{BM}_{i,i}(X) \, ,
\]
where $\iota\colon |A| \hookrightarrow X$ is the inclusion map. Note that $\tcyc_{|A|}(A)$ is defined, since $i=\dim|A|$. 
\end{defn}

\begin{rem}
Using Remark \ref{rem:push-forward with support}, we can (and will) view the tropical cycle class of $A\in Z_i(X)$ as an element of $H^{BM}_{p,q}(|A|,X)$
\end{rem}

For top-dimensional tropical cycles, we now have two ways to take their tropical cycle classes which are a priori different: one by applying $\tcyc$ and one by applying $\cyc$. As a byproduct of the compatibility with push-forwards we will see that they agree (see Corollary \ref{cor: cyc and tcyc coincide}).

\subsection{Compatibility with push-forwards}

As we have seen, both tropical cycle groups and tropical Borel-Moore homology groups are functorial with respect to proper morphisms. We will now show that the tropical cycle class map respects push-forwards, that is that it defines a natural transformation between tropical cycle groups and tropical homology groups.

\begin{prop}
\label{prop:cycle map commutes with push-forwards}
Let $f\colon X\to Y$ be a proper morphism of $n$-dimensional rational polyhedral spaces. Then the tropical cycle class map $\tcyc$ commutes with push-forwards. In other words, the diagram
\begin{center}
\begin{tikzpicture}[auto]
\matrix[matrix of math nodes, row sep= 5ex, column sep= 4em, text height=1.5ex, text depth= .25ex]{
|(ZX)| Z_n(X) 	&
|(ZY)|	Z_n(Y)	\\
|(HX)| H^{BM}_{n,n}(X)	&	
|(HY)| H^{BM}_{n,n}(Y)	\\
};
\begin{scope}[->,font=\footnotesize]
\draw (ZX) --node{$f_*$} (ZY);
\draw (HX) --node{$f_*$} (HY);
\draw (ZX) -- node{$\tcyc_X$} (HX);
\draw (ZY) -- node{$\tcyc_Y$} (HY);
\end{scope}
\end{tikzpicture}
\end{center}
is commutative.
\end{prop}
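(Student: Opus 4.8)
The plan is to reduce the statement to the local description of the cycle class map given in Remark~\ref{rem:explicit description of natural morphism}, combined with the explicit description of the push-forward on the sheaf $\scrH^n$ provided by Lemma~\ref{lem:description push-forward of dualizing complex}. Since $\tcyc_X(A)$ is, by construction, the morphism $\Omega^n_X[n]\to \dual_X$ obtained (via $\scrH^n_X\cong H^{-n}(\dual_X)$) from the sheaf homomorphism $\Omega^n_X\to \scrH^n_X$ corresponding to $A$ under Proposition~\ref{prop:canonical morphism for between n-cycles and hom}, and since $f_*$ on $H^{BM}_{n,n}$ is induced by post-composing with $Rf_*\dual_X\to\dual_Y$ and pre-composing with $\Omega^n_Y\to Rf_*\Omega^n_X$, it suffices to check the commutativity at the level of the induced maps
\[
\Homs(\Omega^n_X,\scrH^n_X)\to \Homs(\Omega^n_Y,\scrH^n_Y)
\]
on each side. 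This in turn reduces, after passing to stalks or to an open subset on which a common face structure exists, to a concrete computation with cellular chains and lattice indices.

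First I would set up the local picture: work near a point $y\in Y$, choose a local face structure $\Delta$ at $y$ and a compatible local face structure $\Sigma$ at each point of $f^{-1}\{y\}$ so that $f$ maps cells of $\Sigma$ into cells of $\Delta$; choose orientations $\eta_\sigma$, $\eta_\delta$ on the maximal cells. Only isolated points $x\in f^{-1}\{y\}$ contribute (as in Definition~\ref{def:push-forward of cycles}), and near such an $x$ the map $f$ restricted to an $n$-cell $\sigma$ lying over an $n$-cell $\delta$ is a linear isomorphism onto its image with lattice index $[T^\Z_\delta Y : d_xf(T^\Z_\sigma X)]$. The first key step is then to compute, using Lemma~\ref{lem:description push-forward of dualizing complex}, the image of the cellular chain class $[\sigma]\in \scrH^n_{X,x}$ under $f_*\colon f_*\scrH^n_X\to\scrH^n_Y$: pushing forward a relative singular $n$-cycle supported on $\sigma$ along a linear map multiplies it by the signed lattice index, i.e.\ $f_*[\sigma] = \varepsilon\cdot[T^\Z_\delta Y : d_xf(T^\Z_\sigma X)]\,[\delta]$, where the sign $\varepsilon$ records whether $d_xf$ sends the orientation $\eta_\sigma$ to a positive or negative multiple of $\eta_\delta$ (this is exactly where the orientation conventions fixed at the start of Section~\ref{sec:cycle class map} enter). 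The second key step is to track the effect of the pullback $f^\sharp\colon f^{-1}\Omega^n_Y\to\Omega^n_X$ on the pairing: for $\omega\in\Omega^n_{Y,y}$ one has $\langle f^\sharp\omega,\eta_\sigma\rangle = \langle \omega, (d_xf)_*\eta_\sigma\rangle = \varepsilon\cdot[T^\Z_\delta Y : d_xf(T^\Z_\sigma X)]\,\langle\omega,\eta_\delta\rangle$ on the appropriate overlap, by functoriality of the exterior-power pairing.

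Combining these, the morphism $\Omega^n_Y\to\scrH^n_Y$ obtained by going around the square clockwise (first $\tcyc_X$ via the rule $\omega'\mapsto\sum_\sigma A(\sigma)\langle\omega',\eta_\sigma\rangle[\sigma]$, then $f_*$) sends $\omega$ to
\[
\sum_{\delta\in\Delta(n)}\ \sum_{\substack{\sigma\in\Sigma(n)\\ f(\sigma)=\delta}} A(\sigma)\,\varepsilon^2\,[T^\Z_\delta Y : d_xf(T^\Z_\sigma X)]\,\langle\omega,\eta_\delta\rangle\,[\delta],
\]
and since $\varepsilon^2=1$ and $\sum_{\sigma\colon f(\sigma)=\delta}[T^\Z_\delta Y : d_xf(T^\Z_\sigma X)]A(\sigma) = (f_*A)(\delta)$ by Definition~\ref{def:push-forward of cycles}, this is exactly $\omega\mapsto\sum_\delta (f_*A)(\delta)\langle\omega,\eta_\delta\rangle[\delta]$, which is the morphism representing $\tcyc_Y(f_*A)$. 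Thus the two composites agree locally, hence globally. Finally, I would note that the reduction to the case where $f$ is proper and $X$ (and $Y$) are $n$-dimensional is harmless: cells of dimension $<n$ in $\Sigma$ or those collapsed by $f$ contribute zero on both sides, and both $\tcyc$ and $f_*$ ignore the locally polyhedral subset of dimension $\le n-1$ along which $f_*A$ fails to be a genuine cycle, so the computation on $X^{\max}\cap f^{-1}(Y^{\max})$ determines everything. The main obstacle is bookkeeping the two independent sign contributions — one from the cellular-chain push-forward in $\scrH^n$ and one from the pullback of forms — and verifying that the chosen normalization of the isomorphism $\bigwedge^n T^\Z_0\R^n\cong H_n(\R^n,\R^n\setminus\{0\})$ makes them cancel rather than reinforce; this is precisely the point at which one must unwind Lemma~\ref{lem:description push-forward of dualizing complex} carefully rather than invoke it formally.
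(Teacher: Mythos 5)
Your overall strategy is the same as the paper's (localize at a generic point $y\in Y^{\max}\setminus f(X\setminus X^{\max})$, use the explicit description of Remark \ref{rem:explicit description of natural morphism} together with Lemma \ref{lem:description push-forward of dualizing complex}), but your first ``key step'' is false, and the error propagates. You claim that pushing forward the relative chain class gives $f_*[\sigma]=\varepsilon\cdot[T^\Z_\delta Y:d_xf(T^\Z_\sigma X)]\,[\delta]$. It does not: at a contributing point $x$ the map $d_xf$ is injective, hence its real-linear extension $T_xX\to T_yY$ is a linear isomorphism even when the lattice index is $>1$, so $f$ is a local homeomorphism at $x$ and the topological push-forward of the relative singular cycle is just $\varepsilon[\delta]$ (degree $\pm1$; think of $x\mapsto 2x$ on $\R$, which induces the identity on $H_1(\R,\R\setminus\{0\})$ although the lattice index is $2$). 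The lattice index is an invariant of the integral structure, invisible to singular homology; it enters the computation exactly once, through your (correct) second step $\langle f^\sharp\omega,\eta_\sigma\rangle=\varepsilon\,[T^\Z_\delta Y:d_xf(T^\Z_\sigma X)]\,\langle\omega,\eta_\delta\rangle$. As written, your two stated claims would combine to produce the \emph{square} of the lattice index, which contradicts the final displayed formula you assert (that formula, with a single factor of the index and $\varepsilon^2=1$, is the correct answer, but it does not follow from your stated steps). So the proof has a genuine gap at precisely the point you flagged as delicate: the unwinding of Lemma \ref{lem:description push-forward of dualizing complex}.

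The repair is exactly the correct accounting above: $f_*[\sigma]=\varepsilon[\delta]$ and the index comes only from the pairing of the pulled-back form with $\eta_\sigma$; then $\varepsilon^2=1$ and the coefficient of $[\delta]$ is $\sum_{\sigma}A(\sigma)[T^\Z_\delta Y:d_xf(T^\Z_\sigma X)]\langle\omega,\eta_\delta\rangle=(f_*A)(\delta)\langle\omega,\eta_\delta\rangle$, as in Definition \ref{def:push-forward of cycles}. The paper avoids the sign bookkeeping altogether by choosing the orientations $\eta_{x_i}$ at the (finitely many, isolated) preimages of $y$ to be the ones \emph{induced} by a fixed $\eta_y$ via $d_{x_i}f$, so that $\varepsilon=1$, $f_*[\sigma_{x_i}]=[\delta]$, and $\langle f^*\phi,\eta_{x_i}\rangle=[T^\Z_yY:d_{x_i}f(T^\Z_{x_i}X)]\langle\phi,\eta_y\rangle$; you may want to adopt that normalization, since with independently chosen orientations you must additionally justify that $\varepsilon$ in the two steps is the same sign (it is, but only because both are governed by whether $d_xf$ takes $\eta_\sigma$ to a positive or negative multiple of $\eta_\delta$, given the fixed comparison of the two notions of orientation from the beginning of \S\ref{sec:cycle class map}).
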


\begin{proof}
Inspecting the definitions of $\tcyc_X$, $\tcyc_Y$, and the push-forward in homology, we see  that the statement boils down purely formally to proving the commutativity of the diagram
\begin{center}
\begin{tikzpicture}[auto]
\matrix[matrix of math nodes, row sep= 5ex, column sep= 4em, text height=1.5ex, text depth= .25ex]{
|(ZX)| Z_n(X) 	&
|(ZY)|	Z_n(Y)	\\
|(HomX)| \Hom(\Omega^n_X, \scrH^n_X)	&	
|(HomY)| \Hom(\Omega^n_Y,\scrH^n_Y) \ ,	\\
};
\begin{scope}[->,font=\footnotesize]
\draw (ZX) --node{$f_*$} (ZY);
\draw (HomX) -- (HomY);
\draw (ZX) -- (HomX);
\draw (ZY) -- (HomY);
\end{scope}
\end{tikzpicture}
\end{center}
where the vertical maps are induced by the natural isomorphisms of Proposition \ref{prop:canonical morphism for between n-cycles and hom}, and the lower horizontal map sends a morphism $\Omega^n_X\to \scrH^n_X$ to the composite
\begin{equation*}
\Omega^n_Y\to f_*\Omega^n_Y\to f_*\scrH^n_X \to \scrH^n_Y ,
\end{equation*}
where the rightmost morphism is the $(-n)$-th cohomology of the natural morphism $Rf_*\dual_X\to \dual_Y$. By Lemma \ref{lem:description push-forward of dualizing complex}, this morphism is induced by the push-forward morphisms $H_n(X,X\setminus f^{-1}U)\to H_n(Y,U)$ between the singular relative homology groups.
So let $A\in Z_n(X)$. We will compare the two morphisms in $\Hom(\Omega^n_Y,\scrH^n_Y)$ obtained from $A$. Since  tropical $n$-cycles on $Y$ are determined by their restriction to a dense open subset of $Y^{\max}$, it suffices to do this locally at a point $y\in Y^{\max}\setminus f(X\setminus X^{\max})$. Let $\phi\in \Omega^n_{Y,y}$. To compute the image of $\phi$ under the morphism obtained from $A$ by moving along the square counterclockwise, we first take its image $f^*\phi$ in $(f_*\Omega^n_X)_y=\Gamma(f^{-1}\{y\},\Omega_X^n)$. We note that the support of $f^*\phi$ only consists of isolated points of $f^{-1}\{y\}$. Indeed, $f^*\phi$ is nonzero at $x\in f^{-1}\{y\}$, if and only if $d_x f\colon T_x^\Z X\to T^\Z_y Y$ is injective, which is the case if and only if $f$ is injective on a neighborhood of $x$.
In particular, we see that $f^*\phi$ is supported on finitely many points, call them $x_1,\ldots, x_k$, because  $f^{-1}\{y\}$ is compact. Now we take the image of $f^*\phi$ under the morphism 
\[
(f_*\Omega_X^n)_x=\Gamma(f^{-1}\{y\},\Omega_X^n)\to \Gamma(f^{-1}\{y\},\scrH^n_X) =(f_*\scrH_X^n)_y
\]
 induced by $A$. To understand it we use the explicit description of the canonical morphism $Z_n(X)\to \Hom(\Omega^n_X,\scrH^n_X)$ given in Remark \ref{rem:explicit description of natural morphism}, which is particularly simple here because we are working on $X^{\max}$. We pick an orientation $\eta_y\in \bigwedge^n T^\Z_y Y$, which induces orientations $\eta_{x_i}\in \bigwedge^n T^\Z_{x_i} X$ for all $1\leq i\leq k$. These orientations define generators $[\sigma_{x_i}]\in \scrH^n_{X,x_i}$ and $[\delta]\in \scrH^n_{Y,y}$. 
The image of $f^*\phi$ in $\Gamma(f^{-1}\{y\},\scrH_X^n)$ is then represented by 
\[
\sum_{i=1}^k A(x_i) \langle f^*\phi, \eta_{x_i}\rangle [\sigma_{x_i}] \ ,
\]
We observe that
\[
\langle f^*\phi,\eta_{x_i}\rangle = 
[T^\Z_y Y\colon d_{x_i} f (T^\Z_{x_i}X)]\langle\phi,\eta_{y}\rangle \ .
\]
So when we finally apply the morphism 
\[
(f_*\scrH_X^n)_y=\Gamma(f^{-1}\{y\},\scrH_X^n)\to \scrH^n_{Y,y} , 
\]
 we see that
\begin{equation*}
\left(\sum_{i=1}^k [T^\Z_{y}Y\colon d_{x_i} f (T^\Z_{x_i} X)] A(x_i)\right)\langle \phi,\eta_y\rangle [\delta] 
\end{equation*}
is the image of  $\phi$. Looking back at Definition \ref{def:push-forward of cycles}, we see that
this is precisely the image of $\phi$ under the morphism $\Omega^n_{Y,y}\to \scrH^n_{Y,y}$ induced by $f_*A$.
\end{proof}

\begin{cor}
\label{cor: cyc and tcyc coincide}
Let $X$ be an $n$-dimensional rational polyhedral space. Then the two morphisms $\tcyc_X,\cyc_X\colon Z_n(X)\to H^{BM}_{n,n}(X)$ coincide.
\end{cor}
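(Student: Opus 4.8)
The plan is to derive the identity directly from Proposition~\ref{prop:cycle map commutes with push-forwards}, applied to the closed immersion of a cycle into its ambient space. Fix $A\in Z_n(X)$. The case $A=0$ is trivial, so assume $A\neq 0$; then the support $|A|$ is a closed, purely $n$-dimensional rational polyhedral subspace of $X$, and the inclusion $\iota\colon|A|\hookrightarrow X$ is a proper morphism between $n$-dimensional rational polyhedral spaces. Write $A^{\circ}\in Z_n(|A|)$ for the restriction of $A$ to $|A|$; this is the tropical $n$-cycle implicitly used in Definition~\ref{defn:all dimensional cycle map}, so that by definition $\cyc_X(A)=\iota_*\bigl(\tcyc_{|A|}(A^{\circ})\bigr)\in H^{BM}_{n,n}(X)$.

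The first thing I would check is that $\iota_*A^{\circ}=A$ as elements of $Z_n(X)$. This is an immediate unwinding of Definition~\ref{def:push-forward of cycles}: since $\iota$ is injective with image $|A^{\circ}|=|A|$, the corestriction $\widetilde\iota$ of $\iota$ is the identity map of $|A|$, whose pushforward sends $A^{\circ}$ to $A^{\circ}$ (each lattice index in the defining sum is $1$, the differential of the identity being an isomorphism); extending by zero to $X$ then recovers $A$, because $A$ vanishes outside $|A|$ by the definition of the support. Next, Proposition~\ref{prop:cycle map commutes with push-forwards}, applied to the proper morphism $\iota\colon|A|\to X$ of $n$-dimensional rational polyhedral spaces, states precisely that $\iota_*\circ\tcyc_{|A|}=\tcyc_X\circ\iota_*$ as maps $Z_n(|A|)\to H^{BM}_{n,n}(X)$. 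Evaluating this at $A^{\circ}$ and inserting $\iota_*A^{\circ}=A$ gives
\[
\cyc_X(A)=\iota_*\bigl(\tcyc_{|A|}(A^{\circ})\bigr)=\tcyc_X(\iota_*A^{\circ})=\tcyc_X(A),
\]
which is the assertion.

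There is no serious obstacle here: the argument is pure bookkeeping once Proposition~\ref{prop:cycle map commutes with push-forwards} is available. The only points requiring (routine) care are the verification that $\iota_*A^{\circ}=A$, as above, and the check that Proposition~\ref{prop:cycle map commutes with push-forwards} really applies to $\iota$ — for which one needs only that $|A|$ is $n$-dimensional, so that $\tcyc_{|A|}$ is defined through Definition~\ref{def:cyctop} and Proposition~\ref{prop:canonical morphism for between n-cycles and hom}, and that $\iota$ is proper, both of which hold.
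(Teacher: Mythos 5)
Your argument is correct and is essentially the paper's own proof: reduce to the inclusion $\iota\colon|A|\hookrightarrow X$, use Proposition~\ref{prop:cycle map commutes with push-forwards} to write $\iota_*(\tcyc_{|A|}(A))=\tcyc_X(\iota_*A)$, and conclude from $\iota_*A=A$. The extra verification that $\iota_*A^{\circ}=A$ is a correct (if routine) elaboration of a step the paper leaves implicit.
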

\begin{proof}
Let $A\in Z_n(X)$, and let $\iota\colon |A|\to X$ be the inclusion. The statement is trivially true for $A=0$, so we may assume that $|A|$ is an $n$-dimensional rational polyhedral subspace of $X$. By definition of $\cyc_X$, we have $\cyc_X(A)=\iota_*(\tcyc_{|A|}(A))$, which equals $\tcyc_X(\iota_*A)$ by Proposition \ref{prop:cycle map commutes with push-forwards}. As $\iota_*A=A$, this finishes the proof.
\end{proof}

\begin{cor}
\label{cor:cycle map commutes with push-forwards}
Let $f\colon X\to Y$ be a proper morphism of rational polyhedral spaces. Then the tropical cycle class map $\cyc$ commutes with push-forwards. In other words, the diagram
\begin{center}
\begin{tikzpicture}[auto]
\matrix[matrix of math nodes, row sep= 5ex, column sep= 4em, text height=1.5ex, text depth= .25ex]{
|(ZX)| Z_i(X) 	&
|(ZY)|	Z_i(Y)	\\
|(HX)| H^{BM}_{i,i}(X)	&	
|(HY)| H^{BM}_{i,i}(Y)	\\
};
\begin{scope}[->,font=\footnotesize]
\draw (ZX) --node{$f_*$} (ZY);
\draw (HX) --node{$f_*$} (HY);
\draw (ZX) -- node{$\cyc_X$} (HX);
\draw (ZY) -- node{$\cyc_Y$} (HY);
\end{scope}
\end{tikzpicture}
\end{center}
is commutative for all $i\in \N$.
\end{cor}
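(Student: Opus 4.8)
The plan is to deduce this from the equidimensional case already established in Proposition~\ref{prop:cycle map commutes with push-forwards}, together with Corollary~\ref{cor: cyc and tcyc coincide}, by peeling off the auxiliary closed immersions that appear in Definition~\ref{defn:all dimensional cycle map}. Fix a proper morphism $f\colon X\to Y$ and a tropical cycle $A\in Z_i(X)$. If $A=0$ there is nothing to prove, so assume $\dim|A|=i$ and write $\iota\colon|A|\hookrightarrow X$ for the inclusion. By Definition~\ref{defn:all dimensional cycle map} we have $\cyc_X(A)=\iota_*\tcyc_{|A|}(A)$, and since push-forward in tropical Borel--Moore homology is functorial, $f_*\cyc_X(A)=(f\circ\iota)_*\tcyc_{|A|}(A)$. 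The composite $f\circ\iota\colon|A|\to Y$ is proper; I would set $W\coloneqq f(|A|)$, a closed locally polyhedral subspace of $Y$, let $g\colon|A|\to W$ be the (co)restriction of $f\circ\iota$ (a proper surjective morphism), and let $j\colon W\hookrightarrow Y$ be the inclusion, so that $f\circ\iota=j\circ g$ and hence $f_*\cyc_X(A)=j_*\bigl(g_*\tcyc_{|A|}(A)\bigr)$, with $g_*\tcyc_{|A|}(A)\in H^{BM}_{i,i}(W)$.

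Now distinguish two cases according to $\dim W$. If $\dim W<i$, then $\Omega^i_W=0$, so $H^{BM}_{i,i}(W)=\Hom_{\Z_W}(\Omega^i_W,\scrH^i_W)=0$ (the first equality by the vanishing $\scrH^j_W=0$ for $j>\dim W$, exactly as in Definition~\ref{def:cyctop}); thus $g_*\tcyc_{|A|}(A)=0$ and $f_*\cyc_X(A)=0$. On the other hand the function $f_*A$ is supported in $W$, a locally polyhedral set of dimension $\le i-1$, so $f_*A=0$ as an element of $Z_i(Y)$ and $\cyc_Y(f_*A)=0$; both sides vanish. If instead $\dim W=i$, then $g\colon|A|\to W$ is a proper morphism of $i$-dimensional rational polyhedral spaces, so Proposition~\ref{prop:cycle map commutes with push-forwards} gives $g_*\tcyc_{|A|}(A)=\tcyc_W(g_*A)$, where $g_*A\in Z_i(W)$ is the push-forward of cycles. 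By Definition~\ref{def:push-forward of cycles}, $f_*A$ is the extension by zero of $g_*A$ along $j$, i.e.\ $f_*A=j_*(g_*A)$, and in particular $|f_*A|=|g_*A|$. Applying Corollary~\ref{cor: cyc and tcyc coincide} to $W$ and then Definition~\ref{defn:all dimensional cycle map} to $g_*A\in Z_i(W)$ yields $\tcyc_W(g_*A)=\cyc_W(g_*A)=k_*\tcyc_{|g_*A|}(g_*A)$ with $k\colon|g_*A|\hookrightarrow W$. Since $j\circ k$ is the inclusion $|f_*A|\hookrightarrow Y$ and $f_*A$ restricts to $g_*A$ on this support, chaining these identities with functoriality of push-forward gives
\[
f_*\cyc_X(A)=j_*\tcyc_W(g_*A)=j_*k_*\tcyc_{|g_*A|}(g_*A)=(j\circ k)_*\tcyc_{|f_*A|}(f_*A)=\cyc_Y(f_*A) \ .
\]

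I expect the genuine content of the corollary to be entirely absorbed by Proposition~\ref{prop:cycle map commutes with push-forwards} (and, through it, Lemma~\ref{lem:description push-forward of dualizing complex} and Proposition~\ref{prop:canonical morphism for between n-cycles and hom}); what remains is bookkeeping, and this is where the care is needed. The main points to nail down are: that $f(|A|)$ is indeed a rational polyhedral subspace onto which $f|_{|A|}$ corestricts; that the two incarnations of ``$f_*A$'' — the locally constructible function of Definition~\ref{def:push-forward of cycles} and the element of $Z_i(Y)$ — interact with the closed immersion $j$ as the identity $f_*A=j_*(g_*A)$ asserts (the $(i-1)$-dimensional correction being local, hence compatible with extension by zero); and that Proposition~\ref{prop:cycle map commutes with push-forwards} applies to $g$ even when $W$ fails to be purely $i$-dimensional, which is harmless since the definition of $\tcyc$ in Definition~\ref{def:cyctop} and the isomorphism of Proposition~\ref{prop:canonical morphism for between n-cycles and hom} only use $\dim W\le i$. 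None of these steps is hard, but aligning the supports and dimensions correctly is the only real obstacle.
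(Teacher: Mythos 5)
Your argument is correct and follows essentially the same route as the paper's proof: reduce via functoriality (and the trivial case of closed inclusions) to the corestriction $|A|\to f(|A|)$, dispose of the case $\dim f(|A|)<i$ by vanishing of the relevant homology group and of the pushed-forward cycle, and in the equidimensional case invoke Proposition~\ref{prop:cycle map commutes with push-forwards} together with Corollary~\ref{cor: cyc and tcyc coincide}. Your write-up is simply a more explicit version of the paper's argument, with the bookkeeping about supports and the two incarnations of $f_*A$ spelled out.
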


\begin{proof}
By definition of the tropical cycle class map, the assertion holds if $X$ is a rational polyhedral subspace of $Y$ and $f$ is the inclusion. Now assume $f$ is general and $A\in Z_i(X)$. Using the result for inclusions and the fact that push-forwards are functorial for both tropical homology classes and tropical cycles, we reduce to the case where $X=|A|$, $Y=f(|A|)$, and $\dim(X)=i$.
If $\dim(Y)<\dim(X)$, then $H^{BM}_{i,i}(Y)=0$ and the statement is trivial, so we may assume $i=\dim(Y)=\dim(X)$. In this case, the result follow from Proposition \ref{prop:canonical morphism for between n-cycles and hom} and Corollary \ref{cor: cyc and tcyc coincide}.
\end{proof}

\subsection{Compatibility with cross products}
Given tropical cycles $A$ and $B$ on locally polyhedral spaces $X$ and $Y$, we have defined their cross-product (\S\ref{subsec:cross product of cycles}) and the cross-product of their tropical cycle classes (\S\ref{subsec:cross products in homology}). We now show that the tropical cycle class of the former equals the latter.

\begin{prop}
\label{prop:cycle map commutes with cross products}
Let $X$, $Y$ be rational polyhedral spaces, and let $i,j\in \N$. Then the tropical cycle class map takes cross products to cross products. In other words, the diagram 
\begin{center}
\begin{tikzpicture}[auto]
\matrix[matrix of math nodes, row sep= 5ex, column sep= 4em, text height=1.5ex, text depth= .25ex]{
|(ZXZY)| Z_i(X)\otimes_\Z Z_j(Y) 	&
|(ZXY)|	Z_{i+j}(X\times Y)	\\
|(HXHY)| H^{BM}_{i,i}(X)\otimes_\Z H^{BM}_{j,j}(Y)	&	
|(HXY)| H^{BM}_{i+j,i+j}(X\times Y)	\\
};
\begin{scope}[->,font=\footnotesize]
\draw (ZXZY) --node{$\times$} (ZXY);
\draw (HXHY) --node{$\times$} (HXY);
\draw (ZXZY) -- node{$\cyc_X\otimes \cyc_Y$} (HXHY);
\draw (ZXY) -- node{$\cyc_{X\times Y}$} (HXY);
\end{scope}
\end{tikzpicture}
\end{center}
is commutative.
\end{prop}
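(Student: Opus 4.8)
The plan is to reduce to the top‑dimensional situation and then unwind the definitions. First I would use that $\cyc_X(A)=\iota_*\tcyc_{|A|}(A)$ and similarly for $B$, together with $|A\times B|=|A|\times|B|$, the fact that the inclusion of $|A\times B|$ into $X\times Y$ is $\iota_{|A|}\times\iota_{|B|}$, and that $A\times B$ restricted to $|A|\times|B|$ is the cross product of the restrictions. Combined with the compatibility of cross products with proper push‑forwards recorded after Definition~\ref{def:cross product}, this reduces the statement to the case where $X=|A|$ is purely $i$‑dimensional and $Y=|B|$ is purely $j$‑dimensional---so $X\times Y$ is purely $(i+j)$‑dimensional---and one must prove $\tcyc_{X\times Y}(A\times B)=\tcyc_X(A)\times\tcyc_Y(B)$ in $H^{BM}_{i+j,i+j}(X\times Y)$. (The case $A=0$ or $B=0$ is trivial.)

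Next I would rewrite both sides as morphisms of sheaves. Since $\dim(X\times Y)=i+j$, Definition~\ref{def:cyctop} identifies $H^{BM}_{i+j,i+j}(X\times Y)$ with $\Hom_{\Z_{X\times Y}}(\Omega^{i+j}_{X\times Y},\scrH^{i+j}_{X\times Y})$, and similarly over $X$ and $Y$. Unwinding Definition~\ref{def:cross product} under these identifications---using flatness of $\Omega^*$, that $\dual_X,\dual_Y$ sit in cohomological degrees $\geq -i$, $\geq -j$ so that the lowest cohomology sheaf of $\dual_X\boxtimes^L\dual_Y$ is $\scrH^i_X\boxtimes\scrH^j_Y$, the degree argument from Definition~\ref{def:cyctop}, Lemma~\ref{lem:differentials on product} for the splitting $\Omega^{i+j}_{X\times Y}\twoheadrightarrow\Omega^i_X\boxtimes\Omega^j_Y$, and Lemma~\ref{lem:Eilenberg-Zilber} identifying the map $\scrH^i_X\boxtimes\scrH^j_Y\to\scrH^{i+j}_{X\times Y}$ coming from $\dual_X\boxtimes^L\dual_Y\xrightarrow{\cong}\dual_{X\times Y}$ with the relative cross product---I expect to obtain that $\tcyc_X(A)\times\tcyc_Y(B)$ is the composite of sheaf morphisms
\[
\Omega^{i+j}_{X\times Y}\longrightarrow\Omega^i_X\boxtimes\Omega^j_Y\xrightarrow{\ \tcyc_X(A)\boxtimes\tcyc_Y(B)\ }\scrH^i_X\boxtimes\scrH^j_Y\longrightarrow\scrH^{i+j}_{X\times Y}\ .
\]

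Then I would compare this composite with $\tcyc_{X\times Y}(A\times B)$ locally. By Proposition~\ref{prop:canonical morphism for between n-cycles and hom} both morphisms correspond to tropical $(i+j)$‑cycles on $X\times Y$, and such a cycle is determined by its restriction to any dense open subset of the manifold locus, so it is enough to work over $X^{\max}\times Y^{\max}$. At a point $(x,y)$ there I would pick local face structures $\Sigma$, $\Delta$ at $x$, $y$ with maximal cells $\sigma\ni x$, $\delta\ni y$ and orientations $\eta_\sigma\in\bigwedge^i T^\Z(\sigma)$, $\eta_\delta\in\bigwedge^j T^\Z(\delta)$, take $\Sigma\times\Delta$ as local face structure at $(x,y)$ with maximal cell $\sigma\times\delta$ and orientation $\eta_\sigma\wedge\eta_\delta$, and then apply Remark~\ref{rem:explicit description of natural morphism} to each factor. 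Feeding this through the composite above---and using the multiplicativity of the evaluation pairing under the decomposition $\bigwedge^{i+j} T^\Z_{(x,y)}(X\times Y)\cong\bigoplus_{a+b=i+j}\bigwedge^a T^\Z_x X\otimes\bigwedge^b T^\Z_y Y$---I expect $\tcyc_X(A)\times\tcyc_Y(B)$ to send a local generator $\omega$ to $A(\sigma)B(\delta)\,\langle\omega,\eta_\sigma\wedge\eta_\delta\rangle\,([\sigma]\times[\delta])$, where $[\sigma]\in\scrH^i_{X,x}$ and $[\delta]\in\scrH^j_{Y,y}$ are the cellular fundamental chains determined by $\eta_\sigma,\eta_\delta$; whereas Remark~\ref{rem:explicit description of natural morphism} gives $\tcyc_{X\times Y}(A\times B)\colon\omega\mapsto(A\times B)(\sigma\times\delta)\,\langle\omega,\eta_\sigma\wedge\eta_\delta\rangle\,[\sigma\times\delta]$. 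Since $(A\times B)(\sigma\times\delta)=A(\sigma)B(\delta)$, the whole statement collapses to the single identity $[\sigma]\times[\delta]=[\sigma\times\delta]$ of generators of $\scrH^{i+j}_{X\times Y,(x,y)}$.

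The main obstacle is exactly this last identity: that the homological cross product of the cellular fundamental chains of $(\sigma,\eta_\sigma)$ and $(\delta,\eta_\delta)$ equals the cellular fundamental chain of $(\sigma\times\delta,\eta_\sigma\wedge\eta_\delta)$. This is the assertion that the isomorphism $\bigwedge^n T^\Z_0\R^n\xrightarrow{\cong}H_n(\R^n,\R^n\setminus\{0\})$ fixed in the orientation conventions of \S\ref{sec:cycle class map} is multiplicative for exterior products and homological cross products; I would verify it directly from the explicit formula $e_1\wedge\cdots\wedge e_n\mapsto[e_n,e_{n-1},\dots,e_1,e_0]$ and the shuffle description of the Eilenberg--Zilber cross product, the convention having been chosen with precisely this compatibility in mind. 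Granting it and reversing the reduction of the first paragraph finishes the argument.
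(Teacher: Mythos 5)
Your proposal follows essentially the same route as the paper's proof: reduce via push-forward functoriality to $X=|A|$, $Y=|B|$, view both classes as sheaf morphisms $\Omega^{i+j}_{X\times Y}\to\scrH^{i+j}_{X\times Y}$, and compare them locally at points of $(X\times Y)^{\max}$ using Remark \ref{rem:explicit description of natural morphism} and Lemma \ref{lem:Eilenberg-Zilber}. The only difference is that you explicitly isolate the final identity $[\sigma]\times[\delta]=[\sigma\times\delta]$ (compatibility of the orientation convention with homological cross products) as a step to verify, which the paper simply asserts; your proposed verification is reasonable and, if anything, more careful than the paper on this point.
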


\begin{proof}
Let $A\in Z_i(X)$ and $B\in Z_j(Y)$. By the functoriality of the cross-product we may assume that $X=|A|$ is purely $i$-dimensional, and $Y=|B|$ is purely $j$-dimensional. In this case, both morphisms
\begin{align*}
\cyc(A)\times\cyc(B)&\colon \Omega^{i+j}_{X\times Y}[i+j]\to \dual_{X\times Y} \ , \text{ and} \\
\cyc(A\times B)&\colon \Omega^{i+j}_{X\times Y}[i+j]\to\dual_{X\times Y}
\end{align*}
are completely determined by the morphisms $\Omega^{i+j}_{X\times Y}\to\scrH^{i+j}_X$ they induce by taking cohomology in degree $-(i+j)$. By Proposition \ref{prop:canonical morphism for between n-cycles and hom}, it suffices to compare these morphisms locally at a point  $z=(x,y)\in(X\times Y)^{\max}$.  Then $x\in X^{\max}$ and $y\in Y^{\max}$. Let $\eta_x$ be a generator of $\bigwedge\nolimits^i T^\Z_x X$ and $\eta_y$  a generator of $\bigwedge\nolimits^j T^\Z_y Y$, and let $[\sigma_x]$ and $[\sigma_y]$ be the corresponding generator of $\scrH^i_{X,x}$ and $\scrH^j_{Y,y}$. Then as explained in Remark \ref{rem:explicit description of natural morphism}, the morphism $\Omega^i_{X,x}\to \scrH^i_{X,x}$ defined by $X$ is given by $\omega\mapsto \langle\omega,\eta_x\rangle A(x)[\sigma_x]$, and the morphism $\Omega^j_{Y,y}\to \scrH^i_{Y,y}$ defined by $B$ is given by $\omega\mapsto \langle \omega,\eta_y\rangle B(y)[\sigma_y]$. By definition of the cross product and Lemma \ref{lem:Eilenberg-Zilber}, the morphism $\Omega^{i+j}_{X\times Y, z} \to \scrH^{i+j}_{X\times Y, z}$ induced by $\cyc(A)\times\cyc(B)$ takes $\omega\otimes \omega'\in \Omega^i_{X,x}\otimes \Omega^j_{Y,y}\cong \Omega^{i+j}_{X\times Y, z}$ to 
\begin{equation*}
\langle \omega, \eta_x\rangle \langle \omega',\eta_y\rangle A(x) B(y) [\sigma_x]\times [\sigma_y].
\end{equation*}
Here $[\sigma_x]\times[\sigma_y]$ denotes the classical cross-product, which equals the generator $[\sigma_z]$ of $\scrH^{i+j}_{X\times Y}$ corresponding to the generator $\eta_z=\eta_x\otimes \eta_y$ of $\bigwedge^{i+j}T^\Z_z(X\times Y)$. Therefore, the expression above for the image of $\omega\otimes \omega'$ equals
\[
\langle \omega\otimes\omega', \eta_x\otimes \eta_y\rangle (A\times B)(z) [\sigma_z]
\]
which is precisely the image of $\omega\otimes \omega'$ under the morphism $\Omega^{i+j}_{X\times Y, z}\to \scrH^{i+j}_{X\times Y,z}$ induced by $\cyc_{X\times Y}(A\times B)$.
\end{proof}

\subsection{Chern classes}

Let $X$ be a rational polyhedral space. As explained in \S\ref{subsec:line bundles}, the set of isomorphism classes of tropical line bundles on $X$ is an abelian group, naturally isomorphic to $H^1(X,\Aff_X)$. 

\begin{defn}
Let $X$ be a rational polyhedral space, and let $d\colon \Aff_X\to \Omega^1_X$ be the quotient map. Then the \emph{first Chern class} is defined as the morphism
\[
c_1\coloneqq H^1(d)\colon H^1(X,\Aff_X)\to H^1(X,\Omega^1_X)=H^{1,1}(X) \ .
\]
If $\mL$ is a tropical line bundle on $X$, corresponding to $\alpha\in H^1(X,\Aff_X)$, then the first Chern class of $\mL$ is $c_1(\mL)\coloneqq c_1(\alpha)$.
\end{defn}

\begin{prop}
\label{prop:chern class pulls back to chern class}
Let $f\colon X \to Y$ be a morphism of rational polyhedral spaces, and let $\mL$ be a tropical line bundle on $Y$. Then 
\[
c_1(f^*\mL)=f^*(c_1(\mL)) \ .
\]
\end{prop}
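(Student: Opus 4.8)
The plan is to reduce the identity to the naturality of $H^1$ applied to a single commutative square of sheaves. Recall that, by definition, the first Chern class is $c_1 = H^1(d)$ for the quotient map $d\colon\Aff\to\Omega^1$, both on $X$ and on $Y$. Recall also that the pull-back on tropical line bundles is, by construction, the map $H^1(f^\sharp)\colon H^1(X,f^{-1}\Aff_Y)\to H^1(X,\Aff_X)$ precomposed with the canonical pull-back $H^1(Y,\Aff_Y)\to H^1(X,f^{-1}\Aff_Y)$; and, unwinding the definition of the pull-back on tropical cohomology from \S\ref{subsec:pullback} in bidegree $(1,1)$ and using that $f^{-1}$ is exact, the pull-back $f^*\colon H^{1,1}(Y)\to H^{1,1}(X)$ is likewise $H^1(f^\sharp)\colon H^1(X,f^{-1}\Omega^1_Y)\to H^1(X,\Omega^1_X)$ precomposed with the canonical map $H^1(Y,\Omega^1_Y)\to H^1(X,f^{-1}\Omega^1_Y)$.

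First I would record the relevant commuting square of sheaves on $X$: the square whose rows are $f^{-1}\Aff_Y\to f^{-1}\Omega^1_Y$ and $\Aff_X\to\Omega^1_X$ (the maps $f^{-1}d$ and $d$) and whose columns are the two versions of $f^\sharp$ commutes. This is immediate from the very construction of $f^\sharp\colon f^{-1}\Omega^1_Y\to\Omega^1_X$, which is the morphism induced on the quotients $\Aff/\R$ by the pull-back of functions $f^{-1}\Aff_Y\to\Aff_X$ (the pull-back of a constant function is constant, so this pull-back descends to the quotients). Applying the functor $H^1(X,-)$ to this square, and using the naturality of the canonical maps $H^1(Y,\mG)\to H^1(X,f^{-1}\mG)$ in the sheaf $\mG$ on $Y$ (applied to $d\colon\Aff_Y\to\Omega^1_Y$), I obtain a commutative square relating $c_1$ and $f^*$ over $X$ and over $Y$. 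Chasing the class $\alpha\in H^1(Y,\Aff_Y)$ corresponding to $\mL$ around this square gives $c_1(f^*\mL)=c_1(f^*\alpha)=f^*(c_1(\alpha))=f^*(c_1(\mL))$, as desired.

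I do not anticipate a genuine obstacle: the argument is entirely formal once the relevant sheaf square is set up. The only point that requires a moment's care is confirming that the abstractly-defined pull-back on tropical cohomology coincides, in bidegree $(1,1)$, with the concrete composite $H^1(Y,\Omega^1_Y)\to H^1(X,f^{-1}\Omega^1_Y)\xrightarrow{H^1(f^\sharp)} H^1(X,\Omega^1_X)$; this follows by unwinding the derived-category formulation in \S\ref{subsec:pullback} and using that $f^{-1}$ is exact, hence commutes with cohomology.
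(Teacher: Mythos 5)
Your proof is correct and follows essentially the same route as the paper: both reduce the statement to the commutativity of the diagram whose rows are induced by the quotient maps $\Aff\to\Omega^1$ and whose columns are the pull-back maps $H^1(Y,-)\to H^1(X,f^{-1}(-))$ followed by $H^1(f^\sharp)$, using that $f^\sharp$ is defined on the quotients by the pull-back of functions. The extra remark you make about identifying the derived-category pull-back in bidegree $(1,1)$ with this concrete composite is exactly the (implicit) unwinding the paper relies on.
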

\begin{proof}
This follows immediately from the commutativity of the diagram
\begin{center}
\begin{tikzpicture}[auto]
\matrix[matrix of math nodes, row sep= 5ex, column sep= 4em, text height=1.5ex, text depth= .25ex]{
|(AffY)| H^1(Y, \Aff_Y) &
|(OY)|  H^1(Y, \Omega^1_Y) \\
|(XAffY)| H^1(X,f^{-1}\Aff_Y)	&
|(XOY)| H^1(X,f^{-1}\Omega^1_Y) \\
|(AffX)| H^1(X,\Aff_X)	&
|(OX)| H^1(X,\Omega^1_X ) \ ,\\
};
\begin{scope}[->,font=\footnotesize]
\draw (AffY) -- (OY);
\draw (XAffY) -- (XOY);
\draw (AffX) -- (OX);
\draw (AffY)--node{$f^{-1}$}(XAffY);
\draw (OY)--node{$f^{-1}$}(XOY);
\draw (XAffY) --node{$H^1(f^\sharp)$} (AffX);
\draw (XOY)--node{$H^1(f^\sharp)$} (OX);
\end{scope}
\end{tikzpicture} 
\end{center}
where the horizontal morphisms are induced by the quotient morphisms $\Aff\to \Omega^1$.
\end{proof}

Also recall from \S\ref{subsec:Cartier divisors and line bundles} that there is a sheaf of tropical Cartier divisors $\Divs_X$ which fits into a short exact sequence
\begin{equation}
\label{equ:sequence for divisors}
0\to \Aff_X\to \Rat_X\to \Divs_X \to 0 \ ,
\end{equation}
the first connecting homomorphism of whose associated long exact cohomology sequence is the map
\[
\Div(X)=H^0(X,\Divs_X)\to H^1(X,\Aff_X),\;\; D\mapsto \mL(D) \ .
\]
Composing this with the first Chern class defines a map
\[
\Div(X)\to H^{1,1}(X),\;\; D\mapsto c_1(\mL(D)) 
\]
that assigns a $(1,1)$-cohomology class to every Cartier divisor. We will  need to work with a factorization of this map. A Cartier divisor $D\in \Div(X)$ is, by definition, a global section of $\Divs_X$ whose support is, again by definition, equal to $|D|$. Therefore, it defines a morphism $\Z_{|D|}\to \Divs_X$. Composing this with the morphism $\Divs_X\to \Aff_X[1]$ defined by (\ref{equ:sequence for divisors}) and the projection morphism $\Aff_X[1]\to \Omega^1_X[1]$ defines a morphism $\Z_{|D|}\to \Omega^1_X[1]$, and hence an element in $H^{1,1}_{|D|}(X)$ (see Remark \ref{rem:cohomology classes as morphisms}), the image of which in $H^{1,1}(X)$ is $c_1(\mL(D))$. We can thus view $c_1(\mL(D))$ as an element in $H^{1,1}_{|D|}(X)$.

We will now show that taking the cap product with the first Chern class $c_1(\mL(D))$ corresponds to intersecting with $D$.

\begin{prop}[cf.\ {\cite[Theorem 4.15]{Lefschetz}}]
\label{prop:cycle map respects intersections with chern classes}
Let $X$ be a rational polyhedral space. Then for every tropical Cartier divisor $D\in \Div(X)$ and for every tropical cycle $A\in Z_i(X)$ we have
\begin{equation*}
\cyc_X (D\cdot A ) = c_1(\mL(D))\frown\cyc_X(A)
\end{equation*}
in $H^{BM}_{i-1,i-1}(X)$.
\end{prop}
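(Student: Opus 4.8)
The plan is to reduce the identity to a stalkwise statement on a polyhedral cone and there to match both sides with the Allermann--Rau construction of $D\cdot A$.

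\textit{Reduction to $X=|A|$.} Let $\iota\colon|A|\hookrightarrow X$ be the inclusion. Since $D\cdot A$ is defined by first pulling $D$ back to $|A|$, Corollary~\ref{cor:cycle map commutes with push-forwards} applied to $\iota$ gives $\cyc_X(D\cdot A)=\iota_*\cyc_{|A|}(\iota^*D\cdot A)$, while the projection formula (Proposition~\ref{prop:projection formula}) together with Propositions~\ref{prop:chern class pulls back to chern class} and~\ref{prop:pullback and taking line bundles commutes} gives $c_1(\mL(D))\frown\cyc_X(A)=\iota_*\bigl(c_1(\mL(\iota^*D))\frown\tcyc_{|A|}(A)\bigr)$. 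Hence it suffices to treat the case $X=|A|$, which is then purely $n$-dimensional with $n=i$; I assume this from now on.

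\textit{Reduction to a local computation.} Both classes lie in $H^{BM}_{n-1,n-1}(|D|,X)$ --- the first because $|D\cdot A|\subseteq|D|$, the second by the very definition of the cap product --- and it is enough to show that they agree there. If $\dim|D|<n-1$ then this group vanishes and, since then $D\cdot A=0$ as well, there is nothing to prove; so assume $\dim|D|=n-1$. By Lemma~\ref{lem:basicBM}(b), $H^{BM}_{n-1,n-1}(|D|,X)$ is the group of global sections of the sheaf $\Homs_{\Z_X}(\Omega^{n-1}_X,\iota_*\scrH^{n-1}_{|D|})$, so it suffices to compare the two sections at each point $x\in X$. As the cycle class map, the intersection product with divisors, the first Chern class, and the cap product all commute with restriction to open subsets, I may shrink $X$ around $x$; using Proposition~\ref{prop:local cone} I replace $X$ by an open subset of the local cone $\LC_x X$, so that $X=|\Sigma|$ for a purely $n$-dimensional rational polyhedral fan $\Sigma$, $x=0$, $A$ is given by a balanced weight $A\colon\Sigma(n)\to\Z$, and $D$ is given by a conewise-linear function $\phi$ with $\phi(0)=0$ that is linear on every cone of $\Sigma$ (refining $\Sigma$ if necessary). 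I also fix orientations $\eta_\sigma\in\bigwedge^{\dim\sigma}T^\Z(\sigma)$, compatible along codimension-one inclusions as in the conventions on orientations.

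\textit{The two computations.} On the left, Remark~\ref{rem:explicit description of natural morphism} together with the Allermann--Rau formula recalled in \S\ref{subsec:Cartier divisors and line bundles} describes $\cyc_X(D\cdot A)$ near $x$ as the morphism $\omega\mapsto\sum_{\tau\in\Sigma(n-1)}\bigl(\sum_{\sigma\supseteq\tau}\langle\phi-l_\tau,n_{\sigma/\tau}\rangle A(\sigma)\bigr)\langle\omega|_\tau,\eta_\tau\rangle[\tau]$. On the right, I unwind the cap product: using the identification $(\Omega^{n-1}_X)_{|D|}\cong\Z_{|D|}\otimes_{\Z_X}\Omega^{n-1}_X$, the $(\iota_!,\iota^!)$-adjunction, and Shepard's combinatorial model $C^\bullet_\Sigma$ of $\dual_X$ (Remark~\ref{rem:combinatorial dualizing complex}), the class $\tcyc_X(A)$ is represented by the $C^\bullet_\Sigma$-valued cochain map whose degree $-n$ component sends $\omega\in\Omega^n_X$ to $(\langle\omega|_\sigma,\eta_\sigma\rangle A(\sigma))_{\sigma\in\Sigma(n)}\in C^{-n}_\Sigma$, while the degree-one class $c_1(\mL(D))$, being the image of $D$ under the connecting homomorphism of $0\to\Omega^1_X\to\Rat_X/\R_X\to\Divs_X\to0$, is represented in this model by the cocycle that records, on each oriented codimension-one cell $\tau\in\Sigma(n-1)$ with adjacent facet $\sigma$, the lattice-normal slope $\langle\phi-l_\tau,n_{\sigma/\tau}\rangle$. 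Feeding these through the product $\Omega^1_X\otimes\Omega^{n-1}_X\to\Omega^n_X$ and taking $H^{-(n-1)}$ of the resulting map into $C^\bullet_\Sigma$ --- after the bookkeeping that matches the degree shift of the connecting map against the cellular differentials of $C^\bullet_\Sigma$ --- yields exactly the morphism displayed above for the left-hand side, the orientation conventions ensuring that all signs cancel. (The comparison is most transparent at generic points of $(n-1)$-cells, where both morphisms are manifestly $\omega\mapsto w(\tau)\langle\omega,\eta_\tau\rangle[\tau]$; at deeper points it follows from the same computation together with the balancing/exactness built into Shepard's model.)

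\textit{Main obstacle.} The technical heart is the last step: making the degree-one class $c_1(\mL(D))$, which is defined through a connecting homomorphism, concrete enough at the cochain level to compute a cap product, and verifying that under Shepard's combinatorial model this connecting map is precisely the assignment of the lattice-normal slopes $\langle\phi-l_\tau,n_{\sigma/\tau}\rangle$, with all orientation signs tracked correctly. This is exactly where the conventions on orientations and the combinatorial description of the dualizing complex set up earlier in the paper are needed, and it is the step I expect to require the most care.
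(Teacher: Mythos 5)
Your overall route is the paper's: the reduction to $X=|A|$ via Corollary \ref{cor:cycle map commutes with push-forwards}, the projection formula, and Propositions \ref{prop:pullback and taking line bundles commutes} and \ref{prop:chern class pulls back to chern class}; the passage to $H^{BM}_{i-1,i-1}(|D|,X)$ and localization via the sheaf property of Lemma \ref{lem:basicBM}(b); a local face structure with compatible orientations; and the comparison with the Allermann--Rau weights through Remark \ref{rem:explicit description of natural morphism}. Replacing the singular model $\Delta^\bullet_X$ by Shepard's complex (Remark \ref{rem:combinatorial dualizing complex}) is a cosmetic difference.

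The gap is in the one step you defer, which is where all the content sits. The class $c_1(\mL(D))$ is a morphism $\Z_{|D|}\to\Omega^1_X[1]$: it has coefficients in $\Omega^1_X$ and is not a class in any model of $\dual_X$, so the claim that it ``is represented in this model by the cocycle that records the lattice-normal slopes $\langle\phi-l_\tau,n_{\sigma/\tau}\rangle$'' is not a meaningful chain-level statement; those integers only appear after the $1$-form data of $\phi$ has been paired against normal vectors and against the cycle class of $A$, i.e.\ they are the output of the computation you are trying to perform, so as written the argument is circular. To compute the cap product one must first represent $c_1(\mL(D))$ by an honest map of complexes --- the paper resolves $\Z_{|D|}$ by $[\Z_U\to\Z_X]$ with $U=X\setminus|D|$ and $\Divs_X$ by $[\Aff_X\to\Rat_X]$, so the class is given by multiplication by $\phi$ --- after which the composite with $\cyc_X(A)$ is concentrated in degree $-i$, namely $\omega\mapsto-\sum_{\sigma\in\Sigma(i)}\langle d(\phi|_\sigma)\wedge\omega,\eta_\sigma\rangle A(\sigma)[\sigma]$; a chain homotopy replaces it by its boundary in degree $-(i-1)$, and after computing $\partial$ with the orientation conventions one must split $d(\phi|_\sigma)=(d(\phi|_\sigma)-dl_\tau)+dl_\tau$ and invoke the balancing condition of $A$ (together with $\bigwedge^i T^\Z(\tau)=0$) to kill the $dl_\tau$-term; only then does the coefficient of $[\tau]$ become the Allermann--Rau weight. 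Your proposal never uses the balancing of $A$ --- the parenthetical ``balancing/exactness built into Shepard's model'' refers to exactness of the model, which is a different statement --- and without it the identification of the coefficient of $[\tau]$ with the weight of $D\cdot A$ does not go through, even at a generic point of an $(i-1)$-cell of $|D|$. So the framework you set up (comparing sections of $\Homs(\Omega^{i-1}_X,\iota_*\scrH^{i-1}_{|D|})$) is sound, but the explicit resolution, homotopy, boundary, and balancing computation still has to be carried out; it is not supplied by the asserted ``cocycle of slopes.''
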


\begin{proof}
First we reduce to the case $X=|A|$. Let $\iota\colon |A| \hookrightarrow X$ be the inclusion map. Then by definition of the intersection pairing, and by Proposition \ref{prop:cycle map commutes with push-forwards}, we have
\begin{equation*}
\cyc_X (D\cdot A)= \cyc_X(\iota_*(\iota^*D\cdot A))=\iota_*\cyc_{|A|}(\iota^*D\cdot A) \ .
\end{equation*}
On the other hand, by the projection formula in homology (Proposition  \ref{prop:projection formula})  we have
\begin{multline*}
c_1(\mL(D))\frown\cyc_X(A) =\\
 = c_1(\mL(D))\frown \iota_*\cyc_{|A|}(A)=\iota_*(\iota^*(c_1(\mL(D))\frown \cyc_{|A|}(A)) \ ,
\end{multline*}
which equals $\iota_*(c_1(\mL(\iota^*D))\frown \cyc_{|A|}(A))$ by Propositions \ref{prop:pullback and taking line bundles commutes} and \ref{prop:chern class pulls back to chern class}. It thus suffices to show that
\[
\cyc_{|A|} (\iota^*D\cdot A ) =   c_1(\mL(\iota^*D))\frown \cyc_{|A|}(A)
\]
in $H^{BM}_{i-1,i-1}(|A|)$, which allows us to assume $X=|A|$. 
In this case, we will interpret $c_1(\mL(D))$ as an element of $H^{1,1}_{|D|}(X)$ and show that the equality holds in $H^{BM}_{i-1,i-1}(|D|,X)$. Since $|D|$ is at most $(i-1)$-dimensional, the presheaf
\[
U\mapsto H^{BM}_{i-1,i-1}(U\cap |D|,U)
\]
on $X$ is a sheaf by Lemma \ref{lem:basicBM} (b). We can thus work locally around a point $x\in X$, where we can use a local face structure $\Sigma$. After potentially shrinking and refining $\Sigma$, we can assume that the divisor $D$ is principal, say $D=\divv(\phi)$, and that $\phi|_\sigma\in \Gamma(\sigma,\Aff_\sigma)$ for all $\sigma\in\Sigma$. As usual, we choose an orientation on each $\sigma\in \Sigma$ in form of a generator 
\[
\eta_\sigma\in \bigwedge\nolimits^{\dim \sigma}T^\Z (\sigma) ,
\]
and these orientations define classes $[\sigma]\in \Gamma(X,\Delta_X^{-\dim\sigma})$ supported on $\sigma$ for all $\sigma\in\Sigma$.

The rational function $\phi$ defines a morphism $\Z_{|D|}\to \Divs_X$, and, by definition, the first Chern class $c_1(\mathcal L(D))\in H^{1,1}_{|D|}(X)$ is the composite of this morphism with the composite $\Divs_X\to \Aff_X[1]\to\Omega^1_X[1]$, where the first morphism comes from the exact sequence (\ref{equ:sequence for divisors}). These are morphisms in the derived category $\der{X}$ that are not represented by morphisms between the complexes involved. To remedy this, we note that the exact sequence (\ref{equ:sequence for divisors}) yields an isomorphism between $\Divs_X$ and the complex \[
\ldots \to 0 \to \Aff_X \to \Rat_X \to 0 \to \ldots \ ,
\]
where $\Rat_X$ sits in degree $0$, and that the short exact sequence 
\[
0\to \Z_U\to \Z_X\to \Z_{|D|}\to 0 \ ,
\]
where $U=X\setminus |D|$, yields an isomorphism between $\Z_{|D|}$ and the complex
\[
\ldots \to 0 \to \Z_U\to \Z_X \to 0\to \ldots \ ,
\]
where $\Z_X$ sits in degree $0$. The first Chern class $c_1(\mathcal L(D))$ is then represented by the diagram

\begin{center}
\begin{tikzpicture}[auto]
\matrix[matrix of math nodes, row sep= 5ex, column sep= 4em, text height=1.5ex, text depth= .25ex]{
|(ZX)| \Z_X &
|(Div)|  \Rat_X &
|(01)| 0 &
|(02)| 0 \\
|(ZU)| \Z_U	&
|(Aff1)| \Aff_X &
|(Aff2)| \Aff_X &
|(OX)|  \Omega^1_X  \ ,\\
};
\begin{scope}[->,font=\footnotesize]
\draw (ZX) --node{$\cdot \phi$} (Div);
\draw (Div) --(01);
\draw (01) -- (02);
\draw (ZU) --node{$\cdot \phi$} (Aff1);
\draw (Aff1)--node{$-\id$}(Aff2);
\draw (Aff2)--(OX);
\draw (ZU) -- (ZX);
\draw (Aff1)--(Div);
\draw (Aff2) -- (01);
\draw (OX)--(02);
\end{scope}
\end{tikzpicture} 
\end{center}
where each column represents an element in $\der X$ and the minus sign in the middle morphism in the lower row is there by convention. Tensoring with $\Omega^{i-1}_X[i-1]$ from the right and composing with the multiplication morphism $\Omega^{i-1}_X\otimes_\Z \Omega^1_X\to \Omega^i_X$, and the morphism $\Omega^i_X[i]\to \Delta_X^\bullet$ representing $\cyc_X(A)$, we see that $c_1(\mathcal L(D))\frown\cyc(A)$ is represented by the morphism $((\Omega^{i-1}_X)_U\to \Omega^{i-1}_X) ) \to \Delta_X^\bullet$, where the former complex sits in degrees $-i$ and $-(i-1)$, that is given by
\[
(\Omega^{i-1}_X)_U\to \Delta^{-i}_X \colon \;\; \omega \mapsto -\sum_{\sigma\in \Sigma(i)} \langle d(\phi|_\sigma)\wedge\omega|_\sigma,\eta_\sigma\rangle A(\sigma)[\sigma] \ ,
\]
in degree $-i$ and $0$ in every other degree. The morphism in degree $-i$ extends to a morphism $\Omega^{i-1}_X\to \Delta^{-i}_X$, defining a chain homotopy between the morphism of cochain complexes just defined, and the morphism that is given by
\[
\Omega^{i-1}_X\to \Delta^{-(i-1)}_X \colon \;\; \omega\mapsto \partial \left(\sum_{\sigma\in \Sigma(i)} \langle d(\phi|_\sigma)\wedge\omega ,\eta_\sigma\rangle A(\sigma)[\sigma]\right)
\]
in degree $-(i-1)$ and is $0$ in all other degrees. 
 To simplify the expression on degree $-(i-1)$  we pick for every finite codimension-$1$ face $\tau$ of a cone $\sigma\in\Sigma(i)$ a lattice normal vector $n_{\sigma/\tau}$, and we set $\epsilon_{\sigma/\tau}$ equal to $1$ if the chosen orientations on $\sigma$ and $\tau$ are compatible, and to $-1$ otherwise. Recall that this means that
\[
\epsilon_{\sigma/\tau}\eta_\sigma= \eta_\tau\wedge n_{\sigma/\tau} \ .
\]
 We now compute that 
\begin{multline*}
\partial \left(\sum_{\sigma\in \Sigma(i)} \langle d(\phi|_\sigma)\wedge\omega ,\eta_\sigma\rangle A(\sigma)[\sigma]\right) = \\ =
\sum_{\tau\in \Sigma(i-1)}
\sum_{\sigma\in\Sigma_\tau(i)}
\langle d(\phi|_\sigma)\wedge\omega ,\epsilon_{\sigma/\tau} \eta_\tau\wedge n_{\sigma/\tau}\rangle
 A(\sigma)\epsilon_{\sigma/\tau}[\tau] = \\
 \sum_{\tau\in \Sigma(i-1)}
 \sum_{\sigma\in\Sigma_\tau(i)}
\langle \omega \wedge d(\phi|_\sigma) ,n_{\sigma/\tau}\wedge \eta_\tau\rangle
 A(\sigma)[\tau] \ ,
\end{multline*}
where $\Sigma_\tau(i)$ denotes the set of cells $\sigma\in\Sigma(i)$ that have $\tau$ as a finite face. Note that we only need to consider finite faces because if $\omega$ is defined at a point of an infinite face of $\sigma$, then $\langle d(\phi|_\sigma)\wedge \omega,\eta_\sigma\rangle=0$.
Let $l_\tau$ be an affine linear function on a neighborhood of $|\Sigma|$  such that $l_\tau|_\tau=\phi|_\tau$. Then we can rewrite the coefficient of $[\tau]$ in the expression above as
\begin{multline*}
 \sum_{\sigma\in\Sigma_\tau(i)}
\Big(\big\langle \omega\wedge(d(\phi|_\sigma)-dl_\tau) ,n_{\sigma/\tau}\wedge \eta_\tau\big\rangle
 A(\sigma)+\big\langle \omega\wedge dl_\tau ,n_{\sigma/\tau}\wedge \eta_\tau\big\rangle
 A(\sigma)\Big) = \\
=\sum_{\sigma\in\Sigma_\tau(i)} \big\langle\omega ,\eta_\tau\big\rangle\cdot\big\langle (d(\phi|_\sigma)-dl_\tau),n_{\sigma/\tau}\big\rangle
 A(\sigma) + \\ 
 + \left\langle \omega\wedge dl_\tau ,\left(\sum_{\sigma\in\Sigma_\tau(i)}A(\sigma)n_{\sigma/\tau}\right)\wedge \eta_\tau\right\rangle \ . 
\end{multline*}
By the balancing condition and the fact that $\bigwedge^iT^\Z(\tau)=0$, the second summand of the last expression vanishes and we see that the coefficient of $[\tau]$ is given by 
\[
\langle\omega ,\eta_\tau\rangle \left(\sum_{\sigma\in\Sigma_\tau(i)} \big\langle (d(\phi|_\sigma)-dl_\tau),n_{\sigma/\tau}\big\rangle A(\sigma) \right)  \ .
\]
But this is precisely the coefficient of $[\tau]$ one gets in $\cyc_X(D\cdot A)$ (compare the coefficient with the weights of $D\cdot A$ described in  \S\ref{subsec:Cartier divisors and line bundles}), finishing the proof.
\end{proof}

\begin{rem}
If one uses a version of the projection formula that respects supports in the proof of Proposition \ref{prop:cycle map respects intersections with chern classes}, one can show that the formula shown in the proposition actually holds in $H^{BM}_{i-1,i-1}(|D|\cap |A|, X)$.
\end{rem}

\subsection{Compatibility with the singular cycle class map}
Let $X$ be a locally polyhedral space, equipped with a face structure $\Sigma$ whose cells are compact, and let $A\in Z_k(X)$ be a tropical $k$-cycle on $X$. There exists a face structure $\Sigma'$ on $|A|$ such that every cell of $\Sigma'$ is contained in a cell of $\Sigma$. For each $\sigma\in\Sigma'(k)$ we choose an orientation $\eta_\sigma\in\bigwedge^k T^\Z(\sigma)$, which give rise to chains $[\sigma]\in\Gamma(X,\Delta^{-k}_X)$ supported on $\sigma$ via cellular homology. The \emph{locally finite cycle class of $A$}, as considered in \cite[Section 4]{Lefschetz} (also see \cite[Section 4]{MZeigenwave}) is defined as
\[
\cyc_X^{lf}(A)= \sum_{\sigma\in \Sigma'(k)} A(\sigma) \cdot [\sigma]\otimes\eta_\sigma  \ ,
\]
where $[\sigma]\otimes\eta_\sigma$ is the locally finite tropical $(k,k)$-chain defined by $[\sigma]$ and $\eta_\sigma$, and  $A(\sigma)$ is the constant value that $A$ has on $\sigma$. Since any two face structures have a common refinement, this is independent of the choice of $\Sigma'$ and defines a morphism
\[
\cyc_X^{lf}\colon Z_k(X)\to H^{lf}_{k,k,}(X;\Sigma) \ .
\]

\begin{thm}
Let $X$ be a rational polyhedral space, equipped with a face structure $\Sigma$ with compact cells. Then the natural isomorphism (Theorem \ref{thm:compatibility of homology theories})
\begin{equation*}
H^{lf}_{k,k}(X;\Sigma) \xrightarrow{\cong} H^{BM}_{k,k}(X) \ .
\end{equation*}
is compatible with the two tropical cycle class maps. In other words, 
the diagram
\begin{center}
\begin{tikzpicture}[auto]
\node (Z) at (60:2.5) {$Z_k(X)$};
\node (HS) at (0,0) {$H^{lf}_{k,k}(X;\Sigma)$};
\node (H) at (2.5,0) {$H^{BM}_{k,k}(X)$};
\begin{scope}[font=\footnotesize,->]
\draw (Z) -- node[swap]{$\cyc_X^{lf}$} (HS);
\draw (Z) -- node{$\cyc_X$} (H);
\draw (HS) --node{$\cong$} (H);
\end{scope}
\end{tikzpicture}
\end{center}
is commutative.
\end{thm}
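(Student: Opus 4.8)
The plan is to reduce to the case where $X=|A|$ is purely $k$-dimensional, and then to compare both sides through explicit chain-level formulas: Remark~\ref{rem:explicit description of natural morphism} governs $\cyc_X$, while the identifications used in the proof of Theorem~\ref{thm:compatibility of homology theories} govern $\cyc_X^{lf}$. I expect the two procedures to produce literally the same morphism $\Omega^k_X\to\scrH^k_X$, which then finishes the proof.

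For the reduction, fix $A\in Z_k(X)$ (the case $A=0$ being trivial), let $\iota\colon|A|\hookrightarrow X$ be the inclusion, and let $\Sigma'$ be the face structure on $|A|$ with cells inside cells of $\Sigma$ used in the definition of $\cyc_X^{lf}$. By Definition~\ref{defn:all dimensional cycle map} together with Corollary~\ref{cor: cyc and tcyc coincide} we have $\cyc_X(A)=\iota_*\tcyc_{|A|}(A)=\iota_*\cyc_{|A|}(A)$, while by construction $\cyc_X^{lf}(A)$ is the image of $\cyc_{|A|}^{lf}(A)\in H^{lf}_{k,k}(|A|;\Sigma')$ under the map on locally finite homology induced by regarding a $\Sigma'$-respecting tropical chain on $|A|$ as a $\Sigma$-respecting tropical chain on $X$. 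The isomorphism of Theorem~\ref{thm:compatibility of homology theories} is natural along the closed immersion $\iota$ — it intertwines this push-forward of tropical chains with the push-forward $\iota_*$ on $H^{BM}$, which in the end rests on the identification of $Rf_*\dual\to\dual$ with the push-forward of singular chains from Lemma~\ref{lem:description push-forward of dualizing complex}. Hence it suffices to treat $|A|$ in place of $X$, so we may and do assume that $X=|A|$ is $k$-dimensional.

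Since $\scrH^j_X=0$ for $j>k$, Definition~\ref{def:cyctop} identifies $H^{BM}_{k,k}(X)$ with $\Hom_{\der X}(\Omega^k_X[k],\dual_X)=\Hom_{\Z_X}(\Omega^k_X,\scrH^k_X)$. By Corollary~\ref{cor: cyc and tcyc coincide}, $\cyc_X(A)=\tcyc_X(A)$, which by Proposition~\ref{prop:canonical morphism for between n-cycles and hom} and the recipe of Remark~\ref{rem:explicit description of natural morphism} is the morphism $\Omega^k_X\to\scrH^k_X$ sending, with respect to a local face structure and orientations $\eta_\sigma\in\bigwedge^k T^\Z(\sigma)$, a germ $\omega$ to $\sum_\sigma A(\sigma)\langle\omega,\eta_\sigma\rangle[\sigma]$, where $[\sigma]\in\Delta^{-k}_X$ is the cellular chain determined by $\eta_\sigma$. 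On the other side, recall from the proof of Theorem~\ref{thm:compatibility of homology theories} that $C^{lf}_{k,k}(X;\Sigma)=\Gamma\Homs(\Omega^k_X,\Delta_X^{\Sigma,-k})$, with a tropical $(k,k)$-simplex $(\tau,s)$ corresponding to the morphism $\Omega^k_X\to\Delta_X^{\Sigma,-k}$ sending, at $x\in X$, a germ $\omega$ to $s(\omega)\cdot[\tau]\in C_k(X,X\setminus\{x\})$ if $x\in\tau(\Delta^k)$ and to $0$ otherwise. Because $\Homs^\bullet(\Omega^k_X,\Delta_X^{\Sigma,\bullet})$ is homotopically fine, a degree-$(-k)$ cocycle $c$ therein is a chain map $\Omega^k_X[k]\to\Delta_X^{\Sigma,\bullet}$, and its image in $\Hom_{\der X}(\Omega^k_X[k],\dual_X)=\Hom_{\Z_X}(\Omega^k_X,\scrH^k_X)$ — obtained by composing with $\Delta_X^{\Sigma,\bullet}\xrightarrow{\sim}\dual_X$ and passing to $H^{-k}$ — is the morphism $\omega\mapsto[c(\omega)]$, the class of $c(\omega)$ in $\scrH^k_X=H^{-k}(\Delta_X^{\Sigma,\bullet})$. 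Applying this to $c=\cyc_X^{lf}(A)=\sum_\sigma A(\sigma)\,[\sigma]\otimes\eta_\sigma$ and writing each cellular chain $[\sigma]$ as a singular chain $\sum_j\epsilon_j\tau_j$, so that $[\sigma]\otimes\eta_\sigma=\sum_j\epsilon_j(\tau_j,\langle-,\eta_\sigma\rangle)$, one finds that $c$ sends $\omega$ to $\sum_\sigma\sum_j A(\sigma)\epsilon_j\langle\omega,\eta_\sigma\rangle[\tau_j]=\sum_\sigma A(\sigma)\langle\omega,\eta_\sigma\rangle[\sigma]$ in $C_k(X,X\setminus\{x\})$, whose class in $\scrH^k_X$ is $\sum_\sigma A(\sigma)\langle\omega,\eta_\sigma\rangle[\sigma]$. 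This coincides with the morphism attached to $\cyc_X(A)$, so the triangle commutes.

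The main obstacle is bookkeeping rather than anything conceptual: one must chase the cocycle representing $\cyc_X^{lf}(A)$ through the chain of quasi-isomorphisms in the proof of Theorem~\ref{thm:compatibility of homology theories} and verify that the orientation-induced cellular chains $[\sigma]$ appearing there are the same (signs included) as those in Remark~\ref{rem:explicit description of natural morphism}. The reduction step, moreover, tacitly invokes the functoriality of the isomorphism of Theorem~\ref{thm:compatibility of homology theories} along closed immersions; this is not recorded there explicitly, but follows from the naturality of each ingredient of its construction, most importantly Lemma~\ref{lem:description push-forward of dualizing complex}.
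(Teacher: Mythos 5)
Your proposal is correct in substance and its computational core is the same as the paper's: both arguments come down to matching the explicit formula $\omega\mapsto\sum_{\sigma\in\Sigma'(k)}A(\sigma)\langle\omega,\eta_\sigma\rangle[\sigma]$ of Remark~\ref{rem:explicit description of natural morphism} with the image of the locally finite tropical chain $\sum_\sigma A(\sigma)[\sigma]\otimes\eta_\sigma$ under the identification $C^{lf}_{k,k}\cong\Gamma\Homs(\Omega^k_X,\Delta^{\Sigma,-k}_X)$ from the proof of Theorem~\ref{thm:compatibility of homology theories}. The difference is structural: you first reduce to $X=|A|$, which lets you work in $\Hom(\Omega^k_X,\scrH^k_X)$ on a purely $k$-dimensional space, but it costs you an extra input, namely that the isomorphism of Theorem~\ref{thm:compatibility of homology theories} is compatible with push-forward along the closed immersion $\iota\colon|A|\hookrightarrow X$ (chain-level push-forward of locally finite tropical chains on one side versus $\iota_*$ on $H^{BM}$ on the other). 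That square is true and can be proved with the tools already in the paper --- $\iota_*$ is exact, push-forward of singular chains gives $\iota_*\Delta^{\Sigma',\bullet}_{|A|}\to\Delta^{\Sigma,\bullet}_X$ compatibly with the traces as in the proofs of Lemma~\ref{lem:description push-forward of dualizing complex} and Proposition~\ref{prop: Hom is equal to derived hom for locally closed sets} --- but it is not recorded anywhere and does need to be written out; as stated, "naturality of each ingredient" is an assertion, not an argument. The paper avoids this detour by keeping everything on $X$: using Definition~\ref{defn:all dimensional cycle map}, Corollary~\ref{cor: cyc and tcyc coincide} and the chain-level description of $\iota_*$, it represents $\cyc_X(A)$ directly as an honest morphism of complexes $\Omega^k_X[k]\to\Delta^{\Sigma,\bullet}_X$ given by the displayed formula (the balancing condition making it a chain map), and observes that this morphism \emph{is} the chain $\cyc^{lf}_X(A)$ under the identification above. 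So your route is a bit longer and shifts the push-forward bookkeeping into the reduction step, while the paper's absorbs it into the chain-level representation of $\cyc_X(A)$; if you keep your reduction, spell out the compatibility square explicitly, since that is the only point where your write-up falls short of a complete proof.
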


\begin{proof}
Let $A\in Z_k(X)$, and let  $\Sigma'$ be a face structure on $|A|$ whose cells are contained in cells of $\Sigma$. Choose orientations $\eta_\sigma\in \bigwedge^k T^\Z(\sigma)$ for all $\sigma\in\Sigma'$, and let $[\sigma]\in\Gamma(X,\Delta^{-k}_X)$ be the chain supported on $\sigma$ obtained from $\eta_\sigma$ via cellular homology. Using the face structure $\Sigma'$, the morphism $\cyc_X(A)\colon \Omega^k_X[k]\to \dual_X$ in $\der X$ can actually be represented as a morphism of complexes $\Omega^k_X[k]\to \Delta^{\Sigma,\bullet}_X$, namely as the morphism whose component in degree $-k$ is given by
\[
\Omega^n_X[k] \ni \omega \mapsto \sum_{\sigma\in \Sigma'(k)} A(\sigma) \cdot \langle \omega , \eta_\sigma\rangle  [\sigma] \in \Delta^{\Sigma,-k}_X \ .
\]
This corresponds to the locally finite tropical $(k,k)$-chain
\[
\sum_{\sigma\in\Sigma'(k)} A(\sigma)\cdot [\sigma]\otimes\eta_\sigma \in C^{lf}_{k,k}(X;\Sigma)
\]
under the isomorphism $\Hom(\Omega^k_X,\Delta_X^{\Sigma,-k})\cong C^{lf}_{k,k}(X;\Sigma)$ used in the proof of Theorem \ref{thm:compatibility of homology theories}. Noting that this tropical chain represents $\cyc_X(A)$ by definition finishes the proof.
\end{proof}

\section{Integral Poincar\'e-Verdier duality}
\label{sec:duality}

\subsection{Tropical manifolds}
Tropical manifolds are the smooth spaces in the world of rational polyhedral spaces. Let us briefly recall their definition. To every loopless matroid $M$ on a finite base set $E(M)$ we can associate a \emph{tropical linear space} $\berg{M}\subseteq \R^{E(I)}/\R\mathbf{1}$, which is also called the \emph{Bergman fan} of $M$. Here, $\mathbf{1}$ denotes the vector with all coordinates equal to $1$. We refer to \cite{ArdilaKlivans} and \cite[Chapter 4]{TropBook} for precise definitions and further details.

\begin{defn}
A rational polyhedral space $X$ is called \emph{smooth}, or a \emph{tropical manifold}, if every point $x\in X$ has an open neighborhood that is isomorphic to an open subset of $\berg M\times \Rbar^k$ for some loopless matroid $M$ and some $k\in \N$.
\end{defn}

\subsection{Poincar\'e-Verdier duality}

The goal of this section is to prove a formula for the Verdier duals of the sheaf of tropical $p$-forms on a tropical manifold $X$. Recall that for any complex $\mathcal C^\bullet\in\der X$, its \emph{Verdier dual} is defined as
\[
\vdual(\mathcal C^\bullet)\coloneqq R\Homs^\bullet(\mathcal C^\bullet, \dual_X) \ .
\]
It is immediate from this definition that $\vdual(\Z_X)=\dual_X$. What is much less obvious is that for a constructible complex $\mathcal C^\bullet \in \der X$ there is a natural isomorphism $\vdual(\vdual(\mathcal C^\bullet)\cong \mathcal C^\bullet$, justifying the terminology ``dual''. We can now state the main theorem of this section:

\begin{thm}
\label{thm:computing the Verdier dual}
Let $X$ be a purely $n$-dimensional tropical manifold. Then there is a natural isomorphism 
\begin{equation*}
\Omega^{n-p}_X[n]\to \vdual(\Omega^p_X) \ .
\end{equation*} 
\end{thm}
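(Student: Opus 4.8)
The plan is to construct a natural morphism $\psi_p\colon\Omega^{n-p}_X[n]\to\vdual(\Omega^p_X)$ and to prove it is an isomorphism by reducing, through localization and tropical modifications, to the case $X=\Rbar$. For the construction, observe that on a tropical manifold the function equal to $1$ on $X^{\max}$ and to $0$ elsewhere is a tropical $n$-cycle $[X]\in Z_n(X)$: it is locally constructible, and $\LC_x([X])$ is the local matroid fan with all weights $1$, which is balanced. Since $X$ is purely $n$-dimensional, $H^{-j}(\dual_X)=\scrH^j_X=0$ for $j>n$, so the tropical cycle class $\tcyc_X([X])$ is a morphism $\mu_X\colon\Omega^n_X[n]\to\dual_X$ in $\der X$ (Proposition~\ref{prop:canonical morphism for between n-cycles and hom}). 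Shifting the wedge pairing $\Omega^{n-p}_X\otimes_{\Z_X}\Omega^p_X\to\Omega^n_X$ by $[n]$ and composing with $\mu_X$ gives a morphism $\Omega^{n-p}_X[n]\otimes^L_{\Z_X}\Omega^p_X\to\dual_X$ (recall $\Omega^*_X$ is flat), which tensor--hom adjunction rewrites as the desired $\psi_p\colon\Omega^{n-p}_X[n]\to R\Homs^\bullet(\Omega^p_X,\dual_X)=\vdual(\Omega^p_X)$. All the ingredients are compatible with restriction to open subsets, so $\psi_p$ commutes with such restrictions; in particular, whether $\psi_p$ is an isomorphism can be tested on stalks, hence is a local question on $X$.

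Being local, we may assume $X$ is an open subset of $\berg M\times\Rbar^k$ for some loopless matroid $M$, so $n=(\rk(M)-1)+k$. Every such space is obtained from an open subset of $\Rbar^n$ by a finite sequence of tropical modifications along Cartier divisors whose supports are tropical manifolds of dimension $n-1$ --- this is how tropical linear spaces are built up from affine spaces, the successive centres being (products of) Bergman fans of matroid minors. It therefore suffices to prove two things: (i) $\psi_\bullet$ is an isomorphism when $X=\Rbar^n$; and (ii) if $\pi\colon\widetilde Z\to Z$ is a tropical modification of tropical manifolds along a Cartier divisor $D$ with $|D|$ a tropical manifold, then $\psi_\bullet$ being an isomorphism for $Z$ and for $|D|$ implies it is one for $\widetilde Z$. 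Given (i) and (ii), an induction on $\dim X$, the centres being handled by the inductive hypothesis, completes the argument.

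For (i) we use that $\psi_\bullet$ is compatible with products. Indeed $\Omega^p_{X\times Y}\cong\bigoplus_{i+j=p}\Omega^i_X\boxtimes\Omega^j_Y$ (Lemma~\ref{lem:differentials on product}), $\dual_{X\times Y}\cong\dual_X\boxtimes^L\dual_Y$ (\S\ref{subsec:cross products in homology}), Verdier duality is compatible with external products of constructible complexes (\cite{KashiwaraSchapira}), $[X\times Y]=[X]\times[Y]$, and the cycle class map respects cross products (Proposition~\ref{prop:cycle map commutes with cross products}); combining these, $\psi_p^{X\times Y}$ is, under the resulting identifications, $\bigoplus_{i+j=p}\psi_i^X\boxtimes^L\psi_j^Y$. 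Thus (i) reduces to $X=\Rbar$, which is a direct computation: $\Rbar^{\max}=\R$ and affine functions are constant near $\infty$, so $\Omega^0_{\Rbar}=\Z_{\Rbar}$ and $\Omega^1_{\Rbar}\cong\Z_\R$ (with $\Z_\R=\iota_!\iota^{-1}\Z_{\Rbar}$ for $\iota\colon\R\hookrightarrow\Rbar$), while $\dual_{\Rbar}\cong\Z_\R[1]$ because $\Rbar$ is a $1$-dimensional manifold with boundary with orientable interior and $H_i(\Rbar,\Rbar\setminus\{\infty\})=0$ for all $i$. Hence $\vdual(\Omega^0_{\Rbar})=\dual_{\Rbar}=\Z_\R[1]=\Omega^1_{\Rbar}[1]$ and $\vdual(\Omega^1_{\Rbar})=R\Homs^\bullet(\Z_\R,\Z_\R[1])\cong R\iota_*(\Z_\R[1])\cong\Z_{\Rbar}[1]=\Omega^0_{\Rbar}[1]$; in both cases one checks that $\psi_p$ realizes the isomorphism, which comes down to $\mu_{\Rbar}\colon\Omega^1_{\Rbar}[1]\to\dual_{\Rbar}$ being an isomorphism, immediate from Proposition~\ref{prop:canonical morphism for between n-cycles and hom}. (The same computation, with $\Omega^\bullet$ now constant and $\dual\cong\Z[m]$, gives the statement for $\R^m$ as well.)

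The main obstacle is step (ii): understanding $\Omega^p$ and the dualizing complex along a tropical modification precisely enough to keep $\psi_\bullet$ an isomorphism. The heart of the matter is a local structure result near the exceptional divisor: one must relate $\Omega^p_{\widetilde Z}$ to the sheaves of forms on $Z$ and on $|D|$, and likewise $\dual_{\widetilde Z}$ to $\dual_Z$ and $\dual_{|D|}$, by natural distinguished triangles --- the sheaf-theoretic shadow of deletion--contraction for the matroids governing the local models --- and then to check that $\psi_\bullet$ carries one family of triangles to the other. Once these are in place, the five lemma, together with the inductive hypotheses for $Z$ and $|D|$, yields the statement for $\widetilde Z$. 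Everything else --- the naturality of $\psi_p$, the product decomposition, and the $1$-dimensional base case --- is either formal or elementary.
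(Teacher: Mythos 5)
Your overall architecture matches the paper's (natural map built from the fundamental cycle and the wedge pairing; locality; compatibility with products; the $\Rbar$ base case; induction through tropical modifications of Bergman fans), but the step you label (ii) is exactly the non-formal heart of the argument, and you have only stated what would need to be checked rather than checked it. Concretely: for the modification $\delta\colon X'=\berg M\to X=\berg{M\setminus i}$ with divisor $D=\berg{M/i}$ one needs (a) the exact sequence $0\to\Omega^p_X\xrightarrow{\delta^\sharp} s^{-1}\Omega^p_{X'}\to\iota_*\Omega^{p-1}_D\to 0$ (this is a cited lemma of Shaw, not a formality), (b) the identification $\vdual(j_!\Omega^p_{\mathring E})\cong j_*\Omega^{n-p}_{\mathring E}[n]$, which requires showing $R^sj_*\Omega^{n-p}_{\mathring E}=0$ for $s>0$, and above all (c) the commutativity of the square comparing the restriction $\iota^\sharp\colon\Omega^{n-p}_X[n]\to\iota_*\Omega^{n-p}_D[n]$ with the connecting morphism $\vdual(\Omega^p_X)\to\vdual(\iota_*\Omega^{p-1}_D)[1]$ of the dualized triangle. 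Point (c) cannot be waved through as ``$\psi_\bullet$ carries one family of triangles to the other'': it is verified in the paper by an explicit chain-level computation in $\Delta_X^\bullet$ using local face structures, orientations, the balancing condition for the fundamental cycle of $X'$, lattice normal vectors $n_{\tau'/s(\tau)}$ and the auxiliary $1$-form $\omega_\R$ (with the sign $\langle\omega_\R,n_{\tau'/s(\tau)}\rangle=-1$ doing real work). Without this the five lemma has nothing to apply to, since one does not know the two triangles are connected by a morphism of triangles.

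There is also a flaw in your induction scheme: when $i\in E(M)$ is not a coloop, the deletion $\berg{M\setminus i}$ has the \emph{same} dimension as $\berg M$, so ``induction on $\dim X$, the centres being handled by the inductive hypothesis'' does not cover the base $Z$ of the modification. The correct bookkeeping (as in the paper) is induction on $\#E(M)$, with separate treatment of the cases where $E(M)$ is independent (then $\berg M\cong\R^{\#E(M)-1}$) and where $i$ is a loop of $M/i$ (then $\berg M\cong\berg{M\setminus i}$); the contraction $M/i$ and the deletion $M\setminus i$ both have smaller ground sets, so both the centre and the base are reached. Your construction of $\psi_p$, the product reduction, and the $\Rbar$ computation are fine and agree with the paper up to reformulation, but as written the proof of the modification step, which is the bulk of the theorem, is missing.
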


Before starting to prove this theorem, let us show how Poincar\'e duality on a tropical manifold follows directly from it. 

\begin{cor}[Poincar\'e duality with integer coefficients]
\label{cor:Poincar\'e duality}
Let $X$ be a tropical manifold of pure dimension $n$, and let $p$ and $q$ be integers. Then there are natural isomorphisms 
\begin{align*}
H^{BM}_{p,q}(X) &\cong H^{n-p,n-q}(X) \ , \text{ and} \\
H_{p,q}(X) &\cong H_c^{n-p,n-q}(X) \ .
\end{align*}
\end{cor}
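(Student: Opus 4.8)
The plan is to derive Corollary~\ref{cor:Poincar\'e duality} from Theorem~\ref{thm:computing the Verdier dual} by unwinding the definition of tropical Borel-Moore homology and applying the formal properties of Verdier duality. First I would recall that, by definition,
\[
H^{BM}_{p,q}(X) = \Hyper^0 R\Homs^\bullet(\Omega^p_X[q],\dual_X) = \Hyper^0\bigl(\vdual(\Omega^p_X)[-q]\bigr) = \Hyper^{-q}\vdual(\Omega^p_X).
\]
Now I invoke Theorem~\ref{thm:computing the Verdier dual}, which gives a natural isomorphism $\vdual(\Omega^p_X)\cong \Omega^{n-p}_X[n]$. Substituting this in yields
\[
H^{BM}_{p,q}(X)\cong \Hyper^{-q}\bigl(\Omega^{n-p}_X[n]\bigr) = \Hyper^{n-q}\bigl(\Omega^{n-p}_X\bigr) = H^{n-q}(X,\Omega^{n-p}_X) = H^{n-p,n-q}(X),
\]
which is exactly the first claimed isomorphism; naturality is inherited from the naturality of the isomorphism in Theorem~\ref{thm:computing the Verdier dual}.

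For the second isomorphism I would run the identical computation with hypercohomology with compact supports in place of ordinary hypercohomology. By definition $H_{p,q}(X) = \Hyper_c^0 R\Homs^\bullet(\Omega^p_X[q],\dual_X) = \Hyper_c^{-q}\vdual(\Omega^p_X)$, and applying Theorem~\ref{thm:computing the Verdier dual} again gives $H_{p,q}(X)\cong \Hyper_c^{n-q}(\Omega^{n-p}_X) = H_c^{n-q}(X,\Omega^{n-p}_X) = H_c^{n-p,n-q}(X)$, where the last equality is the definition of compactly supported tropical cohomology.

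There is essentially no obstacle here: all the work has been front-loaded into Theorem~\ref{thm:computing the Verdier dual}, and the corollary is a purely formal bookkeeping exercise with the definitions of $H^{BM}_{p,q}$, $H_{p,q}$, $H^{p,q}$, and $H_c^{p,q}$, together with the elementary identity $R\Homs^\bullet(\mathcal{C}^\bullet[q],\dual_X)\cong \vdual(\mathcal{C}^\bullet)[-q]$ and the shift relation $\Hyper^i(\mathcal{D}^\bullet[n]) = \Hyper^{i+n}(\mathcal{D}^\bullet)$ for (compactly supported) hypercohomology. The only point worth a sentence of care is that the isomorphism of Theorem~\ref{thm:computing the Verdier dual} being natural (in $X$, for open inclusions, say) is what makes the resulting duality isomorphisms natural, so I would state the corollary's proof as ``this is immediate from Theorem~\ref{thm:computing the Verdier dual} upon taking $\Hyper^{-q}$ and $\Hyper_c^{-q}$ and using the definitions of the (co)homology groups involved.''
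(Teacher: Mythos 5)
Your proposal is correct and follows the same route as the paper: both unwind the definitions $H^{BM}_{p,q}(X)=\Hyper^{-q}\vdual(\Omega^p_X)$ and $H_{p,q}(X)=\Hyper_c^{-q}\vdual(\Omega^p_X)$, substitute the isomorphism $\vdual(\Omega^p_X)\cong\Omega^{n-p}_X[n]$ from Theorem \ref{thm:computing the Verdier dual}, and shift degrees. Nothing is missing; the corollary really is the formal bookkeeping exercise you describe.
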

\begin{rem}
The first isomorphism in Corollary \ref{cor:Poincar\'e duality} has also been proven in \cite{Lefschetz} using a Mayer-Vietoris argument, and with the extra assumption of the existence of a global face structure.
\end{rem}
\begin{proof}
By definition, we have
\begin{align*}
H^{BM}_{p,q}(X) &= \Hyper^{-q}R\Homs^\bullet(\Omega_X^p,\dual_X)= \Hyper^{-q}\vdual(\Omega_X^p) \ , \text{ and} \\
H_{p,q}(X) &= \Hyper_c^{-q}R\Homs^\bullet(\Omega_X^p,\dual_X)=\Hyper_c^{-q}\vdual(\Omega_X^p) \ .
\end{align*}
Using Theorem \ref{thm:computing the Verdier dual}, this leads to isomorphisms
\begin{align*}
H^{BM}_{p,q}(X) &\cong \Hyper^{-q}(\Omega_X^{n-p}[n]) \cong \Hyper^{n-q}(\Omega_X^{n-p}) =H^{n-p,n-q}(X) \ ,  \text{ and} \\
H_{p,q}(X) &\cong \Hyper_c^{-q}(\Omega_X^{n-p}[n]) \cong \Hyper_c^{n-q}(\Omega_X^{n-p}) =H_c^{n-p,n-q}(X) \ ,
\end{align*}
which is what we wanted to show.
\end{proof}

Let us now turn to the proof of Theorem \ref{thm:computing the Verdier dual}. We begin by explaining how to obtain the natural morphism $\Omega^{n-p}_X[n]\to \vdual(\Omega^p_X)$. We recall that there is a natural morphism
\begin{equation}
\label{equ:natural morphism in deg -n}
\Homs(\Omega^p_X,\scrH^n_X)[n]\to R\Homs(\Omega^p_X,\dual_X)= \vdual(\Omega^p_X)
\end{equation}
that we relied on heavily when we defined the tropical cycle class map. It exists because $H^j(\dual_X)=0$ for $j<-n$ and is an isomorphism on cohomology in degree $-n$. 
By Proposition \ref{prop:canonical morphism for between n-cycles and hom} there also a natural morphism
\begin{equation*}
\Omega^n_X\otimes_{\Z_X} \mathcal Z_n^X \to \scrH^n_X \ .
\end{equation*}
By \cite[Lemma 2.4]{FRIntersection}, the only tropical $n$-cycles on an $n$-dimensional tropical linear space are the ones having the same weight everywhere. Therefore, $\mathcal Z_n^X\cong \Z_X$ is the constant sheaf and we obtain a natural morphism $\Omega^n_X\to \scrH^n_X$. Together with the multiplication on the sheaf of graded rings $\Omega^*_X$, this induces a morphism
\begin{equation*}
\Omega^{n-p}_X \to \Homs(\Omega^p_X,\Omega^n_X) \to \Homs(\Omega^p_X,\scrH^n_X) \ .
\end{equation*}
Shifting by $n$ an composing with the morphism displayed (\ref{equ:natural morphism in deg -n}) yields a morphism $\Omega^{n-p}_X[n]\to \vdual(\Omega^p_X)$. 

\begin{defn}
We say that a purely $n$-dimensional tropical manifold satisfies \emph{Poincar\'e-Verdier duality} if for all $p\in\N$ the natural morphism
\[
\Omega^{n-p}_X[n]\to \vdual(\Omega^p_X)
\]
defined above is an isomorphism.
\end{defn}

With this definition, Theorem \ref{thm:computing the Verdier dual} says that every tropical manifold satisfies Poincar\'e-Verdier duality. 

\begin{lemma}
\label{lem:duality for products}
Let $X$ and $Y$ be tropical manifolds of pure dimension that satisfy Poincar\'e-Verdier duality. Then their product $X\times Y$ satisfies Poincar\'e-Verdier duality as well.
\end{lemma}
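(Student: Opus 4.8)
The plan is to deduce Poincaré-Verdier duality for $X\times Y$ from that of the factors by reducing everything to statements about exterior tensor products. Write $n=\dim X$ and $m=\dim Y$, so $X\times Y$ is purely $(n+m)$-dimensional. The key structural inputs are already in the excerpt: Lemma \ref{lem:differentials on product} gives the natural decomposition $\Omega^p_{X\times Y}\cong\bigoplus_{i+j=p}\Omega^i_X\boxtimes\Omega^j_Y$, and the discussion preceding Definition \ref{def:cross product} gives the natural isomorphism $\dual_X\boxtimes^L\dual_Y\xrightarrow{\cong}\dual_{X\times Y}$. The strategy is: first establish a ``Künneth formula for Verdier duals'', namely a natural isomorphism $\vdual_X(\mathcal C^\bullet)\boxtimes^L\vdual_Y(\mathcal D^\bullet)\xrightarrow{\cong}\vdual_{X\times Y}(\mathcal C^\bullet\boxtimes^L\mathcal D^\bullet)$ for constructible $\mathcal C^\bullet\in\der X$, $\mathcal D^\bullet\in\der Y$ (here $\Omega^i_X$ and $\Omega^j_Y$ are constructible, which is what makes this work); then apply it with $\mathcal C^\bullet=\Omega^i_X$, $\mathcal D^\bullet=\Omega^j_Y$ and use the hypotheses $\vdual_X(\Omega^i_X)\cong\Omega^{n-i}_X[n]$, $\vdual_Y(\Omega^j_Y)\cong\Omega^{m-j}_Y[m]$.

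Concretely, the chain of natural isomorphisms I would write is
\[
\vdual_{X\times Y}(\Omega^p_{X\times Y})\cong\bigoplus_{i+j=p}\vdual_{X\times Y}(\Omega^i_X\boxtimes\Omega^j_Y)\cong\bigoplus_{i+j=p}\vdual_X(\Omega^i_X)\boxtimes^L\vdual_Y(\Omega^j_Y),
\]
and then
\[
\bigoplus_{i+j=p}\Omega^{n-i}_X[n]\boxtimes^L\Omega^{m-j}_Y[m]\cong\Bigl(\bigoplus_{i'+j'=(n+m)-p}\Omega^{i'}_X\boxtimes\Omega^{j'}_Y\Bigr)[n+m]\cong\Omega^{(n+m)-p}_{X\times Y}[n+m],
\]
reindexing by $i'=n-i$, $j'=m-j$ and using Lemma \ref{lem:differentials on product} again (note the exterior tensor product of two $\boxtimes$-factors that are already pulled back from the factors requires no derived correction since $\Omega^*_X$ is flat, as observed in the excerpt). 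The first isomorphism in the second line is exactly the Poincaré-Verdier hypothesis for $X$ and $Y$. Finally one must check that the composite of all these natural isomorphisms coincides with the natural morphism $\Omega^{(n+m)-p}_{X\times Y}[n+m]\to\vdual_{X\times Y}(\Omega^p_{X\times Y})$ constructed before the definition of Poincaré-Verdier duality; this is a compatibility of the multiplication map $\Omega^{*}\otimes\Omega^{*}\to\Omega^{*}$ and the fundamental class $\Omega^{top}\to\scrH^{top}$ with exterior products, which reduces to Lemma \ref{lem:Eilenberg-Zilber} (the relative cross product description of $\dual_X\boxtimes^L\dual_Y\cong\dual_{X\times Y}$) together with the fact that $\scrH^n_X\boxtimes\scrH^m_Y\to\scrH^{n+m}_{X\times Y}$ is induced by cross products and that the fundamental class on a product matroid fan is the exterior product of the fundamental classes on the factors.

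The main obstacle, and the step deserving the most care, is the Künneth formula for Verdier duals $\vdual_X(\mathcal C^\bullet)\boxtimes^L\vdual_Y(\mathcal D^\bullet)\cong\vdual_{X\times Y}(\mathcal C^\bullet\boxtimes^L\mathcal D^\bullet)$. The cleanest route is to invoke the standard sheaf-theoretic identity: for $\mathcal C^\bullet$ cohomologically constructible, $R\Homs^\bullet(p_X^{-1}\mathcal C^\bullet,p_X^!\mathcal E^\bullet)\cong p_X^{-1}R\Homs^\bullet(\mathcal C^\bullet,\mathcal E^\bullet)\otimes^L p_X^!\Z_X$-type formulas, combined with $\dual_{X\times Y}=p_X^!\dual_Y=p_X^!p_Y^{-1}\dual_{\mathrm{pt}}$ and the base-change/Künneth isomorphism $p_X^!\mathcal E^\bullet\otimes^L p_Y^!\mathcal F^\bullet\cong p_{X}^{-1}\mathcal E^\bullet\otimes^L p_Y^{-1}\mathcal F^\bullet\otimes^L\dual_{X\times Y}$ appropriately unwound; see \cite[Ch.\ VI, \S 3]{Iversen} or \cite[\S 3.4]{KashiwaraSchapira} for the relevant formalism. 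Since $\Omega^i_X$ and $\Omega^j_Y$ are constructible sheaves concentrated in degree $0$, all the necessary finiteness/constructibility hypotheses hold, and I would either cite these results directly or, if an entirely self-contained argument is wanted, derive the needed special case by the same kind of ``evaluate via the universal property of the dualizing complex and reduce to the topological Künneth formula \cite[VII-2.7]{Iversen}'' argument used in the proof of Lemma \ref{lem:Eilenberg-Zilber} and Theorem \ref{thm:Kuenneth}. Once this is in place, the rest of the argument is formal bookkeeping with direct sums and reindexing.
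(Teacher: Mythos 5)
Your proposal is correct and follows essentially the same route as the paper: decompose $\Omega^p_{X\times Y}$ via Lemma \ref{lem:differentials on product}, apply the K\"unneth-type isomorphism $\vdual(\Omega^i_X)\boxtimes^L\vdual(\Omega^j_Y)\xrightarrow{\cong}\vdual(\Omega^i_X\boxtimes\Omega^j_Y)$ for constructible sheaves, invoke the duality hypotheses on the factors, reindex, and check compatibility with the natural morphism using that $\Omega^*_{X\times Y}\cong\Omega^*_X\boxtimes\Omega^*_Y$ is an isomorphism of sheaves of rings. The only difference is the source for the key external-product isomorphism (the paper cites Lyubashenko's result on external tensor products rather than Iversen/Kashiwara--Schapira), which is immaterial.
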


\begin{proof}
Let $p\in\Z$. By Lemma \ref{lem:differentials on product}, we have
\[
\Omega^p_{X\times Y} \cong \bigoplus_{i+j=p} \Omega^i_X\boxtimes \Omega^j_Y \ .
\]
For each of the summands there is a natural morphism
\[
\vdual (\Omega^i_X)\boxtimes ^L \vdual(\Omega^j_Y) \to \vdual(\Omega^i_X\boxtimes \Omega^j_Y) \ ,
\]
which is an isomorphism by \cite[Corollary 2.8]{ExternalProduct}.
Let $m=\dim(X)$ and $n=\dim(Y)$. Since $X$ and $Y$ both satisfy Poincar\'e-Verdier duality, we obtain a chain of isomorphisms
\begin{multline*}
\Omega^{n+m-p}_{X\times Y} [m+n]
\cong \bigoplus_{i+j=p} \Omega^{m-i}_X[m] \boxtimes \Omega^{n-j}_Y[n] \xrightarrow{\cong} \\
\xrightarrow{\cong} \bigoplus_{i+j=p}\vdual(\Omega^i_X)\boxtimes ^L \vdual (\Omega^j_Y)  \xrightarrow{\cong}
\bigoplus_{i+j=p} \vdual(\Omega^i_X\boxtimes \Omega^j_Y) \cong \vdual (\Omega_{X\times Y}^p) \ .
\end{multline*}
It essentially follows from the fact that $\Omega^*_{X\times Y} \cong \Omega^*_X\boxtimes \Omega^*_Y$ is an isomorphism of sheaves of rings that the composite isomorphism 
\[
\Omega^{n+m-p}_{X\times Y}[m+n]\to \vdual (\Omega_{X\times Y}^p)
\] 
coincides with the natural morphism that we consider for Poincar\'e-Verdier duality. This finishes the proof.
\end{proof}

\begin{lemma}
\label{lem: Verdier dual for Rn}
For any $n\in\N$ the tropical manifold $\Rbar^n$ satisfies Poincar\'e-Verdier duality.
\end{lemma}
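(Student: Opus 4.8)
The plan is to reduce to the one-dimensional case and then compute everything explicitly. Since $\Rbar^{0}$ is a point, for which Poincar\'e-Verdier duality holds trivially, and since $\Rbar^{n}\cong\Rbar\times\Rbar^{n-1}$, Lemma~\ref{lem:duality for products} together with an induction on $n$ reduces the claim to the case $X=\Rbar$. That case, where the point $\infty$ enters, is where the actual work lies.

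For $X=\Rbar$ I would begin by identifying all the sheaves involved. Write $j\colon\R\hookrightarrow\Rbar$ for the open immersion of the dense stratum and $\iota\colon\{\infty\}\hookrightarrow\Rbar$ for the complementary closed point. As $\dim\Rbar=1$ one has $\Omega^{p}_{\Rbar}=0$ for $p\geq 2$ and $\Omega^{0}_{\Rbar}=\Z_{\Rbar}$; and since, by the conventions on integral affine linear functions, the stalk $\Aff_{\Rbar,\infty}$ consists only of constants, one has $\Omega^{1}_{\Rbar,\infty}=0$, while $\Omega^{1}_{\Rbar}|_{\R}=\Omega^{1}_{\R}$ is the constant sheaf $\Z_{\R}$ generated by $dx$; thus $\Omega^{1}_{\Rbar}\cong j_{!}\Z_{\R}$. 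The key computation will be
\[
\dual_{\Rbar}\cong j_{!}\Z_{\R}[1] \ ,\qquad\text{equivalently}\qquad \scrH^{1}_{\Rbar}\cong j_{!}\Z_{\R}\quad\text{and}\quad\scrH^{i}_{\Rbar}=0\ \text{for}\ i\neq 1 \ .
\]
To obtain it, note that $\dual_{\Rbar}|_{\R}=\dual_{\R}\cong\Z_{\R}[1]$ (as $\R$ is an oriented topological $1$-manifold) and that $\iota^{*}\dual_{\Rbar}=0$, because its degree $-i$ cohomology is $\scrH^{i}_{\Rbar,\infty}=H_{i}(\Rbar,\Rbar\setminus\{\infty\})=H_{i}(\Rbar,\R)=0$, the inclusion $\R\hookrightarrow\Rbar$ being a homotopy equivalence; the localization triangle $j_{!}j^{*}\dual_{\Rbar}\to\dual_{\Rbar}\to\iota_{*}\iota^{*}\dual_{\Rbar}\xrightarrow{+1}$ then collapses to an isomorphism $j_{!}(\dual_{\Rbar}|_{\R})\xrightarrow{\cong}\dual_{\Rbar}$. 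I would also record that the natural morphism $\Omega^{1}_{\Rbar}\to\scrH^{1}_{\Rbar}$ used in building the Poincar\'e-Verdier morphism (recall $\mathcal Z^{\Rbar}_{1}\cong\Z_{\Rbar}$) is an isomorphism: it is $0\to0$ at $\infty$, and on $\R$ it is the isomorphism $\Omega^{1}_{\R}\xrightarrow{\cong}\scrH^{1}_{\R}$ visible from the explicit formula in Remark~\ref{rem:explicit description of natural morphism}.

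With these identifications the duality morphism can be checked degree by degree, only $p=0$ and $p=1$ being nontrivial. For $p=0$ the morphism $\Omega^{1}_{\Rbar}[1]\to\vdual(\Omega^{0}_{\Rbar})=\dual_{\Rbar}$ is the composite $\Omega^{1}_{\Rbar}[1]\xrightarrow{\cong}\scrH^{1}_{\Rbar}[1]\xrightarrow{\cong}\dual_{\Rbar}$, the first arrow being the isomorphism above and the second the natural map~\eqref{equ:natural morphism in deg -n}, which is an isomorphism here because $\dual_{\Rbar}$ has cohomology concentrated in degree $-1$. For $p=1$ I would compute, via the adjunction $j_{!}\dashv j^{*}$ and the identity $Rj_{*}\Z_{\R}=\Z_{\Rbar}$ (the higher direct images vanish since $\varinjlim_{a}H^{q}\bigl((a,+\infty),\Z\bigr)=0$ for $q\geq 1$, and the unit $\Z_{\Rbar}\to j_{*}\Z_{\R}$ is a stalkwise isomorphism),
\[
\vdual(\Omega^{1}_{\Rbar})=R\Homs_{\Rbar}\bigl(j_{!}\Z_{\R},\,j_{!}\Z_{\R}[1]\bigr)\cong Rj_{*}R\Homs_{\R}\bigl(\Z_{\R},\Z_{\R}[1]\bigr)=Rj_{*}\Z_{\R}[1]=\Z_{\Rbar}[1] \ ,
\]
which is again concentrated in degree $-1$, so~\eqref{equ:natural morphism in deg -n} is an isomorphism; and the remaining map $\Omega^{0}_{\Rbar}=\Z_{\Rbar}\to\Homs(\Omega^{1}_{\Rbar},\scrH^{1}_{\Rbar})\cong\Homs(j_{!}\Z_{\R},j_{!}\Z_{\R})=\Z_{\Rbar}$ sends $1$ to the identity endomorphism, hence is an isomorphism. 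This shows $\Rbar$ satisfies Poincar\'e-Verdier duality, and the inductive reduction then yields the statement for every $\Rbar^{n}$.

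The step I expect to need the most care is not any single computation but the bookkeeping: pinning down $\dual_{\Rbar}$ (equivalently $\scrH^{\bullet}_{\Rbar}$) correctly, and then verifying that the isomorphisms produced above genuinely realize the \emph{natural} morphism of Theorem~\ref{thm:computing the Verdier dual} rather than merely some isomorphism. The latter reduces to the explicit description of $\Omega^{1}_{\R}\to\scrH^{1}_{\R}$ in Remark~\ref{rem:explicit description of natural morphism} and to the fact that the map~\eqref{equ:natural morphism in deg -n} is automatically an isomorphism whenever its target has cohomology concentrated in degree $-n$; both products and higher-dimensional $\Rbar^{n}$ are then handled formally by Lemma~\ref{lem:duality for products}.
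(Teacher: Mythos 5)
Your proposal is correct and follows essentially the same route as the paper: reduce to $\Rbar$ via Lemma \ref{lem:duality for products}, observe that $\dual_\Rbar$ is concentrated in degree $-1$ with the natural map $\Omega^1_\Rbar\to\scrH^1_\Rbar$ an isomorphism (settling $p=0$), and handle $p=1$ by identifying $\Omega^1_\Rbar$ with the extension by zero of $\Z_\R$ and using the adjunction/Verdier-duality identity $\vdual(j_!\Z_\R)\cong Rj_*\vdual(\Z_\R)\cong\Z_\Rbar[1]$ together with the multiplication map being an isomorphism. Your extra explicit identifications ($\dual_\Rbar\cong j_!\Z_\R[1]$, $Rj_*\Z_\R\cong\Z_\Rbar$) are just slightly more detailed bookkeeping of the same argument.
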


\begin{proof}
By Lemma \ref{lem:duality for products}, it suffices to show the statement for $\Rbar$. For every $x\in \Rbar$ we have 
\[
\scrH^0_{\Rbar,x} \cong H_0(\Rbar ,\Rbar\setminus \{x\}) =0 \ ,
\]
so $\dual_{\Rbar}$ is concentrated in degree $-1$ and isomorphic to $\scrH^1_\Rbar[1]$. Furthermore, the natural morphism $\Omega^1_{\Rbar}\to \scrH^1_{\Rbar}$ is an isomorphism: at $\infty$ the stalks of both sheaves are $0$, and at a point in $\R$ this follows from the identification of the two different notions of orientations discussed at the beginning of \S\ref{sec:cycle class map}. This shows that the natural morphism 
\[
\Omega_\Rbar^1[1]\to \dual_\Rbar =\vdual(\Z_\Rbar)=\vdual(\Omega^0_\Rbar)
\]
is an isomorphism. 

It is left to show that the natural morphism 
\[
\Omega^0_\Rbar[1]\to \vdual(\Omega^1_\Rbar)
\]
is an isomorphism as well. For this we need to show that 
\begin{enumerate}
\item the morphism $\Omega^0_\Rbar\to \Homs(\Omega^1_\Rbar,\Omega^1_\Rbar)$ induced by the multiplication maps is an isomorphism, and
\item $\vdual(\Omega^1_\Rbar)$ is concentrated in degree $-1$. 
\end{enumerate}
For both items we use that the identity function $\id_\R \in \Gamma(\R,\Aff_\Rbar)$ induces an isomorphism $\Z_\R\xrightarrow{\cong} \Omega^1_\Rbar$. So if $\iota\colon \R\to \Rbar$ denotes the inclusion, we have
\begin{multline*}
\Homs(\Omega^1_\Rbar,\Omega^1_\Rbar) \cong 
\Homs(\Z_\R,\Z_\R) \cong 
\iota_*\Homs(\Z_\R,\Z_\R) \cong \\
\cong \iota_*\Z_\R \cong 
\Z_\Rbar=
\Omega^0_\Rbar \ ,
\end{multline*}
showing item (1). To prove item (2), we note that
\[
\vdual(\Z_\R) \cong R\iota_*\vdual(\Z_\R)
\]
by Verdier duality. The Verdier dual of $\Z_\R$ on $\R$ is the dualizing complex of $\R$, which is isomorphic to $\Z_\R[1]$. Therefore,
\[
\vdual(\Omega^1_\Rbar)\cong \vdual(\Z_\R) \cong R\iota_*\Z[1] \cong \Z_\Rbar[1] \ ,
\]
which is concentrated in degree $-1$.
\end{proof}

\begin{thm}
\label{thm:duality for linear spaces}
Let $M$ be loopless matroid of rank $n+1$ with ground set $E(M)$, and let $i\in E(M)$ such that $i$ is not a coloop of $M$ and  $M/i$ is loopless. Assume that both $\berg {M\setminus i}$ and $\berg {M/i}$ satisfy Poincar\'e-Verdier duality. Then $\berg M$ also satisfies Poincar\'e-Verdier duality.
\end{thm}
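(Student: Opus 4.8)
The plan is to realise $\berg M$ as a tropical modification of $\berg{M\setminus i}$ along a divisor supported on $\berg{M/i}$ and then to transport Poincar\'e-Verdier duality through the modification. Recall that the linear projection $\R^{E(M)}/\R\mathbf 1\to\R^{E(M)\setminus i}/\R\mathbf 1$ forgetting the $i$-th coordinate restricts to a proper surjective morphism $\delta\colon\widetilde X\to X$, where $\widetilde X=\berg M$ and $X=\berg{M\setminus i}$, and that $\delta$ is the tropical modification of $X$ along a tropical Cartier divisor $D$ with $|D|=\iota(\berg{M/i})$ for the natural inclusion $\iota\colon\berg{M/i}\hookrightarrow X$; here $\dim X=\dim\widetilde X=n$, $\dim D=n-1$, $\delta$ is an isomorphism over $U:=X\setminus|D|$, has contractible fibres everywhere, and its exceptional locus $E:=\delta^{-1}(|D|)$ is isomorphic to $\berg{M/i}\times\R_{\geq0}$ with $\delta|_E$ the first projection. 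Since Poincar\'e-Verdier duality is the assertion that one specific morphism of complexes of sheaves is an isomorphism, I would check it after restriction to the open set $U$ — where it holds because duality holds for $X$ — and separately along the closed set $E$.

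First I would record, for each $p$, the distinguished triangles describing the sheaves of tropical forms on the modification: on $\widetilde X$ the pull-back $\delta^\sharp\colon\delta^{-1}\Omega^p_X\to\Omega^p_{\widetilde X}$ fits into an exact sequence whose kernel and cokernel are supported on $E$ and are built from pull-backs of tropical forms on $\berg{M/i}$; equivalently, after applying $R\delta_*$ one obtains a triangle relating $\Omega^p_X$, $R\delta_*\Omega^p_{\widetilde X}$ and $\iota_*\Omega^{p-1}_{\berg{M/i}}$. I would then apply the Verdier duality functor $\vdual(-)=R\Homs^\bullet(-,\dual)$ and use the standard identities $\delta^!\dual_X=\dual_{\widetilde X}$, $\iota^!\dual_X=\dual_{\berg{M/i}}$, $\vdual_X(R\delta_*\mathcal A)\cong R\delta_*\vdual_{\widetilde X}(\mathcal A)$ (properness of $\delta$), $\vdual_X(\iota_*\mathcal B)\cong\iota_*\vdual_{\berg{M/i}}(\mathcal B)$, and the compatibility of $\vdual$ with restriction to $U$. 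Feeding in the hypotheses — duality for $X$ and duality for $\berg{M/i}$, the latter also controlling the contribution of $E\cong\berg{M/i}\times\R_{\geq0}$ via the external-product formula for Verdier duals (as in the proof of Lemma~\ref{lem:duality for products}, using \cite{ExternalProduct}) together with an explicit computation of the Verdier duals of $\Omega^q_{\R_{\geq0}}$ on the half-line $\R_{\geq0}$ modelled on the computation for $\Rbar$ in the proof of Lemma~\ref{lem: Verdier dual for Rn} — the dualised triangle becomes canonically identified with the triangle obtained from the form triangle in complementary degree $n-p$, shifted by $[n]$; that is, with the triangle describing $\Omega^{n-p}_{\widetilde X}[n]$.

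Unwinding the definitions, the natural morphism $\Omega^{n-p}_{\widetilde X}[n]\to\vdual(\Omega^p_{\widetilde X})$ that we must show to be an isomorphism is then exhibited as the third edge of a morphism of distinguished triangles whose other two edges are the natural Poincar\'e-Verdier morphisms for $X$ (on the open part) and for $\berg{M/i}$ (on the exceptional part). By hypothesis these two are isomorphisms, so by the triangulated five lemma the third is an isomorphism as well, which is Poincar\'e-Verdier duality for $\berg M$.

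The main obstacle is the analysis along $E$. In contrast to $X$ and $\berg{M/i}$, the exceptional locus $E\cong\berg{M/i}\times\R_{\geq0}$ is not a tropical manifold, so one cannot simply quote duality for it; instead one must determine precisely which sheaves of (restricted) tropical forms occur in the modification triangles — restriction of forms to $E$ is neither injective nor surjective in the relevant range, so $\Omega^p_{\widetilde X}|_E\neq\Omega^p_E$ — and compute their Verdier duals by hand, with attention to the endpoint of the half-line and to the directions at infinity. The second, more technical point is naturality: producing an abstract isomorphism between the two triangle descriptions is routine, but one must check that the edges coming from the genuinely natural duality morphisms for $X$ and $\berg{M/i}$ glue to the natural morphism for $\widetilde X$. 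This requires tracing through the definitions of $\delta^\sharp$, of the trace maps, and of the canonical morphism $\Homs^\bullet(\Omega^p,\scrH^n)[n]\to\vdual(\Omega^p)$ from \S\ref{sec:cycle class map}, and keeping careful track of shifts, signs, and the connecting morphisms of the modification triangles.
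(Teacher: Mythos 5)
Your overall strategy --- realize $\berg M$ as the tropical modification $\delta\colon X'=\berg M\to X=\berg{M\setminus i}$ along a divisor with support $D=\berg{M/i}$, dualize the exact sequences relating the sheaves of tropical forms, and conclude by the five lemma --- is the same circle of ideas as the paper, but two of your key steps do not work as stated. First, $\delta$ is \emph{not} proper: the fibres over points of $|D|$ are unbounded rays (topologically $X'$ is $X\sqcup D\times\R_{\leq 0}$ glued along $D$, with $\delta$ the projection on the second piece), so the identity $\vdual_X(R\delta_*\mathcal A)\cong R\delta_*\vdual_{X'}(\mathcal A)$ you invoke ``by properness'' is unjustified (Verdier duality exchanges $R\delta_!$ with $R\delta_*$, not $R\delta_*$ with itself). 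Worse, even if one had a triangle on $X$ relating $\Omega^p_X$, $R\delta_*\Omega^p_{X'}$ and $\iota_*\Omega^{p-1}_D$ and proved by the five lemma that the \emph{pushed-forward} comparison map is an isomorphism, this would not yield the statement on $X'$: $R\delta_*$ is not conservative (a complex supported on a fibre ray can have vanishing derived pushforward), so the isomorphism must be exhibited by triangles living on $X'$ itself. Second, in your open/closed decomposition (open part $\delta^{-1}(X\setminus|D|)$, closed part $E\cong D\times\R_{\leq0}$) the entire difficulty is concentrated in $\vdual\bigl(i_{E*}(\Omega^p_{X'}|_E)\bigr)$: along $D\times\{0\}$ the restricted sheaf has the stalks of $\Omega^p_{X'}$ at points where all sheets of the modification meet, so it is controlled neither by forms on $E$ nor by ``the natural duality morphism for $\berg{M/i}$''; you correctly flag this as the main obstacle, but the proposal does not resolve it, and it is precisely the content of the theorem near $D$.

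The paper circumvents both problems by a different decomposition: it uses the topological section $s\colon X\to X'$ of $\delta$ (a closed embedding that is not a morphism of rational polyhedral spaces) and decomposes $X'$ into the closed set $s(X)$ and its open complement $\mathring E$, which is an open subset of the tropical manifold $D\times\R$. Applying $\vdual$ to $j_!\,j^{-1}\Omega^p_{X'}\to\Omega^p_{X'}\to s_*s^{-1}\Omega^p_{X'}$, the open piece is computed from duality for $D\times\R$ (Lemma \ref{lem:duality for products}) together with the vanishing of higher direct images under $j$, while the closed piece is computed on $X$ from Shaw's exact sequence $0\to\Omega^p_X\to s^{-1}\Omega^p_{X'}\to\iota_*\Omega^{p-1}_D\to 0$ and the inductive hypotheses for $X$ and $D$; no pushforward along the non-proper $\delta$ and no by-hand dual on the non-manifold $E$ is needed. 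Finally, the naturality check you defer as ``tracing through definitions'' is not routine: identifying the connecting map of the dualized sequence with $\iota^\sharp$ (up to the isomorphisms of the induction hypothesis) is an explicit chain-level computation with local face structures, orientations, the balancing condition and the auxiliary form $\omega_\R$, and it occupies roughly half of the paper's proof; without it the five-lemma step has no commutative diagram to apply to.
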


\begin{proof}
We denote $X'=\berg M$, $X=\berg {M\setminus i}$, and $D=\berg{M/i}$. There is a natural morphism $\delta \colon X'\to X$ that exhibits $X'$ as the so-called tropical modification of $X$ at the divisor $D$ \cite[Proposition 2.25]{ShawIntersection}. By definition of tropical modifications, there exists a closed subset $E\subseteq X'$, such that the interior $\mathring E$ is isomorphic to an open subset of $D\times \R$, such that the underlying topological space of $E$ is homeomorphic to $D\times \R_{\leq 0}$, and such that the underlying space of $X'$ is homeomorphic to the quotient
\begin{equation*}
\big(X \sqcup D\times \R_{\leq 0}\big)\big/ \sim \ , 
\end{equation*}
where where $\sim$ is the relation identifying $x$ with $(x,0)$ for all  $x\in D$. With this identification the morphism $\delta$ is the identity on $X$ and the projection to the first coordinate on $D\times \R_{\leq 0 }$. In particular, the identity on $X$ defines a section $s\colon X\to X'$ of $\delta$. Note that unlike $\delta$, the closed immersion $s$ is not a morphism of rational polyhedral spaces. Let $j\colon \mathring E\to X'$ be the open immersion onto the complement of $s(X)$. Then we obtain a distinguished triangle
\begin{equation}
\label{equ:distinguished triangle}
\vdual(s_*s^{-1}\Omega^p_{X'}) \to \vdual(\Omega^p_{X'}) \to \vdual(j_!\Omega^p_{\mathring E})\to \vdual(s_*s^{-1}\Omega^p_{X'})[1]
\end{equation}
We can explicitly compute both $\vdual(s_*s^{-1}\Omega^p_{X'})$ and $\vdual(j_!\Omega^p_{\mathring E})$. 

Let us first consider the easier case of $\vdual(j_!\Omega^p_{\mathring E})$. By Verdier duality, we have 
\[
\vdual(j_!\Omega^p_{\mathring E})=Rj_*\vdual (\Omega^p_{\mathring E}) \ .
\]
Because $\mathring E$ is an open subset of $\berg{M/i}\times \R^k$ and $\berg{M/i}$ satisfies Poincar\'e-Verdier duality by assumption, $\mathring E$ satisfies Poincar\'e-Verdier duality by Lemma \ref{lem:duality for products}. It follows that there is a natural isomorphism
\begin{equation*}
Rj_*\Omega^{n-p}_{\mathring E}[n] \xrightarrow{\cong} \vdual(j_!\Omega^p_{\mathring E}) \ .
\end{equation*}
Since every $x\in s(X)$ has a neighborhood basis consisting of open subsets $U$ such that $U\cap \mathring E$ equals the interior of the support of a local face structure at some $y\in \mathring E$ close to $x$, it follow from \cite[Proposition 8.1.4 (ii)]{KashiwaraSchapira} that $R^s_*\Omega^{n-p}_{\mathring E}=0$ for all $s>0$. Therefore, $Rj_*\Omega^{n-p}_{\mathring E}\cong j_*\Omega^{n-p}_{\mathring E}$ and we obtain an isomorphism 
\[
j_*\Omega^{n-p}_{\mathring E}[n] \xrightarrow{\cong} \vdual(j_!\Omega^p_{\mathring E}) \ .
\]

Let us now compute $\vdual(s_*s^{-1}\Omega^p_{X'})$. To do so, we need to compare the integral structures of $X$ and $X'$. The morphism $\delta$ induces a pullback morphism $\Omega^p_{X}\xrightarrow{\delta^\sharp}s^{-1}\Omega^p_{X'}$, which is injective and an isomorphism away from the divisor $D$. To determine its cokernel, we may restrict our attention to $D$. The restriction to $p$-forms to $\mathring E$ defines a morphism
\[
s^{-1}\Omega^p_{X'}\to s^{-1}j_* \Omega^p_{\mathring E} \ .
\]
It follows from Lemma \ref{lem:differentials on product},from  \cite[Proposition 8.1.4(i)]{KashiwaraSchapira}, and from the fact that $\mathring E$ is isomorphic to an open subset of $D\times \R$ that  there is an isomorphism
\[
s^{-1}j_*\Omega^p_{\mathring E} \cong \iota_*\left(\Omega^p_D\oplus \Omega^{p-1}_D\right) \ ,
\]
where $\iota\colon D\to X$ denotes the inclusion. We thus obtain a composite morphism
\begin{equation}
\label{equn:definition of Omega-p on modification to Omega-p-1 on divisor}
s^{-1}\Omega^p_{X'}\to \iota_*\left(\Omega^p_D\oplus \Omega^{p-1}_D\right) \to \iota_*\Omega^{p-1}_D \ ,
\end{equation}
where the second morphism is the projection and $\Omega^{p-1}_D=0$ if $p=0$ by convention. We denote this composite morphism by
\[
\rho\colon s^{-1}\Omega^p_{X'}\to \iota_*\Omega^{p-1}_D \ .
\]
If $a$ is a $p$-form on an open subset of $X$, the image of $\delta^\sharp a$ in $\iota_* \left(\Omega^p_D\oplus \Omega^{p-1}_D\right)$ is given by $(\iota^\sharp (a),0)$ and thus $\rho\circ \delta^\sharp(a)=0$. In fact, the sequence
\begin{equation}
\label{equn:exact sequence for Omegas in modification}
0\to \Omega^p_{X}\xrightarrow{\delta^\sharp} s^{-1}\Omega^p_{X'}\xrightarrow{\rho} \iota_*\Omega^{p-1}_D\to 0
\end{equation}
is exact by \cite[Lemma 2.2.7]{ShawThesis}. 
This gives rise to the distinguished triangle
\begin{center}
\begin{tikzpicture}[auto]
\matrix[matrix of math nodes, row sep= 5ex, column sep= 2.7em, text height=1.5ex, text depth= .25ex]{
|(dOD)| \vdual(\iota_*\Omega^{p-1}_D)	&
|(dOX)| \vdual (s^{-1}\Omega^p_{X'}) &
|(dOX')| \vdual (\Omega^p_{X}) &
|(dOD1)| \vdual(\iota_*\Omega^{p-1}_D)[1] \ . \\
};
\begin{scope}[->,font=\footnotesize]
\draw (dOD) -- (dOX);
\draw (dOX) -- (dOX');
\draw (dOX')--(dOD1);
\end{scope}
\end{tikzpicture}
\end{center}

By assumption, there is a natural isomorphism $\Omega^{n-p}_{X}[n]\to \vdual(\Omega^p_{X})$. Furthermore, by Verdier duality we have $\vdual(\iota_*\Omega^{p-1}_D)=\iota_*\vdual(\Omega^{p-1}_D)$, so again by assumption there is a natural isomorphism $\iota_*\Omega^{n-p}_D[n-1]\to\vdual(\iota_*\Omega^{p-1}_D)$. If we define $\mathcal K$ as the kernel of the restriction $\iota^\sharp\colon \Omega^{n-p}_{X}\to \iota_*\Omega^{n-p}_D$, then there is an exact sequence
\begin{equation*}
\label{equn: defn of K}
0\to \mathcal K\to \Omega^{n-p}_{X}\xrightarrow{\iota^\sharp} \iota_*\Omega^{n-p}_D\to 0 \ .
\end{equation*}
We thus obtain a diagram of solid arrow 
\begin{center}
\begin{tikzpicture}[auto]
\matrix[matrix of math nodes, row sep= 5ex, column sep= 2.7em, text height=1.5ex, text depth= .25ex]{
|(OD)| \iota_*\Omega^{n-p}_D[n-1]	&
|(K)| \mathcal K[n]	&
|(OX')| \Omega^{n-p}_{X}[n] &
|(OD1)| \iota_*\Omega^{n-p}_D[n]  \\
|(dOD)| \vdual(\iota_*\Omega^{p-1}_D)	&
|(dOX)| \vdual (s^{-1}\Omega^p_{X'}) &
|(dOX')| \vdual (\Omega^p_{X}) &
|(dOD1)| \vdual(\iota_*\Omega^{p-1}_D)[1] \ ,\\
};
\begin{scope}[->,font=\footnotesize]
\draw (OD) -- (K);
\draw (K) -- (OX');
\draw (OX')--node{$\iota^\sharp$}(OD1);
\draw (dOD) -- (dOX);
\draw (dOX) -- (dOX');
\draw (dOX')--(dOD1);
\draw (OD)--node{$\cong$} (dOD);
\draw [dashed](K) -- (dOX);
\draw (OX') --node{$\cong$}(dOX');
\draw (OD1)--node{$\cong$}(dOD1);
\end{scope}
\end{tikzpicture}
\end{center}
in which both rows are distinguished triangles. If we show that the right square in this diagram commutes, then it follows formally from the axioms of triangulated categories that the dashed arrow exists, and that it is an isomorphism. Assume that this is true for the moment. Then by Verdier duality, we obtain an isomorphism
\[
s_*\mathcal K[n]\xrightarrow{\cong} s_*\vdual(s^{-1}\Omega^p_{X'}) \cong \vdual(s_*s^{-1}\Omega^p_{X'}) \ .
\]
We have then computed two of the three vertices of the distinguished triangle displayed in (\ref{equ:distinguished triangle}), giving rise to the commutative diagram of solid arrows
\begin{center}
\begin{tikzpicture}[auto]
\matrix[matrix of math nodes, row sep= 5ex, column sep= 2.7em, text height=1.5ex, text depth= .25ex]{
|(K)| s_*\mathcal K[n]	&
|(OXn)| \Omega^{n-p}_{X'}[n]	&
|(jOXn)| j_*\Omega^{n-p}_{\mathring E}[n] &
\\
|(sOX)| s_*\vdual(s^{-1}\Omega^p_{X'}) &
|(OX)| \vdual(\Omega^p_{X'}) &
|(jOX)| Rj_*\vdual(\Omega^p_{\mathring E}) &
|(sOX1)| s_*\vdual(s^{-1}\Omega^p_{X'})[1] \ .\\
};
\begin{scope}[->,font=\footnotesize]
\draw [dashed](K) -- (OXn);
\draw (OXn) --node{$j^\sharp$} (jOXn);
\draw (sOX) -- (OX);
\draw (OX) -- (jOX);
\draw (jOX)--(sOX1);
\draw (K)--node{$\cong$} (sOX);
\draw (OXn) -- (OX);
\draw (jOXn) --node{$\cong$}(jOX);
\end{scope}
\end{tikzpicture}
\end{center}
This already shows that the cohomology of $\vdual(\Omega^p_{X'})$ is concentrated in degree $-n$. What remains to finish the proof is to show that $\Omega^{n-p}_{X'}\to H^{-n}\vdual(\Omega^p_{X'})$ is an isomorphism.

The pullback morphism $\delta^\sharp\colon\delta^{-1}\Omega^{n-p}_{X}\to \Omega^{n-p}_{X'}$ maps $(n-p)$-forms on $X$ that vanish on $D$ to $(n-p)$-forms on $X'$ that vanish on $\mathring E$. Therefore, it induces a morphism $s_*\mathcal K\to \Omega^{n-p}_{X'}$ which, after shifting it by $n$, fills in the dashed arrow in the preceding diagram. It follows directly from the description of the isomorphism $\mathcal K\to \vdual(s^{-1}\Omega^p_X)$ that the resulting square in the diagram is commutative. If we combine the definition of $\mathcal K$ with the exact sequence (\ref{equn:exact sequence for Omegas in modification}), with $n-p$ substituted for $p$, and the definition of the map $s^{-1}\Omega^{n-p}_X\to \iota_*\Omega^{n-p-1}_D$ (see (\ref{equn:definition of Omega-p on modification to Omega-p-1 on divisor})), we see that $\mathcal K$ is the kernel of $j^\sharp$. So taking homology of the diagram above in degree $-n$ yields a commutative diagram 
\begin{center}
\begin{tikzpicture}[auto]
\matrix[matrix of math nodes, row sep= 5ex, column sep= 3em, text height=1.5ex, text depth= .25ex]{
|(0ol)| 0 &[-2em]
|(K)| s_*\mathcal K	&
|(OXn)| \Omega^{n-p}_{X'}	&
|(jOXn)| j_*\Omega^{n-p}_{\mathring E} &[-2em]
|(0or)| 0  \\
|(0ul)| 0 &
|(sOX)| H^{-n}s_*\vdual(s^{-1}\Omega^p_{X'}) &
|(OX)| H^{-n}\vdual(\Omega^p_{X'}) &
|(jOX)| R^{-n}j_*\vdual(\Omega^p_{\mathring E}) &
|(0ur)| 0 \\
};
\begin{scope}[->,font=\footnotesize]
\draw (0ol) --(K);
\draw (K) -- (OXn);
\draw (OXn) -- (jOXn);
\draw (jOXn)--(0or);
\draw (0ul) -- (sOX);
\draw (sOX) -- (OX);
\draw (OX) -- (jOX);
\draw (jOX)--(0ur);
\draw (K)--node{$\cong$} (sOX);
\draw (OXn) -- (OX);
\draw (jOXn) --node{$\cong$}(jOX);
\end{scope}
\end{tikzpicture}
\end{center}
with exact rows. Applying the five lemma shows that $X'=\berg M$ satisfies Poincar\'e-Verdier duality.

To finish the proof we need to show that the square
\begin{center}
\begin{tikzpicture}[auto]
\matrix[matrix of math nodes, row sep= 5ex, column sep= 2.7em, text height=1.5ex, text depth= .25ex]{
|(OX')| \Omega^{n-p}_{X}[n] &
|(OD1)| \iota_*\Omega^{n-p}_D[n]  \\
|(dOX')| \vdual (\Omega^p_{X}) &
|(dOD1)| \vdual(\iota_*\Omega^{p-1}_D)[1] \ ,\\
};
\begin{scope}[->,font=\footnotesize]
\draw (OX')--node{$\iota^\sharp$}(OD1);
\draw (dOX')--(dOD1);
\draw (OX') --node{$\cong$}(dOX');
\draw (OD1)--node{$\cong$}(dOD1);
\end{scope}
\end{tikzpicture}
\end{center}
is commutative. Since all four complexes in this square have cohomology concentrated in degree $-n$, we may check this after taking cohomology in degree $-n$. Because this is a local question, we may work locally at a point $x\in D$ and pick a local face structure $\Sigma$ at $x$ such that $D\cap \vert\Sigma\vert$ is a union of cones. If we denote
\[
\Delta= \{\sigma\in\Sigma\mid \sigma\subseteq D\} \ ,
\]
then $\Delta$ is a local face structure at $x$ in $D$. After appropriately choosing a polyhedron $\tau'$ in $E$ for every $\tau\in\Delta$, the set of polyhedra
\[
\Sigma'=\{s(\sigma) \mid \sigma\in\Sigma\} \cup \{\tau'\mid \tau\in\Delta\}
\]
is a local face structure at $s(x)$ in $X'$. We also choose an orientation $\eta_{\sigma}\in \bigwedge^{\dim(\sigma)} T^\Z (\sigma)$ for every $\sigma\in\Sigma'$, which we use to define a chain $[\sigma]$ on $X'$ via cellular homology. For every $\sigma\in\Sigma$, there is an induced orientation $\eta_\sigma=d\delta (\eta_{s(\sigma)})$, we use to define a chain $[\sigma]$ on $X$ via cellular homology.

Now let $\omega$ be a section of $\Omega^{n-p}_X$ defined in a neighborhood of $x$. We first compute the image of $\omega$ when going through the square counterclockwise. The image of $\omega$ under the left vertical map is represented by the section of $\Homs(\Omega^p_X,\Delta_X^{-n})$ given by
\[
\phi\mapsto \sum_{\sigma\in\Sigma(n)} \langle  \phi \wedge \omega, \eta_\sigma\rangle [\sigma] \ .
\]
To compute the image of this under the connecting homomorphism that constitutes the lower horizontal arrow of the square, we lift it to the section of $\Homs(s^{-1}\Omega^p_{X'},\Delta_X^{-n})$ given by the formula
\[
\phi\mapsto \sum_{\sigma\in\Sigma(n)} \langle  \phi\wedge \delta^\sharp \omega , \eta_{s(\sigma)} \rangle [\sigma] \ .
\]
Now we need to apply the differential $\partial$ of $\Delta^\bullet_X$ to this. We observe that because the fundamental cycle of $X'$ satisfies the balancing condition, we have
\[
\partial\left(\sum_{\sigma\in\Sigma'(n)} \langle \phi' \wedge \delta^\sharp \omega, \eta_{\sigma}\rangle [\sigma] \right)=0
\]
for every section $\phi'$ of $\Omega^p_{X'}$. It follows that if $s_*$ denotes the push-forward of chains along $s$, we have
\begin{equation}
\label{equ:s*}
s_*\left(\partial \left(\sum_{\sigma\in\Sigma(n)} \langle \phi\wedge \delta^\sharp \omega , \eta_{s(\sigma)}\rangle [\sigma] \right)\right)
=-\partial\left(\sum_{\tau\in\Delta(n-1)} \langle \phi\wedge \delta^\sharp \omega , \eta_{\tau'}\rangle [\tau'] \right)
\end{equation}
for all sections $\phi$ of $s^{-1}\Omega^p_{X'}$. 

If $\phi=\delta^\sharp \psi$, then this is zero since the pull-back of every $n$-form on $X$  vanishes on $\mathring E$. Because $s_*$ is injective on chains, it follows that the morphism 
\begin{equation}
\label{equ:morph}
\phi\mapsto \partial\left(\sum_{\sigma\in\Sigma(n)} \langle \phi \wedge  \delta^\sharp \omega , \eta_{s(\sigma)} \rangle [\sigma]\right)
\end{equation}
induces a section of $\Homs(\iota_*\Omega^{p-1}_D,\Delta^{-n}_X)$. 

Let $\phi$ be a section of $\iota_*\Omega^{p-1}_D$. To compute the effect of the morphism $\iota_*\Omega^{p-1}\to \Delta^{-n}_X$ defined by $\omega$ on $\phi$, we first have to lift $\phi$ to $s^{-1}\Omega^p_{X'}$. One possible choice is $\omega_\R\wedge\delta^\sharp\phi' $, where where $\phi'$ is any lift of $\phi$ to $\Omega^p_X$, and $\omega_\R\in \Gamma(X',\Omega^1_{X'})$ is chosen such that it coincides with the pull-back of the identity on $\R$ in an identification of $\mathring E$ with an open subset of $D\times \R$. If we then apply the morphism (\ref{equ:morph}) to $\omega_\R\wedge \delta^\sharp\phi'$, apply $s_*$, and use Equation (\ref{equ:s*}), we obtain
\begin{multline*}
-\partial\left(\sum_{\tau\in\Delta(n-1)} \langle \omega_\R \wedge\delta^\sharp (\phi'\wedge \omega) , \eta_{\tau'}\rangle [\tau'] \right)  =\\
=-s_*\left(\sum_{\tau\in\Delta(n-1)} \langle \omega_\R \wedge\delta^\sharp (\phi'\wedge \omega) , \eta_\tau\wedge n_{\tau'/s(\tau)} \rangle [\tau]\right) 
=\\=s_*\left(\sum_{\tau\in\Delta(n-1)} \langle  \phi\wedge \iota^\sharp \omega , \eta_{s(\tau)}\rangle [\tau]\right) \ ,
\end{multline*}
where $n_{\tau'/s(\tau)}$ denotes a lattice normal vector (see Remark \ref{rem:lattice normal vector and balancing}), and the last equality holds because $\langle \omega_\R, \eta_{\tau'/s(\tau)}\rangle=-1$. Thus, the image of $\omega$ in $H^{-n+1}\vdual(\iota_*\Omega^{p-1}_D)$ when moving through the square counterclockwise is represented by the section of $\Homs(\iota_*\Omega^{p-1}_D, \Delta^{-n+1}_{X})$ given by
\begin{equation*}
\phi\mapsto \sum_{\tau\in\Delta(n-1)} \langle  \phi\wedge \iota^\sharp\omega , \eta_\tau\rangle [\tau] \ .
\end{equation*}
The image of $\omega$ when moving through the square clockwise is represented by the same section of $\Homs(\iota_*\Omega^{p-1}_D, \Delta^{-n+1}_{X})$, finishing the proof.
\end{proof}

\begin{proof}[Proof of Theorem \ref{thm:computing the Verdier dual}]
Since the assertion is local we may assume that $X=\berg M \times \overline \R^k$ for a loopless matroid $M$ and some $k\in\N$. By Lemmas \ref{lem:duality for products} and \ref{lem: Verdier dual for Rn} we can further reduce to the case where $k=0$ and $X=\berg M$ for a loopless matroid $M$. We prove that any such tropical linear space satisfies Poincar\'e-Verdier duality by induction on the cardinality $\#E(M)$ of the ground set $E(M)$ of the matroid. In the base case $\#E(M)=1$, the tropical linear space $\berg M$ is a point and the assertion is trivial. We may thus assume $\#E(M)>1$. First assume that $E(M)$ is independent, in which case $M$ is uniform of maximal rank. Then the tropical linear space $\berg M$ is isomorphic to $\R^{\#E(M)-1}$ and thus satisfies Poincar\'e-Verdier duality by Lemma \ref{lem: Verdier dual for Rn}. We may thus assume that there exists $i\in E(M)$ which is not a coloop. In this case, the deletion $M\setminus i$ is a loopless matroid on the ground set $E(M\setminus i)= E(M)\setminus \{i\}$. If $i$ is a loop $M/i$, then $\berg M$ is isomorphic $\berg {M\setminus i}$ by \cite[Proposition 2.25]{ShawIntersection} and we are done by induction. If $M/i$ is loopless we can use Theorem \ref{thm:duality for linear spaces} and are also done by induction.
\end{proof}

\begin{cor}
Let $X$ be a purely $n$-dimensional tropical manifold. Then there is a natural isomorphism $\dual_X\cong \Omega^n_X[n]$.
\end{cor}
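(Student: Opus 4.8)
The plan is to simply specialize Theorem \ref{thm:computing the Verdier dual} to the case $p=0$. Recall that $\Omega^0_X=\Z_X$, and that by the very definition of the Verdier dual we have
\[
\vdual(\Omega^0_X)=\vdual(\Z_X)=R\Homs^\bullet(\Z_X,\dual_X)\cong \dual_X \ ,
\]
the last isomorphism being the canonical one noted right before the statement of Theorem \ref{thm:computing the Verdier dual}.

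Now apply Theorem \ref{thm:computing the Verdier dual} with $p=0$: since $X$ is a purely $n$-dimensional tropical manifold, there is a natural isomorphism
\[
\Omega^{n}_X[n]\xrightarrow{\ \cong\ } \vdual(\Omega^0_X) \ .
\]
Composing with the identification $\vdual(\Omega^0_X)\cong \dual_X$ gives the desired natural isomorphism $\Omega^n_X[n]\cong\dual_X$. There is no real obstacle here — the work has all been done in establishing Theorem \ref{thm:computing the Verdier dual}; the only thing to check is that the natural morphism $\Omega^n_X[n]\to\vdual(\Omega^0_X)$ constructed just before the definition of Poincar\'e-Verdier duality coincides, under the identification $\vdual(\Z_X)\cong\dual_X$, with the natural morphism $\Omega^n_X[n]\to\dual_X$ coming from \eqref{equ:natural morphism in deg -n} and the isomorphism $\Omega^n_X\cong\scrH^n_X$ of Lemma \ref{lem: Verdier dual for Rn}-type reasoning (equivalently, from Proposition \ref{prop:canonical morphism for between n-cycles and hom} together with $\mathcal Z^X_n\cong\Z_X$ on a tropical manifold). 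This is immediate from unwinding the construction of that morphism in the case $p=0$, where the intermediate step $\Omega^{n-p}_X\to\Homs(\Omega^p_X,\Omega^n_X)$ is just the identity $\Omega^n_X\to\Homs(\Z_X,\Omega^n_X)$.
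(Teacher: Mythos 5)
Your proposal is correct and matches the paper's own argument: specialize Theorem \ref{thm:computing the Verdier dual} to $p=0$, using $\Omega^0_X=\Z_X$ and the canonical identification $\vdual(\Z_X)\cong\dual_X$. The extra compatibility check you mention is harmless but not needed for the statement as phrased.
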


\begin{proof}
The dualizing complex $\dual_X$ is canonically isomorphic to $\vdual(\Z_X)$. Since $ \Z_X= \Omega^0_X$, we obtain
\begin{equation*}
\dual_X\cong \vdual(\Z_X)= \vdual(\Omega^0_X)\cong \Omega^n_X[n]
\end{equation*}
by applying Theorem \ref{thm:computing the Verdier dual}.
\end{proof}

\appendix
\section{Complexes of singular chains on CS sets}
\label{sec:appendix}
\renewcommand*{\thethm}{\Alph{section}.\arabic{thm}}

\begin{defn}
Let $X$ be a topological space. A \emph{stratification} of $X$ is a collection $\mS$ of disjoint locally closed subsets of $X$ such that $X=\bigcup_{S\in\mS}S$, each $S\in \mS$ is a pure-dimensional topological manifold, and such that for every $S\in \mS$ the closure $\overline S$ is a union of strata of dimension less than $\dim(S)$.
\end{defn}

Next we recall the definition of conically stratified spaces. If $L$ is a stratified space, we will use the notation $\mathring c(L)$ for the open cone over $L$, that for the space $(\Rbar\times L)/(\{\infty\}\times L)$. The open cone $\mathring c(L)$ has an induced stratification, with the cone point being the unique $0$-dimensional stratum, and all other strata being of the form $\R\times S$, where $S$ is a stratum of $L$.

\begin{defn}
Let $X$ be a topological space, equipped with a stratification $\mS$.  We say that $X$ is \emph{conically stratified}, or a \emph{CS set} for short, if for all $S\in \mS$ and  $x\in \mS$ there exist a neighborhood $U$ of $x$ in $S$, a neighborhood $V$ of $x$ in $X$, and a compact stratified space $L$ such that $V$ is homeomorphic to $U\times \mathring c(L)$ in a way respecting the stratification.
\end{defn}

\begin{defn} \label{def:admis_strat}
We say that a stratification $\mS$ of a topological space $X$ is \emph{admissible}, if the stratified space $(X,\mS)$ is conically stratified and for every stratum $S\in\mS$ the pair $(\overline S,S)$ is homeomorphic to a pair $(U,\mathring D^n)$, where $\mathring D^n$ is the open unit disc in $\R^n$ and $U$ is an open subset of the closed unit disc $D^n$ that contains $\mathring D^n$.
\end{defn}

\begin{example}
If $X$ is a rational polyhedral space with a face structure $\Sigma$, then the relative interiors of the polyhedra in $\Sigma$ stratify $X$, and this stratification is admissible.
\end{example}

Let $X$ be a topological space equipped with a stratification $\mS$. Exactly as for face structures (see \S\ref{subsec:Comparison of homologies}), we say that a singular simplex $\sigma\colon \Delta^q\to X$ (where $\Delta^q$ denotes the standard $q$-simplex) respects the stratification $\mS$ if the relative interior of any face of $\Delta^q$ is mapped into a stratum of $\mS$. For every open set $U\subseteq X$ and $i\in \Z$ we denote by $C_i(U;\mS)$ the free abelian group on all singular $i$-simplices in $U$ respecting the stratification $\mS$. Since faces of simplices respecting the stratification respect the stratification again, we obtain a chain complex $C_\bullet(U,\mS)$, and a quotient 
\begin{equation} \label{eq:relchains}
C_\bullet(X,U;\mS)= C_\bullet(X;\mS)/C_\bullet(U;\mS)
\end{equation} 
of relative chains that respect the stratification. We denote the $i$-th homology of these complexes by
\begin{align*}
H_i(U;\mS)&= H_i(C_\bullet(U;\mS)) \ , \text{ and} \\
H_i(X,U;\mS)&= H_i(C_\bullet (X,U;\mS)) \ .
\end{align*}
 For every $k$ we denote by $\Delta_X^{\mS,-k}$ the sheafification of the presheaf $U\mapsto C_k(X,X\setminus \overline U;\mS)$. The differentials on the complexes of relative chains that respect the stratification induce a differential that makes $\Delta_X^{\mS,\bullet}$ a cochain complex. By definition, $\Delta_X^{\mS,\bullet}$ is a subcomplex of $\Delta_X^\bullet$.

\begin{prop}
\label{prop:dualizing complex using singular cycles respecting the stratification}
Let $X$ be a conically stratified space with stratification $\mathcal S$. Then the inclusion map
\[
\Delta_X^{\mS,\bullet} \to \Delta_X^\bullet
\]
is a quasi-isomorphism.
\end{prop}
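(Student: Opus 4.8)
The plan is to check that the inclusion $\Delta_X^{\mS,\bullet}\to\Delta_X^\bullet$ induces an isomorphism on all stalks, since a morphism of complexes of sheaves is a quasi-isomorphism precisely when it is one on stalks. Fix $x\in X$, and let $S\in\mS$ be the stratum containing $x$. The stalk of $\Delta_X^{\mS,\bullet}$ at $x$ computes, up to the usual reindexing, the relative homology $\varinjlim_{x\in U} H_\bullet(X,X\setminus U;\mS)$, and likewise for $\Delta_X^\bullet$ without the stratification constraint; so what must be shown is that the natural map
\[
\varinjlim_{x\in U} H_i(X,X\setminus U;\mS)\to \varinjlim_{x\in U} H_i(X,X\setminus U)
\]
is an isomorphism for all $i$. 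Using the conical stratification hypothesis, I would first reduce to a local model: choose a neighborhood $V\cong U'\times\mathring c(L)$ of $x$ respecting the stratification, with $U'\subseteq S$ a Euclidean ball and $L$ a compact stratified space, and observe that cofinally among the $U$'s we may take products $U=U''\times(\text{open cone neighborhood of the cone point})$. Since relative singular homology is homotopy-invariant and excisive, the computation localizes to the cone $\mathring c(L)$: concretely $\varinjlim_U H_i(X,X\setminus U)\cong H_i(\mathbb R^{\dim S})\otimes \widetilde H_{i-1-\dim S}(\text{something involving }L)$-type formulas, but the cleaner route is to keep everything relative and argue by induction on depth.

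The key technical input is a stratified version of the small simplices / barycentric subdivision theorem: given any singular chain in $X$, iterated barycentric subdivision followed by a suitable ``straightening'' near the strata produces a homologous chain all of whose simplices respect $\mS$, and any stratum-respecting chain that bounds in $C_\bullet(X)$ already bounds in $C_\bullet(X;\mS)$. On a cone $\mathring c(L)=(\bar{\mathbb R}\times L)/(\{\infty\}\times L)$ one has an explicit conical homotopy $h_t(r,\ell)=(r+t,\ell)$ (pushing toward the cone point) which deformation-retracts any neighborhood of the cone point onto smaller ones; the point is that this homotopy, and the straight-line homotopies used in subdivision, can be arranged to respect the product-with-cone structure and hence to carry stratum-respecting chains to stratum-respecting chains. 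Combining this with the inductive hypothesis applied to the compact stratified link $L$ (which has strictly smaller depth than $X$ near $x$) gives that both direct limits above agree with the homology of the local model computed with stratum-respecting chains, and that the comparison map is the identity on that model. The base case is $X$ a manifold with the trivial one-stratum stratification, where $\Delta_X^{\mS,\bullet}=\Delta_X^\bullet$ tautologically.

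The main obstacle I expect is making the ``straightening near strata'' step precise and functorial enough that it genuinely produces a chain homotopy inverse on the level of the limit systems, rather than merely an abstract isomorphism — one has to be careful that the homotopies used to push simplices into good position (a) respect the stratification throughout, not just at the endpoints, and (b) are compatible with shrinking the open set $U$, so that they assemble across the direct system. A secondary subtlety is bookkeeping the cone point as a $0$-dimensional stratum when $L$ is disconnected or has several strata meeting it, i.e.\ correctly identifying the local homology $H_i(X,X\setminus\{x\};\mS)$ with $H_i(X,X\setminus\{x\})$; here the inductive structure on $L$ does the work, but one must verify the excision and Mayer–Vietoris steps stay within stratum-respecting chains. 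Once these homotopy-theoretic points are in hand, the statement follows by comparing the two local computations stratum by stratum.
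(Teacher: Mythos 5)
Your reduction to stalks is the same first move as the paper's, but the core of your argument --- the ``stratified small simplices / barycentric subdivision theorem'' asserting that iterated subdivision followed by a straightening near the strata turns an arbitrary singular chain into a homologous one respecting $\mS$, and that stratum-respecting boundaries already bound among stratum-respecting chains --- is exactly the content of the proposition, and you assert it rather than prove it. The obstacle you flag is genuine and, as stated, fatal: a CS set carries no linear structure, so ``straight-line homotopies'' have no meaning in $X$ (the subdivision homotopies live in the domain simplex, not in the target), and the conical homotopy $h_t$ pushing toward the cone point does not make a simplex whose image meets several strata respect $\mS$ at any finite time, while at the terminal time it collapses everything to the cone point. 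There is no known elementary chain-level straightening of this kind, which is precisely why one does not prove the comparison this way.

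The paper sidesteps the chain-level problem entirely by an axiomatic bootstrap. After using the long exact sequence of the pair and the five lemma to reduce the stalkwise statement to showing that $H_*(U;\mS)\to H_*(U)$ is an isomorphism for \emph{every} open $U\subseteq X$, it invokes Friedman's Theorem 5.1.4 \cite{FriedmanBook}: one only has to check that both functors admit compatible Mayer--Vietoris sequences (barycentric subdivision preserves stratum-respecting chains), that both commute with increasing unions of opens, that the map is an isomorphism on a point and on local models $\R^n\times\mathring c(L)$ (which contract to a point through stratum-preserving maps, so no induction on the depth of $L$ is even needed), and that it is an isomorphism when there is a single stratum. If you replace your straightening step by this Mayer--Vietoris/direct-limit bootstrap --- either citing Friedman or reproving his criterion by induction over an exhaustion --- your outline becomes a correct proof; keeping the comparison absolute on open sets, rather than relative at the point as in your limit formula, is also what makes the excision and Mayer--Vietoris bookkeeping painless.
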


\begin{proof}
For the purpose of this proof we will denote 
\[
H^{\mathrm{strat}}_*(X)= H_*(X;\mS)
\]
for a conically stratified space $X$ with stratification $\mS$.

We need to show that $H^{-i}(\Delta_X^{\mS,\bullet})\to H^{-i} (\Delta_X^\bullet)$ is an isomorphism of sheaves for all $i\in \Z$. At a point $x\in X$, the stalks of these sheaves are $H_i(X, X\setminus \{x\};\mS)$ and $H_i(X,X\setminus \{x\})$, respectively, so using the long exact sequence for relative homology and the five lemma, it suffices to show that the natural morphisms
\[
H^{\mathrm{strat}}_*(U) \to H_*(U)
\]
are isomorphisms for all open subsets $U\subseteq X$. This follows from \cite[Theorem 5.1.4]{FriedmanBook} once we show that the four hypothesis of the theorem are satisfied. 

\begin{enumerate}
\item
Since the barycentric subdivision restricts to an equivalence of complexes $C_\bullet(X;\mS)\to C_\bullet(X;\mS)$, there are compatible Mayer-Vietoris sequences for  $H^\mathrm{strat}_*$ and $H_*$.

\item 
If $\{U_\alpha\}$ is an increasing collection of open subsets of a CS set $X$ such that 
\[
H^\mathrm{strat}_*(U_\alpha)\to H_*(U_\alpha)
\]
is an isomorphism for each $\alpha$, then 
\[
H^\mathrm{strat}_*\left(\bigcup_\alpha U_\alpha\right) \to H_*\left(\bigcup_\alpha U_\alpha\right)
\]
is also an isomorphism because
\begin{align*}
H^\mathrm{strat}_*\left(\bigcup_\alpha U_\alpha\right) & \cong \injlim_\alpha H^\mathrm{strat}_*(U_\alpha)  \text{ and} \\
 H_*\left(\bigcup_\alpha U_\alpha\right)  &\cong \injlim_\alpha H_*(U_\alpha) \ .
\end{align*}

\item The statement is true if $X$ is a point. It is also true if $X$ is homeomorphic to $\R^n\times \mathring c(L)$  in a way respecting the stratification for some $n\in \N$ and some CS set $L$, because in this case $X$ can be contracted to a point in a way that respects the stratification, reducing to the case where $X$ is a point. 

\item If $X$ only has a single stratum, then $C^\mathrm{strat}_\bullet(X)=C_\bullet(X)$ and therefore the statement is true for $X$.
\end{enumerate}
 
\end{proof}

\begin{prop}
\label{prop:restriction yields equivalence}
Let $X$ be a conically stratified space with stratification $\mS$, and let $U\subseteq X$ be an open subset. Then the inclusion
\[
\Delta_U^{U\cap \mS,\bullet}\to \Delta_X^{\mS,\bullet}|_U \ ,
\]
where $U\cap \mS$ is the induced stratification on $U$, is an equivalence of complexes
\end{prop}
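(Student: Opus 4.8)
The plan is to prove this by reducing, via Proposition \ref{prop:dualizing complex using singular cycles respecting the stratification}, to the ordinary excision theorem for singular homology.

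First I would record that the inclusion in the statement is induced, at the level of presheaves, by the inclusions $C_k(U, U\setminus\overline V; U\cap\mS)\hookrightarrow C_k(X, X\setminus\overline V;\mS)$ of ($\mS$-respecting) relative chain groups for open $V\subseteq U$, and that to see it is a quasi-isomorphism it is enough to check the induced maps on cohomology sheaves are isomorphisms. Since restriction to an open set is exact one has $H^{-i}(\Delta_X^{\mS,\bullet}|_U)=H^{-i}(\Delta_X^{\mS,\bullet})|_U$, and, exactly as in the proof of Proposition \ref{prop:dualizing complex using singular cycles respecting the stratification}, the stalk of $H^{-i}(\Delta_X^{\mS,\bullet})$ at $x$ is $H_i(X, X\setminus\{x\};\mS)$, while the stalk of $H^{-i}(\Delta_U^{U\cap\mS,\bullet})$ at $x\in U$ is $H_i(U, U\setminus\{x\}; U\cap\mS)$. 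So the whole proposition reduces to the assertion that for every $x\in U$ and every $i$ the natural map
\[
H_i(U, U\setminus\{x\}; U\cap\mS)\longrightarrow H_i(X, X\setminus\{x\};\mS)
\]
is an isomorphism.

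Next I would invoke Proposition \ref{prop:dualizing complex using singular cycles respecting the stratification} twice: once for $(X,\mS)$ and once for $(U, U\cap\mS)$ --- the latter being legitimate because an open subset of a CS set, with the induced stratification, is again a CS set (the defining conditions are local, and a product neighborhood $U'\times\mathring c(L)$ of a point can be shrunk to another one inside any prescribed open set). Passing to stalks at $x$ gives a commutative square whose left and right vertical arrows are the resulting isomorphisms $H_i(U, U\setminus\{x\};U\cap\mS)\xrightarrow{\ \sim\ }H_i(U, U\setminus\{x\})$ and $H_i(X, X\setminus\{x\};\mS)\xrightarrow{\ \sim\ }H_i(X, X\setminus\{x\})$, whose top arrow is the map just displayed, and whose bottom arrow is the natural map $H_i(U, U\setminus\{x\})\to H_i(X, X\setminus\{x\})$; commutativity holds because all four maps come from inclusions of chain complexes (of $\mS$-respecting chains into all chains, and of chains in $U$ into chains in $X$), which already commute on presheaves. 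Thus it suffices to show the bottom arrow is an isomorphism. But this is precisely the excision theorem: $Z\coloneqq X\setminus U$ is closed and does not contain $x$, so $U=X\setminus Z$ and $X\setminus\{x\}$ form an open cover of $X$, and excision identifies $H_\bullet(X\setminus Z,(X\setminus\{x\})\setminus Z)=H_\bullet(U, U\setminus\{x\})$ with $H_\bullet(X, X\setminus\{x\})$. (Equivalently, one may deduce this last step from the canonical isomorphism $\dual_U\cong\dual_X|_U$ together with the description of the stalks of $\scrH^i_X$.)

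I expect the only genuinely delicate points to be the two bookkeeping claims flagged above --- that $(U, U\cap\mS)$ really is a CS set, so that Proposition \ref{prop:dualizing complex using singular cycles respecting the stratification} may be applied to it, and that the square in the previous step commutes --- and both of these are essentially immediate from the definitions, the latter because every arrow involved ultimately comes from an honest inclusion of singular chain groups. I do not anticipate a substantive obstacle; the temptation to resist is reproving excision by hand inside the subcomplex of $\mS$-respecting chains (carrying the barycentric subdivision operator and its standard chain homotopy, both of which do preserve $\mS$-respecting chains, through the classical argument), which works but is strictly more laborious than the reduction above.
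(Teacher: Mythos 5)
There is a real gap here, and it is precisely the point the proposition is designed to address: you prove only that the inclusion $\Delta_U^{U\cap\mS,\bullet}\to\Delta_X^{\mS,\bullet}|_U$ is a quasi-isomorphism (an isomorphism on cohomology sheaves), whereas the proposition asserts an \emph{equivalence of complexes}, i.e.\ a chain homotopy equivalence, and this stronger statement is what is actually used later. In the proof of Proposition \ref{prop: Hom is equal to derived hom for locally closed sets} one applies the non-exact functor $\Hom^\bullet(\Z_A,-)$ to this inclusion; a chain homotopy equivalence survives any additive functor, but a quasi-isomorphism between complexes of non-injective sheaves need not, so your stalkwise-excision argument does not feed into the downstream proof. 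This is why the paper's proof does not stop at cohomology sheaves: it constructs an explicit homotopy inverse $f^\infty=\lim_i\bigl(\id-(d\psi+\psi d)\bigr)^i$, built from a modified subdivision homotopy $T_U$, together with the homotopy $\sum_{i\ge 0}\psi f^i$, all defined at the level of sheaves.

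Your closing remark that one could instead ``carry the barycentric subdivision operator and its standard chain homotopy through the classical argument'' understates the difficulty. The classical excision proof produces, for each fixed pair, a homotopy inverse that uses a number of subdivisions depending on the individual chain; these maps are not compatible with the restriction maps of the presheaves $V\mapsto C_\bullet(X,X\setminus\overline V;\mS)$, so they do not directly sheafify to a homotopy inverse of $\Delta_U^{U\cap\mS,\bullet}\to\Delta_X^{\mS,\bullet}|_U$. Overcoming exactly this incompatibility is the content of the limiting construction in the paper's proof (the convergence of $(f^i)$ in the discrete topology on sections, checked on a basis of opens $V$ with $\overline V\subseteq U$). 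So the part you dismiss as ``strictly more laborious'' is the substance of the proposition; your reduction to excision establishes a weaker statement and would not support the use made of Proposition \ref{prop:restriction yields equivalence} in Proposition \ref{prop: Hom is equal to derived hom for locally closed sets}.
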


\begin{proof}
For every open subset $V\subseteq U$ with $\overline V\subseteq U$ the inclusion
\[
C_\bullet(U, U\setminus \overline V; \mS)\to C_\bullet(X, X\setminus \overline V;\mS)
\]
is an equivalence by the excision theorem. To show that this stays an equivalence when sheafifying, we need to make sure that the homotopy inverses are compatible with restrictions. Let $S\colon C_\bullet(X;\mS)\to C_\bullet(X;\mS)$ be the barycentric subdivision and let $T\colon \id\Rightarrow S$ be a (functorial) chain homotopy between the identity and $S$. We use $T$ to define a new chain homotopy $T_U$ whose action on singular $n$-simplices is given by
\[
T_U\sigma =\begin{cases}  
0, &\text{if }\sigma(\Delta^n) \subseteq U, \\
T\sigma, &\text{else.}
\end{cases}
\]
By the functoriality of $T$, this induces morphisms
\[
C_k(X, X\setminus \overline V ; \mS)\to C_{k+1}(X, X\setminus \overline V;\mS)
\]
for all $k\in \N$ and open subsets $V\subseteq X$, which are compatible with restrictions. Therefore, they induce morphism $\Delta^{\mS,k}_X\to \Delta^{\mS,k-1}_X$ of sheaves for all $k\in \Z$, whose restrictions to $U$ we denote by $\psi^k \colon \Delta^{\mS,k}_X|_U\to \Delta^{\mS,k-1}_X|_U$. Let $f=\id - (d\psi+\psi d)$. We claim that the sequence $(f^i)_i$ of powers of $f$ converges to a morphism $f^\infty$ in the sense that for every section $s\in \Gamma(V, \Delta^{\mS,k}_X)$, where $V\subseteq U$ is open, there exists a covering $V= \bigcup _\lambda V_\lambda$ such that the sequence $f^i(s)|_{V_\lambda}$ converges to $f^\infty(s)|_{V_\lambda}$ in the discrete topology on $\Gamma(V_\lambda,\Delta^{\mS,k}_X)$ (that is the sequence is eventually constant). As the open subsets $V\subset U$ with $\overline V\subseteq U$ form a basis for the topology of $U$, it suffices to show that the sequence $(f^i(s))_i$ is eventually constant for every $s\in C_{-k}(X, X\setminus \overline V;\mS)$ for such an open subset $V$ of $U$. By linearity, we may even assume that $s$ is represented by a single singular $(-k)$-simplex $\sigma$. We have
\begin{multline*}
f(\sigma)= \sigma- (\partial\psi+\psi\partial)\sigma = \sigma -(\partial T+ T\partial)\sigma + (\partial (T-T_U)+ (T-T_U)\partial )\sigma =\\
=  S(\sigma) +(\partial (T-T_U)+ (T-T_U)\partial )\sigma\ .
\end{multline*}
Note that for any singular $j$-simplex $\delta$ we have
\[
(T-T_U)(\delta)= \begin{cases}
T\delta, & \text{if } \delta(\Delta^j)\subseteq U \\
0, & \text{else.}
\end{cases}
\]
In particular, $(T-T_U)c$ is a linear combination of simplices contained in $U$ for every chain $c$. Therefore, $f(\sigma)-S\sigma$ is represented by a linear combination of singular simplices that are contained in $U$. As $f$ is the identity on simplices that are contained in $U$, we see inductively that $f^i(\sigma)-S^i\sigma$ is represented by a linear combination of simplices that are contained in $U$ for all $i\in\N$. Since we assumed that $\overline V\subset U$, the space $X$ is the union of $U$ and $X\setminus \overline V$. Thus, for $i$ large enough the chain $S^i\sigma$ will be represented by a linear combination of simplices that are either contained in $U$ or in $X\setminus \overline V$. The latter are $0$ in $C_{-k}(X,X\setminus \overline V;\mS)$, so $f^i(\sigma)$ is represented by a linear combination of simplices that are contained in $U$. It follows immediately that the sequence $(f^i(\sigma))_i$ is eventually constant, and we conclude that $f^\infty$ is well-defined. It also follows that $f^\infty (\sigma)$ is contained in $C_{-k}(U, U\setminus \overline V;\mS)$. This shows that $f^\infty$ maps into the subcomplex $\Delta_U^{U\cap\mS,\bullet}$ of $\Delta^{\mS,\bullet}_X|_U$.

It is clear from the definitions that $f^\infty$ respects the differentials, so we have constructed a morphism $\Delta^{\mS,\bullet}_X|_U\to \Delta^{U\cap \mS,\bullet}_U$. If $\iota\colon \Delta^{U\cap\mS,\bullet}_U\to \Delta^{\mS,\bullet}_X|_U$ denotes the inclusion, then $f^\infty\circ \iota=\id$ by construction. The construction of $f^\infty$ also provides a chain homotopy $\id \Rightarrow \iota\circ f^\infty$, namely the limit
\[
\sum_{i=0}^\infty \psi f^i \ .
\]
This converges in the same sense as before, because once $f^i(s)$ in $\Delta^{U\cap\mS,\bullet}_U$ we have $\psi(f^i(s))=0$ by definition. We conclude that $\iota$ is an equivalence of complexes.
\end{proof}

\begin{prop}
\label{prop: Hom is equal to derived hom for locally closed sets}
Let $X$ be a conically stratified space with stratification $\mS$, and let $A\subset X$ be a locally closed subset of $X$ that is a union of strata. Then the morphism
\[
\Hom^\bullet(\Z_A, \Delta^{\mS,\bullet}_X)\to R\Hom^\bullet(\Z_A, \dual_X)
\]
that is induced by the inclusion $\Delta_X^{\mS,\bullet}\to \Delta_X^\bullet$ and the natural identification $\Delta_X^\bullet\cong \dual_X$, is an isomorphism in $\der{}$.

In particular, the natural morphism
\[
\Homs^\bullet(\Z_A, \Delta^{\mS,\bullet}_X)\to R\Homs^\bullet(\Z_A, \dual_X)
\]
is an isomorphism in $\der X$.
\end{prop}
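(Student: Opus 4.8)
## Proof Strategy for Proposition~\ref{prop: Hom is equal to derived hom for locally closed sets}

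The plan is to reduce the statement to a purely local question and then exploit the conical structure of $X$ together with the preceding two propositions. First I would recall the general principle: a bounded-below complex of sheaves computes the derived $\Hom$ out of any object as long as it is a complex of $\Gamma$-acyclic (e.g.\ flabby, soft, or injective) sheaves, or more weakly, as long as the relevant hypercohomology vanishing holds. So the first reduction is to show that $\Delta_X^{\mS,\bullet}$ is a complex of sheaves suitable for computing $R\Homs^\bullet(\Z_A,-)$. Concretely, since $\Delta_X^{\mS,\bullet}\hookrightarrow \Delta_X^\bullet$ and the latter is homotopically fine (hence a complex of soft sheaves, as already used in the main text), I would first argue that $\Delta_X^{\mS,\bullet}$ is \emph{also} homotopically fine: the barycentric subdivision operator and a functorial partition-of-unity chain homotopy both preserve the subcomplex of simplices respecting $\mS$, so the same fineness argument applies verbatim. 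A homotopically fine complex is a complex of soft sheaves, and $\Homs^\bullet(\Z_A,-)=\Homs^\bullet((\Z_X)_A,-)$ applied to soft sheaves on a locally closed $A\subseteq X$ again yields soft sheaves (softness is preserved under $\Homs$ from a constant-type sheaf supported on a locally closed set, since $\Z_A = j_!\Z_A$ and $\Homs(j_!\Z_A,\mF)=j_*(\mF|_A)$, and pushforward from a locally closed subspace preserves softness).

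The key step is Proposition~\ref{prop:dualizing complex using singular cycles respecting the stratification}, which tells us the inclusion $\Delta_X^{\mS,\bullet}\to\Delta_X^\bullet\cong\dual_X$ is a quasi-isomorphism. Combined with the softness established above, this means $\Delta_X^{\mS,\bullet}$ is a soft (hence $\Gamma$-acyclic, and more to the point, adapted to the functor $\Homs(\Z_A,-)$) resolution computing $\dual_X$ in $\der X$. Therefore the natural map
\[
\Homs^\bullet(\Z_A,\Delta_X^{\mS,\bullet})\to R\Homs^\bullet(\Z_A,\dual_X)
\]
is an isomorphism in $\der X$: the left side is computed by an honest complex, the right side is its derived version, and the softness guarantees there is no discrepancy. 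For the ``global'' version with $\Hom^\bullet$ in place of $\Homs^\bullet$, I would additionally need that the sheaves $\Homs^i(\Z_A,\Delta_X^{\mS,\bullet})$ are $\Gamma$-acyclic, which again follows from softness since soft sheaves on a paracompact (second-countable, locally compact) space are $\Gamma$-acyclic. Then $R\Gamma\circ\Homs^\bullet = \Hom^\bullet$ on the nose for this complex, and taking $R\Gamma$ of the already-established isomorphism of complexes of sheaves finishes the global statement.

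I expect the main obstacle to be the careful verification that $\Delta_X^{\mS,\bullet}$ is homotopically fine, or equivalently a complex of soft sheaves. The point is that the standard fineness proof for singular chains (via subordinate chain homotopies built from a locally finite cover) must be checked to respect the constraint of ``respecting the stratification $\mS$'': one needs that the chain homotopy operators $T$ appearing in the argument, when restricted to simplices respecting $\mS$, produce chains that again respect $\mS$. This is plausible because those homotopies are constructed from prism decompositions of $\Delta^q\times[0,1]$ and barycentric subdivisions, both of which only refine the simplicial structure and compose the simplices with $\id_X$, so strata-respecting simplices map to strata-respecting simplices. I would isolate this as a short lemma, citing \cite[VI, Proposition 7]{SwanSheaves} or \cite{Bredon} for the underlying construction and observing that every operation involved restricts to the subcomplex. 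Once softness is in hand, everything else is formal homological algebra of the kind already deployed repeatedly in the paper, in particular in the proof of Theorem~\ref{thm:compatibility of homology theories}, so the ``in particular'' clause follows by applying $R\Gamma$ (or noting it is literally the sheaf-level statement whose stalkwise/sectionwise incarnation is the first assertion).
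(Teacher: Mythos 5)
Your reduction to a formal soft-resolution argument does not go through, for two independent reasons. First, the terms of $\Delta^{\mS,\bullet}_X$ are not soft, and homotopical fineness does not make them so: as in the proof of Theorem \ref{thm:compatibility of homology theories}, one has $\Delta^{\mS,-i}_X\cong\bigoplus_\sigma\Z_{\sigma(\Delta^i)}$, a direct sum of constant sheaves extended by zero from the compact sets $\sigma(\Delta^i)$, and such sheaves are not (c-)soft (a section over a two-point subset of $\sigma(\Delta^i)$ taking two different values does not extend to a global section); it is precisely to gain softness that the proof of Lemma \ref{lem:description push-forward of dualizing complex} passes to the limit $\SoftDual^\bullet_X$ over repeated barycentric subdivisions. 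Second, and more fundamentally, softness would not suffice even if you had it: for $A$ locally closed but not open, $\Homs(\Z_A,-)$ is a sections-with-supports functor (for $A$ closed it is $U\mapsto\Gamma_{A\cap U}(U,-)$, with derived functor $i_*i^!$), the family of supports contained in a closed non-open subset is not paracompactifying, and soft sheaves are not acyclic for it. For example, the sheaf $\mathcal C^0$ of continuous functions on $\R$ is soft, yet $H^1_{\{0\}}(\R,\mathcal C^0)\neq 0$, so $\Homs(\Z_{\{0\}},\mathcal C^0)=0$ while $R\Homs(\Z_{\{0\}},\mathcal C^0)$ is not concentrated in degree $0$. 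Your identity $\Homs(j_!\Z_A,\mF)=j_*(\mF|_A)$ is likewise valid only for open $A$. Flabbiness would give the required acyclicity, but these chain sheaves are not flabby.

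A symptom of the same problem is that your argument never uses the hypothesis that $A$ is a union of strata, which is where the content of the proposition lies. The paper's proof first reduces to the case that $A$ is closed, using the chain equivalence of Proposition \ref{prop:restriction yields equivalence} on an open set in which $A$ is closed; it then uses the adjunction between $i_*$ and $i^!$ to rewrite $\Hom^\bullet(\Z_A,\Delta^{\mS,\bullet}_X)$ as $\Gamma_A(X,\Delta^{\mS,\bullet}_X)$ and, because $A$ is a closed union of strata, identifies this with $\Gamma(A,\Delta^{A\cap\mS,\bullet}_A)$, the locally finite strata-respecting chains on the CS set $A$ itself. Proposition \ref{prop:dualizing complex using singular cycles respecting the stratification} applied to $A$, the natural isomorphism $\dual_A\cong i^!\dual_X$, and the homotopical fineness of $\Delta^{A\cap\mS,\bullet}_A$ together with \cite[IV Theorem 2.2]{Bredon} (now for the paracompactifying family of all closed supports on $A$) then give the quasi-isomorphism; the sheaf-level statement is deduced afterwards by running the global statement over all open subsets, again via Proposition \ref{prop:restriction yields equivalence}, rather than by applying $R\Gamma$ to a sheaf-level isomorphism as you propose. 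Any repair of your argument needs this geometric identification (or some other substitute for the acyclicity that softness cannot provide for supports in a non-open $A$).
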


\begin{proof}
By Proposition \ref{prop:dualizing complex using singular cycles respecting the stratification}, we need to show that if
$\Delta^{\mS,\bullet}_X\to\mathcal I^\bullet$ is an injective resolution, then the induced morphism
\[
\Hom^\bullet(\Z_A,\Delta^{\mS,\bullet}_X)\to \Hom^\bullet(\Z_A,\mathcal I^\bullet)
\]
is a quasi-isomorphism. 
Let $U\subseteq X$ be an open subset such that $A$ is closed in $U$.  Then the morphism above is a quasi-isomorphism if and only if the morphism
\[
\Hom^\bullet(\Z_A, \Delta^{\mS,\bullet}_X|_U)\to \Hom^\bullet(\Z_A,\mathcal I^\bullet|_U)
\]
is a quasi-isomorphism. Since the natural morphism $\Delta^{U\cap\mS,\bullet}_U\to \Delta^{\mS,\bullet}_X|_U$ is a chain equivalence by \ref{prop:restriction yields equivalence}, the induced morphism
\[
\Hom^\bullet(\Z_A,\Delta^{U\cap \mS,\bullet}_U)\to \Hom^\bullet(\Z_A,\Delta^{\mS,\bullet}_X|_U)
\]
is a quasi-isomorphism as well, so it suffices to show that the morphism
\[
\Hom^\bullet(\Z_A,\Delta^{U\cap \mS,\bullet}_U)\to \Hom^\bullet(\Z_A,\mathcal I^\bullet|_U)
\]
that is induced by the composite $\Delta^{U\cap \mS,\bullet}_U\to \Delta^{\mS,\bullet}_X|_U\to \mathcal I^\bullet|_U$ is a quasi-isomorphism. As this composite is an injective resolution of $\Delta^{U\cap \mS,\bullet}_U$ we may replace $X$ by $U$ and assume that $A$ is closed in $X$. 

Let $i\colon A\to X$ be the inclusion. Since $i^!$ is right-adjoint to $i_*$, it suffices to show that the morphism
\[
\Gamma_A(X, \Delta^{\mS,\bullet}_X)=\Hom^\bullet\left(\Z_A, i^!\left(\Delta^{\mS,\bullet}_X\right)\right)\to \Hom^\bullet\left(\Z_A, i^!\mathcal I^\bullet\right)=\Gamma(A, i^! \mathcal I^\bullet)
\]
is a  quasi-isomorphism. The natural morphism $\Delta^{A\cap \mS,\bullet}_A\to i^!(\Delta^{\mS,\bullet}_X)$ defined by push-forwards of chains along $i$ defines an isomorphism
\[
\Gamma(A,\Delta^{A\cap \mS,\bullet}_A)\cong\Gamma_A(X,\Delta^{\mS,\bullet}_X)
\]
on global sections because both sides are the chain complexes of locally finite chains in $A$ that respect the stratification. It thus suffices to show that the morphism
\begin{equation}
\label{equ:morphism of restrictions}
\Gamma(A,\Delta^{A\cap \mS,\bullet}_A)\to \Gamma(A, i^!\mathcal I^\bullet)
\end{equation}
induced by the composite $\Delta^{A\cap \mS,\bullet}_A\to i^!\Delta^{\mS,\bullet}_X\to i^!\mathcal I^\bullet$ is a quasi-isomorphism. 
By construction, there is a commutative diagram
\begin{center}
\begin{tikzpicture}[auto]
\matrix[matrix of math nodes, row sep= 5ex, column sep= 4em, text height=1.5ex, text depth= .25ex]{
|(DelA)| \Delta^{A\cap \mS,\bullet}_A 	&
|(iI)| i^!\mathcal I^\bullet 
\\
|(DA)| \dual_A &
|(iD)| i^!\dual_X 
\\
};
\begin{scope}[->,font=\footnotesize]
\draw (DelA) -- (iI);
\draw (DA) -- (iD);
\draw (DelA)--node{$\cong$}(DA);
\draw (iI) --node{$\cong$}(iD);
\end{scope}
\end{tikzpicture}
\end{center}
in $\der A$ whose vertical arrows are isomorphism. Since we defined the upper horizontal morphism via the push-forward of singular cycles, which is, of course, compatible with the trace morphisms, the lower horizontal morphism is the natural isomorphism $\dual_A\cong i^!\dual_X$. We conclude that the upper horizontal morphism is an isomorphism in $\der A$ as well. Because $i^!$ is the right-adjoint of the exact functor $i_*$, the upper horizontal morphism in the diagram is in fact an injective resolution. 
That the morphism displayed in (\ref{equ:morphism of restrictions}) is a quasi-isomorphism now follows from the fact that $\Delta^{A\cap \mS,\bullet}_A$ is homotopically fine and from \cite[IV Theorem 2.2]{Bredon}, finishing the proof of the main statement.

For the ``in particular'' statement we note that
\[
\Hom^\bullet(\Z_{U\cap A}, \Delta^{U\cap\mS,\bullet}_U)\to R\Hom^\bullet(\Z_{U\cap A}, \dual_U)
\]
is a quasi-isomorphism for every open subset $U\subseteq X$ by the main statement. Together with Proposition \ref{prop:restriction yields equivalence} we see that 
\[
\Hom^\bullet(\Z_{A}|_U, \Delta^{\mS,\bullet}_X|_U)\to R\Hom^\bullet(\Z_A|_U, \dual_X|_U)
\]
is a quasi-isomorphism for all open subsets $U$ of $X$, which directly implies the claim.
\end{proof}

\begin{lemma}
\label{lem:Sections of dual}
Let $Y$ be a subset of the closed unit disc $D^n\subseteq \R^n$ such that its intersection $Z=Y\cap \mathring D^n$ with the open unit disc $\mathring D^n$ is nonempty and connected. Furthermore,  let $\mF$ be sheaf of abelian groups on $D^n$ such that the restriction $\mF|_{\mathring D^n}$ is constant. Then the restriction maps
\begin{align*}
\Hom(\mF|_{\mathring D^n}, \Z_{\mathring D^n}) &\to \Hom(\mF|_Z,\Z_Z) \ , \text{ and} \\
\Hom(\mF|_{Y}, \Z_{Y}) &\to \Hom(\mF|_Z,\Z_Z) 
\end{align*}
are isomorphisms.
\end{lemma}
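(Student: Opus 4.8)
The plan is to reduce both statements to the claim that $\Hom$ out of $\mF$ into a constant sheaf only depends on the values of $\mF$ on a connected dense-enough open subset. First I would observe that since $\mF|_{\mathring D^n}$ is constant and $Z=Y\cap\mathring D^n$ is open in $Y$, the restriction $\mF|_Z$ is also constant. For a constant sheaf $A_Z$ on a connected space $Z$, we have $\Hom(A_Z,\Z_Z)\cong\Hom_\Z(A,\Z)$ by taking global sections (a homomorphism of constant sheaves on a connected base is just a homomorphism of the stalks, since the identity component is determined globally). So the two target groups in the statement are both canonically $\Hom_\Z(\mF_z,\Z)$ for any $z\in Z$, where $\mF_z=A$ is the constant stalk.

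**Key steps.** The first map: $\Hom(\mF|_{\mathring D^n},\Z_{\mathring D^n})\to\Hom(\mF|_Z,\Z_Z)$. Since $\mathring D^n$ is connected and $\mF|_{\mathring D^n}=A_{\mathring D^n}$ is constant, the source is $\Hom_\Z(A,\Z)$; the target is $\Hom_\Z(A,\Z)$ as well, and the restriction map is visibly the identity under these identifications. The second map: $\Hom(\mF|_Y,\Z_Y)\to\Hom(\mF|_Z,\Z_Z)$. Here I would argue that a morphism $\mF|_Y\to\Z_Y$ is determined by its restriction to the dense open subset $Z\subseteq Y$, using that $\Z_Y$ is a sheaf (separatedness gives injectivity of the restriction on sections over any open $U\subseteq Y$, hence on $\Hom$), so the map is injective. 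For surjectivity, given $\varphi\colon\mF|_Z\to\Z_Z$, one must extend it over the boundary points $Y\setminus Z$. At such a point $y\in Y\cap\partial D^n$, one uses that $Y\subseteq D^n$ so every neighborhood of $y$ in $Y$ meets $Z$ in a nonempty connected set (shrinking to small balls, which meet $\mathring D^n$ in a connected set), and that $\mF$ restricted to such a neighborhood still has the property that its restriction to the $Z$-part is constant; one then glues the locally-defined extensions — uniqueness of the extension makes the gluing automatic.

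**Main obstacle.** The delicate point is the surjectivity in the second isomorphism, i.e.\ extending a section of $\Homs(\mF|_Z,\Z_Z)$ across the boundary $Y\cap\partial D^n$. This requires knowing that near a boundary point the sheaf $\mF$ is still sufficiently controlled — concretely, that $Y$ has enough connected neighborhoods meeting $Z$ and that $\Homs(\mF|_Y,\Z_Y)$ is itself a sheaf whose sections one may construct locally. I expect this to hinge on the topology of subsets $Y$ of $D^n$ with $\mathring D^n\subseteq Y$: such $Y$ is \emph{locally connected through} $Z$ in the sense that small open balls about any $y\in Y$ intersect $Y$ in sets whose intersection with $Z$ is connected and dense. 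Granting that, the extension of $\varphi$ is forced (by continuity into the discrete sheaf $\Z$) and the gluing is trivial. I would therefore spend the bulk of the argument verifying this local connectivity property of $Y$ and then invoke the sheaf property of $\Homs(\mF|_Y,\Z_Y)$, with the rest being the bookkeeping identifications with $\Hom_\Z(\mF_z,\Z)$ described above.
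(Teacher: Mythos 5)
Your treatment of the first isomorphism (identifying both sides with $\Hom_\Z(A,\Z)$ for the constant stalk $A$, using connectedness of $Z$) is fine, and your injectivity argument for the second map is essentially the paper's, which phrases it as $\Hom(\mF_{Y\setminus Z},\Z_Y)=0$; both versions tacitly use that $Z$ is dense in $Y$, which is how the lemma is applied, so that is not where you diverge. The genuine gap is in the surjectivity step. First, you quietly strengthen the hypotheses to $\mathring D^n\subseteq Y$: this is \emph{not} assumed in the lemma (it appears in the definition of admissible stratifications, but the lemma is invoked with $Y$ a singular simplex image or a polyhedron, which does not contain $\mathring D^n$), and under that reading $Z=\mathring D^n$ and the first isomorphism becomes vacuous, so you have changed the statement. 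Second, and more seriously, the topological claim your extension rests on --- that small neighborhoods of a point $y\in Y\setminus Z$ \emph{in $Y$} meet $Z$ in connected, dense sets --- is false under the actual hypotheses: take $n=2$, let $Z\subseteq\mathring D^2$ be two arcs limiting to the same boundary point $p$ and joined to each other away from $p$, and $Y=Z\cup\{p\}$; then $Z$ is nonempty, connected and dense in $Y$, yet every small ball around $p$ meets $Z$ in two components. So the locally ``forced'' extension you describe need not exist neighborhood-by-neighborhood in $Y$, and the gluing you call automatic never gets started; you also never explain how the extension acts on the (completely uncontrolled) stalks $\mF_y$ at boundary points.

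What actually makes the extension work is a statement about the ambient pair $(D^n,\mathring D^n)$, not about $(Y,Z)$, and this is exactly how the paper argues: it forms the commutative square whose top row is $\Hom(\mF,\Z_{D^n})\to\Hom(\mF|_{\mathring D^n},\Z_{\mathring D^n})$ and whose right vertical map is the first isomorphism, reduces surjectivity of the bottom restriction map to surjectivity of the top one (using the injectivity already proved), and then gets the top surjectivity from the $\Hom$/$\Ext$ exact sequence: the cokernel injects into $\Ext^1(\mF_{S^{n-1}},\Z_{D^n})\cong\Ext^1(\mF|_{S^{n-1}},i^!\Z_{D^n})$, which vanishes because $i^!\Z_{D^n}=0$, the pair $(D^n,\mathring D^n)$ being locally homeomorphic to a closed half-space and its interior. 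Your germ-level idea can be repaired in this spirit --- germs of $\mF|_Y$ at $y\in Y\setminus Z$ come from germs of $\mF$ at $y$ in $D^n$, and balls about boundary points meet $\mathring D^n$ in convex, hence connected, sets, so one can extend over $D^n$ first and only then restrict to $Y$ --- but that detour through $\Hom(\mF,\Z_{D^n})$ (or some equivalent vanishing of cohomology supported on $S^{n-1}$) is the missing idea, and without it the surjectivity argument as proposed does not go through.
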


\begin{proof}
The fact that 
\[
\Hom(\mF|_{\mathring D^n}, \Z_{\mathring D^n}) \to \Hom(\mF|_Z,\Z_Z) \\
\]
is an isomorphism follows immediately from the fact that $\mF|_{\mathring D^n}$ is constant and $Z$ is connected. For the second map we consider the short exact sequence
\[
0\to \mF_Z \to \mF_{Y} \to \mF_{Y\setminus Z} \to 0 
\]
of sheaves on $Y$. Applying $\Hom(-,\Z_{Y})$ we obtain an exact sequence
\[
0\to \Hom(\mF_{Y\setminus Z},\Z_{Y})\to \Hom(\mF_{Y}, \Z_{Y}) \to \Hom(\mF_Z,\Z_{Y})\to \Ext^1(\mF_{Y\setminus Z},\Z_{Y}) \ .	
\]
As $\Z_{Y}$ does not have any sections supported on a proper closed subset of $Y$, we have
\[
\Hom(\mF_{Y\setminus Z},\Z_{ Y}) = 0 \ .
\]
Furthermore, there is a natural isomorphism 
\[
\Hom(\mF_Z,\Z_{ Y}) \cong \Hom(\mF|_Z,\Z_Z) \ .
\]
We conclude that the restriction 
\[
\Hom(\mF|_{Y}, \Z_{ Y}) \to \Hom(\mF|_Z,\Z_Z)
\]
is injective. To show that it is surjective as well we consider the commutative square
\begin{center}
\begin{tikzpicture}[auto]
\matrix[matrix of math nodes, row sep= 5ex, column sep= 4em, text height=1.5ex, text depth= .25ex]{
|(Dbar)| \Hom(\mF,\Z_{D^n}) 	&
|(D)| \Hom(\mF|_{\mathring D^n}, \Z_{\mathring D^n})	
\\
|(Ybar)| \Hom(\mF|_{Y},\Z_{Y}) &
|(Y)| \Hom(\mF|_Z,\Z_Z)  \ .
\\
};
\begin{scope}[->,font=\footnotesize]
\draw (Dbar) -- (D);
\draw (Ybar) -- (Y);
\draw (Dbar)--(Ybar);
\draw (D) --node{$\cong$}(Y);
\end{scope}
\end{tikzpicture}
\end{center}
From the discussion above we know that the vertical arrow on the right is an isomorphism, and that the horizontal arrows are injective (set $Y=D^n$ in the discussion above for the top arrow). So to finish the proof, it suffices to show that the top horizontal arrow is surjective. In other words, it suffices to prove the result for $Y= D^n$, that is to show the surjectivity of
\[
\Hom(\mF,\Z_{D^n}) 	\to
\Hom(\mF|_{\mathring D^n}, \Z_{\mathring D^n})	\ .
\]
In the exact sequence from above we can see that this map is surjective if and only if $\Ext^1(\mF_{S^{n-1}},\Z_{D^n})=0$, where
$S^{n-1}=D^n\setminus \mathring D^n$. Let $i\colon S^{n-1}\to D^n$ be the inclusion. By Verdier duality for $i$ we see that
\[
\Ext^1(\mF_{S^{n-1}},\Z_{D^n})=\Ext^1(\mF|_{S^{n-1}}, i^! \Z_{D^n}) \ .
\]
For $k\in \N$, the  $k$-th cohomology sheaf $H^k(i^! \Z_{D^n})$ is the restriction to $S^{n-1}$ of the sheaf associated to the presheaf
\[
U \to H^k(U, U\cap \mathring D^n) 
\] 
on $D^n$ (cf.\ \cite[pp.\ 14-15]{LocalCohomology} for a closely related example). As the pair $(D^n,\mathring D^n)$ is locally homeomorphic to the pair given by an open half-space in $\R^n$ and its closure, these sheaves are all zero and hence $i^!\Z_{D^n}=0$, finishing the proof.
\end{proof}

\begin{prop}
\label{prop:Hom is RHom}
Let $X$ be a conically stratified space with an admissible stratification $\mS$, and let $\mF$ be a sheaf of abelian groups on $X$  such that $\mF|_{S}$ is locally free of finite rank for every stratum $S\in \mS$. Then the natural morphism
\[
\Homs^\bullet(\mF, \Delta_X^{\mS}) \to R\Homs^\bullet (\mF,\dual_X)
\]
is an isomorphism in $\der X$. In particular, the natural morphism
\[
\Hom^\bullet(\mF, \Delta_X^{\mS}) \to R\Hom^\bullet (\mF,\dual_X)
\]
is an isomorphism in $\der{}$. 
\end{prop}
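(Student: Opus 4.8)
The plan is to deduce the statement from Propositions~\ref{prop:dualizing complex using singular cycles respecting the stratification}, \ref{prop:restriction yields equivalence}, and \ref{prop: Hom is equal to derived hom for locally closed sets}, together with Lemma~\ref{lem:Sections of dual}, by a d\'evissage in $\mF$. First I would record two formal reductions. The comparison morphism is the one induced by the quasi-isomorphism $\Delta_X^{\mS,\bullet}\to\dual_X$ of Proposition~\ref{prop:dualizing complex using singular cycles respecting the stratification}. Since $\Delta_X^{\mS,\bullet}$ is homotopically fine, so is $\Homs^\bullet(\mF,\Delta_X^{\mS,\bullet})$ --- post-compose a fine partition of unity for $\Delta_X^{\mS,\bullet}$ with $\Homs^\bullet(\mF,-)$ --- so $\Gamma$ computes its hypercohomology, and the ``in particular'' statement follows from the sheaf statement. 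Conversely, using $\Delta_X^{\mS,\bullet}|_U\simeq\Delta_U^{U\cap\mS,\bullet}$ (Proposition~\ref{prop:restriction yields equivalence}), the cone of the comparison morphism can be checked to vanish on stalks: the stalk at $x$ of the $i$-th cohomology of each side is $\varinjlim_{U\ni x}$ of the $i$-th hypercohomology over $U$, which is $H^i\Hom^\bullet(\mF|_U,\Delta_U^{U\cap\mS,\bullet})$ on the left by homotopical fineness and $H^iR\Hom^\bullet(\mF|_U,\dual_U)$ on the right. So the sheaf statement follows from the non-sheaf statement applied on all small open sets, and since the assertion is local I may assume $X$ so small that $\mS$ has only finitely many strata (automatic for $\mS$ coming from a face structure, and in general a consequence of the conical structure).

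To prove the non-sheaf statement I would d\'evisse. Order the strata $S^{(1)},\dots,S^{(N)}$ by non-increasing dimension, so that each $Z_m\coloneqq S^{(m)}\cup\dots\cup S^{(N)}$ is closed (the frontier of a stratum is a union of lower-dimensional strata). Setting $U_m\coloneqq X\setminus Z_{m+1}$ and letting $\mF_m$ be $\mF|_{U_m}$ extended by zero, I get a finite filtration $0=\mF_0\subseteq\cdots\subseteq\mF_N=\mF$ with $\mF_m/\mF_{m-1}\cong l_{m!}(\mF|_{S^{(m)}})$, $l_m\colon S^{(m)}\hookrightarrow X$. Each $\mF|_{S^{(m)}}$ is locally free of finite rank, and after shrinking $X$ the stratum $S^{(m)}$ is a simply connected open disc by admissibility, so $\mF|_{S^{(m)}}$ is constant and $l_{m!}(\mF|_{S^{(m)}})\cong(\Z_{S^{(m)}})^{r_m}$; for these sheaves the comparison morphism is an isomorphism by Proposition~\ref{prop: Hom is equal to derived hom for locally closed sets} (with $A=S^{(m)}$) and compatibility with finite direct sums. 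This is the base of an induction on the filtration length.

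For the inductive step I would fix an injective resolution $\Delta_X^{\mS,\bullet}\to\mathcal I^\bullet$, so that $\Homs^\bullet(-,\mathcal I^\bullet)$ represents $R\Homs^\bullet(-,\dual_X)$ and is exact. Applying $\Homs^\bullet(-,\Delta_X^{\mS,\bullet})$ and $\Homs^\bullet(-,\mathcal I^\bullet)$ to $0\to\mF_{m-1}\to\mF_m\to l_{m!}(\mF|_{S^{(m)}})\to 0$ gives a left-exact and a short-exact sequence of complexes; passing to mapping cones (the cone of a termwise-injective chain map is quasi-isomorphic to its cokernel) produces a morphism of distinguished triangles whose first vertical arrow is the comparison isomorphism for $l_{m!}(\mF|_{S^{(m)}})$ and whose third factors through the comparison isomorphism for $\mF_{m-1}$. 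By the five lemma, the comparison morphism for $\mF_m$ is an isomorphism provided $\Homs^\bullet(-,\Delta_X^{\mS,\bullet})$ carries this short exact sequence to a distinguished triangle; iterating to $m=N$ then gives the result.

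The main obstacle is exactly this last point: $\Homs^\bullet(-,\Delta_X^{\mS,\bullet})$ is \emph{not} an exact functor --- the sheaves $\Delta_X^{\mS,j}$ are not $\Homs(\mF,-)$-acyclic --- so one cannot argue term by term and must use the geometry instead. This is where admissibility does the work: it makes the closure of each stratum disc-like, so that Lemma~\ref{lem:Sections of dual} identifies the sections of $\Homs(\mF,\Z_{\overline{S^{(m)}}})$ with those of $\Homs(\mF,\Z_{S^{(m)}})$ and, more generally, lets one compute $\Homs^\bullet(\mF,\Delta_X^{\mS,\bullet})$ stratum by stratum --- the same device underlying the combinatorial description of the dualizing complex (Remark~\ref{rem:combinatorial dualizing complex}), its use in Proposition~\ref{prop:canonical morphism for between n-cycles and hom}, and the $i^!$-adjunction argument in the proof of Proposition~\ref{prop: Hom is equal to derived hom for locally closed sets}. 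It is this cell-wise analysis that makes the short exact sequences of the filtration behave like distinguished triangles, allowing the induction to close.
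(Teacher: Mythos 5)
Your overall route is the paper's: reduce to finitely many strata, d\'evisse $\mF$ along the stratification so that the graded pieces are finite sums of copies of $\Z_S$ for single strata $S$ (constant because admissibility makes each stratum an open disc), handle those pieces by Proposition \ref{prop: Hom is equal to derived hom for locally closed sets}, and close the induction with the five lemma; the ``in particular'' statement via homotopical fineness is also as in the paper. However, the step you yourself single out as ``the main obstacle'' is exactly the step you do not prove: you assert that admissibility and a ``cell-wise analysis'' make $\Homs^\bullet(-,\Delta^{\mS,\bullet}_X)$ carry the filtration short exact sequences to distinguished triangles, but that sentence is the content of the inductive step, not an argument for it. It is also not a general fact: for an arbitrary short exact sequence of constructible sheaves, applying $\Homs(-,\Delta^{\mS,-i}_X)$ is only left exact, and no disc-likeness of closures repairs this in general. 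What saves the day is a special feature of the sequences $0\to\mF_{m-1}\to\mF_m\to\mF_{S^{(m)}}\to 0$ (or, in the paper, $0\to\mF_S\to\mF\to\mF_A\to 0$ with $S$ a maximal stratum of $\supp\mF$): the subsheaf and the quotient have disjoint stratum-supports.

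Concretely, the missing verification runs as follows. One has $\Delta^{\mS,-i}_X\cong\bigoplus_\sigma\Z_{\sigma(\Delta^i)}$, the sum over singular $i$-simplices respecting $\mS$; since the sheaves involved are constructible, $\Homs(\mG,-)$ commutes with this direct sum; and Lemma \ref{lem:Sections of dual} (this is where admissibility enters) identifies $\Homs(\mG,\Z_{\sigma(\Delta^i)})$ with $\bigl(\Hom(\mG|_{T_\sigma},\Z_{T_\sigma})\bigr)_{\sigma(\Delta^i)}$, where $T_\sigma$ is the stratum receiving $\relint(\Delta^i)$. Hence termwise exactness of the upper row of your diagram of triangles reduces to the exactness, for every stratum $T$, of $0\to\Hom(\mF_{S^{(m)}}|_T,\Z_T)\to\Hom(\mF_m|_T,\Z_T)\to\Hom(\mF_{m-1}|_T,\Z_T)\to 0$, and this holds by a trivial case distinction: if $T=S^{(m)}$, the last group vanishes and the first map is an isomorphism (because $\mF_{m-1}|_T=0$ and $\mF_m|_T\to\mF_{S^{(m)}}|_T$ is an isomorphism), while for any other stratum the first group vanishes and the second map is an isomorphism (or all three groups vanish). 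With this case-split spelled out, your five-lemma induction closes and the argument coincides with the paper's, which performs the same d\'evissage one stratum at a time by peeling off a maximal stratum of $\supp\mF$ rather than fixing a full filtration.
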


\begin{proof}
The statement is local on $X$, so we may assume that the stratification $\mS$ is finite. We do induction on the number of strata on which $\mF$ is nontrivial. If this number is $0$, then $\mF=0$ and the statement is trivial. So let us assume there is a stratum on which $\mF$ is nontrivial, and let $S\in\mS$ be maximal with that property. The stratum $S$ is an open subset of the support $\supp(\mF)$, so if $A=\supp(\mF)\setminus S$ we obtain an exact sequence
\[
0\to \mF_S \to \mF \to \mF_A \to 0 \ .
\]	
We can use this to obtain a commutative diagram
\begin{center}
\begin{tikzpicture}[auto]
\matrix[matrix of math nodes, row sep= 5ex, column sep= 2em, text height=1.5ex, text depth= .25ex]{
|(FA)| \Homs^\bullet(\mF_A, \Delta_X^{\mS})	&
|(F)| \Homs^\bullet(\mF, \Delta_X^{\mS})	&
|(FU)| \Homs^\bullet(\mF_S, \Delta_X^{\mS}) &
\\
|(RFA)| R\Homs^\bullet(\mF_A, \Delta_X^{\mS})	&
|(RF)| R\Homs^\bullet(\mF, \Delta_X^{\mS})	&
|(RFU)| R\Homs^\bullet(\mF_S, \Delta_X^{\mS}) &
|(RFA1)| 
\\
};
\begin{scope}[->,font=\footnotesize]
\draw (FA) -- (F);
\draw (F) -- (FU);
\draw (RFA) -- (RF);
\draw (RF) -- (RFU);
\draw (RFU) -- (RFA1);
\draw (FA) -- (RFA);
\draw (F) -- (RF);
\draw (FU) --(RFU);
\end{scope}
\end{tikzpicture}
\end{center}
where the lower row is an exact triangle. Note that the vertical arrow on the left is an isomorphism by the induction hypothesis.  Since $\mF_S$ is locally free and $S$ is simply connected, the sheaf $\mF_S$ is isomorphic to a finite sum of several copies of $\Z_S$. So by Proposition \ref{prop: Hom is equal to derived hom for locally closed sets}, the right arrow is an isomorphism as well. If we can show that the morphisms of complexes in the upper row of the diagram defines a short exact sequence in every degree, the statement follows from the five lemma.
We recall from the proof of Theorem \ref{thm:compatibility of homology theories} that for every $i\in\Z$ there is an isomorphism  
\[
\Delta^{\mS,-i}_X \cong \bigoplus_{\sigma}\Z_{\sigma(\Delta^i)} \ ,
\]
where the direct sum is over all singular $i$-simplices $\sigma\colon \Delta^i\to X$ respecting the stratification. Consequentially, for every sheaf of Abelian groups $\mG$ that is locally free of finite rank when restricted to any stratum in $\mS$, we have
\[
\Homs(\mG, \Delta^{\mS, -i}_X) = \Homs\left(\mG,\bigoplus_{\sigma}\Z_{\sigma(\Delta^i)} \right) = \bigoplus_\sigma  \Homs(\mG, \Z_{\sigma(\Delta^i)}) \ ,
\]
where the last equality holds because $\mG$ is constructible. If for a simplex $\sigma$ appearing in the direct sum we denote by $T_\sigma\in\mS$ the unique stratum into which the relative interior of $\Delta^i$ maps, there is an isomorphism
\[
\Homs(\mG, \Z_{\sigma(\Delta^i)}) \cong (\Hom(\mG|_{T_\sigma},\Z_{T_\sigma}))_{\sigma(\Delta^i)}
\]
induced by restricting sections by Lemma \ref{lem:Sections of dual}.
So to finish the proof it suffices to show that for every stratum $T\in \mS$ the sequence
\[
0\to \Hom((\mF_A)|_T,\Z_T) \to \Hom(\mF|_T,\Z_T) \to \Hom((\mF_S)|_T,\Z_T) \to 0
\]
is exact. If $T=S$, this is the case because the second morphism is an isomorphism and the first group is trivial, whereas if $T\neq S$ this is the case because the first morphism is an isomorphism and the last group is trivial.

For the ``in particular'' statement we apply $R\Gamma$ to the isomorphism 
\[
\Homs^\bullet(\mF, \Delta_X^{\mS,\bullet}) \to R\Homs^\bullet (\mF,\dual_X)
\]
and note that the natural morphism
\[
\Hom^\bullet(\mF,\Delta_X^{\mS,\bullet})=\Gamma(\Homs^\bullet(\mF,\Delta_X^{\mS,\bullet})) \to R\Gamma\Homs^\bullet(\mF,\Delta_X^{\mS,\bullet}) 
\]
is an isomorphism because $\Delta_X^{\mS,\bullet}$, and hence $\Homs^\bullet(\mF,\Delta_X^{\mS,\bullet})$, is homotopically fine \cite[IV Theorem 2.2]{Bredon}.
\end{proof}

\begin{bibdiv}
\begin{biblist}

\bib{ArdilaKlivans}{article}{
      author={Ardila, Federico},
      author={Klivans, Caroline~J.},
       title={The {B}ergman complex of a matroid and phylogenetic trees},
        date={2006},
        ISSN={0095-8956},
     journal={J. Combin. Theory Ser. B},
      volume={96},
      number={1},
       pages={38\ndash 49},
  url={https://doi-org.proxy.library.cornell.edu/10.1016/j.jctb.2005.06.004},
      review={\MR{2185977}},
}

\bib{AllerRau}{article}{
      author={Allermann, Lars},
      author={Rau, Johannes},
       title={First steps in tropical intersection theory},
        date={2010},
        ISSN={0025-5874},
     journal={Math. Z.},
      volume={264},
      number={3},
       pages={633\ndash 670},
  url={https://doi-org.proxy.library.cornell.edu/10.1007/s00209-009-0483-1},
      review={\MR{2591823}},
}

\bib{IntersectionCohomology}{book}{
      author={Borel~{et al.}, A.},
       title={Intersection cohomology},
      series={Modern Birkh\"{a}user Classics},
   publisher={Birkh\"{a}user Boston, Inc., Boston, MA},
        date={2008},
        ISBN={978-0-8176-4764-3},
        note={Notes on the seminar held at the University of Bern, Bern, 1983,
  Reprint of the 1984 edition},
      review={\MR{2401086}},
}

\bib{TropIntro}{inproceedings}{
      author={Brugall\'e, Erwan},
      author={Itenberg, Ilia},
      author={Mikhalkin, Grigory},
      author={Shaw, Kristin},
       title={Brief introduction to tropical geometry},
        date={2015},
   booktitle={Proceedings of the {G}\"okova {G}eometry-{T}opology {C}onference
  2014},
   publisher={G\"okova Geometry/Topology Conference (GGT), G\"okova},
       pages={1\ndash 75},
      review={\MR{3381439}},
}

\bib{BorelMoore}{article}{
      author={Borel, A.},
      author={Moore, J.~C.},
       title={Homology theory for locally compact spaces},
        date={1960},
        ISSN={0026-2285},
     journal={Michigan Math. J.},
      volume={7},
       pages={137\ndash 159},
         url={http://projecteuclid.org/euclid.mmj/1028998385},
      review={\MR{0131271}},
}

\bib{Bredon}{book}{
      author={Bredon, Glen~E.},
       title={Sheaf theory},
     edition={Second},
      series={Graduate Texts in Mathematics},
   publisher={Springer-Verlag, New York},
        date={1997},
      volume={170},
        ISBN={0-387-94905-4},
  url={https://doi-org.proxy.library.cornell.edu/10.1007/978-1-4612-0647-7},
      review={\MR{1481706}},
}

\bib{Eisenbud}{book}{
      author={Eisenbud, David},
       title={Commutative algebra},
      series={Graduate Texts in Mathematics},
   publisher={Springer-Verlag, New York},
        date={1995},
      volume={150},
        ISBN={0-387-94268-8; 0-387-94269-6},
  url={https://doi-org.proxy.library.cornell.edu/10.1007/978-1-4612-5350-1},
        note={With a view toward algebraic geometry},
      review={\MR{1322960}},
}

\bib{FRIntersection}{article}{
      author={Fran\c{c}ois, Georges},
      author={Rau, Johannes},
       title={The diagonal of tropical matroid varieties and cycle
  intersections},
        date={2013},
        ISSN={0010-0757},
     journal={Collect. Math.},
      volume={64},
      number={2},
       pages={185\ndash 210},
  url={https://doi-org.proxy.library.cornell.edu/10.1007/s13348-012-0072-1},
      review={\MR{3041763}},
}

\bib{FriedmanBook}{unpublished}{
      author={Friedman, Greg},
       title={Singular intersection homology},
        date={2018},
         url={http://faculty.tcu.edu/gfriedman/IHbook.pdf},
        note={Available at
  \href{http://faculty.tcu.edu/gfriedman/IHbook.pdf}{http://faculty.tcu.edu/gfriedman/IHbook.pdf}},
}

\bib{GKM}{article}{
      author={Gathmann, Andreas},
      author={Kerber, Michael},
      author={Markwig, Hannah},
       title={Tropical fans and the moduli spaces of tropical curves},
        date={2009},
        ISSN={0010-437X},
     journal={Compos. Math.},
      volume={145},
      number={1},
       pages={173\ndash 195},
  url={https://doi-org.proxy.library.cornell.edu/10.1112/S0010437X08003837},
      review={\MR{2480499}},
}

\bib{LocalCohomology}{book}{
      author={Hartshorne, Robin},
       title={Local cohomology},
      series={A seminar given by A. Grothendieck, Harvard University, Fall},
   publisher={Springer-Verlag, Berlin-New York},
        date={1967},
      volume={1961},
      review={\MR{0224620}},
}

\bib{TropHomology}{article}{
      author={Itenberg, Ilia},
      author={Katzarkov, Ludmil},
      author={Mikhalkin, Grigory},
      author={Zharkov, Ilia},
       title={Tropical {H}omology},
        date={2019},
        ISSN={0025-5831},
     journal={Math. Ann.},
      volume={374},
      number={1-2},
       pages={963\ndash 1006},
  url={https://doi-org.proxy.library.cornell.edu/10.1007/s00208-018-1685-9},
      review={\MR{3961331}},
}

\bib{Iversen}{book}{
      author={Iversen, Birger},
       title={Cohomology of sheaves},
      series={Universitext},
   publisher={Springer-Verlag, Berlin},
        date={1986},
        ISBN={3-540-16389-1},
  url={https://doi-org.proxy.library.cornell.edu/10.1007/978-3-642-82783-9},
      review={\MR{842190}},
}

\bib{Lefschetz}{article}{
      author={Jell, Philipp},
      author={Rau, Johannes},
      author={Shaw, Kristin},
       title={Lefschetz {$(1,1)$}-theorem in tropical geometry},
        date={2018},
        ISSN={2491-6765},
     journal={\'{E}pijournal Geom. Alg\'{e}brique},
      volume={2},
       pages={Art. 11, 27},
      review={\MR{3894860}},
}

\bib{Superforms}{article}{
      author={Jell, Philipp},
      author={Shaw, Kristin},
      author={Smacka, Jascha},
       title={Superforms, tropical cohomology, and {P}oincar\'{e} duality},
        date={2019},
        ISSN={1615-715X},
     journal={Adv. Geom.},
      volume={19},
      number={1},
       pages={101\ndash 130},
         url={https://doi.org/10.1515/advgeom-2018-0006},
      review={\MR{3903579}},
}

\bib{KashiwaraSchapira}{book}{
      author={Kashiwara, Masaki},
      author={Schapira, Pierre},
       title={Sheaves on manifolds},
      series={Grundlehren der Mathematischen Wissenschaften},
   publisher={Springer-Verlag, Berlin},
        date={1994},
      volume={292},
        ISBN={3-540-51861-4},
        note={With a chapter in French by Christian Houzel, Corrected reprint
  of the 1990 original},
      review={\MR{1299726}},
}

\bib{ExternalProduct}{article}{
      author={Lyubashenko, V.~V.},
       title={External tensor product of perverse sheaves},
        date={2001},
        ISSN={0041-6053},
     journal={Ukra\"{\i}n. Mat. Zh.},
      volume={53},
      number={3},
       pages={311\ndash 322},
         url={https://doi.org/10.1023/A:1012384019661},
      review={\MR{1834663}},
}

\bib{MeyerDiss}{thesis}{
      author={Meyer, Henning},
       title={Intersection theory on tropical toric varieties and
  compactifications of tropical parameter spaces},
        type={Ph.D. Thesis},
        date={2011},
         url={https://kluedo.ub.uni-kl.de/files/2323/compact_intersection.pdf},
        note={Available at
  \href{https://kluedo.ub.uni-kl.de/files/2323/compact_intersection.pdf}{https://kluedo.ub.uni-kl.de/files/2323/compact\_intersection.pdf}},
}

\bib{TropBook}{book}{
      author={Maclagan, Diane},
      author={Sturmfels, Bernd},
       title={Introduction to tropical geometry},
      series={Graduate Studies in Mathematics},
   publisher={American Mathematical Society, Providence, RI},
        date={2015},
      volume={161},
        ISBN={978-0-8218-5198-2},
      review={\MR{3287221}},
}

\bib{MZjacobians}{incollection}{
      author={Mikhalkin, Grigory},
      author={Zharkov, Ilia},
       title={Tropical curves, their {J}acobians and theta functions},
        date={2008},
   booktitle={Curves and abelian varieties},
      series={Contemp. Math.},
      volume={465},
   publisher={Amer. Math. Soc., Providence, RI},
       pages={203\ndash 230},
  url={https://doi-org.proxy.library.cornell.edu/10.1090/conm/465/09104},
      review={\MR{2457739}},
}

\bib{MZeigenwave}{incollection}{
      author={Mikhalkin, Grigory},
      author={Zharkov, Ilia},
       title={Tropical eigenwave and intermediate {J}acobians},
        date={2014},
   booktitle={Homological mirror symmetry and tropical geometry},
      series={Lect. Notes Unione Mat. Ital.},
      volume={15},
   publisher={Springer, Cham},
       pages={309\ndash 349},
  url={https://doi-org.proxy.library.cornell.edu/10.1007/978-3-319-06514-4_7},
      review={\MR{3330789}},
}

\bib{Ruddat}{unpublished}{
      author={Ruddat, Helge},
       title={A homology theory for tropical cycles on integral affine
  manifolds and a perfect pairing},
        note={work in progress},
}

\bib{ShawThesis}{thesis}{
      author={Shaw, Kristin},
       title={Tropical intersection theory and surfaces},
        type={Ph.D. Thesis},
        date={2011},
         url={https://archive-ouverte.unige.ch/unige:22758/ATTACHMENT01},
        note={Available at
  \href{https://archive-ouverte.unige.ch/unige:22758/ATTACHMENT01}{https://archive-ouverte.unige.ch/unige:22758/ATTACHMENT01}},
}

\bib{ShawIntersection}{article}{
      author={Shaw, Kristin},
       title={A tropical intersection product in matroidal fans},
        date={2013},
        ISSN={0895-4801},
     journal={SIAM J. Discrete Math.},
      volume={27},
      number={1},
       pages={459\ndash 491},
         url={https://doi-org.proxy.library.cornell.edu/10.1137/110850141},
      review={\MR{3032930}},
}

\bib{TropicalSurfaces}{unpublished}{
      author={Shaw, Kristin},
       title={Tropical surfaces},
        date={2015},
         url={http://arxiv.org/abs/1506.07407},
        note={Preprint available at
  \href{https://arxiv.org/abs/1506.07407}{{\tt ar{X}iv:1506.07407}}},
}

\bib{ShepardThesis}{thesis}{
      author={Shepard, Allen~Dudley},
       title={A cellular description of the derived category of a stratified
  space},
        type={Ph.D. Thesis},
   publisher={ProQuest LLC, Ann Arbor, MI},
        date={1985},
      review={\MR{2634247}},
}

\bib{SmackaThesis}{thesis}{
      author={Smacka, Jascha},
       title={Differential forms on tropical spaces},
        type={Ph.D. Thesis},
        date={2017},
         url={https://epub.uni-regensburg.de/36262/},
        note={Available at
  \href{https://epub.uni-regensburg.de/36262/}{https://epub.uni-regensburg.de/36262/}},
}

\bib{SwanSheaves}{book}{
      author={Swan, R.G.},
       title={The theory of sheaves},
      series={Chicago lectures in mathematics},
   publisher={The University of Chicago Press, Chicago and London},
        date={1964},
}

\bib{Weibel95}{book}{
      author={Weibel, Charles~A.},
       title={An introduction to homological algebra},
      series={Cambridge Studies in Advanced Mathematics},
   publisher={Cambridge University Press, Cambridge},
        date={1994},
      volume={38},
        ISBN={0-521-43500-5; 0-521-55987-1},
         url={https://doi.org/10.1017/CBO9781139644136},
      review={\MR{1269324}},
}

\bib{OrlikSolomon}{article}{
      author={Zharkov, Ilia},
       title={The {O}rlik-{S}olomon algebra and the {B}ergman fan of a
  matroid},
        date={2013},
        ISSN={1935-2565},
     journal={J. G\"{o}kova Geom. Topol. GGT},
      volume={7},
       pages={25\ndash 31},
      review={\MR{3153919}},
}

\end{biblist}
\end{bibdiv}


\end{document}